\def\set#1{{\def\st{\;:\;}\left\{#1\right\}}}
\def\abs#1{\left\vert{#1}\right\vert}
\def \<#1>{{\left\langle{#1}\right\rangle}}
\def\ZZ{\mathbb Z}
\def\QQ{\mathbb Q}
\def\NN{\mathbb N}
\def\RR{\mathbb R}
\def\Q-{\overline{\mathbb Q}}
\DeclareMathOperator{\Aut}{Aut}
\DeclareMathOperator{\Hom}{Hom}
\DeclareMathOperator{\IMG}{IMG}
\DeclareMathOperator{\St}{St}
\DeclareMathOperator{\RiSt}{RiSt}
\DeclareMathOperator{\rist}{rist}
\DeclareMathOperator{\Sym}{Sym}
\DeclareMathOperator{\id}{id}
\DeclareMathOperator{\st}{st}
\DeclareMathOperator{\lcm}{lcm}
\newtheorem{Theorem}{Theorem}
\newtheorem{Conjecture}[Theorem]{Conjecture}
\newtheorem{Corollary}[Theorem]{Corollary}
\newtheorem{Proposition}{Proposition}[section]
\newtheorem{Lemma}[Proposition]{Lemma}
\newtheorem{corollary}[Proposition]{Corollary}
\newtheorem{theorem}[Proposition]{Theorem}
\newtheorem{Definition}[Proposition]{Definition}
\newtheorem{Example}[Proposition]{Example}
\newtheorem{Remark}[Proposition]{Remark}
\title{Groups of finite type: classification and structural properties}
\author{Santiago Radi}
\address{Department of Mathematics, Texas A\&M University, 77843 College Station, U.S.A.
}
\email{santiradi@tamu.edu}
\keywords{Groups acting on trees, groups of finite type, finitely constrained groups, just-infinite, topologically finitely generated, strongly completeness, automata groups}
\subjclass[2020]{Primary: 20E08 (Groups acting on trees), 20F65 (Geometric group theory). Secondary: 20E18 (Limits, profinite groups), 20M35 (Semigroups in automata theory), 20H05 (Unimodular groups, congruence subgroups)}
\thanks{The author is supported by Grigorchuk's Simons Foundation Grant MP-TSM-00002045 and the department of Mathematics of Texas A\&M University.}
\begin{document}

\maketitle

\begin{abstract}
Groups of finite type (also called finitely constrained groups), introduced by Grigorchuk, are known to be the closure of regular branch groups. This article explores many of their properties.

Firstly, we prove that being finitely generated, just-infinite and strongly complete are equivalent in a vast family of groups of finite type. As a consequence, we prove that the closure of the Hanoi towers group on 3 pegs is just-infinite although the group itself is not. 

Secondly, we improve the algorithm given by Bondarenko and Samoilovych in \cite{Bondarenko2014}, to compute all the groups of finite type of a given depth and acting on a given tree. We use this to find the groups of finite type acting on the ternary tree with depth 2 and 3.

Thirdly, we give a sufficient condition for a group generated by a finite automaton of Mealy type to have as closure a group of finite type. This allows us to identify groups of finite type as the closure of explicit groups generated by a finite automaton. 

Lastly, we give an algorithm to prove whether two groups of finite type are isomorphic. With this result, we classify groups of finite type up to isomorphism in the binary tree for depths 2, 3 and 4 and in the ternary tree for depths 2 and 3.
\end{abstract}

\tableofcontents

\section{Introduction}
\label{sec_Introduction}

A profinite group can be defined as a totally disconnected compact group or as the inverse limit of an inverse system of finite groups. Moreover, it was observed in \cite{GrigorchukNekrashevichSuschanski2000} that any countably based profinite group can be embedded as a closed subgroup of the group of automorphisms of a spherically homogeneous rooted tree. Three important properties of profinite groups are topologically finitely generation, just-infiniteness and strongly completeness. A profinite group is \textit{topologically finitely generated} if it contains a dense finitely generated subgroup. It is \textit{just-infinite} if every closed non-trivial normal subgroup is of finite index. First results about just-infinite groups were given in \cite{Hall1964, Mennicke1965} around 1965. In \cite{Grigorchuk2000}, Grigorchuk proved that just-infinite profinite groups split into two categories, being just-infinite branch groups one of the possibilities. In the same article, the author also gave an equivalent condition for a branch group to be just-infinite. In \cite{Wilson2000}, Wilson  extended  his  result  from \cite{Wilson1971} about just-infinite abstract groups to profinite groups, splitting them in those that have finite and infinite structure lattice. In fact, the second class of Wilson's just-infinite groups coincides with the class of just-infinite branch groups.

A profinite group $G$ is \textit{strongly complete} if one of the following equivalent conditions occur:
\begin{itemize}
\item Every subgroup of finite index in $G$ is open
\item $G$ is isomorphic to its profinite completion $\widehat{G}$
\item Every group homomorphism from $G$ to any other profinite group is continuous.
\end{itemize}

In other words, this means that the algebraic structure completely determines the topology of $G$. 

The problem of determining which profinite groups are strongly complete has been studied by several authors in the last fifty years. In \cite{Serre1997}, Serre proved that topologically finitely generated profinite pro-$p$ groups are strongly complete. In \cite{Pletch1981}, Pletch generalized the result of Serre, proving that profinite groups with topologically finitely generated pro-$p$-Sylows are strongly complete. Finally, Nikolov and Segal, using the classification of finite simple groups, proved that topologically finitely generated profinite groups are strongly complete in \cite{NikolovSegal2007}. Constructions of profinite groups that are not strongly complete also exist (see \cite[Example 4.2.13]{RibesZalesskii2000}).

In 2005, Grigorchuk introduced in \cite{Grigorchuk2005} the class of groups of finite type (also known as finitely constrained groups). These groups are profinite groups acting on regular rooted trees whose action locally around every vertex is given by a finite group of allowed actions. This finite group of allowed actions is called the minimal pattern subgroup and it acts on a truncated regular rooted tree with finitely many levels. The number $D$ of levels necessary to define the group of finite type is called its depth. The definition of groups of finite type mimics the idea of shifts of finite type, where the set of infinite words in these subshifts are given by those words whose finite snippets avoid a finite set of forbidden words (see \cite[Chapter 2]{Lind_Marcus_1995}). 

In \cite{Grigorchuk2005}, Grigorchuk proved that groups of finite type are regular branch groups (a subfamily of branch groups introduced in \cite{Grigorchuk2000}) and in \cite{Sunic2006}, Sunic proved the converse, namely, the closure of any regular branch group is a group of finite type. As a consequence, groups of finite type appear to be the closure of many well-studied abstract groups acting on rooted trees such as: the first Grigorchuk group \cite{Grig1980}, Grigorchuk-Gupta-Sidki groups \cite{GGS1983}, the Hanoi towers group \cite{GrigNekraSunic2006}, the iterated monodromy group of the polynomial $z^2+i$ \cite{GrigSavchukSunic2007}, iterated wreath products or the whole group of automorphisms of a regular rooted tree. Recently, it has also been shown that certain iterated Galois groups, used in number theory and arithmetic dynamics, are also groups of finite type (see \cite{Radi2025FPP}). In \cite{FariñaAsategui2024}, the author proved that fractal branch profinite groups are groups of finite type. 

The class of groups of finite type has been studied by many authors in the last two decades. For example, in \cite{Bartholdi2013, FariñaAsategui2024, Penland2017nearly, PenlandSunic2016, Sunic2006}, different authors studied possible values of their Hausdorff dimensions, showing that the Hausdorff dimension is always positive (if the group of finite type is not finite), rational in the case of prime power regular trees and calculated the spectrum, namely, all the possible values for the Hausdorff dimension in groups of finite type. In \cite{Bondarenko2010, Bondarenko2014, Sunic2010}, authors gave necessary and sufficient conditions over the minimal pattern subgroup to know whether a group of finite type is topologically finitely generated. In \cite{Bondarenko2014}, authors computed all groups of finite type of depth $2$, $3$ and $4$ acting on the binary tree, studying how many of them are topologically finitely generated. Moreover, they gave an explicit description of the minimal pattern subgroups of the topologically finitely generated groups of finite type of depth $4$ acting on the binary tree.

If $G$ is a group of finite type of depth $D$, its properties are in general studied by understanding the abelianization of the subgroup $\St_G(D-1)$, that corresponds to the subgroup of elements in $G$ fixing all the vertices at level $D-1$ of the tree. In \cite[Theorem 3]{Bondarenko2014}, Bondarenko and Samoilovych indirectly proved that just-infinite level-transitive groups of finite type are topologically finitely generated. On the other side, we already observed that Nikolov and Segal proved that topologically finitely generated profinite groups are strongly complete. One of the main results of this article closes the cycle between these three concepts for a vast family of groups of finite type (see \cref{figure: diagram equivalences}). 

\begin{figure}[t]
\centering
\begin{tikzpicture}[node distance=2.5cm, auto]
    \node (A) at (0,4) {Just-infinite};
    \node (B) at (4,0) {\shortstack{Topologically finitely \\ generated}};
    \node (C) at (-4,0) {\shortstack{Strongly \\  complete}};

    \draw[->, bend left=20] (A) to node[midway, right] {BS} (B);
    \draw[->, bend left=20] (B) to node[midway, above] {NS} (C);
    \draw[->, bend left=20] (C) to node[midway, left] {*} (A);
\end{tikzpicture}
\caption{Cycle of equivalences in level-transitive groups of finite type. The label BS indicates that the implication was proven by Bondarenko and Samoilovych in \cite{Bondarenko2014}. The label NS that the implication was proven by Nikolov and Segal in \cite{NikolovSegal2007} and the label * that the implication uses that $\St_G(D-1)/\overline{\St_G(D-1)'}$ is torsion.}
\label{figure: diagram equivalences}
\end{figure}
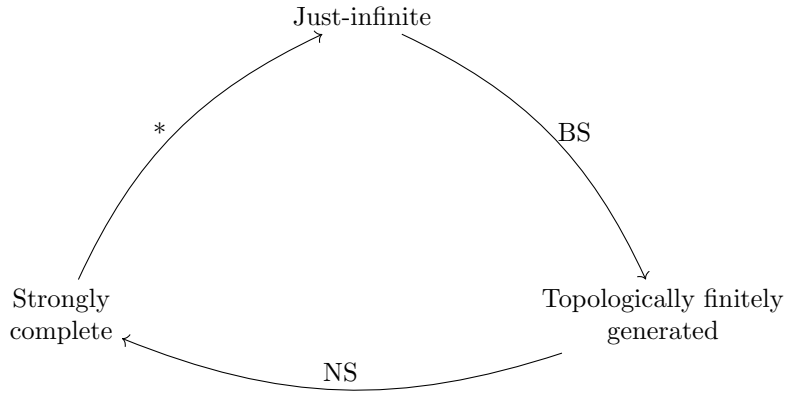

\begin{Theorem}
Let $G$ be a level-transitive group of finite type of depth $D$ such that $\St_G(D-1)/\overline{\St_G(D-1)'}$ is torsion. Then, the following properties are equivalent:

\begin{enumerate}[(i)]
\item $G$ is just-infinite,
\item $G$ is topologically finitely generate,
\item $G$ is strongly complete.
\end{enumerate}
\label{Theorem: finite type equivalences}
\end{Theorem}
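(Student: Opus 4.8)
\emph{Proof proposal.} The plan is to prove the cycle $(i)\Rightarrow(ii)\Rightarrow(iii)\Rightarrow(i)$ displayed in \cref{figure: diagram equivalences}; only the closing implication uses the torsion hypothesis.

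For $(i)\Rightarrow(ii)$ I would simply invoke \cite[Theorem 3]{Bondarenko2014}, which gives that a level-transitive just-infinite group of finite type is topologically finitely generated; for $(ii)\Rightarrow(iii)$ I would invoke the theorem of Nikolov and Segal \cite{NikolovSegal2007} that topologically finitely generated profinite groups are strongly complete. These two implications are ``black boxes'' and require nothing beyond the hypotheses already recorded in the figure.

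The heart of the matter is $(iii)\Rightarrow(i)$, which I would prove by contraposition: assuming $G$ is not just-infinite, I will exhibit a finite-index subgroup of $G$ that is not open. Since $G$ is of finite type it is a regular branch group, so each $\rist_G(n)$ is open in $G$ and, by the standard description of the normal subgroup structure of branch groups (cf. \cite{Grigorchuk2000}), every non-trivial closed normal subgroup of $G$ contains $\overline{\rist_G(n)'}$ for some $n$. Hence if $G$ is not just-infinite, then $\overline{\rist_G(n)'}$ has infinite index in $G$ for some $n$, and since the subgroups $\rist_G(n)$ are nested we may take $n\ge D-1$; as $\rist_G(n)$ is open, the profinite abelianization $\rist_G(n)/\overline{\rist_G(n)'}$ is then infinite. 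Now the torsion hypothesis propagates: because consecutive level stabilizers in $\Aut(T)$ have finite index (so $[\St_G(m):\St_G(m+1)]<\infty$ for all $m$), and because the transfer homomorphism shows that a finite-index subgroup of a profinite group with torsion abelianization again has torsion abelianization (if $B\le C$ is of finite index $k$ and $C/\overline{C'}$ has exponent $e$, then $B/\overline{B'}$ has exponent dividing $ek$, via $B/\overline{B'}\to C/\overline{C'}\to B/\overline{B'}$), the assumption on $\St_G(D-1)$ forces $\St_G(m)/\overline{\St_G(m)'}$ to be torsion for all $m\ge D-1$; applying the transfer once more to the finite-index inclusion $\rist_G(n)\le\St_G(n)$ shows $\rist_G(n)/\overline{\rist_G(n)'}$ is torsion as well. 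Finally, an infinite torsion profinite abelian group is never strongly complete: by Baire category such a group has bounded exponent, hence surjects onto $(\ZZ/p\ZZ)^{\NN}$ for some prime $p$, and the kernel of a discontinuous $\FF_p$-linear functional on $(\ZZ/p\ZZ)^{\NN}$ (which exists by a Zorn's lemma argument) is a finite-index subgroup that is not open. Pulling this back, $\rist_G(n)/\overline{\rist_G(n)'}$, and hence $\rist_G(n)$ (a continuous quotient is involved), and hence $G$ (since $\rist_G(n)$ is open in $G$), has a finite-index subgroup that is not open; so $G$ is not strongly complete, completing the cycle.

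The main obstacle, and the only place the finite-type structure enters essentially, is the propagation of the torsion hypothesis: one must pass from torsion of $\St_G(D-1)/\overline{\St_G(D-1)'}$ to torsion of $\rist_G(n)/\overline{\rist_G(n)'}$ for every relevant $n$, using that in a branch group of finite type the consecutive level stabilizers --- and the rigid stabilizers sitting inside them --- all have finite index, and then pushing ``torsion abelianization'' along these inclusions with the transfer map. Once that is in place, what remains is the soft observation that an infinite torsion profinite abelian quotient is precisely an obstruction to strong completeness, together with the two cited implications $(i)\Rightarrow(ii)$ and $(ii)\Rightarrow(iii)$.
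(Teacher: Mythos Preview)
Your cycle $(i)\Rightarrow(ii)\Rightarrow(iii)$ and the endgame for $(iii)\Rightarrow(i)$ (an infinite torsion profinite abelian quotient obstructs strong completeness) are both sound, but the transfer step you use to propagate the torsion hypothesis is wrong. The composite $B/\overline{B'}\to C/\overline{C'}\xrightarrow{V} B/\overline{B'}$ (inclusion followed by transfer) is \emph{not} multiplication by $k=[C:B]$; for $B\lhd C$ it is the norm map $b\mapsto\sum_{\bar c\in C/B}\bar c\cdot b$ for the conjugation action of $C/B$ on $B^{\overline{ab}}$, and this equals $k\cdot\mathrm{id}$ only when that action is trivial. (It is the \emph{other} composite, $C^{\overline{ab}}\xrightarrow{V}B^{\overline{ab}}\to C^{\overline{ab}}$, that is multiplication by $k$.) A concrete counterexample to your claim ``finite-index subgroups inherit torsion abelianization'': take $C=\ZZ_p\rtimes C_{p-1}$ with $C_{p-1}$ acting on $\ZZ_p$ by multiplication by a primitive root; then $C^{\overline{ab}}\cong C_{p-1}$ has exponent $p-1$, but the index-$(p-1)$ subgroup $B=\ZZ_p$ has $B^{\overline{ab}}=\ZZ_p$, torsion-free.

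The fix is specific to the finite-type structure and makes the detour through $\RiSt_G(n)$ unnecessary. By \cref{lemma: St(n+1) = prod St(n)} one has $\St_G(m)\cong\St_G(D-1)^{d^{\,m-D+1}}$ for $m\ge D-1$, so $\St_G(m)^{\overline{ab}}$ is a direct power of $\St_G(D-1)^{\overline{ab}}$ and hence torsion --- no transfer needed. If you still want $\RiSt_G(n)^{\overline{ab}}$ torsion, argue \emph{upward}: $\St_G(n+D-1)\lhd\RiSt_G(n)$ with finite index (\cref{lemma: finite type is weakly regular branch}), and torsion abelianization does pass upward along finite-index normal inclusions (if $A\lhd B$, $[B:A]<\infty$, $A^{\overline{ab}}$ torsion, then $B/\overline{A'}$ contains the torsion group $A/\overline{A'}$ with finite index, hence is torsion, and $B^{\overline{ab}}$ is a quotient of it). Cleaner still is to skip $\RiSt_G(n)$ entirely: \cref{corollary: finite type just infinite} says $G$ is just-infinite iff $\St_G(D-1)^{\overline{ab}}$ is finite, so the contrapositive of $(iii)\Rightarrow(i)$ reads ``$\St_G(D-1)^{\overline{ab}}$ infinite $\Rightarrow$ $G$ not strongly complete'', and now the torsion hypothesis applies directly with no propagation. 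This is exactly the paper's route in the proof of $(3)\Rightarrow(6)$ in \cref{theorem: finite type all equivalences}; there a counting argument (uncountably many finite-index subgroups versus countably many open ones) replaces your discontinuous-functional construction, but the two endgames are equivalent.
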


The condition ``$\St_G(D-1)/\overline{\St_G(D-1)'}$ is torsion" has been studied by the author for many groups of finite type and it is suspected that it always holds. Unfortunately, a proof cannot be claimed yet.

\begin{Conjecture}
If $G$ is a group of finite type of depth $D$, then $\St_G(D-1)/\overline{\St_G(D-1)'}$ is torsion. In particular, \cref{Theorem: finite type equivalences} holds for all groups of finite type. 
\label{conjecture: property (E)}
\end{Conjecture}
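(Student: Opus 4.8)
The plan is to reduce \cref{conjecture: property (E)} to a uniform exponent bound on a profinite abelianisation, and then to attack that bound through the branch structure of $G$; I will also point out where such an attack currently stalls. Throughout let $T$ be the regular rooted tree of degree $d$ on which $G$ acts, write $L_m$ for its set of vertices at level $m$, and put $K=\St_G(D-1)$; note that the ``in particular'' clause is immediate from the torsion statement, since that statement makes the hypothesis of \cref{Theorem: finite type equivalences} automatic for every level-transitive group of finite type. A profinite abelian group is torsion if and only if it has finite exponent: if $A$ is profinite, abelian and torsion, then $A=\bigcup_{n\ge 1}A[n]$ with each $A[n]=\{a\in A:na=0\}$ closed, so by the Baire category theorem some $A[n]$ is open; for that $n$ the quotient $A/A[n]$ is finite, say of exponent $m$, and hence $\exp(A)$ divides $nm$. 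Applied to $A:=K/\overline{K'}$, this recasts \cref{conjecture: property (E)} as the assertion that $A$ has finite exponent. Filtering $K$ by level stabilisers and setting $A_n=K/\overline{K'\,\St_G(n)}$ for $n\ge D$, we have $A=\varprojlim_n A_n$; the sequence $\exp(A_n)$ is non-decreasing for divisibility (each $A_{n-1}$ is a quotient of $A_n$) and $\exp(A)$ is its least common multiple, so $\exp(A)<\infty$ exactly when this sequence stabilises. Since the kernel of $A_n\to A_{n-1}$ is a quotient of the finite layer $\St_G(n-1)/\St_G(n)$, stabilisation is equivalent to the following claim $(\star)$: for all sufficiently large $n$, the layer $\St_G(n)/\St_G(n+1)$ is contained in the image of $[K,K]=[\St_G(D-1),\St_G(D-1)]$. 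Proving $(\star)$ is the goal.

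To prove $(\star)$ I would run a descent along the branch structure. Being of finite type, $G$ is a regular branch group, so the rigid stabiliser $\RiSt_G(D-1)=\prod_{v\in L_{D-1}}\rist_G(v)$ has finite index in $K$, the subgroups $\rist_G(v)$ commute pairwise for distinct $v\in L_{D-1}$ (disjoint supports), and each $\rist_G(v)$ is, up to finite index, a copy of the group of finite type carried by the subtree $T_v$. For $n\ge 2D-2$ the constraints that the minimal pattern subgroup $P$ imposes on an element of $\St_G(n)$ modulo $\St_G(n+1)$ only involve its portrait at vertices of level $\ge D-1$, and each such vertex lies in a single subtree $T_v$ with $v\in L_{D-1}$; hence the layer splits as a direct product $\St_G(n)/\St_G(n+1)=\prod_{v\in L_{D-1}}\Lambda_v$, where $\Lambda_v$ is a deep layer of the group of finite type carried by $T_v$. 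Commutators coming from $\rist_G(v)$ land in the factor $\Lambda_v$, so claim $(\star)$ feeds back into itself one level of branching lower; iterating, the problem collapses to two finite ingredients -- the ``top'' quotient $K/\RiSt_G(D-1)$, which is a subquotient of $P$, and a base step asking that the finitely many ``boundary'' layers $\St_G(n)/\St_G(n+1)$ with $D\le n<2D-2$ already lie in the image of $[K,K]$.

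The real obstacle is quantitative, which is why the statement remains conjectural. The coarse recursion extracted above -- $K^{\mathrm{ab}}$ is an extension of a finite group (a subquotient of $P^{\mathrm{ab}}$) by a group assembled from copies of $K^{\mathrm{ab}}$ -- does not by itself force torsion: for $G=\Aut(T)$ of depth $1$ the same shape of recursion would be satisfied by $\ZZ_p$, and the torsion of $\Aut(T)^{\mathrm{ab}}$ is only visible after descending to the finite truncations, where the abelianisation of the automorphism group of the first $n$ levels of $T$ has exponent dividing that of $\Sym(d)^{\mathrm{ab}}$ for all $n$, the contribution added at each step splitting off as a bounded-exponent direct summand. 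For a general pattern subgroup $P$ one must establish the corresponding non-accumulation -- that the successive extensions of finite groups arising in the descent never raise the exponent -- and this is a genuine statement about how commutators of $\St_G(D-1)$ interact with the action of $P$ on the truncated tree, not something a soft argument can supply: the property ``$H^{\mathrm{ab}}$ is torsion'' is not inherited by finite-index subgroups (for example $\ZZ_p$ has finite index in the profinite group $\ZZ_p\rtimes C_2$, whose abelianisation is torsion although that of $\ZZ_p$ is not), so any proof must exploit the specific subgroup $\St_G(D-1)$ and the specific constraints imposed by $P$. The author has verified the needed exponent bound by direct computation for every group of finite type examined so far; what is lacking is a uniform argument, presumably an explicit description -- in terms of $P$ -- of which layers $\St_G(n)/\St_G(n+1)$ are met by $[\St_G(D-1),\St_G(D-1)]$.
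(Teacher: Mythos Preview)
The statement is labelled a \emph{conjecture} in the paper and is explicitly left open; there is no proof to compare against. What the paper does provide is \cref{proposition: property (E) equivalences}, which records three equivalent formulations: torsion, finite exponent, and a concrete extension criterion on generators of $\St_\mathcal{P}(D-1)$. Your first reduction (torsion $\Leftrightarrow$ finite exponent via Baire) matches the equivalence $(1)\Leftrightarrow(2)$ there. You do not reach the paper's condition $(3)$, which is the formulation actually used in practice to verify the conjecture for specific examples (e.g.\ iterated wreath products in \cref{corollary: IWP equivalence}).

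There is, however, a genuine error in your reduction to $(\star)$. You claim that stabilisation of $\exp(A_n)$ is equivalent to $(\star)$, where $(\star)$ asks that the layer $\St_G(n)/\St_G(n+1)$ lie in the image of $K'$ for all large $n$. But $(\star)$ says the transition maps $A_{n+1}\to A_n$ are eventually isomorphisms, i.e.\ that $K^{\overline{ab}}$ is \emph{finite}; by \cref{corollary: finite type just infinite} this is exactly just-infiniteness, which is condition~(5)/(6) of \cref{theorem: finite type all equivalences} and is certainly false in general (take $G=\Aut(T)$, or any non-perfect iterated wreath product). Torsion only requires the \emph{exponent} of $A_n$ to stabilise while $|A_n|$ may grow without bound---your own $\Aut(T)$ discussion later acknowledges exactly this phenomenon, so the misstatement of $(\star)$ appears to be a slip, but as written it conflates the conjecture with a strictly stronger (and false) statement. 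The correct target is a uniform exponent bound on the layers \emph{modulo} $K'$, not their vanishing; the paper's condition~(3) in \cref{proposition: property (E) equivalences} is one way to certify such a bound without proving $(\star)$.
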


In \cref{proposition: property (E) equivalences}, equivalences for \cref{conjecture: property (E)} are given. For level-transitive iterated wreath products, the quotient $\St_G(D-1)/\overline{\St_G(D-1)'}$ is always torsion and therefore we obtain the following corollary from \cref{Theorem: finite type equivalences}:

\begin{Corollary}
Let $d \geq 2$ and $\mathcal{P}$ be a transitive subgroup of $\Sym(d)$. Denote $W_\mathcal{P}$ the iterated wreath product of $\mathcal{P}$ acting on a $d$-regular tree and $\mathcal{P}'$ the commutator subgroup of $\mathcal{P}$. Then, the following properties are all equivalent:

\begin{enumerate}[(i)]
\item $W_\mathcal{P}$ is just-infinite,
\item $W_\mathcal{P}$ is topologically finitely generate,
\item $W_\mathcal{P}$ is strongly complete.
\item $\mathcal{P} = \mathcal{P}'$. 
\end{enumerate}

In particular, the whole group of automorphisms of the tree does not have any of these properties. 
\label{Corollary: IWP equivalence}
\end{Corollary}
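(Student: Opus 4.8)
The plan is to view $W_\mathcal{P}$ as a level-transitive group of finite type of depth $1$, with minimal pattern subgroup $\mathcal{P}$ acting on the $d$ children of the root, so that $\St_{W_\mathcal{P}}(D-1)=\St_{W_\mathcal{P}}(0)=W_\mathcal{P}$, and then to invoke \cref{Theorem: finite type equivalences}. For this I first check that $W_\mathcal{P}/\overline{W_\mathcal{P}'}$ is torsion. Write $W_\mathcal{P}=\varprojlim_n W_n$, where $W_n=\mathcal{P}\wr\cdots\wr\mathcal{P}$ ($n$ factors) is the automorphism group of the depth-$n$ truncation and the transition maps are the (surjective) truncation maps, so that $W_\mathcal{P}/\overline{W_\mathcal{P}'}=\varprojlim_n W_n/W_n'$. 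Using that $(A\wr\mathcal{P})^{\mathrm{ab}}\cong A^{\mathrm{ab}}\times\mathcal{P}^{\mathrm{ab}}$ whenever $\mathcal{P}$ is transitive on $d$ points (the coinvariants of the permutation $\mathcal{P}$-module $(A^{\mathrm{ab}})^{d}$ being $A^{\mathrm{ab}}$), an immediate induction gives $W_n/W_n'\cong(\mathcal{P}^{\mathrm{ab}})^{n}$. Hence every $W_n/W_n'$ has exponent dividing $e:=\exp(\mathcal{P}^{\mathrm{ab}})$, so $W_\mathcal{P}/\overline{W_\mathcal{P}'}$ has exponent dividing $e$ and in particular is torsion. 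Therefore \cref{Theorem: finite type equivalences} applies and (i), (ii), (iii) are equivalent.

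It remains to insert (iv) into the list, for which I prove (ii) $\Leftrightarrow$ (iv). Suppose first $\mathcal{P}\neq\mathcal{P}'$. Then $\mathcal{P}^{\mathrm{ab}}\neq1$, and since the transition maps $W_n/W_n'\to W_{n-1}/W_{n-1}'$ are surjective, $W_\mathcal{P}/\overline{W_\mathcal{P}'}$ surjects onto $(\mathcal{P}^{\mathrm{ab}})^{n}$ for every $n$ and is thus infinite; an infinite abelian profinite group of bounded exponent is not topologically finitely generated, and since a continuous image of a topologically finitely generated profinite group is topologically finitely generated, $W_\mathcal{P}$ cannot be topologically finitely generated either, so (ii) fails. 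Conversely, suppose $\mathcal{P}=\mathcal{P}'$, so $W_\mathcal{P}/\overline{W_\mathcal{P}'}=1$. Here I would appeal to Grigorchuk's just-infiniteness criterion for branch groups \cite{Grigorchuk2000}: $W_\mathcal{P}$ is a regular branch group, it is level-transitive, and its rigid level-$n$ stabilizer decomposes as $\rist_{W_\mathcal{P}}(n)=\prod_{|v|=n}\rist_{W_\mathcal{P}}(v)$ with each $\rist_{W_\mathcal{P}}(v)\cong W_\mathcal{P}$ (the iterated wreath product acting on the subtree hanging from $v$); since $W_\mathcal{P}$ is topologically perfect, each such rigid stabilizer is topologically perfect, so $\rist_{W_\mathcal{P}}(n)'=\rist_{W_\mathcal{P}}(n)$ has finite index in $W_\mathcal{P}$ for every $n$, and the criterion gives that $W_\mathcal{P}$ is just-infinite, i.e. (i) holds, hence so does (ii). Thus (i)--(iv) are all equivalent.

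Finally, the full automorphism group of the $d$-regular rooted tree is $W_{\Sym(d)}$, and $\Sym(d)\neq\Sym(d)'$ for every $d\geq2$ (the abelianization of $\Sym(d)$ is cyclic of order $2$); so by the equivalence just established $W_{\Sym(d)}$ is neither just-infinite, nor topologically finitely generated, nor strongly complete.

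The step I expect to demand the most care is the application of the branch just-infiniteness criterion in the case $\mathcal{P}=\mathcal{P}'$: one must cite it in its correct profinite form (``$[W_\mathcal{P}:\rist_{W_\mathcal{P}}(n)']<\infty$ for every $n$'' being equivalent to just-infiniteness) and verify cleanly both the identification $\rist_{W_\mathcal{P}}(v)\cong W_\mathcal{P}$ and the product decomposition of $\rist_{W_\mathcal{P}}(n)$. The abelianization computation and the elementary facts ``bounded exponent $\Rightarrow$ torsion'' and ``infinite abelian of bounded exponent $\Rightarrow$ not topologically finitely generated'' are routine.
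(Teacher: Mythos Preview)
Your proof is correct and follows essentially the same route as the paper: both compute $W_n^{\mathrm{ab}}\cong(\mathcal{P}^{\mathrm{ab}})^n$ via the wreath-product abelianization formula, use this to check the torsion hypothesis, and then invoke \cref{Theorem: finite type equivalences}.

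The only difference is in how (iv) is tied in. The paper uses the more detailed \cref{theorem: finite type all equivalences}, whose condition (4) is precisely the stabilization of $|\pi_n(W_\mathcal{P})^{\mathrm{ab}}|=|\mathcal{P}^{\mathrm{ab}}|^n$; this stabilizes if and only if $\mathcal{P}^{\mathrm{ab}}=1$, so (iv) drops out in one line. You instead split into cases and, for $\mathcal{P}=\mathcal{P}'$, re-invoke Grigorchuk's branch just-infiniteness criterion from scratch. That works, but it is a detour: the equivalence ``$W_\mathcal{P}$ just-infinite $\Leftrightarrow$ $W_\mathcal{P}^{\overline{\mathrm{ab}}}$ finite'' is already packaged into the theorem you cited (it is \cref{corollary: finite type just infinite}, equivalently condition (6) of \cref{theorem: finite type all equivalences}), so once you have $W_\mathcal{P}^{\overline{\mathrm{ab}}}\cong\varprojlim(\mathcal{P}^{\mathrm{ab}})^n$, the equivalence with $\mathcal{P}=\mathcal{P}'$ is immediate without revisiting rigid stabilizers. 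Your caution about ``the step demanding the most care'' is therefore unnecessary; the cleaner path avoids it entirely.
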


\cref{Corollary: IWP equivalence} generalizes the result of Bondarenko in \cite{Bondarenko2010}, where he proved that for iterated wreath products, condition (ii) and condition (iv) of \cref{Corollary: IWP equivalence} are equivalent. 

The second corollary involves the Hanoi towers group on 3 pegs. This group, introduced in \cite{GrigNekraSunic2006}, it is probably the most famous example of a regular branch group that is not just-infinite (see \cite[Proposition 3.17]{Bartholdi2012} for a proof of this fact). As the group is regular branch and finitely generated, its closure is a topologically finitely generated group of finite type, and by \cref{Theorem: finite type equivalences}, we conclude the following:

\begin{Corollary}
The closure of the Hanoi towers group on 3 pegs is just-infinite.
\label{Corollary: closure Hanoi towers joo}
\end{Corollary}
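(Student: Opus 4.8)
The plan is to exhibit $G:=\overline{\mathcal{H}^{(3)}}\le\Aut(T_3)$ as a level-transitive, topologically finitely generated group of finite type and then invoke \cref{Theorem: finite type equivalences}, the only remaining point being the torsion hypothesis on $\St_G(D-1)/\overline{\St_G(D-1)'}$.

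First I would collect the standard facts about the Hanoi towers group $\mathcal{H}^{(3)}$ from \cite{GrigNekraSunic2006}: it is generated by the three involutions $a_{01},a_{02},a_{12}$, so it is finitely generated; its image in $\Sym(3)$ under the root action is all of $\Sym(3)$ and it is self-replicating, so it acts level-transitively on the ternary rooted tree $T_3$; and it is a regular branch group. By Sunic's theorem recalled in the introduction (\cite{Sunic2006}), the closure of a regular branch group is a group of finite type, so $G$ is of finite type; write $D$ for its depth. Since $\mathcal{H}^{(3)}$ is by construction dense in $G$, the group $G$ is topologically finitely generated, and since $\mathcal{H}^{(3)}$ and $G$ have the same image in each finite quotient $\Aut(T_{3,n})$, they have the same orbits on every level, so $G$ is level-transitive. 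Thus $G$ is a level-transitive group of finite type of depth $D$ satisfying condition (ii) of \cref{Theorem: finite type equivalences}.

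It remains to check that $\St_G(D-1)/\overline{\St_G(D-1)'}$ is torsion; granting this, \cref{Theorem: finite type equivalences} yields the implication (ii)$\Rightarrow$(i), i.e. $G$ is just-infinite, which is the assertion. To carry out the check I would use the concrete description of $G$ as a group of finite type: from the wreath recursions of $a_{01},a_{02},a_{12}$ one computes the minimal pattern subgroup $P$ of $G$, together with its depth $D$ -- this computation is carried out explicitly elsewhere in the paper. The quotient $\St_G(D-1)/\overline{\St_G(D-1)'}$ is then governed by finite data extracted from $P$; in fact \cref{proposition: property (E) equivalences} reduces the torsion statement to a finite verification on the pattern subgroup, which one performs for this particular $P$.

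The step I expect to be the main obstacle is precisely this last verification. The implication "topologically finitely generated $\Rightarrow$ just-infinite" for groups of finite type is only available through \cref{Theorem: finite type equivalences}, and its torsion hypothesis is genuinely a hypothesis (a topologically finitely generated profinite abelian group need not be torsion), so one cannot avoid using the explicit automaton/pattern-subgroup structure of $\mathcal{H}^{(3)}$; the remaining ingredients -- density, level-transitivity, being of finite type -- are soft. I would close by remarking that $\mathcal{H}^{(3)}$ itself is not just-infinite (\cite[Proposition 3.17]{Bartholdi2012}), which is what makes the statement about its closure worth recording.
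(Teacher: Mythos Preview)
Your overall plan is sound and would succeed, but the paper takes a cleaner route that sidesteps the torsion hypothesis entirely. You propose to verify that $\St_G(D-1)^{\overline{ab}}$ is torsion (via \cref{proposition: property (E) equivalences}) and then run the cycle (ii)$\Rightarrow$(iii)$\Rightarrow$(i) of \cref{Theorem: finite type equivalences}. The paper instead first pins down the depth as $D=2$ (\cref{proposition: depth Hanoi towers closure}, using that $[\pi_1(\mathcal{H}):\pi_1(\mathcal{H})']=[\pi_2(\mathcal{H}):\pi_2(\mathcal{H})']=2$ together with \cref{lemma: stabilizabition indices regular branch}), and then checks with GAP that
\[
[\pi_2(\St_\mathcal{H}(1)):\pi_2(\St_\mathcal{H}(1))'] = [\pi_3(\St_\mathcal{H}(1)):\pi_3(\St_\mathcal{H}(1))'] = 4.
\]
This is exactly condition (4) of the finer \cref{theorem: finite type all equivalences}, and by \cref{Remark: item 4 does everything} condition (4) implies just-infiniteness \emph{without} appealing to the torsion hypothesis (the only implication needing torsion is $(3)\Rightarrow(6)$, which is not on the path $(4)\Rightarrow(6)\Leftrightarrow(1)$).

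The practical upside of the paper's route is that it reduces everything to a single equality of two finite indices, computable from $\pi_3(\mathcal{H})$. Your route via \cref{proposition: property (E) equivalences} asks for extensions $g_s\in\St_G(1)$ of generators of $\St_\mathcal{P}(1)$ that have finite order in $\St_G(1)^{\overline{ab}}$; this is not literally a finite check on the pattern subgroup and is more awkward to carry out for $\mathcal{H}$, whose natural generators are involutions but do not lie in $\St_G(1)$. Of course, once one has verified condition (4) as the paper does, $\St_G(1)^{\overline{ab}}$ is in fact finite of order $4$, so your torsion hypothesis holds a posteriori---but at that point just-infiniteness already follows directly, making the detour through \cref{Theorem: finite type equivalences} unnecessary.
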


This result yields to the first example of a regular branch group that is not just-infinite but its closure is. 

\bigskip

We will also be interested in distinguishing when two groups of finite type are isomorphic. Apart from allowing us to do a classification of these groups by isomorphism, the classification has direct applications to abstract groups. Let $G_1, G_2$ be two abstract subgroups of $\Aut(T)$, the group of automorphism of a regular rooted tree $T$. For $i = 1,2$, let $\overline{G_i}$ be the closure of $G_i$ in $\Aut(T)$ and $\widehat{G_i}$ the profinite completion of $G_i$. Let us assume that the groups $G_1$ and $G_2$ have the congruence subgroup property, namely, $\widehat{G_i} \simeq \overline{G_i}$ for $i = 1,2$. This assumption is known to be true in groups such as the first Grigorchuk Group or GGS groups with no constant defining vector. Assume further that the closure of the groups $G_1$ and $G_2$ are groups of finite type, which happens if for example the groups $G_1$ and $G_2$ are regular branch. Then, we have two cases: if $\overline{G_1} \ncong \overline{G_2}$, this means that the profinite completions of $G_1$ and $G_2$ are not isomorphic and therefore, the groups $G_1$ and $G_2$ are not isomorphic, giving a tool to distinguish abstract groups. If, contrarily, we have $\overline{G_1} \simeq \overline{G_2}$ and $G_1 \ncong G_2$, this means that the groups $G_1$ and $G_2$ are not profinite rigid, leading to another important property of these groups. 

To study groups of finite type up to isomorphism, we should start noticing that since they are profinite groups, an isomorphism must be a continuous group isomorphism. To find a condition, denote $G_\mathcal{P}$ and $G_\mathcal{Q}$ the groups of finite type, where $\mathcal{P}$ and $\mathcal{Q}$ are the respective minimal pattern subgroups. We will see in \cref{section: classification of group of finite type up to isomorphism} that we may assume that $G_\mathcal{P}$ and $G_\mathcal{Q}$ have the same depth. The first guess one can imagine is that $G_\mathcal{P}$ is isomorphic to $G_\mathcal{Q}$ if and only if $\mathcal{P}$ is isomorphic to $\mathcal{Q}$. However, this will be disproved in \cref{proposition: P iso Q does not imply GP iso GQ}. Denote by $\mathcal{S}_{\mathcal{P}\mathcal{Q}}$ the set of elements acting on the truncated tree with $D$ levels that conjugate $\mathcal{P}$ onto $\mathcal{Q}$. In \cref{section: classification of group of finite type up to isomorphism}, we will see that we can endow this set with an equivalence relation and we can endow the set of classes with a graph structure. We will denote this directed graph $\Gamma_{\mathcal{P}\mathcal{Q}}$. Then, the main result is the following:

\begin{Theorem}
Let $\mathcal{P}$ and $\mathcal{Q}$ be minimal pattern subgroups of depth $D$ and $\Gamma_{\mathcal{P}\mathcal{Q}}$ their associated directed graph. 

\begin{enumerate}[(i)]
\item If $\Gamma_{\mathcal{P}\mathcal{Q}}$ has a cycle, then $G_\mathcal{P}$ and $G_\mathcal{Q}$ are conjugated in the group of automorphisms of the tree.
\item If $G_\mathcal{P}$ is fractal, then $G_\mathcal{P}$ and $G_\mathcal{Q}$ are conjugated in the group of automorphisms of the tree if and only if $\Gamma_{\mathcal{P}\mathcal{Q}}$ has a cycle.
\end{enumerate}
\label{Theorem: finite type isomorphic conjugated iff gamma cycle}
\end{Theorem}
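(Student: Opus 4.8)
The plan is to reduce the conjugacy of $G_\mathcal{P}$ and $G_\mathcal{Q}$ in $\Aut(T)$ to a combinatorial statement about the windows of a hypothetical conjugator, and then to identify that statement with the presence of a cycle in $\Gamma_{\mathcal{P}\mathcal{Q}}$. For $h\in\Aut(T)$ and a vertex $v$, write $h_v\in\Aut(T_{[D]})$ for the pattern of $h$ at $v$ (the action of the section $h|_v$ on the first $D$ levels); recall that $(\,\cdot\,)_v$ is a homomorphism $\Aut(T)\to\Aut(T_{[D]})$ because $\St_{\Aut(T)}(D)$ is normal. Everything rests on the identity
\[
\bigl(hgh^{-1}\bigr)_{h(x)}\;=\;h_{g(x)}\,g_x\,h_x^{-1}\qquad(g,h\in\Aut(T),\ x\text{ a vertex}),
\]
together with the fact that $h$ permutes vertices level by level. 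Since $g\in G_\mathcal{P}$ iff $g_x\in\mathcal{P}$ for all $x$ (and likewise for $\mathcal{Q}$), a sufficient condition for $hG_\mathcal{P}h^{-1}=G_\mathcal{Q}$ is that $h_{g(x)}\,g_x\,h_x^{-1}\in\mathcal{Q}$ for all $x$ and all $g\in G_\mathcal{P}$. Note the subtlety that merely having $h_x\in\mathcal{S}_{\mathcal{P}\mathcal{Q}}$ for every $x$ only forces $h_{g(x)}\,g_x\,h_x^{-1}\in N_{\Aut(T_{[D]})}(\mathcal{Q})$, not $\mathcal{Q}$, so one must also control how $h_x$ varies within $\mathcal{S}_{\mathcal{P}\mathcal{Q}}$; this is exactly what the equivalence relation defining the vertices of $\Gamma_{\mathcal{P}\mathcal{Q}}$ in \cref{section: classification of group of finite type up to isomorphism} is built to do (it identifies elements of $\mathcal{S}_{\mathcal{P}\mathcal{Q}}$ lying in a common $\mathcal{Q}$-coset).

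For (i): a cycle $[s_0]\to[s_1]\to\cdots\to[s_{\ell-1}]\to[s_0]$ in $\Gamma_{\mathcal{P}\mathcal{Q}}$ is turned into a conjugator by building its portrait level by level. Declare $h_{\mathrm{root}}\in[s_0]$; the edge $[s_j]\to[s_{j+1\bmod\ell}]$ guarantees that, for any vertex $w$ at level $j$ with $h_w\in[s_j]$ already fixed and any child index $i$, the $(D-1)$-level datum $(h_w)|_i$ extends to some element of $[s_{j+1\bmod\ell}]$, which we install as $h_{wi}$; the choices at distinct children affect disjoint parts of the portrait, so no conflict arises and the construction runs forever. The resulting $h\in\Aut(T)$ satisfies $h_v\in\mathcal{S}_{\mathcal{P}\mathcal{Q}}$ and $[h_v]=[s_{|v|\bmod\ell}]$ for every $v$, so patterns at vertices of the same level lie in a common $\mathcal{Q}$-coset; since elements of $\Aut(T)$ preserve levels, $h_{g(x)}\in\mathcal{Q}\,h_x$ and therefore $h_{g(x)}\,g_x\,h_x^{-1}\in\mathcal{Q}\cdot h_x\mathcal{P}h_x^{-1}=\mathcal{Q}$. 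The same two facts, applied through the displayed identity to $h$ and to $h^{-1}$, yield both $hG_\mathcal{P}h^{-1}\subseteq G_\mathcal{Q}$ and $h^{-1}G_\mathcal{Q}h\subseteq G_\mathcal{P}$, so $G_\mathcal{P}$ and $G_\mathcal{Q}$ are conjugate in $\Aut(T)$.

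For the forward implication in (ii): assume $G_\mathcal{P}$ is fractal and $hG_\mathcal{P}h^{-1}=G_\mathcal{Q}$. Fractality, iterated down the tree, gives $\St_{G_\mathcal{P}}(x)|_x=G_\mathcal{P}$ for every vertex $x$, hence $\{g_x:g\in G_\mathcal{P}\}=\mathcal{P}$; applying the displayed identity to $g\in\St_{G_\mathcal{P}}(x)$ (so $g(x)=x$) gives $h_x\,\mathcal{P}\,h_x^{-1}\subseteq\mathcal{Q}$ for all $x$. Since conjugation by $h$ preserves every level stabilizer, $|\mathcal{P}|=[G_\mathcal{P}:\St_{G_\mathcal{P}}(D)]=[G_\mathcal{Q}:\St_{G_\mathcal{Q}}(D)]=|\mathcal{Q}|$, so the inclusion is an equality and $h_x\in\mathcal{S}_{\mathcal{P}\mathcal{Q}}$ for all $x$. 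The section relation $(h_v)|_i=(h_{vi})|_{[D-1]}$ then makes $v\mapsto[h_v]$ an edge-respecting labelling of the vertices of $T$ by the finite graph $\Gamma_{\mathcal{P}\mathcal{Q}}$, so along any infinite ray some class repeats and a cycle appears; together with (i) this is the claimed equivalence.

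The step I expect to be the genuine obstacle is making precise, from the definitions in \cref{section: classification of group of finite type up to isomorphism}, that an \emph{edge} of $\Gamma_{\mathcal{P}\mathcal{Q}}$ really certifies \emph{simultaneous} legal continuations at all $d$ children and for all representatives of the source class — this is what lets a single cycle, rather than a more complicated sub-structure of the graph, suffice to fill out the whole $d$-ary tree — together with the verification that the equivalence relation is at once coarse enough for this and fine enough that following a cycle keeps the patterns within one $\mathcal{Q}$-coset per level (the point upgrading ``lands in $N_{\Aut(T_{[D]})}(\mathcal{Q})$'' to ``lands in $\mathcal{Q}$''). The fractality input in (ii) is secondary but should be isolated as a short lemma: fractality of a group of finite type forces every pattern of $\mathcal{P}$ to occur at every vertex; it is needed only on the $\mathcal{P}$-side, the $\mathcal{Q}$-side being handled by the index count, which is exactly why $G_\mathcal{Q}$ need not be assumed fractal in (ii).
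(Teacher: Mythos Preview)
Your argument for part~(i) is correct and is essentially the paper's: the two conditions you isolate---(a)~$h_v\in\mathcal{S}_{\mathcal{P}\mathcal{Q}}$ for every vertex $v$, and (b)~patterns at vertices of the same level lie in a common $\mathcal{Q}$-coset---are exactly what the paper proves suffice for conjugacy (\cref{proposition: isomorphism between finite type groups}), and a cycle in $\Gamma_{\mathcal{P}\mathcal{Q}}$ lets you build such an $h$ level by level (\cref{theorem: Gamma cycle equivalence}).

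In part~(ii), however, there is a genuine gap. From fractality and the displayed identity applied to $g\in\st_{G_\mathcal{P}}(x)$ you correctly derive condition~(a), that $h_x\in\mathcal{S}_{\mathcal{P}\mathcal{Q}}$ for every $x$. You then assert that ``$v\mapsto[h_v]$ is an edge-respecting labelling'' of $T$ by $\Gamma_{\mathcal{P}\mathcal{Q}}$. But recall that an edge $C\to C'$ in $\Gamma_{\mathcal{P}\mathcal{Q}}$ requires extensions into the \emph{single} class $C'$ at \emph{all} $d$ children simultaneously. You only know $h_v\xrightarrow{i}h_{vi}$ with $h_{vi}\in\mathcal{S}_{\mathcal{P}\mathcal{Q}}$; you have not shown that $[h_{v1}]=\cdots=[h_{vd}]$. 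Without this, following an infinite ray and applying pigeonhole produces only a repeating class along a path in the single-letter relation $\xrightarrow{i}$, which is \emph{not} a cycle in $\Gamma_{\mathcal{P}\mathcal{Q}}$. What is missing is precisely condition~(b): $h_v^D\,(h_w^D)^{-1}\in\mathcal{Q}$ whenever $v,w$ lie on the same level.

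The paper obtains~(b) by a second, sharper use of fractality. Given $v,w$ on the same level, level-transitivity provides $k\in G_\mathcal{P}$ with $k(w)=v$; fractality then provides $l\in\st_{G_\mathcal{P}}(w)$ with $l|_w^D=(k|_w^D)^{-1}$. Setting $g:=kl$, one has $g(w)=v$ and $g|_w^D=1$, so the conjugation identity collapses to
\[
(hgh^{-1})|_{h(w)}^D \;=\; (h|_v^D)\cdot 1\cdot (h|_w^D)^{-1}\;\in\;\mathcal{Q}.
\]
Once~(b) is in hand, $[h_v]$ depends only on the level of $v$, your edge-respecting labelling becomes legitimate, and the ray-plus-pigeonhole argument finishes as you wrote.
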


The definition of fractal and equivalences for a group of finite type to be fractal can be found in \cref{section: Fractality}. 

As a corollary of \cref{Theorem: finite type isomorphic conjugated iff gamma cycle} and using results of rigidity proved in \cite{NekrashevychBartholdi2006} and \cite{Fariña2025Boston}, we classify groups of finite type of a given depth and acting on a given tree:

\begin{Corollary}
We have the following bounds for the amount of isomorphic classes in different trees and with different depths:
\begin{enumerate}[(i)]
\item There are 5 isomorphic classes of groups of finite type in the binary tree with depth $2$.
\item There are between 16 and 23 isomorphic classes of groups of finite type in the binary tree with depth $3$.
\item Among the 32 topologically finitely generated groups of finite type in the binary tree of depth $4$, there are exactly 8 different isomorphic classes.
\item There are between 15 and 40 isomorphic classes in the ternary tree with depth $2$.
\item Among the 216 topologically finitely generated pro-3-groups of finite type in the ternary tree of depth $3$, there are exactly 12 different isomorphic classes.
\end{enumerate}
\label{Corollary: number of isomorphic classes}
\end{Corollary}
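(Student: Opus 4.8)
The plan is to combine the explicit enumerations of groups of finite type obtained earlier in the paper with \cref{Theorem: finite type isomorphic conjugated iff gamma cycle} and the rigidity results of \cite{NekrashevychBartholdi2006} and \cite{Fariña2025Boston}. For each of the five cases I would start from the corresponding finite list of minimal pattern subgroups: the $5$ patterns of depth $2$ on the binary tree, the depth-$3$ and depth-$4$ lists on the binary tree (the latter restricted to the $32$ topologically finitely generated ones), and the depth-$2$ and depth-$3$ lists on the ternary tree (the latter restricted to the $216$ topologically finitely generated pro-$3$ groups). For every unordered pair $\{\mathcal{P},\mathcal{Q}\}$ from a given list I would compute the set $\mathcal{S}_{\mathcal{P}\mathcal{Q}}$ of automorphisms of the truncated tree with $D$ levels conjugating $\mathcal{P}$ onto $\mathcal{Q}$, build the directed graph $\Gamma_{\mathcal{P}\mathcal{Q}}$, and test it for a cycle. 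By part (i) of \cref{Theorem: finite type isomorphic conjugated iff gamma cycle}, each such cycle yields a conjugacy, hence an isomorphism $G_\mathcal{P}\simeq G_\mathcal{Q}$, so collapsing patterns joined by a cycle gives an \emph{upper} bound for the number of isomorphism classes; this produces the values $5$, $23$, $8$, $40$ and $12$ respectively.

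For the matching lower bounds I would argue that the groups involved are fractal — automatic in the topologically finitely generated cases, and checked directly from the criteria of \cref{section: Fractality} in the small binary and ternary cases — so that part (ii) of \cref{Theorem: finite type isomorphic conjugated iff gamma cycle} applies and $G_\mathcal{P}$, $G_\mathcal{Q}$ are conjugate in $\Aut(T)$ \emph{exactly} when $\Gamma_{\mathcal{P}\mathcal{Q}}$ has a cycle; this determines the number of conjugacy classes. To upgrade non-conjugacy to non-isomorphism I would invoke the rigidity theorems of \cite{NekrashevychBartholdi2006} and \cite{Fariña2025Boston}: when they apply, every topological isomorphism between the relevant groups of finite type is implemented by conjugation in $\Aut(T)$, so the isomorphism classes coincide with the conjugacy classes and the two bounds meet, yielding the exact counts $5$ (binary, depth $2$), $8$ (binary, depth $4$) and $12$ (ternary, depth $3$).

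In the two remaining items the bounds need not coincide: for the binary tree of depth $3$ and the ternary tree of depth $2$ some of the groups of finite type escape the hypotheses of the available rigidity statements (and possibly of the fractality clause), so a priori distinct conjugacy classes could still be abstractly isomorphic. I would then obtain the lower ends $16$ and $15$ by separating classes with a genuinely isomorphism-invariant quantity, such as the abelianization, a small finite quotient, or the Hausdorff dimension. The main obstacle is precisely this last step — deciding abstract isomorphism rather than conjugacy for the groups where neither \cref{Theorem: finite type isomorphic conjugated iff gamma cycle}(ii) nor the cited rigidity results are in force; everything else reduces to a finite, though extensive, set of computations with the graphs $\Gamma_{\mathcal{P}\mathcal{Q}}$.
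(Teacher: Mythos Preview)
Your strategy broadly matches the paper's, but there are two genuine gaps and some smaller inaccuracies.

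\textbf{The finite/infinite split.} The paper does not apply the $\Gamma_{\mathcal{P}\mathcal{Q}}$ machinery to all minimal patterns. It first separates off those $\mathcal{P}$ for which $G_\mathcal{P}$ is \emph{finite}: by \cref{theorem: finite type finiteness condition} one then has $G_\mathcal{P}\simeq\mathcal{P}$ as abstract groups, and isomorphism among these is decided by the isomorphism type of the pattern group itself, not by conjugacy in $\Aut(T)$. Only the infinite groups are fed into the $\Gamma$ test. This matters for the upper bounds in (i), (ii), (iv): the numbers $5$, $23$, $40$ are obtained as $(\#\mathfrak{R}_f) + (\#\mathfrak{R}_\infty)$, where $\#\mathfrak{R}_f$ is the number of abstract isomorphism types among the finite patterns (e.g.\ $2$ for $(d,D)=(2,2)$, $4$ for $(2,3)$ and for $(3,2)$). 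Your blanket $\Gamma$ computation over all patterns would not produce these numbers directly, since two finite groups $G_\mathcal{P}\simeq G_\mathcal{Q}$ need not be detected by a cycle in $\Gamma_{\mathcal{P}\mathcal{Q}}$. (Incidentally, there are $6$ minimal patterns for $(d,D)=(2,2)$, not $5$.)

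\textbf{The lower bounds $16$ and $15$.} These do not come from computing invariants like abelianizations or Hausdorff dimension. The paper's argument is a pure count: among the infinite conjugacy classes, those that are $T$-rigid (via \cref{theorem: Jorge rigidity fractal} or \cref{theorem: finite type KP and rigidity}) are automatically pairwise non-isomorphic and non-isomorphic to anything else in the list; the remaining non-rigid conjugacy classes contribute at least one further isomorphism class, and for $(d,D)=(3,2)$ one also uses that being a pro-$3$ group is an isomorphism invariant to separate the $W_3$ classes from the rest. Concretely, for $(2,3)$ one gets $4+11+1=16$, and for $(3,2)$ one gets $4+4+6+1=15$. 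Your proposed route through ad hoc invariants might work in principle but is a different argument and you have not carried it out.

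\textbf{A smaller point for case (i).} Even there, one of the three infinite groups is \emph{not} $T$-rigid (it is the one with $\pi_2(G)\simeq C_2\times C_2$ in \cref{theorem: Jorge rigidity fractal}), so the exact count $5$ is not obtained by rigidity alone; one still needs the observation that a non-rigid class cannot be isomorphic to a rigid, non-conjugate one. Also, fractality of the topologically finitely generated groups in (iii) and (v) is verified computationally in the paper, not ``automatic''.
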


As part of \cref{Corollary: number of isomorphic classes}, the algorithm presented by Bondarenko and Samoilovich in \cite{Bondarenko2014} to find all minimal pattern subgroups of a certain depth acting on a certain tree will be improved to reach larger depths in bigger trees (see \cref{section: algorithms to find minimal pattern subgroups}).

\bigskip

By \cite{Sunic2006}, the closure of regular branch groups are groups of finite type, but, can we ensure with milder conditions whether an abstract group has as closure a group of finite type? can we explicitly give the depth and minimal pattern subgroup? The following theorem answer this question in the case of groups given by finite automata:

\begin{Theorem}
Let $\mathcal{A}$ be a finite automaton and $G(\mathcal{A})$, the group generated by $\mathcal{A}$.  Let $\mathcal{P}$ be a minimal pattern p-subgroup of depth $D$ such that its associated group of finite type $G_\mathcal{P}$ is topologically finitely generated. If the action on the first $D$ levels of $G(\mathcal{A})$ coincides with the action of $\mathcal{P}$, then $G_\mathcal{P}$ is the closure of $G(\mathcal{A})$.
\label{Theorem: closure automaton and finite type}
\end{Theorem}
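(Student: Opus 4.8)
The plan is to prove the two inclusions $\overline{G(\mathcal{A})}\subseteq G_\mathcal{P}$ and $G_\mathcal{P}\subseteq\overline{G(\mathcal{A})}$. Write $\pi_n\colon\Aut(T)\to\Aut(T_{[n]})$ for the truncation of the action to the first $n$ levels of $T$; since a closed subgroup of $\Aut(T)$ is the inverse limit of its images under the $\pi_n$, the inclusions amount to controlling the images $\pi_n\bigl(\overline{G(\mathcal{A})}\bigr)=\pi_n\bigl(G(\mathcal{A})\bigr)$ at every level. For $\overline{G(\mathcal{A})}\subseteq G_\mathcal{P}$ I would use that $G(\mathcal{A})$, being generated by a Mealy automaton, is self-similar: for every $g\in G(\mathcal{A})$ and every vertex $v$ the section $g_v$ again lies in $G(\mathcal{A})$, so the pattern of $g$ at $v$ — the action of $g_v$ on the first $D$ levels — lies in $\pi_D\bigl(G(\mathcal{A})\bigr)=\mathcal{P}$. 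As this holds at every $v$, the automorphism $g$ obeys the defining constraint of $G_\mathcal{P}$, hence $G(\mathcal{A})\subseteq G_\mathcal{P}$, and taking closures $\overline{G(\mathcal{A})}\subseteq G_\mathcal{P}$. Note also that $\overline{G(\mathcal{A})}$ is closed and self-similar (the section maps are continuous) with $\pi_D\bigl(\overline{G(\mathcal{A})}\bigr)=\mathcal{P}=\pi_D(G_\mathcal{P})$, the last equality because $\mathcal{P}$ is the minimal pattern subgroup of $G_\mathcal{P}$.

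This reduces the theorem to the following structural statement, which I would isolate as a lemma: \emph{if $G_\mathcal{P}$ is topologically finitely generated, then every closed self-similar subgroup $H\le G_\mathcal{P}$ with $\pi_D(H)=\mathcal{P}$ equals $G_\mathcal{P}$.} I would prove it by showing inductively that $H\,\St_{G_\mathcal{P}}(n)=G_\mathcal{P}$ for all $n\ge D$ — the case $n=D$ being the hypothesis — whence $H$ is dense and, being closed, $H=G_\mathcal{P}$. For the inductive step one invokes that a group of finite type is a regular branch group \cite{Grigorchuk2005}, over an open subgroup $K$: then $\St_{G_\mathcal{P}}(n)$ contains the product over the level-$n$ vertices $v$ of the rigid vertex subgroups $\rist_{G_\mathcal{P}}(v)$, each geometrically containing a copy of $K$, and these govern the congruence quotient $\St_{G_\mathcal{P}}(n)/\St_{G_\mathcal{P}}(n+1)$. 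Using the self-similarity of $H$ and $\pi_D(H)=\mathcal{P}$, one shows that $H$ contains dense subgroups of these rigid vertex subgroups past some level, and substituting such elements into $H\,\St_{G_\mathcal{P}}(n)=G_\mathcal{P}$ sharpens it to level $n+1$.

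The topological finite generation hypothesis is what makes this terminate at the whole group, and it is genuinely needed: without it the statement fails, since $\Aut(T)$ is a group of finite type of depth $1$ whose pattern is already realised on the first level by a finite automaton (for $d=2$, by the odometer) whose closure is merely $\ZZ_2$. One clean way to exploit it, available because $\mathcal{P}$ is a pattern $p$-subgroup and hence $G_\mathcal{P}$ is pro-$p$, is to route the conclusion through the Frattini subgroup $\Phi(G_\mathcal{P})=\overline{[G_\mathcal{P},G_\mathcal{P}]\,G_\mathcal{P}^{\,p}}$, which is open: one proves $\St_{G_\mathcal{P}}(D)\le\Phi(G_\mathcal{P})$ — equivalently, that the minimality of the pattern depth forces $G_\mathcal{P}$ and $\mathcal{P}=G_\mathcal{P}/\St_{G_\mathcal{P}}(D)$ to need the same number of topological generators — so that $H\,\St_{G_\mathcal{P}}(D)=G_\mathcal{P}$ yields $H=G_\mathcal{P}$ at once by the topological Nakayama lemma; the level-by-level induction above is then the more hands-on substitute for this last step.

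I expect the main obstacle to be the ``branch inheritance'' step: a priori $H=\overline{G(\mathcal{A})}$ is only self-similar, not a branch group and not even self-replicating, so one cannot directly produce an element of $H$ supported on a single subtree with a prescribed section, which is exactly what is needed to feed into the congruence quotients. The way around it has to use that $H$ attains the \emph{entire} pattern group $\mathcal{P}$ on the top $D$ levels, so that sections, products and commutators of elements of $H$ — all of which remain inside $H$ by self-similarity — can be assembled into the generators of the branching subgroup $K$ (and, in the $p$-group packaging, into commutators and $p$-th powers witnessing $\St_{G_\mathcal{P}}(D)\le\Phi(G_\mathcal{P})$); it is precisely at this point that the topological finite generation of $G_\mathcal{P}$, equivalently — via the analysis of Bondarenko and Samoilovych \cite{Bondarenko2014} — the existence of a finite generating set of $\mathcal{P}$ admitting a compatible self-similar family of lifts into $G_\mathcal{P}$, has to be brought to bear.
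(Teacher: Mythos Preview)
Your overall architecture is right and coincides with the paper's: establish $G(\mathcal{A})\subseteq G_\mathcal{P}$ by self-similarity of the automaton group, then argue that any subgroup of $G_\mathcal{P}$ surjecting onto $\mathcal{P}$ under $\pi_D$ is dense. Your ``clean'' route through the Frattini subgroup is exactly what the paper does, just packaged differently. The paper's two-line argument runs: topological finite generation of $G_\mathcal{P}$ together with the pro-$p$ hypothesis gives, by the contrapositive of Bondarenko--Samoilovych's obstruction (\cref{proposition: finite type not tfg}), the inclusion $\pi_n(\St_{G_\mathcal{P}}(n-1))\le\pi_n(G_\mathcal{P})'$ for every $n\ge D$; feeding this into the pro-nilpotent lifting lemma (\cref{theorem: tfg pro-nilpotent}) yields that \emph{any} lift to $G_\mathcal{P}$ of a generating set of $\mathcal{P}$ topologically generates $G_\mathcal{P}$. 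Since the automaton states already lie in $G_\mathcal{P}$ and project onto $\mathcal{P}$, one is done. This is precisely your statement $\St_{G_\mathcal{P}}(D)\le\Phi(G_\mathcal{P})$, or equivalently $g(G_\mathcal{P})=g(\mathcal{P})$.

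Two corrections to your exposition. First, the reason $g(G_\mathcal{P})=g(\mathcal{P})$ is \emph{not} ``minimality of the pattern depth''; $D$ need not be minimal, and minimality of $\mathcal{P}$ as a pattern subgroup only ensures $\pi_D(G_\mathcal{P})=\mathcal{P}$. The equality of generator numbers is a genuine consequence of the tfg hypothesis via the chain above, and you should cite it as such rather than as a depth-minimality fact. Second, your level-by-level induction and the ``branch inheritance'' obstacle you worry about are unnecessary detours: note that the paper's argument never uses self-similarity of $\overline{G(\mathcal{A})}$ for the reverse inclusion --- self-similarity is spent entirely on getting $G(\mathcal{A})\subseteq G_\mathcal{P}$. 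Once the states sit inside $G_\mathcal{P}$ and project onto generators of $\mathcal{P}$, the Frattini-type lifting finishes the proof with no need to manufacture elements supported on single subtrees. So your structural lemma is correct but stronger than required, and the obstacle you flag does not actually arise.
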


Even though \cref{Theorem: closure automaton and finite type} gives no information whether the automata group is regular branch or not, it is possible to use the group of finite type that coincides with its closure to answer this question. In \cref{section: The maximal branching subgroup}, an algorithm and examples are presented. 

As the Hausdorff dimension of a group is the same as the Hausdorff dimension of its closure, \cref{Theorem: closure automaton and finite type} will be a very good tool to calculate the Hausdorff dimension of automata groups. It will also be a good tool to find automata groups whose closures are given groups of finite type. One corollary is the following:

\begin{Corollary}
\label{Corollary: group closure Grig group}
There exists a $4$-state automaton such that the group generated by this automaton contains the first Grigorchuk group as a subgroup of infinite index and such that its closure equals the closure of the first Grigorchuk group.  
\end{Corollary}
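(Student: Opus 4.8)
The plan is to identify $\overline{\Gamma}$, the closure of the first Grigorchuk group $\Gamma=\langle a,b,c,d\rangle$ acting on the binary tree $T$, with the group of finite type attached to a suitable pattern group, and then to feed a cleverly chosen $4$-state automaton into \cref{Theorem: closure automaton and finite type}. Since $\Gamma$ is finitely generated and regular branch, \cite{Sunic2006} gives $\overline{\Gamma}=G_\mathcal{P}$ for a minimal pattern subgroup $\mathcal{P}$ of some depth $D$; because $\Gamma$ is a finitely generated dense $2$-subgroup, $\mathcal{P}$ is a minimal pattern $2$-subgroup and $G_\mathcal{P}=\overline{\Gamma}$ is topologically finitely generated. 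Thus $\mathcal{P}$ already meets the hypotheses of \cref{Theorem: closure automaton and finite type}, and it remains only to produce a $4$-state automaton $\mathcal{A}$ with (a) $\Gamma\leq G(\mathcal{A})$ and (b) the action of $G(\mathcal{A})$ on the first $D$ levels of $T$ equal to that of $\mathcal{P}$. The point of routing through the theorem is that (b) is a \emph{finite} verification, whereas checking directly that the generators of $\mathcal{A}$ lie in $\overline{\Gamma}$ would require controlling infinitely many levels.

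To build $\mathcal{A}$ I would work explicitly over the alphabet $\{0,1\}$. The standard self-similar presentation of $\Gamma$ uses five states $\{\mathrm{id},a,b,c,d\}$, with $a=(\mathrm{id},\mathrm{id})\sigma$, $b=(a,c)$, $c=(a,d)$, $d=(\mathrm{id},b)$; the fifth state is forced by the two occurrences of $\mathrm{id}$. The idea is to eliminate the identity state by introducing one fourth state $e$ and, if needed, slightly modifying the recursions of the other three states, so that (i) the four states of $\mathcal{A}$ generate a group containing $a,b,c$, whence $\Gamma\leq G(\mathcal{A})$, and (ii) each state acts on the first $D$ levels of $T$ as an element of $\mathcal{P}$. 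Condition (b) then holds automatically: $G(\mathcal{A})$ induces at least the action of $\mathcal{P}$ on the first $D$ levels because the dense subgroup $\Gamma$ does, and at most that action by (ii). Applying \cref{Theorem: closure automaton and finite type} yields $\overline{G(\mathcal{A})}=G_\mathcal{P}=\overline{\Gamma}$.

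It remains to force $[G(\mathcal{A}):\Gamma]=\infty$. The cleanest device is to arrange that some word in the states of $\mathcal{A}$ has infinite order: since $\Gamma$ is an infinite torsion group, a pigeonhole argument over the finitely many cosets shows that any finite-index overgroup of $\Gamma$ is again torsion, so a single element of infinite order in $G(\mathcal{A})$ already forces infinite index. Alternatively, keeping every state of finite order, one can combine the (easily checked) inequality $G(\mathcal{A})\neq\Gamma$ with rigidity properties of $\Gamma$ inside $\Aut(T)$ as in \cite{NekrashevychBartholdi2006, Fariña2025Boston}, a finite-index overgroup of $\Gamma$ lying in its commensurator and hence being equal to $\Gamma$. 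Either way the corollary follows.

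The main obstacle is the explicit choice of the fourth state $e$ in the second paragraph: one must exhibit a single automorphism that simultaneously substitutes for the identity state well enough to recover $\Gamma$ from the states of $\mathcal{A}$ --- under the rigid constraint that a Mealy recursion sends states to states, not to products of states --- does not enlarge the action on the first $D$ levels beyond $\mathcal{P}$, and produces an element of infinite order together with the other states. Checking these properties for a proposed $e$ is routine and finite; producing a working $e$ is the creative step, and that is where I expect the real work to lie.
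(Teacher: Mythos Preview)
Your high-level strategy is exactly the paper's: realise $\overline{\Gamma}$ as a topologically finitely generated group of finite type $G_\mathcal{P}$ (depth $D=4$, pattern a $2$-group), exhibit a $4$-state automaton $\mathcal{A}$ with $\Gamma\le G(\mathcal{A})$ and $\pi_D(G(\mathcal{A}))=\mathcal{P}$, invoke \cref{Theorem: closure automaton and finite type}, and then force infinite index by producing an element of infinite order in $G(\mathcal{A})$ (your pigeonhole argument is the one the paper uses).

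The gap is that you stop precisely where the content is: you do not give $\mathcal{A}$. The paper does, and its construction is \emph{not} of the shape you sketch. Rather than keeping three of Grigorchuk's states and replacing the identity by a single extra state $e$, the paper takes four entirely new, non-trivial states
\[
a=(a,c)\sigma,\qquad b=(d,d),\qquad c=(d,b),\qquad d=(a,c),
\]
and recovers the Grigorchuk generators as \emph{quotients}: $a'=ad^{-1}$, $b'=db^{-1}$, $c'=cd^{-1}$, $d'=bc^{-1}$ satisfy exactly the Grigorchuk recursions, so $\Gamma\le G(\mathcal{A})$. The equality $\pi_4(G(\mathcal{A}))=\mathcal{P}$ is a finite GAP check, and then the theorem gives $\overline{G(\mathcal{A})}=\overline{\Gamma}$. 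For infinite order the paper exhibits $g=bc^{-2}b^{-1}d^2b^{-1}cda^{-1}$ with $g^8\in\St_G(6)$ and $g^8|_{1^6}=g$, whence $g$ has infinite order; your coset argument then finishes. So your plan is sound, but the heuristic ``replace the identity state by one new state $e$'' undersells what is needed: the working example changes all four states and lets the Grigorchuk generators appear only as words, not as states.

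One further remark: your alternative route to infinite index via the commensurator is not correct as stated. A proper finite-index overgroup $H$ of $\Gamma$ certainly lies in the commensurator of $\Gamma$, but that does not force $H=\Gamma$; the commensurator of $\Gamma$ in $\Aut(T)$ is strictly larger than $\Gamma$. Stick with the infinite-order element.
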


Combining \cref{Theorem: finite type isomorphic conjugated iff gamma cycle} with \cref{Theorem: closure automaton and finite type}, we also obtain 

\begin{Corollary}
The closure of the group $\IMG(z^2+i)$ and the closure of the third Grigorchuk group are isomorphic. 
\label{Corollary: closure IMG 3rd Grig group}
\end{Corollary}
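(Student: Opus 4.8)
The plan is to present both $\IMG(z^2+i)$ and the third Grigorchuk group as groups generated by explicit finite Mealy automata on the rooted binary tree $T$, to use \cref{Theorem: closure automaton and finite type} to identify their closures with concrete groups of finite type $G_\mathcal{P}$ and $G_\mathcal{Q}$, and then to apply \cref{Theorem: finite type isomorphic conjugated iff gamma cycle} once a cycle in the graph $\Gamma_{\mathcal{P}\mathcal{Q}}$ has been exhibited.

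First I would recall that $\IMG(z^2+i)$ is generated by an explicit finite automaton on the alphabet $\{0,1\}$ (see \cite{GrigSavchukSunic2007}), and that the third Grigorchuk group, which I denote $G_3$, is generated by a known finite self-similar automaton on $\{0,1\}$; both are regular branch subgroups of $\Aut(T)$, so by \cite{Sunic2006} their closures are groups of finite type. Next, for a suitable depth $D$, I would compute the images $\mathcal{P}$ and $\mathcal{Q}$ of the actions of $\IMG(z^2+i)$ and of $G_3$ on the first $D$ levels of $T$, and check that these are minimal pattern $2$-subgroups whose associated groups of finite type are topologically finitely generated (either invoking the criteria of \cite{Bondarenko2010, Bondarenko2014, Sunic2010}, or directly from finiteness of the two automata). \cref{Theorem: closure automaton and finite type} then gives $\overline{\IMG(z^2+i)} = G_\mathcal{P}$ and $\overline{G_3} = G_\mathcal{Q}$, so it suffices to prove that $G_\mathcal{P}$ and $G_\mathcal{Q}$ are conjugate in $\Aut(T)$.

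After arranging, via the normalization in \cref{section: classification of group of finite type up to isomorphism}, that $\mathcal{P}$ and $\mathcal{Q}$ have the same depth, I would check that they are already conjugate inside the finite automorphism group of the truncated tree with $D$ levels (so that $\mathcal{S}_{\mathcal{P}\mathcal{Q}} \neq \emptyset$), form the finite directed graph $\Gamma_{\mathcal{P}\mathcal{Q}}$ on the equivalence classes of $\mathcal{S}_{\mathcal{P}\mathcal{Q}}$, and search it for a cycle. Since $\IMG(z^2+i)$ and the Grigorchuk groups are fractal, part (ii) of \cref{Theorem: finite type isomorphic conjugated iff gamma cycle} is available, but for the corollary only part (i) is needed: the existence of a cycle in $\Gamma_{\mathcal{P}\mathcal{Q}}$ forces $G_\mathcal{P}$ and $G_\mathcal{Q}$ to be conjugate in $\Aut(T)$, hence isomorphic as topological groups, which is the assertion.

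The main obstacle is this middle step: one must choose $D$ large enough that the action of each automaton group on the first $D$ levels has stabilized to the action of a topologically finitely generated minimal pattern subgroup, as \cref{Theorem: closure automaton and finite type} requires, and then extract $\mathcal{P}$, $\mathcal{Q}$ and their common depth correctly from the two automata. Once this finite data is in place, locating the cycle in $\Gamma_{\mathcal{P}\mathcal{Q}}$ is a bounded computation and the conclusion follows at once.
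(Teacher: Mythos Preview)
Your plan is correct and matches the paper's approach exactly: the paper identifies $\overline{\IMG(z^2+i)} = G_{\mathcal{P}_{111}}$ and $\overline{G_3} = G_{\mathcal{P}_{121}}$ (both of depth $D=4$ in the binary tree) via \cref{Theorem: closure automaton and finite type}, draws the two-vertex graph $\Gamma_{\mathcal{P}_{111}\mathcal{P}_{121}}$, exhibits a cycle (indeed loops at both vertices), and invokes \cref{Theorem: finite type isomorphic conjugated iff gamma cycle}. The only thing missing from your outline is the concrete execution with $D=4$ and the explicit graph, which the paper supplies.
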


It is known that the third Grigorchuk group has the congruence subgroup property (the proof follows verbatim \cite[Proposition 10]{Grigorchuk2000}), so its profinite completion is isomorphic to its closure, but it was recently proved in \cite{Radi2025IMG} that $\IMG(z^2+i)$ does not have the congruence subgroup property, so its profinite completion is not isomorphic to its closure. In virtue of \cref{Corollary: closure IMG 3rd Grig group}, this implies that their profinite completions are not isomorphic and therefore, the third Grigorchuk group and $\IMG(z^2+i)$ are not isomorphic, even though their closures are.  

\subsection*{Organization}
In \cref{section: preliminaries} the necessary background is introduced. In \cref{section: algorithms to find minimal pattern subgroups}, improvements of the algorithms to calculate all the minimal pattern subgroups are presented. In \cref{section: Hausdorff dimension fractality and torsion}, results about the Hausdorff dimension, as well as necessary and sufficient conditions to prove when groups of finite type are fractal, strongly fractal and super strongly fractal are established. Finally, a result about when the groups are torsion is given, showing with a family of groups of finite type that they can even be torsion-free. In \cref{section: Topological finite generation just infiniteness and strong completeness}, \cref{Theorem: finite type equivalences} and all its corollaries are proven. In \cref{section: Automata groups and groups of finite type}, \cref{Theorem: closure automaton and finite type} and \cref{Corollary: group closure Grig group} are proven. The \cref{section: classification of group of finite type up to isomorphism} is devoted to prove \cref{Theorem: finite type isomorphic conjugated iff gamma cycle}. In \cref{section: analysis of different cases}, \cref{Corollary: number of isomorphic classes} and \cref{Corollary: closure IMG 3rd Grig group} are proven. In \cref{section: The maximal branching subgroup}, properties of the maximal branching subgroup of a group of finite type are given, as well as an algorithm to prove whether an abstract group whose closure is a group of finite type is regular branch. Finally, in \cref{section: List of automata groups whose closure is a group of finite type}, examples of automata groups whose closure match with the topologically finitely generated groups of finite type studied along the article are listed.

\subsection*{Acknowledgements}

The author would like to thank R. Grigorchuk, D. Savchuk and J. Fariña-Asategui for the valuable discussions held and remarks. 

\section{Preliminaries}
\label{section: preliminaries}

\subsection{About general notation} 

Given $S$ a set, we will write $\#S$ to denote its cardinality. In the case that $S$ is a subgroup, we write $\abs{S}$ instead. If $G$ is a group, and $H$ is a subgroup of finite index, we will write this as $H \leq_f G$. If furthermore, $H$ is normal, we will write it as $H \lhd_f G$. If $x,y$ are two elements of $G$, the commutator element is defined as $[x,y] := xyx^{-1}y^{-1}$ and the commutator subgroup of $G$ is denoted by $G'$. The abelianization of $G$ will be denoted by $G^{ab} := G/G'$. If the group $G$ is a topological group, we will write $G^{\overline{ab}} := G/\overline{G'}$. The $n$-th derived series will be denoted $G^{(n)}$ and the lower central series $\gamma_k(G)$. Finally, $C_n$ denotes the cyclic group of order $n$. 

\subsection{Groups acting on rooted trees}
\label{subsection: Groups acting on rooted trees}

A \textit{spherically homogeneous rooted tree} $T$ is an infinite tree with a root $\emptyset$, where the vertices at the same distance from the root all have the same number of descendants. The set of vertices at a distance exactly $n \geq 1$ from the root form the $n$-th level of $T$ and will be denoted $\mathcal{L}_n$. The vertices whose distance is at most $n$ from the root form the $n$-th truncated tree $T^n$. If all the vertices of the tree have the same number of descendants $d$, the tree $T$ will be called \textit{$d$-regular}. In this case, we  can describe the vertices as finite words on $X = \set{1,\dots,d}$. 

The group of automorphisms of $T$, denoted $\Aut(T)$, is the group of bijective functions from $T$ to $T$ that preserve the root and adjacency between vertices. Then, $\Aut(T)$ acts on $\mathcal{L}_n$ for all $n \ge 1$. For any vertex $v$ in $T$, the subtree rooted at $v$ which is again a spherically homogeneous infinite rooted tree, is denoted $T_v$. Notice also that if $v$ and $w$ are vertices on the same level, then $T_v$ and $T_w$ are isomorphic and that if $T$ is $d$-regular then $T_v$ is also $d$-regular. In this article, the action of $\Aut(T)$ on $T$ will be on the left, so if $g,h \in \Aut(T)$ and $v$ is a vertex in $T$, then $(gh)(v) = g(h(v))$.

Given a vertex $v$ in $T$, we write $\st(v)$ for the \textit{stabilizer of the vertex $v$}, namely, the subgroup of the elements $g \in \Aut(T)$ such that $g(v) = v$. Given $n \ge 1$, we write $\St(n) := \bigcap_{v\in \mathcal{L}_n}\st(v)$, and we call it the \textit{stabilizer of level $n$}. The subgroup $\mathrm{St}(n)$ is a normal subgroup of finite index in $\mathrm{Aut}(T)$.

We can make $\Aut(T)$ a topological group by declaring $\{\St(n)\}_{n\ge 1}$ to be a base of neighborhoods of the identity. This topology is called the \textit{congruence topology} and with it, $\Aut(T)$ is homeomorphic to a profinite group.

Let $v$ be a vertex in $T$, $1\le n \le \infty$ and $g \in \Aut(T)$. By preservation of adjacency, we have $g(T_v) = T_{g(v)}$. The \textit{section} of $g$ at $v$, denoted $g|_v$, is the map obtained by restricting the action of $g$ to $T_v$. Truncating the action of $g|_v$ to the first $n$ levels, we obtain $g|_v^n \in \Aut(T_v^n)$ that satisfies that $$g(vw)=g(v)g|_v^n(w)$$ for all $w$ vertex in $T_v^n$. For $n=1$, the permutation $g|_v^1$ is known as the \textit{label} of $g$ at $v$. The collection of all labels $\set{g|_v^1: v \text{ vertex in } T}$ is called the \textit{portrait} of $g$ and uniquely determines $g$. For every $n\ge 1$, we define the map $\pi_n:\Aut(T)\to \Aut(T^n)$ given by $\pi_n(g):=g|_\emptyset^n $. The following three properties are satisfied: 
\begin{align}
(gh)|_v^n = g|_{h(v)}^n h|_v^n, \qquad (g^{-1})|_v^n = (g|_{g^{-1}(v)}^n)^{-1} \qquad \text{ and } \qquad g|_{vw} = (g|_v)|_w.
\label{equation: properties sections}
\end{align}

Sections allow us to have the following isomorphism for any natural number $n\ge 1$: 
\begin{align}
\begin{split}
\psi_n:\Aut(T) &\to \big(\Aut(T_{v_1}) \times \dotsb \times \Aut(T_{v_{N_n}})\big) \rtimes \Aut(T^n) \\ g &\mapsto (g|_{v_1},\dotsc,g|_{v_{N_n}}) \pi_n(g),
\end{split}
\label{equation: Aut and semidirect product}
\end{align}
where $v_1,\dotsc,v_{N_n}$ are all the distinct vertices in $\mathcal{L}_n$ labeled from left to right.

To describe the elements of $\Aut(T)$, we will use the isomorphism $\psi_1$ and write $g=(g_1,\dotsc,g_{\abs{\mathcal{L}_1}})\pi_1(g)$ by a slight abuse of notation. Sometimes, we will also use the isomorphism $\psi_n$ to represent the elements of $\St(n)$ as tuples $g = (g_1, \dots, g_{\abs{\mathcal{L}_n}})_n$, where the subscript $n$ at the end of the tuple is used to indicate that the action of $g$ on the first $n$ levels is trivial. 

Given $v$ a vertex in $T$, define also the map
\begin{equation}
\varphi_v: \Aut(T) \rightarrow \Aut(T_v) \text{, given by } g \mapsto g|_v.
\label{equation: map varphi v}
\end{equation}
Note that by \cref{equation: properties sections}, the restriction of $\varphi_v$ to $\st(v)$ is a group homomorphism.

Let us fix a subgroup $G \le \Aut(T)$. We define vertex stabilizers and level stabilizers as $\st_G(v) := \st(v) \cap G$ and $\St_G(n) := \St(n) \cap G$ respectively, for any $v$ vertex in $T$ and $n \ge 1$. We say that a group $G \leq \Aut(T)$ is \textit{level-transitive} if the action of $G$ on each level $\mathcal{L}_n$ is transitive for all $n \ge 1$. Notice that if $G$ is level-transitive, it must be infinite. 

The following is a well-known result of level-transitivity:

\begin{Lemma}
Let $T$ be a spherically homogeneous tree and $G \leq \Aut(T)$. Then $G$ is level-transitive if and only if there exists a sequence of vertices $\set{v_n}_{n \in \NN}$ with $v_n \in \mathcal{L}_n$ such that $\st_G(v_n)$ acts transitive on the immediate descendants of $v_n$. 
\label{lemma: level transitivity equivalence}
\end{Lemma}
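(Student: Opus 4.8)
The plan is to prove both directions of the equivalence in \cref{lemma: level transitivity equivalence}, with the forward direction being essentially immediate and the reverse direction requiring a diagonal/compactness-style argument to glue local transitivity into global level-transitivity.

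For the forward direction, suppose $G$ is level-transitive. Fix any level $n$ and any vertex $v_n \in \mathcal{L}_n$; I want to show $\st_G(v_n)$ acts transitively on the immediate descendants of $v_n$. If $v_n w$ and $v_n w'$ are two such descendants (with $w, w' \in \mathcal{L}_1(T_{v_n})$), then $v_n w$ and $v_n w'$ both lie in $\mathcal{L}_{n+1}$, so by transitivity of $G$ on $\mathcal{L}_{n+1}$ there is $g \in G$ with $g(v_n w) = v_n w'$. Such a $g$ need not fix $v_n$, so the real point is to choose the sequence $\set{v_n}$ consistently: I would build it recursively. Set $v_0 = \emptyset$. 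Given $v_n$, there are finitely many immediate descendants $v_n 1, \dots, v_n d$. I claim that for at least one choice of descendant $v_{n+1}$, the stabilizer $\st_G(v_n)$ acts transitively on the descendants of $v_n$ — actually this is automatic once $G$ is level-transitive, because level-transitivity on $\mathcal{L}_{n+1}$ restricted to the fiber above $v_n$ together with transitivity on $\mathcal{L}_n$ gives, via an orbit-counting / transitivity-of-the-action-on-a-block argument, that $\st_G(v_n)$ is transitive on the children of $v_n$. More carefully: the orbit of $v_n w$ under $G$ is all of $\mathcal{L}_{n+1}$; intersecting with the set of children of $v_n$ and using that $G$ permutes the blocks $\{\text{children of } u : u \in \mathcal{L}_n\}$ transitively, a standard block-system argument shows the setwise stabilizer of the block (which is $\st_G(v_n)$) acts transitively on the block. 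So the forward direction needs only this block-argument, and the sequence $\set{v_n}$ may be taken to be any descending ray, e.g. $v_n = 11\cdots1$.

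For the reverse direction, suppose such a sequence $\set{v_n}_{n \in \NN}$ exists. I want to show $G$ is transitive on each $\mathcal{L}_n$. Fix $n$ and two vertices $u, u' \in \mathcal{L}_n$. The strategy is to move both $u$ and $u'$ onto the distinguished vertex $v_n$ using elements of $G$, proceeding one level at a time from the root down. Write $u = a_1 a_2 \cdots a_n$. Since $\st_G(v_0) = \st_G(\emptyset) = G$ acts transitively on $\mathcal{L}_1 = \{$children of $\emptyset\}$, there is $g_1 \in G$ with $g_1(a_1) = v_1$ (here $v_1$ is the first vertex of the distinguished ray). Then $g_1(u)$ is a vertex below $v_1$, say $g_1(u) = v_1 b_2 \cdots b_n$. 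Now use that $\st_G(v_1)$ acts transitively on the children of $v_1$: there is $g_2 \in \st_G(v_1) \le G$ sending the child $v_1 b_2$ to $v_2$; then $g_2 g_1(u)$ lies below $v_2$. Iterating, after $n$ steps I obtain $h_u := g_n \cdots g_1 \in G$ with $h_u(u) = v_n$. The only subtlety is checking at step $k$ that $g_k$ can be taken in $\st_G(v_{k-1})$ while also fixing the partial progress $v_1, \dots, v_{k-1}$ — but since $v_{k-1}$ determines its ancestors $v_1, \dots, v_{k-2}$, an element fixing $v_{k-1}$ automatically fixes all of them, so this is free. Doing the same for $u'$ gives $h_{u'} \in G$ with $h_{u'}(u') = v_n$, and then $h_{u'}^{-1} h_u \in G$ sends $u$ to $u'$. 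This proves transitivity on $\mathcal{L}_n$, and since $n$ was arbitrary, $G$ is level-transitive.

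I expect the main obstacle to be the bookkeeping in the reverse direction — specifically, making precise at each inductive step that the chosen element lies in the correct vertex stabilizer and that the "partial alignment" of $u$ with the ray $v_0, v_1, v_2, \dots$ is preserved. The key facts that make this routine rather than hard are: (a) a vertex on level $k$ has a unique ancestor on each lower level, so fixing a deep vertex fixes the whole ancestral ray, and (b) the hypothesis gives transitivity of $\st_G(v_{k-1})$ on the children of $v_{k-1}$, which is exactly what is needed to push the $k$-th coordinate of $u$ into alignment. No compactness or profinite machinery is actually required for a fixed level $n$; everything is finite-step. One should also remark that the statement is symmetric enough that the particular ray $\set{v_n}$ in the reverse direction is given by hypothesis, whereas in the forward direction we are free to construct it, and the two need not be "the same" ray — but that is fine, since level-transitivity is a property of $G$, not of any chosen ray.
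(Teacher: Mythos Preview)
The paper states this lemma as well-known and provides no proof, so there is nothing to compare against; I evaluate your argument on its own.

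Your forward direction is correct: the children of each vertex $v \in \mathcal{L}_n$ form a block for the transitive $G$-action on $\mathcal{L}_{n+1}$, and the setwise stabiliser of a block in a transitive action is always transitive on that block. Hence $\st_G(v)$ is transitive on the children of $v$ for \emph{every} $v$, and any sequence works.

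Your reverse direction has a genuine gap. Throughout you assume that $v_k$ is a child of $v_{k-1}$ (so that the $v_n$ form a descending ray), but the hypothesis only furnishes \emph{some} vertex $v_n \in \mathcal{L}_n$ at each level, with no adjacency relation between consecutive terms. The step ``there is $g_2 \in \st_G(v_1)$ sending the child $v_1 b_2$ to $v_2$'' fails unless $v_2$ happens to be a child of $v_1$, since elements of $\st_G(v_1)$ can only permute the children of $v_1$. Likewise, your remark that ``$v_{k-1}$ determines its ancestors $v_1,\dots,v_{k-2}$'' presupposes that these \emph{are} its ancestors.

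The fix is to argue by induction on $n$ that $G$ is transitive on $\mathcal{L}_n$. For the inductive step, take $u,u' \in \mathcal{L}_{n+1}$ with parents $w,w' \in \mathcal{L}_n$; by the inductive hypothesis choose $g,g' \in G$ with $g(w) = v_n = g'(w')$. Then $g(u)$ and $g'(u')$ are both children of $v_n$, so some $h \in \st_G(v_n)$ sends one to the other, and $(g')^{-1}hg$ sends $u$ to $u'$. This never requires the $v_n$ to be nested. (The base case needs $v_0 = \emptyset$ to be part of the sequence, i.e.\ that $G$ is already transitive on $\mathcal{L}_1$; without this the statement is actually false, as the example $G = \st_{\Aut(T)}(x)$ for a single first-level vertex $x$ shows.)
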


Notice that since $\Aut(T)$ is a profinite group, if $G$ is a closed subgroup of $\Aut(T)$, then $G$ has a unique normalized Haar measure $\mu$.

\subsection{Branch groups}

Let $T$ be a spherically homogeneous rooted tree, a subgroup $G \leq \Aut(T)$, a vertex $v$ in $T$ and $n \in \NN$. We define the \textit{rigid vertex stabilizer} of $v$ (denoted by $\rist_G(v)$) to the subgroup of $G$ consisting of automorphisms which fix every vertex not in $T_v$. We define the \textbf{$n$-th rigid level stabilizer} and denote it $\RiSt_G(n)$ to the subgroup generated by the subgroups $\rist_G(v)$ with $v \in \mathcal{L}_n$. It is well-known that $\RiSt_G(n)$ is a normal subgroup of $G$ and it is isomorphic to $\prod_{v \in \mathcal{L}_n} \rist(v)$.

We say that a group $G \leq \Aut(T)$ is \textit{weakly branch} if $\RiSt_G(n) \neq 1$ for all $n \in \NN$. The following result, due to Abért, is a corollary from \cite[Corollary 1.4]{Abert2003}:

\begin{theorem}
\label{theorem: torsion measure weakly branch}
Let $G$ be a closed weakly branch group and $\mu$ its Haar measure. Then $$\mu(\set{g \in G: \abs{g} < \infty} = 0.$$
\end{theorem}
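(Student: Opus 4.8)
The plan is to deduce the statement from \cite[Corollary 1.4]{Abert2003}. For $n\ge 1$ put $T_n:=\{g\in G:g^n=1\}$; since $g\mapsto g^n$ is continuous and $\{1\}$ is closed, each $T_n$ is closed, hence $\mu$-measurable, and the set of torsion elements of $G$ is the countable union $\bigcup_{n\ge1}T_n$. Thus it suffices to show $\mu(T_n)=0$ for each fixed $n$ --- exactly the vanishing of the Haar measure of the solution set of the law $x^n=1$, which is what Abért's result provides. The first step is then to check that a closed weakly branch $G\le\Aut(T)$, viewed as a permutation group on the vertices of $T$, satisfies the non-degeneracy hypothesis of \cite[Corollary 1.4]{Abert2003}. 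This is essentially formal: $\RiSt_G(m)\ne1$ for all $m$, together with the observation that $\rist_G(v)\ne1$ implies $\rist_G(u)\ne1$ for every ancestor $u$ of $v$, yields via König's lemma a ray $u_0\subsetneq u_1\subsetneq\cdots$ with $u_k\in\mathcal L_k$ and $\rist_G(u_k)\ne1$ for all $k$ (so no level-transitivity assumption is needed); more importantly, it says that below every vertex there are, at arbitrarily large depth, further vertices with nontrivial rigid stabilizer --- the abundance of ``independently movable subtrees'' that Abért's argument exploits.

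Granting this verification, \cite[Corollary 1.4]{Abert2003} gives $\mu(T_n)=0$ for every $n$, and the first paragraph concludes. For orientation I would also spell out the mechanism. Equip $(G,\mu)$ with the congruence filtration $\mathcal F_m:=\sigma(\pi_m)$; the maps $\pi_m$ separate points, so $\bigvee_m\mathcal F_m$ is the full Borel $\sigma$-algebra, and Lévy's zero--one law gives $\mu(T_n\mid\mathcal F_m)\to\mathbf 1_{T_n}$ $\mu$-a.e. Now, conditionally on the action on $T^m$, resampling a Haar-random $g$ by an independent element of $\RiSt_G(m)=\prod_{v\in\mathcal L_m}\rist_G(v)$ perturbs the sections $g|_v$ independently across the pairwise disjoint subtrees $T_v$, $v\in\mathcal L_m$; the relation $g^n=1$ becomes a family of constraints on these sections (for each $g$-orbit $O$ on $\mathcal L_m$ the product of the sections of $g$ around $O$ must have order dividing $n/\abs{O}$, together with further constraints inside each $T_v$), and the genuinely independent freedom inside the nontrivial $\rist_G(v)$ lets one violate such a constraint with probability bounded away from $0$. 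Iterating over $m$ should drive the conditional probability of $T_n$ to $0$, whence $\mathbf 1_{T_n}<1$ $\mu$-a.e.\ and $\mu(T_n)=0$.

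The crux --- and the reason the result is attributed to Abért rather than proved here from scratch --- is to make that last step quantitative and uniform, i.e.\ to show that $\mu(T_n\mid\mathcal F_m)$ tends to $0$ (or at least stays bounded away from $1$) $\mu$-a.e.\ as $m\to\infty$. A single level does not suffice: if every $\rist_G(v)$ with $v\in\mathcal L_m$ has exponent dividing $n$, then $\RiSt_G(m)$ --- a nontrivial subgroup --- is entirely contained in $T_n$, so nothing visible at level $m$ alone forces non-torsion. The substance of the argument is to show that such degeneracies cannot persist as $m\to\infty$ and to convert their failure into an honest contraction estimate, running the computation over many levels simultaneously and using that the Haar measures on $\rist_G(v)$ for incomparable $v$ are independent, so that the per-level gains multiply. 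Carrying this out is precisely the content of \cite[Corollary 1.4]{Abert2003}, and it is the step I would expect to be the real work.
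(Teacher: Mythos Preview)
Your proposal is correct and matches the paper's treatment: the paper does not give an independent proof but simply records the theorem as a corollary of \cite[Corollary 1.4]{Abert2003}, which is precisely your approach. Your explicit verification that a closed weakly branch group meets Ab\'ert's hypothesis, and your sketch of the martingale/independence mechanism, go beyond what the paper itself supplies.
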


For branch groups, the definition often used is that the group must be level-transitive and $\RiSt_G(n) \leq_f G$ for all $n \in \NN$. However, for the purpose of this article, we will need a more general definition also given by Grigorchuk in \cite[Definition 1]{Grigorchuk2000}. 

\begin{Definition}
A group $G$ is called \textit{branch} if there exist a spherically homogeneous infinite rooted tree $T$ such that $G \hookrightarrow \Aut(T)$ is level-transitive and there exists a family of pairs $\set{(H_n, L_n)}_{n = 1}^{\infty}$ such that for all $n \in \NN$,

\begin{enumerate}[(i)]
\item $H_n \leq \St(n)$, 
\item $H_n \lhd_f G$, 
\item $H_n \simeq \prod_{\mathcal{L}_n} L_n$, where each copy of $L_n$ acts non-trivially only on one vertex of $\mathcal{L}_n$. 
\end{enumerate}
\label{definition: branch group}
\end{Definition}

The family of pairs is called a \textit{branch structure} for $G$, and it is not unique. In the case where $G$ is a profinite group, we say that $G$ is a \textit{branch group} if there exists an embedding of $G$ into $\Aut(T)$ as a closed subgroup and a branch structure defined as before where $H_n$ is closed for every $n \in \NN$.

Recall that a topological infinite group is \textit{just-infinite} if every closed non-trivial normal subgroup has finite index. The main result that it will be used for profinite branch groups in this article is the following:

\begin{theorem}[{\cite[Theorem 4]{Grigorchuk2000}}]
Let $G$ be a profinite branch group with branch structure $\set{(H_n,L_n)}_{n \in \NN}$. Then $G$ is just-infinite if and only if $L_n^{\overline{ab}}$ is finite for all $n \in \NN$.
\label{theorem: equivalence joo in branch groups}
\end{theorem}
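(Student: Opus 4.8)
The plan is to prove the two implications separately; the implication ``$L_n^{\overline{ab}}$ finite for all $n$ $\Rightarrow$ $G$ just-infinite'' carries essentially all of the work, and rests on the following structural claim, which uses only that $G$ is a level-transitive profinite branch group (and not the hypothesis on the $L_n$): \emph{every nontrivial closed normal subgroup $N\lhd G$ contains $\overline{H_\ell'}$ for some $\ell\in\NN$}. To prove the claim, fix $1\neq g\in N$. Since the action on $T$ is faithful there is a vertex $u$, at some level $\ell\geq 1$, with $g(u)\neq u$; then $u':=g^{-1}(u)$ also lies at level $\ell$ and $u'\neq u$, so $T_u$ and $T_{u'}$ are disjoint. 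Let $R\leq H_\ell$ be the direct factor of $H_\ell$ supported on $T_u$, a copy of $L_\ell$; since $R$ acts trivially off $T_u$, the conjugate $g^{-1}Rg$ acts trivially off $T_{u'}$. Hence for $h\in R$ the element
\[
x:=[h^{-1},g^{-1}]=(h^{-1}g^{-1}h)\,g
\]
lies in $N$ (because $h^{-1}g^{-1}h\in N$ by normality and $g\in N$), restricts to $h^{-1}$ on $T_u$, and is supported on $T_u\cup T_{u'}$. For any $z\in R$ the commutator $[x,z]$ lies in $N$ (as $N\lhd G$), is supported on $T_u$, and on $T_u$ equals the commutator of $h^{-1}$ and $z$; thus $[x,z]=[h^{-1},z]\in N$ as an element of $G$. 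Letting $h$ and $z$ range over $R$ gives $R'\leq N$, hence $\overline{R'}\leq N$ since $N$ is closed. Finally, $G$ acts transitively on $\mathcal{L}_\ell$ and, since $H_\ell\lhd G$, the elements of $G$ permuting $\mathcal{L}_\ell$ permute the direct factors $L_\ell^{(v)}$ of $H_\ell$ accordingly; from $\overline{(L_\ell^{(u)})'}=\overline{R'}\leq N$ and normality of $N$ we get $\overline{(L_\ell^{(v)})'}\leq N$ for every $v\in\mathcal{L}_\ell$, and since these factors pairwise commute and $H_\ell'=\prod_{v\in\mathcal{L}_\ell}(L_\ell^{(v)})'$, we conclude $\overline{H_\ell'}\leq N$. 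Granting the claim, just-infiniteness follows immediately: for any nontrivial closed $N\lhd G$,
\[
[G:N]\;\leq\;[G:\overline{H_\ell'}]\;=\;[G:H_\ell]\cdot\abs{H_\ell/\overline{H_\ell'}}\;=\;[G:H_\ell]\cdot\abs{L_\ell^{\overline{ab}}}^{\abs{\mathcal{L}_\ell}},
\]
which is finite since $H_\ell\lhd_f G$, the level $\mathcal{L}_\ell$ is finite, and $L_\ell^{\overline{ab}}$ is finite by hypothesis.

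For the converse I would argue by contraposition. Suppose $L_m^{\overline{ab}}$ is infinite for some $m$, and consider $\overline{H_m'}$, which is closed and normal in $G$ (the closure of a characteristic subgroup of $H_m\lhd G$). If $\overline{H_m'}\neq 1$, then it is a nontrivial closed normal subgroup with $[G:\overline{H_m'}]=[G:H_m]\cdot\abs{H_m/\overline{H_m'}}\geq\abs{L_m^{\overline{ab}}}=\infty$, so $G$ is not just-infinite. The remaining possibility, $\overline{H_m'}=1$, is the main obstacle: then $H_m$ is abelian, and being of finite index in the infinite group $G$ it is an infinite abelian closed normal subgroup, so $G$ is virtually abelian. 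I would exclude this using that $G$ is genuinely weakly branch --- each $L_n$ is nontrivial (otherwise $H_n=1\lhd_f G$ would force $G$ finite, contradicting level-transitivity), so each $\rist_G(v)$ contains the nontrivial factor of $H_n$ supported on $T_v$ whenever $v\in\mathcal{L}_n$ --- together with the standard fact that a level-transitive weakly branch profinite group is not virtually abelian (indeed, not virtually solvable); one ingredient here is that such a group has trivial centre, which follows by the same disjoint-support argument as above. This contradiction rules out $\overline{H_m'}=1$ and completes the contrapositive.

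The two steps I expect to be most delicate are: in the structural claim, the bookkeeping of sections and supports needed to justify rigorously that $x$ is supported on $T_u\cup T_{u'}$ and that $[x,z]$ coincides with $[h^{-1},z]$ as an element of $G$; and, in the converse, pinning down precisely (or citing) the assertion that a level-transitive weakly branch profinite group is not virtually abelian --- this is the heart of the reverse implication and the one place where the branch structure, rather than mere normality of the $H_n$, is genuinely used.
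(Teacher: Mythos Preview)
The paper does not supply its own proof of this statement: it is quoted verbatim as \cite[Theorem~4]{Grigorchuk2000} and used as a black box (notably in the derivation of \cref{corollary: finite type just infinite}), so there is no in-paper argument to compare against. Your proposal is essentially the standard proof from Grigorchuk's original paper: the commutator trick showing that any nontrivial closed normal $N$ contains $\overline{H_\ell'}$ for some $\ell$, followed by the index computation $[G:\overline{H_\ell'}]=[G:H_\ell]\cdot|L_\ell^{\overline{ab}}|^{|\mathcal{L}_\ell|}$, is exactly Grigorchuk's argument, and your bookkeeping for the support of $x=[h^{-1},g^{-1}]$ and of $[x,z]$ is correct.

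For the converse, your case split is the right one, and the only genuine content is indeed the exclusion of $\overline{H_m'}=1$. One remark: rather than invoking a ``standard fact'' about weakly branch groups not being virtually abelian, you can stay closer to the branch structure you already have. If $H_m$ is abelian then so is each $L_m$, and $L_m$ is infinite; pick any level $k>m$ and two vertices $v\neq w$ in $\mathcal{L}_k$ lying below the \emph{same} vertex $u\in\mathcal{L}_m$. The factors $L_k^{(v)}$ and $L_k^{(w)}$ of $H_k$ both sit inside $\rist_G(u)$, and the very commutator computation you used in the forward direction (applied with a nontrivial $g\in L_k^{(v)}$ moving a vertex and $h\in L_k^{(w)}$) produces nontrivial commutators inside $\rist_G(u)\cap H_m=L_m^{(u)}$, contradicting abelianness of $L_m$. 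This keeps the whole proof self-contained and avoids the external citation you flagged as delicate.
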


If we now restrict ourselves to work on $d$-regular trees, we can identify $T_v$ and $T$ in a natural way for any $v \in \mathcal{L}_n$, and therefore we can also identify $\Aut(T)$ with $\Aut(T_v)$. We say that a group $G \leq \Aut(T)$ is \textit{self-similar} if $g|_v \in G$ for all $v$ vertex in $T$ and $g \in G$.  

Given a vertex $v$ in $T$, define the map $\delta_v: \Aut(T) \rightarrow \rist(v)$ such that 
\begin{equation}
\delta_v(g) |_v = g.
\label{equation: map deltav}
\end{equation}
The following properties are easy to check:
\begin{enumerate}[(i)]
\item $\delta_v$ is an injective homomorphism of groups.
\item $\delta_v \circ \delta_w = \delta_{vw}$.
\end{enumerate}

If $K$ is a subgroup of $\Aut(T)$ and $n \in \NN$, we define the \textit{geometric product} of $K$ on level $n$ as $$K_n := \set{g \in \St(n): g|_v \in K \text{ for all $v \in \mathcal{L}_n$}}.$$

\begin{Definition}
We say that $G \leq \Aut(T)$ is \textit{regular branch} over a subgroup $K$ if $K_1 \leq_f K \leq_f G$. The group $K$ is called the \textit{branching subgroup}.
\end{Definition}

In the case that $G$ is a self-similar group, the condition $[K:K_1] < \infty$ need not be checked:

\begin{Lemma}
Let $T$ be a $d$-regular tree, a group $G \leq \Aut(T)$ self-similar and $K \leq G$ such that $K_1 \leq K \leq_f G$. Then, $G$ is regular branch over $K$.
\label{lemma: regular branch for self-similar groups}
\end{Lemma}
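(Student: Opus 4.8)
The claim is that for a self-similar $G \le \Aut(T)$ on a $d$-regular tree, the hypothesis $K_1 \le K \le_f G$ already forces $[K : K_1] < \infty$, so that $G$ is regular branch over $K$. The only thing missing from the definition of regular branch is finiteness of the index $[K : K_1]$; everything else is given. So I would isolate that single point and prove $[K:K_1] < \infty$.

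First I would record the chain $K_1 \le K \le_f G$ and the fact that, since $G$ is self-similar, every section $g|_v$ of an element $g \in G$ lies in $G$; in particular the geometric product $K_1 = \{g \in \St(n) : g|_v \in K \text{ for all } v \in \mathcal L_1\}$ (here $n=1$) makes sense as a subgroup of $G$. Next I would exhibit $K_1$ as the kernel, or at least control it as a finite-index object, via the map $\psi_1$ restricted to $\St_G(1)$: writing $\psi_1(\St_G(1)) \le \Aut(T_{v_1}) \times \dots \times \Aut(T_{v_d})$ and using self-similarity, the image $\psi_1(\St_G(1))$ is a subgroup of $G \times \dots \times G$ (after the identification $\Aut(T_{v_i}) \simeq \Aut(T)$), and under this identification $\psi_1(K_1) = K \times \dots \times K$ viewed inside the image. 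The key index computation is then
\[
[G : K_1] \;=\; [G : \St_G(1)] \cdot [\St_G(1) : K_1],
\]
and the first factor is finite because $\St_G(1) = \St(1) \cap G$ has finite index in $G$ (as $\St(1) \lhd_f \Aut(T)$). For the second factor I would argue that $[\St_G(1) : K_1]$ is bounded by $[G:K]^d$: indeed $\psi_1$ embeds $\St_G(1)$ into a product of $d$ copies of $G$ and sends $K_1$ onto the intersection of that image with $K^d$, so the index of $K_1$ in $\St_G(1)$ divides $[G^d : K^d] = [G:K]^d < \infty$. Hence $[G:K_1] < \infty$, and therefore $[K:K_1] = [G:K_1]/[G:K] < \infty$ as well.

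Once $[K:K_1] < \infty$ is established, I would simply invoke the definition: $K_1 \le_f K \le_f G$ holds, so by definition $G$ is regular branch over $K$, completing the proof.

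The main obstacle, and the step needing the most care, is the bookkeeping around $\psi_1$: one must check that under the identification $\Aut(T_{v_i}) \simeq \Aut(T)$ the subgroup $\psi_1(K_1)$ really is the trace of $K^d$ on $\psi_1(\St_G(1))$ (this is exactly the definition of the geometric product $K_1$, unwound), and that self-similarity is what guarantees $\psi_1(\St_G(1))$ actually lands in $G^d$ rather than a larger product — without self-similarity the sections $g|_{v_i}$ need not be in $G$ and the index bound breaks down. Everything else is routine multiplicativity of indices in a profinite group.
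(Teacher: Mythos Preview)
Your proposal is correct and follows essentially the same approach as the paper: both arguments reduce to showing $[K:K_1]<\infty$ by using self-similarity to embed $\St_G(1)$ (or $\St_K(1)$) into $G^d$ via $\psi_1$ and then bounding the relevant index by $[G:K]^d$. The paper factors $[K:K_1]$ through $K\cap\St_G(1)\le G_1$ while you factor $[G:K_1]$ through $\St_G(1)$, but these are cosmetic differences in the same index computation.
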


\begin{proof}
Since $G$ is self-similar, we have $K \cap \St_G(1) \leq G_1$ and therefore
\begin{align*}
[K:K_1] = [K:K \cap \St_G(1)][K \cap \St_G(1): K_1] \leq [K:\St_K(1)][G_1: K_1] \\ \leq \abs{\pi_1(K)} [G:K]^d < \infty.
\end{align*}
\end{proof}

The following lemma will be used in \cref{section: Topological finite generation just infiniteness and strong completeness}:

\begin{Lemma}
Let $T$ be a $d$-regular tree and $K \leq \Aut(T)$. Then $(K_1)' = (K')_1$.
\label{lemma: commutator and geometric product}
\end{Lemma}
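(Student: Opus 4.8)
The plan is to transport the whole statement across the isomorphism $\psi_1$ of \cref{equation: Aut and semidirect product} and reduce to the elementary fact that the commutator subgroup of a direct product is the direct product of the commutator subgroups. First I would observe that both $(K_1)'$ and $(K')_1$ are subgroups of $\St(1)$: indeed $(K')_1 \le \St(1)$ by the definition of the geometric product, and $(K_1)' \le K_1 \le \St(1)$. Hence it suffices to show that $\psi_1\big((K_1)'\big) = \psi_1\big((K')_1\big)$ inside $\Aut(T_{v_1}) \times \dotsb \times \Aut(T_{v_d})$, where we use that on $\St(1)$ the factor $\Aut(T^1)$ in \cref{equation: Aut and semidirect product} is trivial, so $\psi_1|_{\St(1)}$ is an isomorphism onto $\Aut(T_{v_1}) \times \dotsb \times \Aut(T_{v_d})$.

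Next I would unwind the definition of $K_1$ under this identification. Since $T$ is $d$-regular we identify each $T_{v_i}$ with $T$, and then $g \in \St(1)$ lies in $K_1$ exactly when each section $g|_{v_i} = \varphi_{v_i}(g)$ belongs to $K$; that is, $\psi_1(K_1) = K \times \dotsb \times K$ ($d$ copies). Likewise $\psi_1\big((K')_1\big) = K' \times \dotsb \times K'$.

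The remaining ingredient is the coordinate-wise computation of commutators in a direct product: for any groups $A_1,\dots,A_d$ and elements $(a_1,\dots,a_d),(b_1,\dots,b_d) \in A_1 \times \dotsb \times A_d$ one has
\[
[(a_1,\dots,a_d),(b_1,\dots,b_d)] = ([a_1,b_1],\dots,[a_d,b_d]),
\]
so the subgroup generated by all commutators is $(A_1 \times \dotsb \times A_d)' = A_1' \times \dotsb \times A_d'$. Applying this with $A_i = K$ gives $\psi_1\big((K_1)'\big) = (K \times \dotsb \times K)' = K' \times \dotsb \times K' = \psi_1\big((K')_1\big)$, and since $\psi_1$ restricted to $\St(1)$ is injective we conclude $(K_1)' = (K')_1$.

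There is no serious obstacle here; the argument is pure bookkeeping. The only point requiring a little care is to verify at the outset that both sides genuinely sit inside $\St(1)$, so that the comparison may be carried out after applying $\psi_1$, and to keep straight that the $d$-regularity is what lets us write the target of $\psi_1|_{\St(1)}$ as a product of $d$ copies of $\Aut(T)$ with $K_1$ corresponding to $K^{\,d}$.
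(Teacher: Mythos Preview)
Your proof is correct and is essentially the same argument as the paper's, just packaged differently: the paper computes directly on generators, showing $[a,b] = ([a_1,b_1],\dots,[a_d,b_d])_1$ for one inclusion and $\delta_i([a,b]) = [\delta_i(a),\delta_i(b)]$ for the other, which is precisely the coordinate-wise commutator identity you invoke after transporting through $\psi_1$.
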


\begin{proof}
The group $(K_1)'$ is generated by elements of the form $[a,b]$ with $a,b \in K_1$. Therefore, there exist $a_1, \dots, a_d, b_1, \dots, b_d \in K$ such that $a = (a_1, \dots, a_d)_1$ and $b = (b_1, \dots, b_d)_1$. Then $$[a,b] = ([a_1,b_1], \dots, [a_d,b_d])_1 \in (K')_1.$$  

Conversely, it is not hard to see that $(K')_1$ is generated by elements of the form $\delta_i([a,b])$, where $i$ is a vertex of level one and $\delta_i$ is the map defined in \cref{equation: map deltav}. Then $\delta_i([a,b]) = [\delta_i(a), \delta_i(b)]$ and each $\delta_i(a)$ and $\delta_i(b) \in K_1$.
\end{proof}

The following result will be used in \cref{section: classification of group of finite type up to isomorphism}:

\begin{Lemma}[{\cite[Lemma 1.2]{Bartholdi2012}}]
Let $T$ be a $d$-regular tree and $G \leq \Aut(T)$ be a regular branch group. Then, there exists a unique subgroup $K$ such that $G$ is regular branch over $K$ and $K$ is maximal, namely, if $G$ is also regular branch over another subgroup $H$, then $H \leq K$.
\label{lemma: unique maximal reg branch subgroup}
\end{Lemma}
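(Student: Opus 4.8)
The plan is to establish existence and maximality of the branching subgroup $K$ by taking $K$ to be generated by all subgroups over which $G$ is regular branch, and then showing this $K$ itself witnesses the regular branch property. First I would let $\mathcal{K}$ denote the (nonempty, since $G$ is regular branch) collection of subgroups $H\le G$ with $H_1\le_f H\le_f G$, and set $K:=\langle H : H\in\mathcal{K}\rangle$. The first task is to check $K\le_f G$: since each $H\in\mathcal{K}$ has finite index in $G$ and $G$ is profinite (or at least the indices are bounded), the subgroup generated by finitely many of them is of finite index; one should note that $\mathcal{K}$ is actually finite up to the relevant bound, or more cleanly, that any product of finite-index subgroups of $G$ is again of finite index, and $K$ is the union of an ascending chain of such finite products, hence $[G:K]$ is bounded and $K\le_f G$.

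The heart of the argument is showing $K_1\le_f K$, i.e. that $K$ is again a branching subgroup. The key point is that the geometric-product operation is monotone and interacts well with generation: if $H\in\mathcal{K}$ then $H_1\le K_1$, and since $H\le K$ we want to deduce $K_1$ is not too big relative to $K$. The cleanest route is to pick a single $H\in\mathcal{K}$; then $H_1\le K_1\le K$ and also $K_1\le \St_{\Aut(T)}(1)$ with all sections in $K\le_f G$, so $K_1\le G_1$ (the geometric product of $G$ on level $1$, which for $G\le\Aut(T)$ means $\{g\in\St_G(1): g|_v\in G\}$ — here one uses that $G$ need not be self-similar, so I should instead work with $K_1\subseteq K$ directly). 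Concretely, $[K:K_1]=[K:H_1][H_1:K_1]^{-1}\cdots$ — better: $H_1\le K_1\le K$ gives $[K:K_1]\le [K:H_1]=[K:H][H:H_1]<\infty$. This shows $K_1\le_f K$, so together with $K\le_f G$ we get that $G$ is regular branch over $K$.

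For maximality: if $G$ is regular branch over some $H$, then $H\in\mathcal{K}$ by definition, hence $H\le K$ — so $K$ is the unique maximal branching subgroup. The only subtle point left is uniqueness as a \emph{maximal} element: any two maximal branching subgroups are each contained in $K$, and $K$ is itself a branching subgroup, so each equals $K$. The main obstacle I anticipate is verifying that $K=\langle H:H\in\mathcal{K}\rangle$ is still of finite index and still satisfies $K_1\le_f K$ simultaneously — the tension is that enlarging $H$ to $K$ could in principle enlarge $K_1$ faster than $K$ itself, but the bound $[K:K_1]\le[K:H]\cdot[H:H_1]$ using any fixed $H\in\mathcal{K}$ with $H_1\le K_1$ resolves this cleanly. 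Since the excerpt cites this as \cite[Lemma 1.2]{Bartholdi2012}, I expect the intended proof is exactly this short generation-and-index argument, and I would present it in that form rather than reproving it from scratch.
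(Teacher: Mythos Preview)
The paper does not prove this lemma; it simply cites \cite[Lemma 1.2]{Bartholdi2012}. Your overall strategy---define $K=\langle H:H\in\mathcal{K}\rangle$ and check it is again a branching subgroup---is the standard one and is correct in outline.

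However, there is a genuine gap at the step you flag as ``the heart of the argument.'' You write the chain $H_1\le K_1\le K$ and then bound $[K:K_1]\le[K:H][H:H_1]$, but the inclusion $K_1\le K$ is exactly what needs to be proved, and an index bound presupposes it rather than establishing it. Your monotonicity observation gives only $H_1\le K_1$; nothing you wrote shows that an element of $\St(1)$ whose sections all lie in $K$ must itself lie in $K$. The fix is short but is the actual content: write an element of $K_1$ as $(k_1,\dots,k_d)=\prod_{i=1}^d\delta_i(k_i)$, where $\delta_i$ is the injection into the $i$-th subtree (a group homomorphism). Each $k_i\in K$ is a word in elements of various $H\in\mathcal{K}$, and for $h\in H$ one has $\delta_i(h)\in H_1\le H\le K$; since $\delta_i$ is a homomorphism, $\delta_i(k_i)\in K$, and hence $K_1\le K$. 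Once this inclusion is in place, your index bound $[K:K_1]\le[K:H_1]=[K:H][H:H_1]<\infty$ is valid and the rest follows.

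Two minor presentational points: the argument that $K\le_f G$ needs no discussion of ascending chains or profiniteness---just fix one $H\in\mathcal{K}$ and note $H\le K\le G$ forces $[G:K]\le[G:H]<\infty$. And uniqueness is immediate once existence of a maximal element is shown: any branching subgroup lies in $\mathcal{K}$, hence in $K$.
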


\subsection{Groups of finite type}

\begin{Definition}
Let $D$ be a positive natural number, $T$ a $d$-regular rooted tree and $\mathcal{P}$ a subgroup of $\Aut(T^D)$. The \textit{group of finite type} of \textit{depth} $D$ and set of \textit{patterns} $\mathcal{P}$ is defined as 
\begin{equation*}
G_\mathcal{P} := \set{g \in \Aut(T): g|_v^D \in \mathcal{P} \text{ for all $v$ vertex in $T$}}.
\label{equation: finite type definition}
\end{equation*}
A profinite group $G$ is said to be of \textit{finite type} if there exists an embedding to a $d$-regular tree, a number $D \in \NN$ and a subgroup $\mathcal{P} \leq \Aut(T^D)$ such that $G$ is isomorphic to $G_\mathcal{P}$.
\end{Definition}

If for example $D = 1$, we are forcing the labels $g|_v^1$ of each element $g$ in $G_\mathcal{P}$ to be in a certain subgroup $\mathcal{P}$ of $\Sym(d)$. In this case, the group will be isomorphic to the inverse limit of wreath products $\mathcal{P} \wr \mathcal{P} \wr \dots \wr \mathcal{P}$. This latter inverse limit group is known as \textit{iterated wreath product} and it will be denoted $W_\mathcal{P}$. Notice that if $\mathcal{P} = 1$, then $W_\mathcal{P} = 1$ and if $\mathcal{P} = \Sym(d)$, then $W_\mathcal{P} = \Aut(T)$. In the case that $d = q$ is a prime power, and $\mathcal{P} = \<(1, \dots, q)>$ where $(1, \dots, q)$ is the cyclic permutation, we will denote the iterated wreath product as $W_q$.

The following lemma will be used in \cref{section: classification of group of finite type up to isomorphism}:

\begin{Lemma}
If $p$ is prime and $G \leq W_p$ is infinite and self-similar, then $G$ is level-transitive.
\label{lemma: self similarity and level-transitivity}
\end{Lemma}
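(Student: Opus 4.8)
The plan is to establish the contrapositive in the stronger form: \emph{a self-similar subgroup $G\leq W_p$ that is not level-transitive must be finite}. So assume $G$ is not level-transitive and let $n\geq 1$ be the least level on which $G$ does not act transitively, using the convention $\St_G(0)=G$.

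The first observation is that every vertex $u\in\mathcal{L}_{n-1}$ is \emph{inactive}, i.e.\ $\st_G(u)$ induces the trivial permutation on the $p$ children of $u$. Since $G\leq W_p$, this induced permutation group is a subgroup of $\langle(1,\dots,p)\rangle\cong C_p$, and since $p$ is prime, it is either trivial or all of $C_p$; in the latter case it is transitive on the $p$ children. Combined with the transitivity of $G$ on $\mathcal{L}_{n-1}$ (valid by minimality of $n$), an active $u$ would force $G$ to be transitive on $\mathcal{L}_{n}$: move an arbitrary child of $u$ to any other child by an element of $\st_G(u)$, then carry $u$ to any prescribed vertex of $\mathcal{L}_{n-1}$ by an element of $G$. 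This contradicts the choice of $n$, so all of $\mathcal{L}_{n-1}$ is inactive. It follows that $\St_G(n-1)=\St_G(n)$: if $g\in\St_G(n-1)$ then $g$ fixes each $u\in\mathcal{L}_{n-1}$, hence lies in $\st_G(u)$, hence fixes every child of $u$ by inactivity; letting $u$ range over $\mathcal{L}_{n-1}$, $g$ fixes all of $\mathcal{L}_{n}$.

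Now set $N:=\St_G(n-1)=\St_G(n)$; the heart of the proof is that $N$ is closed under taking sections. Indeed, let $g\in N=\St_G(n)$ and let $i$ be a first-level vertex. Then $g$ fixes every vertex of the form $iw$ lying in the first $n$ levels, so the section $g|_i$ fixes every vertex of $T_i$ in its first $n-1$ levels; that is, $g|_i\in\St(n-1)$. By self-similarity $g|_i\in G$, so $g|_i\in\St(n-1)\cap G=\St_G(n-1)=N$. Iterating via $g|_{vw}=(g|_v)|_w$ yields $g|_v\in N$ for every vertex $v$. Since $N\subseteq\St_G(n)\subseteq\St(1)$, every such section has trivial label, $g|_v^1=\id$, so the portrait of $g$ is trivial and $g=\id$. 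Thus $N=\St_G(n-1)$ is trivial, whence $\pi_{n-1}$ embeds $G$ into the finite group $\Aut(T^{n-1})$ and $G$ is finite, contradicting the hypothesis. Therefore $G$ is level-transitive.

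The step requiring the most care — the main obstacle — is the closure of $N$ under sections: this is the unique place where self-similarity is used, and it hinges on the preceding identity $\St_G(n-1)=\St_G(n)$, which is in turn the only place the primality of $p$ and the inclusion $G\leq W_p$ enter. One should keep in mind that this closure property is peculiar to the group $N$ itself and fails for an arbitrary section subgroup $\varphi_v(\st_G(v))$, so the argument genuinely has to be run inside $G$ on the whole tree rather than by restricting to a subtree.
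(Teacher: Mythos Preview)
Your proof is correct. It differs from the paper's argument, which is direct: for each $n$ the paper picks a vertex $w_n\in\mathcal L_n$, uses infiniteness of $\st_G(w_n)$ to obtain a non-trivial element, locates its first non-trivial label at some deeper vertex $u_m$, and then takes a section at the length-$(m-n)$ prefix of $u_m$ to produce an element of $\st_G(v_n)$ with non-trivial label at $v_n$; an appeal to \cref{lemma: level transitivity equivalence} finishes. Your contrapositive instead isolates the first bad level $n$, uses the $C_p$-label structure to force $\St_G(n-1)=\St_G(n)$, and then shows this common stabilizer is section-closed, hence trivial.

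What each buys: the paper's route is constructive (it exhibits, for every $n$, a concrete level-$n$ vertex with transitive local action) but relies on \cref{lemma: level transitivity equivalence} and on carefully locating a non-trivial label in the right subtree. Your route is shorter and entirely self-contained; the key identity $\St_G(n-1)=\St_G(n)$ together with self-similarity kills the whole stabilizer in one stroke, and the finiteness of $G$ drops out without any auxiliary lemma. Both arguments use primality of $p$ and $G\le W_p$ at the same point (forcing the local action to be $0$ or $C_p$), and self-similarity exactly once (you: to close $N$ under sections; the paper: to pull the section $g|_{u'_{m-n}}$ back into $G$).
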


\begin{proof}
Given $n \in \NN$, choose $w_n$ any vertex on level $n$. As $G$ is infinite and $\st_G(w_n)$ has finite index in $G$, then also $\st_G(w_n)$ is infinite, so we can find $g \in \st_G(w_n)$ non-trivial. Let $m$ be the smallest level such that $g|_{u_m}^1 \neq 1$ for some vertex $u_m \in \mathcal{L}_m$ below $w_n$, and write $u_m = u'_{m-n} v_n$ where $u'_{m-n} \in \mathcal{L}_{m-n}$. As $m$ is the smallest, this means that $g \in \st_G(u_m)$. Since $G$ is self-similar, then $g|_{u'_{m-n}} \in \st_G(v_n)$ and acts non-trivially on the immediate descendants of $v_n$. Since $G \leq W_p$, the action of the powers of $g|_{u'_{m-n}}$ below $v_n$ must be transitive. The result follows by \cref{lemma: level transitivity equivalence}.
\end{proof}

The following results about groups of finite type are well-known:

\begin{Proposition}[{\cite[page 3]{Bondarenko2014} and \cite[Proposition 7.5]{Grigorchuk2005}}]
If $G$ is a group of finite type of depth $D$ embedded in a $d$-regular tree $T$, then $G$ is closed in $\Aut(T)$, self-similar and regular branch over $\St_G(D-1)$.
\label{proposition: finite type is closed self-similar and regular branch}
\end{Proposition}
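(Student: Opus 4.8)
The plan is to verify the three assertions in turn. Throughout write $G = G_{\mathcal P}$ for the group of finite type with pattern group $\mathcal P \le \Aut(T^D)$, and use freely the cocycle identities \cref{equation: properties sections} together with the identification of $T_v$ with $T$ for the $d$-regular tree.

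\emph{Closedness.} For each fixed vertex $v$ the map $g \mapsto g|_v^D$ from $\Aut(T)$ to the finite group $\Aut(T_v^D)$ is continuous, since it factors through the projection onto a truncated tree. Moreover the product and inverse identities in \cref{equation: properties sections} show that $C_v := \set{g \in \Aut(T) : g|_v^D \in \mathcal P}$ is a subgroup of $\Aut(T)$ (using that $\mathcal P$ is a subgroup), and it is closed as the preimage of the finite set $\mathcal P$ under a continuous map. Hence $G = \bigcap_{v} C_v$ is a closed subgroup of $\Aut(T)$.

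\emph{Self-similarity.} Given $g \in G$ and a vertex $w$, I would use the last identity of \cref{equation: properties sections}: for every vertex $v$ of $T_w$ one has $(g|_w)|_v = g|_{wv}$, hence $(g|_w)|_v^D = g|_{wv}^D \in \mathcal P$ because $wv$ is a vertex of $T$. Thus $g|_w \in G$, so $G$ is self-similar.

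\emph{Regular branch over $K := \St_G(D-1)$.} Since $K = G \cap \St(D-1)$ and $\St(D-1)$ has finite index in $\Aut(T)$, we get $K \le_f G$. Because $G$ is self-similar, \cref{lemma: regular branch for self-similar groups} reduces the claim to the inclusion $K_1 \le K$. So take $g \in \St(1)$ with $g|_i \in K$ for all $i \in \mathcal L_1$. Each $g|_i$ fixes the first $D-1$ levels of $T_i$, which together with $g \in \St(1)$ shows that $g$ fixes levels $1, \dots, D$ of $T$; in particular $g \in \St(D-1)$ and $g|_\emptyset^D$ is trivial, hence lies in $\mathcal P$. For a vertex $v$ with $\abs{v} \ge 1$ write $v = i v'$ with $i \in \mathcal L_1$; then $g|_v^D = (g|_i)|_{v'}^D \in \mathcal P$ because $g|_i \in K \subseteq G = G_{\mathcal P}$. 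Therefore $g \in G_{\mathcal P} = G$, so $g \in G \cap \St(D-1) = K$, establishing $K_1 \le K$; \cref{lemma: regular branch for self-similar groups} then yields that $G$ is regular branch over $K = \St_G(D-1)$.

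\emph{Expected obstacle.} None of the steps is substantial. The only place requiring genuine care is the last one: correctly translating the hypothesis ``$g|_i \in \St_G(D-1)$ for every $i \in \mathcal L_1$'' into ``$g$ fixes levels $2, \dots, D$ of $T$'', and checking that the hypotheses of \cref{lemma: regular branch for self-similar groups} (self-similarity of $G$ and $K \le_f G$) are in place before invoking it.
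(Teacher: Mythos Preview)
The paper does not supply its own proof of this proposition; it is stated with citations to \cite{Bondarenko2014} and \cite{Grigorchuk2005}. Your argument is the standard one and is essentially correct, with one small slip.

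In the closedness step you assert that each $C_v = \set{g \in \Aut(T) : g|_v^D \in \mathcal P}$ is a subgroup of $\Aut(T)$. This is false in general: for $g,h \in C_v$ the identity $(gh)|_v^D = g|_{h(v)}^D\, h|_v^D$ involves the section of $g$ at $h(v)$, not at $v$, and there is no reason for $g|_{h(v)}^D$ to lie in $\mathcal P$. Fortunately you do not need $C_v$ to be a subgroup: it is a closed \emph{subset} (as the preimage of a finite set under a continuous map), so $G = \bigcap_v C_v$ is closed. That $G$ itself is a subgroup follows from the same cocycle identity once you quantify over \emph{all} vertices simultaneously: if $g,h \in G$ then $(gh)|_v^D = g|_{h(v)}^D\, h|_v^D \in \mathcal P$ because $h(v)$ is again a vertex of $T$.

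The self-similarity and regular-branch parts are fine as written.
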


\begin{theorem}[{\cite[Theorem 3]{Sunic2010}}]
Groups of finite type are the closure of regular branch groups. Furthermore if $G$ is a regular branch group branching over a subgroup $K$ containing $\St_G(D-1)$, then $\overline{G}$ is a group of finite type of depth $D$.
\label{theorem: characterization finite type groups}
\end{theorem}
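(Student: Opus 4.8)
The plan is to treat the two assertions separately. The first is immediate from \cref{proposition: finite type is closed self-similar and regular branch}: a group of finite type of depth $D$ is closed and regular branch (over its own subgroup $\St_G(D-1)$), hence it equals its own closure and is therefore the closure of a regular branch group. The rest of the proof concerns the ``furthermore'' part, so let $G \le \Aut(T)$ be a self-similar regular branch group over a subgroup $K$ with $N := \St_G(D-1) \subseteq K$, and set $\mathcal{P} := \pi_D(G) \le \Aut(T^D)$; I claim $\overline{G} = G_\mathcal{P}$, which is of finite type of depth $D$ by definition. Two preliminary remarks. First, from $K_1 \le_f K$ and the identity $K_{m+1} = (K_m)_1$ one obtains by induction that $K_m \le K$ for all $m \ge 1$, whence $N_m \subseteq K_m \subseteq K \subseteq G$ for every $m \ge 1$. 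Second, self-similarity gives $G \subseteq G_\mathcal{P}$: for $g \in G$ and any vertex $v$ we have $g|_v \in G$, so $g|_v^D \in \pi_D(G) = \mathcal{P}$. Since $G_\mathcal{P}$ is closed this already yields $\overline{G} \subseteq G_\mathcal{P}$, and combined with the fact that $\pi_D(g) = g|_\emptyset^D \in \mathcal{P}$ for every $g \in G_\mathcal{P}$ it also gives $\pi_D(G_\mathcal{P}) = \mathcal{P}$.

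The core of the argument is the identification of the higher level stabilizers. Writing $L := \St_{G_\mathcal{P}}(D-1)$, I will show that for every $n \ge D$ one has $\St_G(n) = N_{n-D+1}$ and $\St_{G_\mathcal{P}}(n) = L_{n-D+1}$, and moreover that $\pi_D(N) = \pi_D(L)$, both being equal to the set of those $p \in \mathcal{P}$ that fix the first $D-1$ levels. The inclusions ``$\subseteq$'' in the first two equalities come from self-similarity: an element fixing level $n$ has, below each level-$(n-D+1)$ vertex $w$, a section lying in $G$ (resp. $G_\mathcal{P}$) and fixing level $D-1$, hence lying in $N$ (resp. $L$). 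For ``$\supseteq$'' on the $G$ side one uses $N_{n-D+1} \subseteq G$ from the preliminary remark (the inclusion $N_{n-D+1} \subseteq \St(n)$ being automatic from $N \subseteq \St(D-1)$); on the $G_\mathcal{P}$ side one verifies directly that an element $g$ assembled from sections in $L$ below level $n-D+1$ satisfies the pattern condition $g|_v^D \in \mathcal{P}$: because $L$ fixes the first $D-1$ levels, $g|_v^D$ is trivial whenever $v$ has level at most $n-D$, it equals $\pi_D(g|_v)$ with $g|_v \in L \subseteq G_\mathcal{P}$ when $v$ has level $n-D+1$, and it sits inside a section $g|_w \in L \subseteq G_\mathcal{P}$ when $v$ is deeper. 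Finally, any $p \in \mathcal{P} = \pi_D(G) = \pi_D(G_\mathcal{P})$ fixing the first $D-1$ levels is $\pi_D$ of an element of $G$ (resp. $G_\mathcal{P}$) which must then lie in $N$ (resp. $L$), giving $\pi_D(N) = \pi_D(L)$.

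With this in hand, $\pi_{n+1}(N_{n-D+1})$ is completely described by the tuple of depth-$D$ truncations $\pi_D(g|_w)$ of the sections below the level-$(n-D+1)$ vertices $w$, which ranges over $\pi_D(N)^{d^{\,n-D+1}}$; the same description with $\pi_D(L)$ holds for $\pi_{n+1}(L_{n-D+1})$, so $\pi_D(N) = \pi_D(L)$ forces $\pi_{n+1}(\St_{G_\mathcal{P}}(n)) = \pi_{n+1}(\St_G(n)) \subseteq \pi_{n+1}(G)$ for all $n \ge D$. I then prove $\pi_n(G) = \pi_n(G_\mathcal{P})$ for all $n$ by induction: for $n \le D$ it follows from $\pi_D(G) = \mathcal{P} = \pi_D(G_\mathcal{P})$, and for the step from $n$ to $n+1$ with $n \ge D$, given $g \in G_\mathcal{P}$ I use the inductive hypothesis to pick $h_0 \in G$ with $\pi_n(h_0) = \pi_n(g)$, so that $g h_0^{-1} \in \St(n) \cap G_\mathcal{P} = \St_{G_\mathcal{P}}(n)$; then $\pi_{n+1}(g h_0^{-1}) \in \pi_{n+1}(\St_{G_\mathcal{P}}(n)) \subseteq \pi_{n+1}(G)$ provides $h_1 \in G$ with $\pi_{n+1}(h_1) = \pi_{n+1}(g h_0^{-1})$, and $h_1 h_0 \in G$ satisfies $\pi_{n+1}(h_1 h_0) = \pi_{n+1}(g)$. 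Since $\pi_n(\overline{G}) = \pi_n(G)$ for every $n$ (each $\pi_n$ is continuous with finite image) and a closed subgroup of $\Aut(T)$ is the intersection over $n$ of the preimages of its projections under $\pi_n$, we conclude $\overline{G} = G_\mathcal{P}$.

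I expect the main obstacle to be the level-stabilizer identification, specifically the verification that an element of $\Aut(T)$ built section-by-section from $L$ below level $n-D+1$ genuinely lies in $G_\mathcal{P}$ — this needs careful bookkeeping of which depth-$D$ windows can see nontrivial data — and it is precisely here that the hypothesis $\St_G(D-1) \subseteq K$, via $N_{n-D+1} \subseteq K_{n-D+1} \subseteq G$, is used to make the two towers of level stabilizers coincide.
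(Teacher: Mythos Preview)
The paper does not prove this theorem; it is quoted from \cite{Sunic2010} without argument, so there is no in-paper proof to compare against. Your reconstruction is correct and follows what is essentially the standard route: set $\mathcal{P}=\pi_D(G)$, use self-similarity to get $G\subseteq G_\mathcal{P}$, identify $\St_G(n)$ and $\St_{G_\mathcal{P}}(n)$ with the geometric products $N_{n-D+1}$ and $L_{n-D+1}$ (this is exactly \cref{lemma: St(n+1) = prod St(n)} applied twice), observe $\pi_D(N)=\pi_D(L)$, and then climb level by level to obtain $\pi_n(G)=\pi_n(G_\mathcal{P})$ for all $n$. The bookkeeping in your verification that $L_{n-D+1}\subseteq G_\mathcal{P}$ is accurate.

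One small point worth flagging: you explicitly add the hypothesis that $G$ is self-similar, whereas the paper's definition of ``regular branch'' does not include it. This is not a flaw in your argument --- the self-similarity is genuinely needed for the inclusion $G\subseteq G_\mathcal{P}$ (so that $g|_v^D\in\pi_D(G)$), and every application of the theorem in the paper is to self-similar groups --- but it is an assumption the theorem as stated leaves implicit, and you were right to make it visible.
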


\begin{Lemma}[{\cite[Lemma 10]{Sunic2006}}] 
Let $T$ be a $d$-regular tree and $G \leq \Aut(T)$ be a self-similar regular branch group, branching over a subgroup $K$ containing $\St_G(m)$. Then, for all $n \geq m$, $$\St_G(n) = (\St_G(m))_{n-m},$$ where the right-hand side corresponds to the geometric product of $\St_G(m)$.
\label{lemma: St(n+1) = prod St(n)}
\end{Lemma}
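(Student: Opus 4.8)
The plan is to induct on $n \geq m$. The base case $n=m$ is the tautology $(\St_G(m))_0 = \St_G(m)$, since $\mathcal{L}_0 = \set{\emptyset}$ and the geometric product of a subgroup on level $0$ is the subgroup itself. For the inductive step I would first isolate the cleaner one-level statement
\[
\St_G(n+1) = (\St_G(n))_1 \qquad \text{for every } n \geq m,
\]
and then bootstrap it with the elementary identity $(H_j)_1 = H_{j+1}$, valid for any $H \leq \Aut(T)$ and any $j \geq 0$. This identity follows at once from $g|_{vw} = (g|_v)|_w$ together with the fact that $g$ fixes $\mathcal{L}_{j+1}$ if and only if $g \in \St(1)$ and each first-level section $g|_v$ fixes level $j$ of $T_v$. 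Granting the one-level statement, the induction closes: $\St_G(n+1) = (\St_G(n))_1 = ((\St_G(m))_{n-m})_1 = (\St_G(m))_{n+1-m}$.

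It remains to prove $\St_G(n+1) = (\St_G(n))_1$ for $n \geq m$. The inclusion $\St_G(n+1) \subseteq (\St_G(n))_1$ is the easy direction and uses only self-similarity: if $g \in \St_G(n+1)$ then $g \in \St(1)$, and for each $v \in \mathcal{L}_1$ the section $g|_v$ lies in $G$ by self-similarity and fixes level $n$ of $T_v$ because $g$ fixes level $n+1$ of $T$; hence $g|_v \in \St_G(n)$, so $g \in (\St_G(n))_1$.

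The reverse inclusion is where the regular branch hypothesis is used and is the crux of the argument. Take $h \in (\St_G(n))_1$, so $h \in \St(1)$ and $h|_v \in \St_G(n)$ for every $v \in \mathcal{L}_1$. First, $h$ fixes level $n+1$ of $T$, since it fixes level $1$ and each $h|_v$ fixes level $n$. To see $h \in G$, note that $n \geq m$ gives $\St_G(n) \leq \St_G(m) \leq K$, so $h|_v \in K$ for all $v \in \mathcal{L}_1$; by definition of the geometric product this says exactly $h \in K_1$, and by the regular branch assumption $K_1 \leq K \leq G$, whence $h \in G$. Therefore $h \in \St_G(n+1)$.

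The only nontrivial point — and the reason the hypotheses are what they are — is precisely this last step: the membership $h \in G$ in the reverse inclusion \emph{cannot} be extracted level-by-level from self-similarity alone, and must come instead from the chain $\St_G(m) \leq K$ and $K_1 \leq G$ supplied by the branch structure. This is why the statement needs $n \geq m$, and why the induction strips off a single level at a time, so that at each step only one application of $K_1 \leq K \leq G$ is required, rather than trying to analyze $(\St_G(m))_{n-m}$ in one shot.
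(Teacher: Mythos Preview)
Your proof is correct. Note, however, that the paper does not actually prove this lemma: it is stated with a citation to \cite[Lemma 10]{Sunic2006} and no proof is given in the paper itself. Your argument is the standard one and matches what one finds in Sunic's paper --- induct on $n$, reduce to the one-level identity $\St_G(n+1) = (\St_G(n))_1$, get one inclusion from self-similarity, and get the other from $\St_G(n) \leq \St_G(m) \leq K$ together with $K_1 \leq K \leq G$.
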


\cref{lemma: St(n+1) = prod St(n)} is the key result to give a branch structure for groups of finite type:

\begin{Lemma}
If $G$ is a group of finite type of depth $D$ embedded in a $d$-regular tree $T$ and acting level-transitively, then $\set{(\St_G(n), \St_G(D-1))}$ is a branch structure for $G$. Therefore $G$ is a branch group in the sense of \cref{definition: branch group}.
\label{lemma: branch structure groups of finite type}
\end{Lemma}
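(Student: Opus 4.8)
The plan is to exhibit an explicit family of pairs and verify, one by one, the three clauses of \cref{definition: branch group}; the only substantial input will be \cref{lemma: St(n+1) = prod St(n)} applied to the regular branch structure of $G$ over $K:=\St_G(D-1)$ provided by \cref{proposition: finite type is closed self-similar and regular branch}. Concretely, for each $n\in\NN$ I would set $H_n:=\St_G(n+D-1)$ and $L_n:=\St_G(D-1)$; this is the family $\set{(\St_G(n),\St_G(D-1))}$ of the statement, re-indexed so that $H_n$ is the level stabilizer which decomposes as a product over $\mathcal{L}_n$ (one must take $\St_G(n+D-1)$, not $\St_G(n)$, for this decomposition to be indexed by $\mathcal{L}_n$).

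First I would record that $G$ is level-transitive by hypothesis, so the ambient-embedding requirement of \cref{definition: branch group} holds. For clause (i): $H_n=\St_G(n+D-1)\le\St(n+D-1)\le\St(n)$. For clause (ii): $\St(n+D-1)\lhd_f\Aut(T)$ and is open, so $H_n=\St(n+D-1)\cap G$ is open, closed, of finite index in $G$, and $H_n\lhd_f G$. The content is clause (iii). Since $G$ is self-similar and regular branch over $K=\St_G(D-1)$, \cref{lemma: St(n+1) = prod St(n)} (with $m=D-1$) gives $\St_G(j+D-1)=(\St_G(D-1))_{j}$ for all $j\ge 0$, hence $H_n=(\St_G(D-1))_n$, the geometric product of $\St_G(D-1)$ on level $n$. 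I would then check that this geometric product is the internal direct product $\prod_{v\in\mathcal{L}_n}\delta_v(\St_G(D-1))$: any $g\in(\St_G(D-1))_n$ fixes $\mathcal{L}_n$ pointwise and has $g|_v\in\St_G(D-1)$, so $g=\prod_{v\in\mathcal{L}_n}\delta_v(g|_v)$; the subgroups $\delta_v(\St_G(D-1))$ have pairwise disjoint support, so they commute and meet trivially; and $\delta_v(\St_G(D-1))\le G$ because $\delta_v(h)$ has all its level-$n$ sections in $\St_G(D-1)$ and therefore already lies in $(\St_G(D-1))_n=H_n$. Thus $H_n\simeq\prod_{\mathcal{L}_n}L_n$, and the copy sitting at $v$ is contained in $\rist_G(v)$, hence fixes every vertex outside $T_v$ and in particular every vertex of $\mathcal{L}_n$ other than $v$ — in the terminology of \cref{definition: branch group}, it acts nontrivially only on the vertex $v$. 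This is exactly clause (iii).

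Finally, each $H_n$ is closed in $G$, so $\set{(H_n,L_n)}_{n\in\NN}$ is a branch structure in the profinite sense and $G$ is a branch group as in \cref{definition: branch group}. I do not anticipate a genuine obstacle: the only points demanding care are the index shift between the level at which $H_n$ sits and the level $\mathcal{L}_n$ over which it splits, and the verification that the geometric product really is an internal direct product of rigid-vertex-stabilizer factors — both routine once \cref{lemma: St(n+1) = prod St(n)} is available.
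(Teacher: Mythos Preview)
Your proposal is correct and follows the paper's own (much terser) proof: invoke \cref{proposition: finite type is closed self-similar and regular branch} for closedness, normality, and finite index of the level stabilizers, then \cref{lemma: St(n+1) = prod St(n)} for the direct-product decomposition in clause (iii). Your explicit re-indexing $H_n=\St_G(n+D-1)$ is more careful than the paper's loose notation $\{(\St_G(n),\St_G(D-1))\}$ --- as you observe, the geometric product $(\St_G(D-1))_n$ is indexed by $\mathcal{L}_n$ only after this shift --- but the underlying argument is identical.
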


\begin{proof}
By \cref{proposition: finite type is closed self-similar and regular branch}, $G$ is closed in $\Aut(T)$ and therefore $\St_G(n)$ is closed in $G$, normal and with finite index. Finally, \cref{lemma: St(n+1) = prod St(n)} justifies condition (iii).
\end{proof}

As a corollary of \cref{theorem: equivalence joo in branch groups} and \cref{lemma: branch structure groups of finite type}, we obtain:

\begin{corollary}
If $G$ is a group of finite type of depth $D$ embedded in a $d$-regular tree $T$ and acting level-transitively, then $G$ is just-infinite if and only if $\St_G(D-1)^{\overline{ab}}$ is finite.
\label{corollary: finite type just infinite}
\end{corollary}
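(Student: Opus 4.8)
The plan is to deduce the statement directly from the two results quoted immediately before it, \cref{lemma: branch structure groups of finite type} and \cref{theorem: equivalence joo in branch groups}, so that the only real task is to identify the relevant data.

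First, since $G$ is a group of finite type of depth $D$, it is closed in $\Aut(T)$, self-similar and regular branch over $\St_G(D-1)$ by \cref{proposition: finite type is closed self-similar and regular branch}; combined with the level-transitivity hypothesis, \cref{lemma: branch structure groups of finite type} then exhibits $G$ as a profinite branch group in the sense of \cref{definition: branch group}, with a branch structure in which $L_n = \St_G(D-1)$ for every $n$. The feature worth underlining is that this layer group is the \emph{same} for all $n$: that is forced by \cref{lemma: St(n+1) = prod St(n)} applied with $m = D-1$, which gives $\St_G(n+D-1) = (\St_G(D-1))_n \simeq \prod_{v\in\mathcal{L}_n}\St_G(D-1)$, each factor supported on the subtree $T_v$. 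Clauses (i) and (ii) of \cref{definition: branch group} hold because every $\St_G(m)$ is a closed, normal, finite-index subgroup of $G$, again by $G$ being of finite type.

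Second, I would plug this branch structure into \cref{theorem: equivalence joo in branch groups}: a profinite branch group with branch structure $\{(H_n,L_n)\}$ is just-infinite if and only if $L_n^{\overline{ab}}$ is finite for every $n$. Since $L_n = \St_G(D-1)$ independently of $n$, this countable family of conditions collapses to the single requirement that $\St_G(D-1)^{\overline{ab}} = \St_G(D-1)/\overline{\St_G(D-1)'}$ be finite, which is precisely the claimed equivalence (and, just-infiniteness being an intrinsic property of $G$, the choice of branch structure is immaterial).

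I do not anticipate any genuine obstacle here; the only care needed is the bookkeeping of checking that $G$ satisfies all the clauses of \cref{definition: branch group} — which is exactly the content of \cref{lemma: branch structure groups of finite type} — and observing that the (constant) layer group is $\St_G(D-1)$ rather than something varying with the level, a point that rests on the self-similar, regular-branch structure of groups of finite type together with \cref{lemma: St(n+1) = prod St(n)}.
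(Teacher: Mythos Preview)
Your proposal is correct and follows exactly the approach of the paper, which simply states the corollary as an immediate consequence of \cref{theorem: equivalence joo in branch groups} and \cref{lemma: branch structure groups of finite type} without writing out any further detail. Your expansion—making explicit that all the $L_n$ equal $\St_G(D-1)$ so the countable family of conditions collapses to one—is precisely the unwinding the paper leaves implicit.
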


To finish, we prove that infinite groups of finite type are weakly branch:

\begin{Lemma}
If $G$ is a group of finite type of depth $D$ embedded in a $d$-regular tree $T$, then $\St_G(n+D-1) \leq \RiSt_G(n)$ for all $n \in \NN$. In particular, if $G$ is infinite, then it is weakly branch.
\label{lemma: finite type is weakly regular branch}
\end{Lemma}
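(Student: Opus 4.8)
The plan is to reduce the statement to two facts already in hand. By \cref{proposition: finite type is closed self-similar and regular branch}, $G$ is self-similar and regular branch over $K := \St_G(D-1)$. Applying \cref{lemma: St(n+1) = prod St(n)} with $m = D-1$ (so that $K = \St_G(m)$ trivially contains $\St_G(m)$) gives $\St_G(n+D-1) = (\St_G(D-1))_n$ for every $n$, where the right-hand side is the geometric product. So it suffices to show $(\St_G(D-1))_n \leq \RiSt_G(n)$.

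First I would check that $\delta_v(\St_G(D-1)) \leq \rist_G(v)$ for every vertex $v$. Since $G$ is regular branch over $K$ we have $K_1 \leq K$; for a level-one vertex $i$ and $k \in K$, the element $\delta_i(k)$ lies in $\St(1)$ with section $k$ at $i$ and trivial section elsewhere, hence $\delta_i(k) \in K_1 \leq K \leq G$. Thus $\delta_i(K) \leq K$ for each $i \in \mathcal{L}_1$, and iterating via $\delta_v \circ \delta_w = \delta_{vw}$ yields $\delta_v(K) \leq K \leq G$ for all $v$. Since $\delta_v$ takes values in $\rist(v)$, this gives $\delta_v(\St_G(D-1)) \leq \rist(v) \cap G = \rist_G(v)$.

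Now take $g \in \St_G(n+D-1) = (\St_G(D-1))_n$. Then $g$ fixes $\mathcal{L}_n$ and each section $g|_v$ with $v \in \mathcal{L}_n$ lies in $\St_G(D-1)$; since the elements $\delta_v(g|_v)$ have pairwise disjoint support they commute, and their product fixes $\mathcal{L}_n$ with the same sections as $g$ at every level-$n$ vertex, so $g = \prod_{v \in \mathcal{L}_n} \delta_v(g|_v)$. By the previous paragraph each factor lies in $\rist_G(v) \leq \RiSt_G(n)$, hence $g \in \RiSt_G(n)$. This proves $\St_G(n+D-1) \leq \RiSt_G(n)$. For the last assertion, $\St(n+D-1)$ has finite index in $\Aut(T)$, so $\St_G(n+D-1)$ has finite index in $G$; if $G$ is infinite this subgroup is infinite, in particular nontrivial, so $\RiSt_G(n) \neq 1$ for all $n$ and $G$ is weakly branch.

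I do not anticipate a genuine obstacle: the only points needing a little care are justifying $\delta_v(K) \leq G$ from the regular branch condition (done above by peeling off one level at a time) and confirming that $g \in \St_G(n+D-1)$ factors as $\prod_{v \in \mathcal{L}_n}\delta_v(g|_v)$, which follows because an element of $\St(n)$ is determined by its sections at level $n$.
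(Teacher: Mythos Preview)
Your proof is correct and follows essentially the same approach as the paper's: decompose $g \in \St_G(n+D-1)$ as $\prod_{v \in \mathcal{L}_n} \delta_v(g|_v)$ with each factor in $\rist_G(v)$. The paper's version is more terse, directly observing $g|_v \in \St_G(D-1)$ rather than first invoking \cref{lemma: St(n+1) = prod St(n)}, and leaving the justification of $\delta_v(g|_v) \in G$ implicit; your explicit derivation of $\delta_v(K) \leq G$ from the regular branch condition fills in exactly that step.
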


\begin{proof}
If $g \in \St_G(n+D-1)$, for each vertex $v \in \mathcal{L}_n$ we have $g|_v \in \St_G(D-1)$. Then $\delta_v(g|_v) \in \rist_G(v)$ and $g = \prod_{v \in \mathcal{L}_n} \delta_v(g|_v) \in \RiSt_G(n)$. As $\St_G(n+D-1) \leq_f G$ and $G$ is not finite, then $\RiSt_G(n) \neq 1$ for all $n \in \NN$.
\end{proof}

\section{Algorithms to find minimal pattern subgroups}
\label{section: algorithms to find minimal pattern subgroups}

\subsection{Minimal pattern subgroups}

Let $T$ be a $d$-regular tree and $D$ a natural number. Then $\mathcal{L}_D$ has $d^D$ vertices that can be labeled from left to right as the numbers from $1$ to $d^D$. This allows us to describe elements in $\Aut(T^D)$ as permutations in $\Sym(d^D)$ that will facilitate the description of explicit examples.

\begin{Example}
In the $2$-regular tree with depth $D = 2$, consider $\mathcal{P} = \<\alpha>$ where $\alpha = (1,2)(3,4)$. Its portrait is shown in the \cref{figure: portrait of not minimal pattern}, where $\sigma$ is the non-trivial permutation of $\Sym(2)$.

\begin{figure}[h!]
    \centering
    \begin{tikzpicture}
    \node {$\id$}
        child {node {$\sigma$}}
        child {node {$\sigma$}};
    \end{tikzpicture}
    \caption{Portrait of the element $\alpha$.}
    \label{figure: portrait of not minimal pattern}
\end{figure}
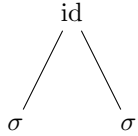

Notice that if an element $g \in G_\mathcal{P}$ had a vertex $v$  in $T$ such that $g|_v^2 = \alpha$, then the vertices underneath $v$ would have to have a pattern that starts with $\sigma$, but $\mathcal{P}$ does not contain any pattern like this. Hence,  $G_\mathcal{P} = 1$. 
\label{example: problem minimal pattern}
\end{Example}

\cref{example: problem minimal pattern} suggests the existence of a minimal pattern subgroup, that is justified in the following proposition:

\begin{Proposition}[{\cite[Section ``Minimal patterns groups"]{Bondarenko2014}}]
Let $T$ be a $d$-regular tree, $\mathcal{Q} \leq \Aut(T^D)$ and $G_{\mathcal{Q}}$ the group of finite type associated to $\mathcal{Q}$. Define the subgroup $\mathcal{P} = \pi_D(G_\mathcal{Q})$. Then $G_\mathcal{Q} = G_\mathcal{P}$ and $\mathcal{P} \leq \mathcal{Q}$.
\label{proposition: existence minimal pattern}
\end{Proposition}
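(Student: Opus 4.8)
The plan is to show the two assertions separately: first that $G_\mathcal{Q} = G_\mathcal{P}$, and then that $\mathcal{P} \leq \mathcal{Q}$.

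For the inclusion $\mathcal{P} \leq \mathcal{Q}$: by definition $\mathcal{P} = \pi_D(G_\mathcal{Q}) = \{g|_\emptyset^D : g \in G_\mathcal{Q}\}$. For any $g \in G_\mathcal{Q}$, applying the defining condition of $G_\mathcal{Q}$ at the root $v = \emptyset$ gives $g|_\emptyset^D \in \mathcal{Q}$. Hence every generator of $\mathcal{P}$ lies in $\mathcal{Q}$, and since $\mathcal{P}$ is the image of a homomorphism it is already a subgroup, so $\mathcal{P} \leq \mathcal{Q}$.

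For the equality $G_\mathcal{Q} = G_\mathcal{P}$: the inclusion $G_\mathcal{Q} \subseteq G_\mathcal{P}$ follows easily from $\mathcal{P} \leq \mathcal{Q}$ being the \emph{smaller} group — wait, that is the wrong direction, so I must be more careful. The correct argument for $G_\mathcal{Q} \subseteq G_\mathcal{P}$ is: take $g \in G_\mathcal{Q}$ and a vertex $v$ in $T$. We must show $g|_v^D \in \mathcal{P} = \pi_D(G_\mathcal{Q})$, i.e. we must exhibit an element $h \in G_\mathcal{Q}$ with $h|_\emptyset^D = g|_v^D$. The natural candidate is $h = \delta_v(g|_v)$, the automorphism supported on $T_v$ whose section at $v$ is $g|_v$ (using the identification of $T_v$ with $T$). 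One checks $h \in G_\mathcal{Q}$ by verifying the pattern condition at every vertex $w$: if $w$ is not below $v$ and $w \neq v$ then $h|_w^D$ is either trivial or a truncation of the portrait that is trivial up to level $D$, which forces $h|_w^D = \id \in \mathcal{Q}$ (one should check this truncation really is the identity — it is, because $h$ acts trivially off $T_v$, and off $T_v$ within distance $D$ of $w$ the map $h$ is the identity unless $v$ itself lies in $T_w^D$, in which case $h|_w^D$ is a "shifted copy" of $g|_v^D$ placed at the coordinate for $v$, and \emph{this} element equals $(g|_w^D)$-with-everything-outside-the-$v$-branch-deleted, which one shows lies in $\mathcal{Q}$ using that $g|_w^D \in \mathcal{Q}$ — here is the real content); and if $w$ is below $v$, say $w = vu$, then $h|_w^D = (g|_v)|_u^D = g|_{vu}^D = g|_w^D \in \mathcal{Q}$. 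Thus $h \in G_\mathcal{Q}$ and $g|_v^D = h|_\emptyset^D \in \mathcal{P}$. The reverse inclusion $G_\mathcal{P} \subseteq G_\mathcal{Q}$ is immediate from $\mathcal{P} \leq \mathcal{Q}$: any $g$ with all $D$-sections in $\mathcal{P}$ has all $D$-sections in $\mathcal{Q}$.

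The main obstacle is the bookkeeping in the step $G_\mathcal{Q} \subseteq G_\mathcal{P}$: one must confirm that the "localized" automorphism $\delta_v(g|_v)$ genuinely satisfies the pattern condition at \emph{every} vertex, and in particular that the $D$-truncated sections taken at vertices $w$ lying above $v$ (where the support of $\delta_v(g|_v)$ intrudes into $T_w^D$) still land in $\mathcal{Q}$. The clean way to handle this is to observe that the $D$-section at such a $w$ is obtained from $g|_w^D \in \mathcal{Q}$ by replacing the portrait outside the branch towards $v$ by the trivial portrait; since $\mathcal{Q}$ is a subgroup of $\Aut(T^D)$ this "coordinate projection onto one branch" need not preserve $\mathcal{Q}$ in general — so instead one argues that for $w$ strictly above $v$ with $d(w,v) < D$, the element $h|_w^D$ is actually of the form $\delta_{v'}\big((g|_v)|_\emptyset^{D - d(w,v)}\big)$ for the appropriate truncated subtree, and then reduces, by an induction on $d(w,v)$ going downward toward $v$, to the already-verified case $w = v$. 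Alternatively, and more efficiently, note that it suffices to prove the claim for $v$ ranging only over the \emph{first level}, because sections compose (\cref{equation: properties sections}), and then the case analysis for $w$ above $v$ is vacuous when $v \in \mathcal{L}_1$ except at $w = \emptyset$, which is handled directly. I would present the argument via this first-level reduction together with an induction on the level of $v$.
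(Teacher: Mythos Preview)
The paper does not supply its own proof of this proposition; it cites Bondarenko--Samoilovych. So the comparison reduces to assessing whether your argument is correct.

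Your proof of $\mathcal{P}\le\mathcal{Q}$ and of the inclusion $G_\mathcal{P}\subseteq G_\mathcal{Q}$ is fine. The problem is the other inclusion $G_\mathcal{Q}\subseteq G_\mathcal{P}$, where you choose the wrong witness. You want, for $g\in G_\mathcal{Q}$ and a vertex $v$, some $h\in G_\mathcal{Q}$ with $\pi_D(h)=g|_v^D$. The obvious choice is $h=g|_v$ itself: for any vertex $w$ one has $(g|_v)|_w^D=g|_{vw}^D\in\mathcal{Q}$ by the very definition of $G_\mathcal{Q}$, so $g|_v\in G_\mathcal{Q}$ and hence $g|_v^D=\pi_D(g|_v)\in\mathcal{P}$. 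This is just the (trivial) self-similarity of $G_\mathcal{Q}$, recorded later in the paper as part of \cref{proposition: finite type is closed self-similar and regular branch}.

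Your chosen candidate $h=\delta_v(g|_v)$ does not work in general, and the first-level reduction does not rescue it. For $v\in\mathcal{L}_1$ the only vertex ``above'' $v$ is $w=\emptyset$, and there $\delta_v(g|_v)|_\emptyset^D$ is the pattern that is trivial off the $v$-branch and equal to $\pi_{D-1}(g|_v)$ on it; this coordinate projection need not lie in $\mathcal{Q}$. Concretely, with $d=2$, $D=2$ and $\mathcal{Q}=\{(\id;\id,\id),(\sigma;\id,\id),(\id;\sigma,\sigma),(\sigma;\sigma,\sigma)\}$ (which is already a minimal pattern subgroup), take $g\in G_\mathcal{Q}$ with $g|_\emptyset^2=(\id;\sigma,\sigma)$; then $\delta_1(g|_1)|_\emptyset^2=(\id;\sigma,\id)\notin\mathcal{Q}$, so $\delta_1(g|_1)\notin G_\mathcal{Q}$. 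The phrase ``handled directly'' at the end of your write-up is exactly where the argument breaks.
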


If $G$ is a group of finite type of depth $D$, the subgroup $\mathcal{P}$ given in \cref{proposition: existence minimal pattern} is called the \textit{minimal pattern subgroup}.

In \cite[``Pattern graph"]{Bondarenko2014}, authors provide an algorithm that given a subgroup $H \leq \Aut(T^D)$, it returns $\mathcal{P}(H)$, the associated minimal pattern subgroup. The algorithm implies constructing a labeled directed graph whose vertices are the patterns in $H$, and the edges are labeled with the vertices at level one of the tree. Two patterns $a$ and $b$ are joined by an arrow from $a$ to $b$ with label $x$ if the pattern $a$ can be extended using the pattern $b$ through the vertex $x$. The algorithm deletes those patterns that cannot be extended along some vertex of the tree and when the reduction is no longer possible, the remaining patterns conform $\mathcal{P}(H)$, the minimal pattern subgroup associated to $H$. The problem with this algorithm is that when $H$ is too big, the number of vertices and edges in the graph increases considerably and no computer can take a reasonable time to find $\mathcal{P}(H)$. Nevertheless, the algorithm can be improved by using the group structure of $H$ and $\mathcal{P}(H)$.

\subsection{Improving the algorithm that finds $\mathcal{P}(H)$}

Let $T$ be a $d$-regular tree, $D$ a natural number, $p \in \Aut(T^D)$ a pattern and $x \in \mathcal{L}_1$ a vertex in level one. The notation $p|_x$ will refer to the section of $D-1$ levels of $p$ at the vertex $x \in \mathcal{L}_1$. In particular, $p|_x \in \pi_{D-1}(\Aut(T^{D-1}))$.

\begin{Definition}
Given $H$ a subgroup of $\Aut(T^D)$, we say that $K \leq H$ is \textit{closed under patterns} of $H$ if for all $k \in K$ and $x \in \mathcal{L}_1$ then $k|_x \in \pi_{D-1}(H)$. 
\end{Definition}

\begin{Lemma}
Let $T$ be a $d$-regular tree, $D$ a natural number and $\mathcal{P} \leq \Aut(T^D)$. Then, $\mathcal{P}$ is a minimal pattern subgroup if and only if $\mathcal{P}$ is closed under patterns of itself.
\label{lemma: equivalence minimal pattern}
\end{Lemma}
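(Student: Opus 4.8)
The starting observation is that $\pi_D(G_\mathcal{P})\le\mathcal{P}$ always holds: if $g\in G_\mathcal{P}$ then $\pi_D(g)=g|_\emptyset^D\in\mathcal{P}$ by the definition of $G_\mathcal{P}$. Hence $\mathcal{P}$ is a minimal pattern subgroup (that is, $\mathcal{P}=\pi_D(G_\mathcal{P})$; see \cref{proposition: existence minimal pattern}) exactly when every pattern $p\in\mathcal{P}$ \emph{extends}, meaning $p=\pi_D(g)$ for some $g\in G_\mathcal{P}$. The bridge to the notion of being closed under patterns is the elementary identity
\[
p|_x=\pi_{D-1}\!\left(g|_x^D\right)\qquad\text{whenever }p=g|_\emptyset^D\text{ and }x\in\mathcal{L}_1,
\]
and more generally $\bigl(g|_v^D\bigr)\big|_x=\pi_{D-1}\!\left(g|_{vx}^D\right)=g|_{vx}^{D-1}$ for any vertex $v$; this is immediate from $g|_{vx}=(g|_v)|_x$ and the definition of sections. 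In words, the $D$-level pattern of $g$ at $v$, read off along the edge to $vx$, records precisely the $(D-1)$-level pattern of $g$ at $vx$.

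For the implication ``minimal $\Rightarrow$ closed under patterns'', take $p\in\mathcal{P}=\pi_D(G_\mathcal{P})$ and pick $g\in G_\mathcal{P}$ with $g|_\emptyset^D=p$. For any $x\in\mathcal{L}_1$ we have $g|_x^D\in\mathcal{P}$ by the definition of $G_\mathcal{P}$, so the identity above gives $p|_x=\pi_{D-1}(g|_x^D)\in\pi_{D-1}(\mathcal{P})$. Thus $\mathcal{P}$ is closed under patterns of itself.

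For the converse, assume $\mathcal{P}$ is closed under patterns of itself and fix $p\in\mathcal{P}$. I build a family $(p_v)_{v\in T}$ of patterns in $\mathcal{P}$ by recursion on $|v|$: set $p_\emptyset:=p$, and, given $p_v\in\mathcal{P}$, for each $x\in\mathcal{L}_1$ use $p_v|_x\in\pi_{D-1}(\mathcal{P})$ to choose $p_{vx}\in\mathcal{P}$ with $\pi_{D-1}(p_{vx})=p_v|_x$ (only countably many choices). Let $g\in\Aut(T)$ be the automorphism whose portrait is $g|_v^1:=\pi_1(p_v)$ for every vertex $v$. If $g|_v^D=p_v$ for all $v$, then $g\in G_\mathcal{P}$ and $\pi_D(g)=p_\emptyset=p$, so $p$ extends; as $p$ is arbitrary this yields $\mathcal{P}=\pi_D(G_\mathcal{P})$, i.e.\ $\mathcal{P}$ is a minimal pattern subgroup.

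The verification of $g|_v^D=p_v$ is the only non-formal point, and it is where the compatibility $\pi_{D-1}(p_{vx})=p_v|_x$ built into the recursion is used. First prove, by induction on $|w|$, the auxiliary claim $\pi_{D-|w|}(p_{vw})=(p_v)|_w$ for all vertices $v$ and all $w$ with $|w|\le D-1$; writing $w=w'x$ with $x\in\mathcal{L}_1$, the inductive step chains
\[
\pi_{D-|w|}(p_{vw})=\pi_{D-|w|}\!\bigl((p_{vw'})|_x\bigr)=\bigl(\pi_{D-|w'|}(p_{vw'})\bigr)\big|_x=\bigl((p_v)|_{w'}\bigr)\big|_x=(p_v)|_w,
\]
using the construction of $p_{vw}=p_{(vw')x}$, the fact that truncation $\pi_m$ commutes with taking sections at level-one vertices, the inductive hypothesis, and $(q|_{w'})|_x=q|_{w'x}$. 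Applying $\pi_1$ to the auxiliary claim (valid since $|w|\le D-1$) gives $\pi_1(p_{vw})=(p_v)|_w^1$. Now the portrait of $g|_v^D$ on $T^D$ has label $g|_{vw}^1=\pi_1(p_{vw})$ at each vertex $w$ with $|w|\le D-1$, while the portrait of $p_v$ has label $(p_v)|_w^1$ there; the two portraits coincide, so $g|_v^D=p_v$, as needed. The only part requiring genuine care is keeping track of which truncated tree each object lives in; the rest is a direct unwinding of the definitions of sections and portraits.
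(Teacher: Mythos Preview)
Your proof is correct and follows essentially the same approach as the paper's: both directions match, and in particular the converse is proved by recursively extending $p$ to an element $g\in G_\mathcal{P}$ via compatible patterns $p_v$. The only difference is that you spell out in detail the verification that the constructed $g$ satisfies $g|_v^D=p_v$ (via the auxiliary claim $\pi_{D-|w|}(p_{vw})=(p_v)|_w$), whereas the paper simply asserts that the gluing process yields such a $g$.
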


\begin{proof}
$(\Rightarrow)$ If $\mathcal{P}$ is a minimal pattern subgroup then there exists $G$ a group of finite type of depth $D$ such that $\pi_D(G) = \mathcal{P}$. Then, if $p \in \mathcal{P}$ and $x \in \mathcal{L}_1$, there exists $g \in G$ such that $g|_\emptyset^D = p$ and since $G$ is self-similar, also $g|_x^{D-1} = p|_x \in \pi_{D-1}(\mathcal{P})$.

$(\Leftarrow)$ Conversely, consider $G = G_\mathcal{P}$ and call $\mathcal{Q} = \pi_D(G)$ the minimal pattern subgroup of $G$. By \cref{proposition: existence minimal pattern}, we know that $\mathcal{Q} \leq \mathcal{P}$ and the goal is to prove they are equal. Start with $p \in \mathcal{P}$. Since $p|_x \in \pi_{D-1}(\mathcal{P})$ because $\mathcal{P}$ is closed under patterns of itself, there exists a pattern $q \in \mathcal{P}$ such that $p|_x = \pi_{D-1}(q)$. Gluing $q$ to $p$ and doing this for every vertex $x \in \mathcal{L}_1$, we extend $p$ with depth $D$ to an element defined until level $D+1$. But then, each pattern of depth $D$ ending the element created so far can also be extended by the property of $\mathcal{P}$, allowing us to construct an element $g \in G$ such that $g|_\emptyset^D = p$. Then $p \in \mathcal{Q}$ and $\mathcal{P} = \mathcal{Q}$, implying that $\mathcal{P}$ is the minimal pattern subgroup.
\end{proof}

In particular by \cref{lemma: equivalence minimal pattern}, the subgroup $\mathcal{P}(H)$ is closed under patterns of $H$. The following lemmas have as goal to prove that $H$ has a subgroup that is maximal among the subgroups of $H$ closed under patterns of $H$.

\begin{Lemma}
Let $T$ be a $d$-regular tree, $D$ a natural number, $H$ a subgroup of $\Aut(T^D)$, $K$ a subgroup of $H$ and $S$ a generating set of $K$. Then, $K$ is closed under patterns of $H$ if and only if for all $s \in S$ and $x \in \mathcal{L}_1$ we have $s|_x \in \pi_{D-1}(H)$. 
\label{lemma: closed under H generator set}
\end{Lemma}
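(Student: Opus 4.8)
The forward implication will be immediate from the definitions: if $K$ is closed under patterns of $H$, then $k|_x \in \pi_{D-1}(H)$ for every $k \in K$ and every $x \in \mathcal{L}_1$, and in particular for each $s \in S \subseteq K$, so there is nothing to do in that direction.

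For the converse, my plan is to package the target condition into a subgroup. I would set
\[
\widetilde{H} := \set{g \in \Aut(T^D) : g|_x \in \pi_{D-1}(H) \text{ for all } x \in \mathcal{L}_1},
\]
observe that the hypothesis says exactly that $S \subseteq \widetilde{H}$, and then show that $\widetilde{H}$ is a subgroup of $\Aut(T^D)$. Once that is done, $K = \langle S \rangle \subseteq \widetilde{H}$ follows automatically, which is precisely the statement that $K$ is closed under patterns of $H$.

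To check that $\widetilde{H}$ is a subgroup I would invoke the section identities of \cref{equation: properties sections} (in the version for $\Aut(T^D)$, with $n = D-1$). The identity element lies in $\widetilde{H}$ trivially. For closure under products, given $g, h \in \widetilde{H}$ and $x \in \mathcal{L}_1$ one computes $(gh)|_x = g|_{h(x)}\,h|_x$; here $h(x) \in \mathcal{L}_1$ because $h$ permutes the vertices of $\mathcal{L}_1$ among themselves, so both factors lie in the subgroup $\pi_{D-1}(H)$ and hence so does $(gh)|_x$. For inverses, $(g^{-1})|_x = (g|_{g^{-1}(x)})^{-1}$ with $g^{-1}(x) \in \mathcal{L}_1$, so this again lands in $\pi_{D-1}(H)$. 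That finishes the argument.

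The only subtlety — and the single place where one must pay attention — is that the level-one section of a product $gh$ at a vertex $x$ is governed by the section of $g$ at $h(x)$, not at $x$. This is harmless exactly because automorphisms of $T^D$ move vertices of $\mathcal{L}_1$ only to other vertices of $\mathcal{L}_1$, so the condition defining $\widetilde{H}$ is stable under multiplication. I do not expect any genuine obstacle beyond keeping track of this bookkeeping.
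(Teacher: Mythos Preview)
Your proof is correct and essentially the same as the paper's: both use the section identities $(gh)|_x = g|_{h(x)}\,h|_x$ and $(g^{-1})|_x = (g|_{g^{-1}(x)})^{-1}$ together with the fact that $\pi_{D-1}(H)$ is a subgroup. The only cosmetic difference is that you package the target condition as membership in an auxiliary subgroup $\widetilde{H}$, whereas the paper phrases the same computation as an induction on word length in $S$.
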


\begin{proof}
Direct is straightforward. For the converse, we apply induction on the length of $g$ as a word in $S$. If the length is $1$ then $g = s^{\pm 1}$. The case $g = s$ is the hypothesis. If $g = s^{-1}$, then $(s^{-1})|_x = (s|_{s^{-1}(x)})^{-1} \in \pi_{D-1}(H)$ because $H$ is a subgroup. Similarly, since $(gh)|_x = g|_{h(x)} \, h|_x$ and each term is in $\pi_{D-1}(H)$, then $(gh)|_x \in \pi_{D-1}(H)$.
\end{proof}

\begin{Lemma}
Let $T$ be a $d$-regular tree, $D$ a natural number, $H$ a subgroup of $\Aut(T^D)$ and $K_1, K_2 \leq H$ closed under patterns of $H$. Then $\<K_1,K_2>$ is closed under patterns of $H$.
\label{lemma: join closed under patterns}
\end{Lemma}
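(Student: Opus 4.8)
The plan is to reduce the statement immediately to \cref{lemma: closed under H generator set}, which says that being closed under patterns of $H$ is a property that can be checked on any chosen generating set. First I would fix a generating set $S_1$ of $K_1$ and a generating set $S_2$ of $K_2$; then $S := S_1 \cup S_2$ is a generating set of $\langle K_1, K_2\rangle$. For every $s \in S_1$ and every $x \in \mathcal{L}_1$, the hypothesis that $K_1$ is closed under patterns of $H$ gives $s|_x \in \pi_{D-1}(H)$; symmetrically, for every $s \in S_2$ and every $x \in \mathcal{L}_1$ the hypothesis on $K_2$ gives $s|_x \in \pi_{D-1}(H)$. Hence the criterion of \cref{lemma: closed under H generator set} is met for the generating set $S$ of $\langle K_1, K_2\rangle$, and we conclude that $\langle K_1, K_2\rangle$ is closed under patterns of $H$.

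I do not expect any genuine obstacle here: the statement is a purely formal consequence of the generator-set criterion, and the only subtlety is precisely the one that \cref{lemma: closed under H generator set} resolves, namely that one need not verify the pattern condition on all of $\langle K_1, K_2\rangle$ but only on generators. If one preferred to avoid citing that lemma, one could argue directly by induction on word length in $S_1 \cup S_2$, re-using the identities $(gh)|_x = g|_{h(x)}\, h|_x$ and $(g^{-1})|_x = (g|_{g^{-1}(x)})^{-1}$ from \cref{equation: properties sections} together with the fact that $\pi_{D-1}(H)$ is a subgroup; but this merely repeats the proof of \cref{lemma: closed under H generator set}, so invoking that lemma is cleaner.

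This lemma is the last of the three auxiliary results (generator-set reduction, stability under join) that together will let one extract, inside an arbitrary $H \le \Aut(T^D)$, a unique maximal subgroup closed under patterns of $H$, which by \cref{lemma: equivalence minimal pattern} is exactly $\mathcal{P}(H)$.
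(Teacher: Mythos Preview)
Your proof is correct and essentially identical to the paper's: take generating sets $S_1, S_2$ of $K_1, K_2$, observe that $S_1 \cup S_2$ generates $\langle K_1, K_2\rangle$, and apply \cref{lemma: closed under H generator set}. One small inaccuracy in your closing commentary: the maximal subgroup of $H$ closed under patterns of $H$ is what the paper calls $\mathfrak{P}(H)$, not $\mathcal{P}(H)$; the two need not coincide, and the algorithm iterates $\mathfrak{P}$ until it stabilizes at $\mathcal{P}(H)$.
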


\begin{proof}
If $S_i$ is a generating set of $K_i$ for $i = 1,2$, then $S_1 \cup S_2$ is a generating set of $\<K_1, K_2>$. Since $K_1$ and $K_2$ are closed under patterns of $H$, then $\<K_1, K_2>$ is closed under patterns of $H$ by \cref{lemma: closed under H generator set}.
\end{proof}

Let $T$ be a $d$-regular tree, $D$ a natural number and let $\mathcal{L}$ be the lattice of subgroups of $\Aut(T^D)$ ordered by inclusion. If $H \geq K$, we can put a distance between them, defining $d(H,K)$ as the smallest $n$ such that there is a chain of subgroups $H = H_0 \gneq H_1 \gneq \dots \gneq H_n = K$ where $H_{i+i}$ is a maximal subgroup of $H_i$ for all $i = 0,\dots,n-1$. Clearly $d(H,H) = 0$ and it is not hard to see that if $H \geq K \geq J$, then $d(H,J) \leq d(H,K) + d(K,J)$.

\begin{Lemma}
Let $T$ be a $d$-regular tree, $D$ a natural number, $H \leq \Aut(T^D)$ and $K_1$ the closest subgroup of $H$ closed under patterns of $H$. If $K_2$ is another subgroup of $H$ closed under patterns of $H$, then $K_2 \leq K_1$.
\label{lemma: maximal closed under patterns}
\end{Lemma}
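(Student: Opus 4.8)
The plan is to prove that the family of subgroups of $H$ closed under patterns of $H$ has a unique greatest element, and that this element is precisely the closest one $K_1$; the conclusion $K_2\le K_1$ is then immediate.

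First I would set $\mathcal{C}:=\set{K\le H \st K \text{ is closed under patterns of } H}$. This is a nonempty finite set: it is finite because $\Aut(T^D)$ is finite, and the trivial subgroup belongs to it since $1|_x=\id\in\pi_{D-1}(H)$ for every $x\in\mathcal{L}_1$. By \cref{lemma: join closed under patterns}, the join of two members of $\mathcal{C}$ is again in $\mathcal{C}$, and iterating this over the finitely many members of $\mathcal{C}$ shows that $K_{\max}:=\<K : K\in\mathcal{C}>$ is itself in $\mathcal{C}$. Since $K\le K_{\max}$ for every $K\in\mathcal{C}$ by construction, $K_{\max}$ is the unique maximal element of $\mathcal{C}$.

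Next I would show that $K_{\max}$ achieves the minimum of $d(H,-)$ over $\mathcal{C}$, so that $K_{\max}$ deserves the name ``the closest subgroup of $H$ closed under patterns of $H$'', i.e. $K_1=K_{\max}$. For this I would establish the monotonicity property $d(H,L')\le d(H,L)$ whenever $H\ge L'\ge L$ --- moving the lower endpoint of an interval up cannot lengthen its minimal unrefinable chain. Granting this with $L'=K_{\max}$ and $L=K$ for an arbitrary $K\in\mathcal{C}$ gives $d(H,K_{\max})\le d(H,K)$, so $K_{\max}$ is at minimal distance from $H$; as it also contains every member of $\mathcal{C}$, it is unambiguously the closest such subgroup. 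Then for any $K_2\in\mathcal{C}$ we conclude $K_2\le K_{\max}=K_1$, as desired.

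The main obstacle is the monotonicity property, and the reason care is needed is that subgroup lattices are not modular: one cannot simply transport a shortest unrefinable chain $H=H_0\gneq H_1\gneq\dots\gneq H_n=L$ through $L'$ cover-by-cover, because replacing each $H_i$ by $\<H_i,L'>$ may turn a single maximal step into a longer descent. I would instead argue by induction on $n=d(H,L)$: with $M:=H_1$ (maximal in $H$, $d(M,L)=n-1$), if $L'\le M$ one applies the inductive hypothesis inside $M$ to get $d(M,L')\le n-1$ and prepends the step $H\gneq M$; if $L'\not\le M$ then $\<M,L'>=H$ by maximality and $L\le M\cap L'\subsetneq L'$, so one applies the inductive hypothesis to $M\ge M\cap L'\ge L$ and transfers the resulting chain across the ``diamond'' $L\le M\cap L'$, $M$, $L'$, $H=\<M,L'>$. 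Bounding this diamond transfer --- showing that a chain from $M$ down to $M\cap L'$ produces a chain from $H$ down to $L'$ of no greater length --- is the technical heart, and is where the combinatorics of unrefinable chains must be handled carefully.
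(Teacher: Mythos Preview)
Your overall plan is sound, but it is considerably more elaborate than the paper's. The paper argues by direct contradiction in three lines: if $K_2\nleq K_1$, then $K_1\lneq\langle K_1,K_2\rangle\le H$, and by \cref{lemma: join closed under patterns} this join is again closed under patterns of $H$; this already contradicts the choice of $K_1$ as the closest. There is no need to assemble a global maximum $K_{\max}$ in advance---the single join with the offending $K_2$ does all the work.

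That said, you have correctly isolated the one point the paper uses without comment: the monotonicity $L\le L'\le H\Rightarrow d(H,L')\le d(H,L)$. The paper's contradiction rests on it just as much as your identification $K_1=K_{\max}$ does. Your inductive outline is the natural line of attack, and your caution about non-modularity is well placed; but you leave the Case~2 diamond transfer open, so your argument is incomplete at exactly the spot where the paper is silent. (In fact the \emph{strict} inequality can genuinely fail: in $H=S_4$ both $1$ and $\langle(12)(34)\rangle$ sit at distance $3$, so only the non-strict version can hold in general.) What the algorithm that follows actually exploits is only that the family has a unique greatest element, and your first paragraph already establishes that cleanly without any appeal to $d$.
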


\begin{proof}
If $K_2$ is trivial there is nothing to do. Suppose that $K_2$ is not trivial. By closeness of $K_1$ to $H$, then necessarily $K_1$ is also non-trivial. Now, if $K_2 \nleq K_1$, then $K_1 < \<K_1,K_2> \leq H$ and by \cref{lemma: join closed under patterns}, the join is closed under patterns of $H$. However, this contradicts the fact that $K_1$ was the closest to $H$.
\end{proof}

\cref{lemma: maximal closed under patterns} justifies the existence of a subgroup $\mathfrak{P}(H)$ that is maximal between the subgroups of $H$ that are closed under patterns of $H$. Since $\mathcal{P}(H)$ is closed under patterns of $H$, necessarily $\mathcal{P}(H) \leq \mathfrak{P}(H)$.

The next step is to find an efficient algorithm that computes $\mathfrak{P}(H)$. Let $S$ be a generating set of $H$. We can separate $S$ in a disjoint union of sets $S_1$ and $S_2$ such that $s \in S_1$ if and only if $s|_x \in \pi_{D-1}(H)$ for all $x \in \mathcal{L}_1$. Let $K = \<S_1>$. By \cref{lemma: closed under H generator set}, $K$ is closed under patterns of $H$ and by \cref{lemma: maximal closed under patterns} we have that $K \leq \mathfrak{P}(H)$. We can use any generating set $S$ but there is a compromise between choosing $S$ big or small. If it is big, there are more chances to have $S_1$ also big and consequently $K = \<S_1>$ big as well, but on the other side, the bigger $S$ is, the harder is for the software to do calculations with $H = \<S>$. Furthermore, finding useful extra elements $s$ to add in a generating set $S$ is not easy to control.

Once $K$ is obtained, consider $H/K$. Observe that if $t_1 \equiv t_2 \pmod{K}$, then $t_1 \in \mathfrak{P}(H)$ if and only if $t_2 \in \mathfrak{P}(H)$ because $K \leq \mathfrak{P}(H)$. Therefore, a coclass of $H/K$ is included in $\mathfrak{P}(H)$ independently of the representative of $H/K$ we choose. Take $T$ a set of representatives of $H/K$ and let $t \in T \setminus K$. If $t|_x \in \pi_{D-1}(H)$ for all $x \in \mathcal{L}_1$, this means that $t \in \mathfrak{P}(H)$, so we can replace $K$ by $\<K,t>$ (that is necessarily bigger than the old $K$) and recalculate $H/K$. Since $H$ is a finite group, once we find a $K$ such that for some set of representatives $T$ of $H/K$, no $t \in \mathfrak{P}(H)$, that means that this latter $K$ equals $\mathfrak{P}(H)$.

The last ingredient for the algorithm is the following: 

\begin{Lemma}
Let $T$ be a $d$-regular tree, $D$ a natural number and $S$ a generating set of $\mathcal{P} \leq \Aut(T^D)$. Then $\mathcal{P}$ is a minimal pattern subgroup if and only if for all $s \in S$ and $x \in \mathcal{L}_1$, we have $s|_x \in \pi_{D-1}(\mathcal{P})$.
\label{lemma: minimal pattern generating set}
\end{Lemma}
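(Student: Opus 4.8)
The plan is to derive this statement by chaining together the two equivalences already established in this subsection, applied to the degenerate case in which the ambient group equals the subgroup. First I would recall \cref{lemma: equivalence minimal pattern}, which characterizes minimal pattern subgroups of $\Aut(T^D)$ as precisely those $\mathcal{P}$ that are closed under patterns of themselves. This immediately reduces the claim to the assertion that $\mathcal{P}$ is closed under patterns of $\mathcal{P}$ if and only if every generator $s \in S$ satisfies $s|_x \in \pi_{D-1}(\mathcal{P})$ for all $x \in \mathcal{L}_1$.

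Second, I would invoke \cref{lemma: closed under H generator set} with the specific choice $H = \mathcal{P}$, $K = \mathcal{P}$, and generating set $S$ of $K$. One small bookkeeping point: that lemma is phrased for a subgroup $K \leq H$ with generating set $S$, so one must observe that $\mathcal{P}$ is (trivially) a subgroup of itself and that $S$ is a legitimate generating set, after which the lemma specializes to: $\mathcal{P}$ is closed under patterns of $\mathcal{P}$ if and only if for all $s \in S$ and $x \in \mathcal{L}_1$ we have $s|_x \in \pi_{D-1}(\mathcal{P})$. Combining this biconditional with the one from \cref{lemma: equivalence minimal pattern} yields the statement.

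Since the substantive arguments (the reduction argument via gluing patterns, and the induction on word length in $S$) have already been carried out in \cref{lemma: equivalence minimal pattern} and \cref{lemma: closed under H generator set}, there is no real obstacle here; the proof is just a two-line chaining of citations. The only thing to take care of is confirming that the hypotheses of \cref{lemma: closed under H generator set} genuinely apply when $H = K = \mathcal{P}$, which is immediate. I would therefore present the proof as a short paragraph that states these two applications explicitly and concludes.
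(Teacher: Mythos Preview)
Your proposal is correct and follows exactly the same approach as the paper's proof: apply \cref{lemma: equivalence minimal pattern} to reduce to ``closed under patterns of itself,'' then invoke \cref{lemma: closed under H generator set} with $K = H = \mathcal{P}$.
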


\begin{proof}
By \cref{lemma: equivalence minimal pattern}, the subgroup $\mathcal{P}$ is a minimal pattern subgroup if and only if it is closed under patterns of itself. Then, the result follows by \cref{lemma: closed under H generator set} applied to $K = H = \mathcal{P}$.
\end{proof}

Finally, the algorithm to find $\mathcal{P}(H)$ is as follows:

\begin{enumerate}
\item Take $S$ a minimal generating set of $H$. If \cref{lemma: minimal pattern generating set} applies, then $\mathcal{P}(H) = H$. 
\item If not, we look for $\mathfrak{P}(H)$ as it was explained before. Since step 1 did not apply, necessarily $\mathfrak{P}(H) < H$. On the other side, we know that $\mathcal{P}(H) \leq \mathfrak{P}(H)$, so we may replace $H$ by $\mathfrak{P}(H)$ and start step 1 again. 
\end{enumerate}

\subsection{Finding all minimal pattern subgroups}

We now focus on constructing an algorithm such that given $d$ and $D$ gives us all the minimal pattern subgroups in a $d$-regular tree with depth $D$. Since, the number of subgroups that we would have to check is huge as $d$ and $D$ increase, some theoretic tools are developed to help in the search.

\begin{Lemma}
\label{lemma: minimal pattern St(1)}
Let $T$ be a $d$-regular tree, $D$ a natural number, and $\mathcal{P} \leq \Aut(T^D)$. If $\mathcal{P}$ is a non-trivial minimal pattern subgroup, then $\mathcal{P} \nsubseteq \pi_D(\St(1))$.
\end{Lemma}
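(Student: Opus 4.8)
The plan is to argue by contradiction, reducing everything to a statement about the label at the root. Suppose $\mathcal{P}$ is a non-trivial minimal pattern subgroup and, contrary to the claim, $\mathcal{P} \subseteq \pi_D(\St(1))$. The first step is to translate this inclusion into the observation that every pattern in $\mathcal{P}$ is trivial on the first level: if $p \in \pi_D(\St(1))$ then $p = g|_\emptyset^D$ for some $g \in \St(1)$, and since $g$ fixes $\mathcal{L}_1$ we get $p|_\emptyset^1 = g|_\emptyset^1 = \id$. Thus the hypothesis says that each $p \in \mathcal{P}$ has trivial label at the root.

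The second step is to propagate this to \emph{all} labels of \emph{all} elements of $G_\mathcal{P}$. Take any $g \in G_\mathcal{P}$ and any vertex $v$ in $T$. By definition of a group of finite type, $g|_v^D \in \mathcal{P}$, and by the first step its root label is trivial, i.e. $g|_v^1 = \id$. Since $v$ was arbitrary, the portrait of $g$ consists entirely of trivial labels, and because the portrait uniquely determines an element of $\Aut(T)$ (as recalled in \cref{subsection: Groups acting on rooted trees}), we conclude $g = \id$. Hence $G_\mathcal{P}$ is trivial.

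The final step is to invoke that $\mathcal{P}$ is a minimal pattern subgroup, so that $\mathcal{P} = \pi_D(G_\mathcal{P})$ by \cref{proposition: existence minimal pattern}; then $\mathcal{P} = \pi_D(\{\id\}) = 1$, contradicting the assumption that $\mathcal{P}$ is non-trivial. I do not expect a real obstacle here: the argument is short, and the only point requiring care is the bookkeeping that lets the condition $\mathcal{P} \subseteq \pi_D(\St(1))$, which a priori only constrains the root, force every section of every element of $G_\mathcal{P}$ to be labeled trivially. One may equally well phrase this without contradiction: the computation above shows that whenever $\mathcal{P} \subseteq \pi_D(\St(1))$ one has $G_\mathcal{P} = 1$, so the only minimal pattern subgroup contained in $\pi_D(\St(1))$ is the trivial one.
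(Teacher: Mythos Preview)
Your proof is correct and is essentially the contrapositive of the paper's argument: the paper starts from a non-trivial $g \in G_\mathcal{P}$ (which exists since $\mathcal{P} = \pi_D(G_\mathcal{P})$ is non-trivial), picks a vertex $v$ with $g|_v^1 \neq 1$, and exhibits $g|_v^D \in \mathcal{P} \setminus \pi_D(\St(1))$. Both arguments rest on the same two facts---that every section $g|_v^D$ lies in $\mathcal{P}$ and that minimality gives $\mathcal{P} = \pi_D(G_\mathcal{P})$---so there is no substantive difference.
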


\begin{proof}
The subgroup $\mathcal{P}$ is non-trivial so there is an element $g \in G_\mathcal{P}$ such that $g|_v^1 \neq 1$ at some vertex $v$. Then $g|_v^D \in \mathcal{P}$ and it is not in $\pi_D(\St(1))$.
\end{proof}

\cref{lemma: minimal pattern St(1)} explains why the subgroup given in \cref{example: problem minimal pattern} is not a minimal pattern subgroup.

\begin{Lemma}
\label{lemma: join minimal pattern subgroups}
Let $T$ be a $d$-regular tree, $D$ a natural number, and $\mathcal{P}_1, \mathcal{P}_2 \leq \Aut(T^D)$. If $\mathcal{P}_1$ and $\mathcal{P}_2$ are minimal patterns then $\<\mathcal{P}_1, \mathcal{P}_2>$ is a minimal pattern.
\end{Lemma}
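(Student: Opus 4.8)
The plan is to reduce everything to the characterization of minimal pattern subgroups given in \cref{lemma: equivalence minimal pattern}: a subgroup $\mathcal{P} \leq \Aut(T^D)$ is a minimal pattern subgroup if and only if it is closed under patterns of itself. So, setting $K := \<\mathcal{P}_1, \mathcal{P}_2>$, it suffices to prove that $K$ is closed under patterns of $K$, and then invoke \cref{lemma: equivalence minimal pattern}.

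The key observation is that being closed under patterns is monotone in the ambient group: if $J \leq H_1 \leq H_2$ and $J$ is closed under patterns of $H_1$, then $J$ is closed under patterns of $H_2$, simply because $\pi_{D-1}(H_1) \subseteq \pi_{D-1}(H_2)$. Applying this with $H_1 = \mathcal{P}_i$ and $H_2 = K$: since $\mathcal{P}_i$ is a minimal pattern subgroup, \cref{lemma: equivalence minimal pattern} says $\mathcal{P}_i$ is closed under patterns of $\mathcal{P}_i$; hence, as $\mathcal{P}_i \leq K$, it is closed under patterns of $K$, for $i = 1, 2$. Now \cref{lemma: join closed under patterns} (applied with $H = K$) gives that $\<\mathcal{P}_1, \mathcal{P}_2> = K$ is closed under patterns of $K$, and \cref{lemma: equivalence minimal pattern} finishes the proof.

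Alternatively, one can argue directly at the level of generators without the monotonicity remark: pick generating sets $S_i$ of $\mathcal{P}_i$, so that $S_1 \cup S_2$ generates $K$; for each $s \in S_i$ and each $x \in \mathcal{L}_1$ we have $s|_x \in \pi_{D-1}(\mathcal{P}_i) \subseteq \pi_{D-1}(K)$ by \cref{lemma: equivalence minimal pattern}; then \cref{lemma: closed under H generator set} (with $H = K$) shows $K$ is closed under patterns of $K$, and again \cref{lemma: equivalence minimal pattern} concludes. I do not expect any real obstacle here — the statement is essentially a formal consequence of the preceding lemmas, and the only thing to be careful about is that the notion ``closed under patterns of $H$'' refers to membership in $\pi_{D-1}(H)$ for the \emph{correct} ambient group $H$, which must be taken to be $K$ (not $\mathcal{P}_i$) throughout the argument.
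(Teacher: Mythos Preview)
Your proof is correct and follows exactly the same route as the paper: use \cref{lemma: equivalence minimal pattern} to get that each $\mathcal{P}_i$ is closed under patterns of itself, observe (via the monotonicity you spell out) that it is then closed under patterns of $\langle\mathcal{P}_1,\mathcal{P}_2\rangle$, and apply \cref{lemma: join closed under patterns} and \cref{lemma: equivalence minimal pattern} to conclude. The only difference is that the paper leaves the monotonicity step implicit, whereas you make it explicit.
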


\begin{proof}
By \cref{lemma: equivalence minimal pattern}, each $\mathcal{P}_i$ is closed under patterns of itself, so it is closed under patterns of $\<\mathcal{P}_1, \mathcal{P}_2>$. Then by \cref{lemma: join closed under patterns}, we have that $\<\mathcal{P}_1, \mathcal{P}_2>$ is closed under patterns of $\<\mathcal{P}_1, \mathcal{P}_2>$.
\end{proof}

Recall the distance between subgroups defined in the previous subsection.

\begin{Lemma}
Let $T$ be a $d$-regular tree, $D$ a natural number, $H \leq \Aut(T^D)$ and $\mathcal{P}_1 \leq H$ the closest  minimal pattern to $H$. If $\mathcal{P}_2$ is another minimal pattern contained in $H$, then $\mathcal{P}_2 \leq \mathcal{P}_1$.
\label{lemma: maximal minimal patterns}
\end{Lemma}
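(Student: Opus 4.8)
The plan is to imitate, almost verbatim, the proof of \cref{lemma: maximal closed under patterns}, with \cref{lemma: join minimal pattern subgroups} playing the role that \cref{lemma: join closed under patterns} played there. I would argue by contradiction: suppose $\mathcal{P}_2 \nleq \mathcal{P}_1$. Since $\mathcal{P}_1$ and $\mathcal{P}_2$ are both contained in $H$, so is $\langle \mathcal{P}_1,\mathcal{P}_2\rangle$, and by \cref{lemma: join minimal pattern subgroups} this join is again a minimal pattern. From $\mathcal{P}_2 \nleq \mathcal{P}_1$ we get $\mathcal{P}_1 \lneq \langle\mathcal{P}_1,\mathcal{P}_2\rangle \leq H$, so $\langle\mathcal{P}_1,\mathcal{P}_2\rangle$ is a minimal pattern contained in $H$ that is strictly closer to $H$ than $\mathcal{P}_1$, contradicting the choice of $\mathcal{P}_1$. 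Hence $\mathcal{P}_2 \leq \mathcal{P}_1$.

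I would also record the conceptual reason behind this. The set of minimal patterns contained in $H$ is non-empty (the trivial subgroup is a minimal pattern, being $\pi_D$ of the trivial group of finite type) and, by \cref{lemma: join minimal pattern subgroups}, closed under joins; since $\Aut(T^D)$ is finite, this set therefore possesses a unique largest element, namely the join of all of its members. Calling that element $\mathcal{P}_1$, every minimal pattern $\mathcal{P}_2 \leq H$ automatically satisfies $\mathcal{P}_2 \leq \mathcal{P}_1$, and $\mathcal{P}_1$ is of course the one closest to $H$. I would include this remark because it also pins down $\mathcal{P}_1$ unambiguously (in particular when several minimal patterns are equidistant from $H$).

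The only delicate point, and the step I would scrutinize, is the assertion that a minimal pattern sitting strictly between $\mathcal{P}_1$ and $H$ is \emph{strictly} closer to $H$ than $\mathcal{P}_1$ is, i.e.\ that the chain-distance $d(H,-)$ strictly decreases along strict inclusions below $H$. This is what the proof of \cref{lemma: maximal closed under patterns} implicitly uses as well, and it can fail for the subgroup lattice of a general finite group when that lattice is not graded. The safe way around it, which I would adopt, is simply to \emph{define} $\mathcal{P}_1$ as the join of all minimal patterns contained in $H$ (as in the previous paragraph) rather than as a distance-minimizer; then the inclusion $\mathcal{P}_2 \leq \mathcal{P}_1$ is immediate and no property of the distance function is needed. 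With that reading there is no real obstacle: the lemma is a bookkeeping consequence of \cref{lemma: join minimal pattern subgroups} together with the finiteness of $\Aut(T^D)$.
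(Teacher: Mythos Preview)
Your proposal is correct and takes essentially the same approach as the paper, which simply says the proof is identical to that of \cref{lemma: maximal closed under patterns}. Your additional remark reframing $\mathcal{P}_1$ as the join of all minimal patterns contained in $H$ is a worthwhile clarification that sidesteps the subtle monotonicity issue with the chain-distance you correctly flag, though the paper does not address this point.
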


\begin{proof}
The proof is identical to the proof of \cref{lemma: maximal closed under patterns}.
\end{proof}

\begin{Lemma}
Let $T$ be a $d$-regular tree and $D$ a natural number. If $H \leq \Aut(T^D)$, then $\mathcal{P}(H)$ is the closest minimal pattern subgroup to $H$.
\label{lemma: P(H) closest minimal pattern}
\end{Lemma}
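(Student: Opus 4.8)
The plan is to show that $\mathcal{P}(H)$ is simultaneously a minimal pattern subgroup contained in $H$ and one that contains \emph{every} minimal pattern subgroup contained in $H$, and then to invoke \cref{lemma: maximal minimal patterns}, which already guarantees that a closest minimal pattern subgroup to $H$ dominates all other minimal pattern subgroups contained in $H$. ``Largest'' together with ``dominates all others'' will force equality.

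\textbf{Key steps.} First I would record that $\mathcal{P}(H)$ is a legitimate candidate: by \cref{proposition: existence minimal pattern} applied with $\mathcal{Q}=H$ we have $\mathcal{P}(H)=\pi_D(G_H)\le H$, and $\mathcal{P}(H)$ is a minimal pattern subgroup, being the minimal pattern of the group of finite type $G_H$ (equivalently, it is closed under patterns of itself by \cref{lemma: equivalence minimal pattern}). Since the trivial subgroup is itself a minimal pattern subgroup contained in $H$, the collection of minimal pattern subgroups inside $H$ is a nonempty finite set, so a closest one to $H$, say $\mathcal{Q}$, exists. Applying \cref{lemma: maximal minimal patterns} with $\mathcal{P}_1=\mathcal{Q}$ and $\mathcal{P}_2=\mathcal{P}(H)$ gives $\mathcal{P}(H)\le\mathcal{Q}$. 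For the reverse inclusion I would use monotonicity of the assignment $\mathcal{R}\mapsto G_\mathcal{R}$: from $\mathcal{Q}\le H$ it is immediate that $G_\mathcal{Q}\le G_H$, since an automorphism whose depth-$D$ sections all lie in $\mathcal{Q}$ has, in particular, all of them in $H$. Since $\pi_D$ is a group homomorphism, $\mathcal{Q}=\pi_D(G_\mathcal{Q})\le\pi_D(G_H)=\mathcal{P}(H)$, where the first equality uses that $\mathcal{Q}$, being a minimal pattern subgroup, is unchanged under ``take the minimal pattern''. Combining the two inclusions yields $\mathcal{Q}=\mathcal{P}(H)$; as $\mathcal{Q}$ was an arbitrary closest minimal pattern subgroup to $H$, this proves the statement and simultaneously shows the closest one is unique.

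\textbf{Main obstacle.} The only point needing a little care is the identity $\mathcal{Q}=\pi_D(G_\mathcal{Q})$ for a minimal pattern subgroup $\mathcal{Q}$, i.e.\ idempotence of the minimal-pattern operation. This is precisely what the $(\Leftarrow)$ direction of \cref{lemma: equivalence minimal pattern} establishes (a subgroup closed under patterns of itself equals $\pi_D$ of its group of finite type), and it also follows from the uniqueness of the minimal pattern subgroup of a given group of finite type coming from \cref{proposition: existence minimal pattern}. Everything else is bookkeeping with results already in hand; notably, no new property of the chain-distance $d$ beyond what is used inside \cref{lemma: maximal minimal patterns} is required.
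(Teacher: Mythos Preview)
Your proof is correct and follows essentially the same approach as the paper: both obtain $\mathcal{P}(H)\le\mathcal{Q}$ from \cref{lemma: maximal minimal patterns}, and both obtain the reverse inclusion via monotonicity of $\mathcal{R}\mapsto G_\mathcal{R}$ together with $\pi_D(G_H)=\mathcal{P}(H)$. The paper phrases the second step as the sandwich $G_{\mathcal{P}(H)}\le G_\mathcal{Q}\le G_H$ with the outer terms equal, while you go directly to $\mathcal{Q}=\pi_D(G_\mathcal{Q})\le\pi_D(G_H)=\mathcal{P}(H)$; these are the same argument.
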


\begin{proof}
Let $\mathcal{P}$ be the closest minimal pattern subgroup to $H$. By \cref{lemma: maximal minimal patterns} we have that $\mathcal{P}(H) \leq \mathcal{P}$ and by monotony, $G_{\mathcal{P}(H)} \leq G_\mathcal{P} \leq G_H$, but the groups in the extremes are the same so $G_{\mathcal{P}(H)} = G_\mathcal{P}$. Then, projecting to the first $D$ levels, $\mathcal{P}(H) = \mathcal{P}$.
\end{proof}

The combination of \cref{lemma: maximal minimal patterns} and \cref{lemma: P(H) closest minimal pattern} is very powerful because it allows us to discard every subgroup of $H$ that is not also a subgroup of $\mathcal{P}(H)$.

We conclude the following algorithm: let $L$ be the waiting set. This set will have the subgroups that need to be analyzed after, and write $P$ for the set of minimal patterns already found. Start setting $L = \set{\Aut(T^D)}$ and $P = \emptyset$. Then, while $L$ is not empty, take in $L$ the subgroup $H$ with the biggest order and remove $H$ from $L$. As we saw before, we have an algorithm to compute $\mathcal{P}(H)$, so we just add $\mathcal{P}(H)$ to $P$ in case the subgroup is not already there. Then, consider $M(\mathcal{P}(H))$ the set of maximal subgroups of $\mathcal{P}(H)$. Let $K \in M(\mathcal{P}(H))$. If $K$ is not already in $L$ nor is $K$ included in $\St(1)$, add $K$ to the set $L$ to analyze it in the future. The program ends when $L$ is empty.

\begin{table}[t]
\centering
\begin{tabular}{|c|c|c|c|}
\hline
\textbf{d} & \textbf{D} & \textbf{\begin{tabular}[c]{@{}c@{}}Number of \\ minimal patterns\end{tabular}} & \textbf{Time} \\ \hline
2 & 2 & 6 & 62 ms \\ \hline
2 & 3 & 60 & 0.653 s \\ \hline
2 & 4 & 4544 & 13m 27s \\ \hline
3 & 2 & 588 & 17.427 s \\ \hline
\end{tabular}
\caption{Number of patterns found and time spent.}
\label{table: time calculating minimal pattern subgroups}
\end{table}

\cref{table: time calculating minimal pattern subgroups} shows the number of minimal pattern subgroups found and how long it took to find the minimal pattern subgroups of depth $D$ in a $d$-regular tree. The first three rows where also calculated in \cite{Bondarenko2014}, and the results obtained are the same.

\section{Hausdorff dimension, fractality and torsion}
\label{section: Hausdorff dimension fractality and torsion}

\subsection{Hausdorff dimension}

The Hausdorff dimension of a group is typically used in geometric group theory to have another way to measure the size of a group with respect to a group that contains it. The study of the Hausdorff dimension on profinite groups was initiated by Abercrombie in \cite{Abercrombie1994} and by Barnea and Shalev in \cite{BarneaShalev1997}, and in the case of groups acting on trees, many interesting results have been proved (see for example \cite{AbertVirag2004, FariñaAsategui2024, Siegenthaler2008}). 

Let $T$ be a spherically homogeneous infinite rooted tree and $H \leq G \leq \Aut(T)$. The \textit{relative Hausdorff dimension} of $H$ in $G$ is defined as the number $$\mathcal{H}_G(H) := \liminf_{n \rightarrow +\infty} \frac{\log(\abs{\pi_n(H)})}{\log(\abs{\pi_n(G)})}.$$

The \textit{Hausdorff dimension} of $H$ is defined as $$\mathcal{H}(H) := \mathcal{H}_{\Aut(T)}(H).$$

Given $\mathcal{P}$ a minimal pattern subgroup of depth $D$ in a $d$-regular tree $T$, define $$\St_\mathcal{P}(D-1) := \pi_D(\St_{G_\mathcal{P}}(D-1)).$$

\begin{Proposition}
\label{proposition: number elements finite type and Hausdorff dimension}
Let $T$ be a $d$-regular tree, $D$ a natural number and $\mathcal{P} \leq \Aut(T^D)$ a minimal pattern subgroup. Then 
\begin{equation}
\abs{\pi_n(G_\mathcal{P})} = \abs{\mathcal{P}} \abs{\St_\mathcal{P}(D-1)}^{d+\dots+d^{n-D}}
\label{equation: finitetype pin(G)}
\end{equation}
for all $n \geq D$. In particular, its Hausdorff dimension is
\begin{equation}
\mathcal{H}(G_\mathcal{P}) = \frac{1}{d^{D-1}} \frac{\log(\abs{\St_\mathcal{P}(D-1)})}{\log(d!)}.
\label{equation: finite type Hausorff dimension}
\end{equation}
\end{Proposition}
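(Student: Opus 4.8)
The plan is to compute $\abs{\pi_n(G_\mathcal{P})}$ by telescoping the indices $[\St_{G_\mathcal{P}}(m):\St_{G_\mathcal{P}}(m+1)]$. Write $N:=\St_{G_\mathcal{P}}(D-1)$. By \cref{proposition: finite type is closed self-similar and regular branch}, $G_\mathcal{P}$ is self-similar and regular branch over $N$, so $N_1\leq_f N$; and by \cref{lemma: St(n+1) = prod St(n)}, $\St_{G_\mathcal{P}}(m)=N_{m-D+1}$ for all $m\ge D-1$, where the subscript denotes the geometric product. Since $\mathcal{P}$ is a minimal pattern subgroup, \cref{proposition: existence minimal pattern} gives $\pi_D(G_\mathcal{P})=\mathcal{P}$, so $\abs{\pi_D(G_\mathcal{P})}=\abs{\mathcal{P}}$, which is \cref{equation: finitetype pin(G)} for $n=D$ (the exponent $d+\dots+d^{\,n-D}$ being the empty sum). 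Likewise $\ker(\pi_D)\cap N=\St_{G_\mathcal{P}}(D)=N_1$, where the last equality is \cref{lemma: St(n+1) = prod St(n)} with $m=D-1$, $n=D$; hence $\St_\mathcal{P}(D-1)=\pi_D(N)\cong N/N_1$, and in particular $\abs{\St_\mathcal{P}(D-1)}=[N:N_1]<\infty$.

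The heart of the proof is the identity
\begin{equation*}
[\St_{G_\mathcal{P}}(m):\St_{G_\mathcal{P}}(m+1)]=\abs{\St_\mathcal{P}(D-1)}^{\,d^{\,m-D+1}}\qquad(m\ge D-1).
\end{equation*}
To prove it, set $j=m-D+1$ and apply the isomorphism $\psi_j$ of \cref{equation: Aut and semidirect product}: using the section identities \cref{equation: properties sections}, one checks that $\psi_j$ carries $N_j=\St_{G_\mathcal{P}}(m)$ onto $\prod_{v\in\mathcal{L}_j}N$ and carries $N_{j+1}=\St_{G_\mathcal{P}}(m+1)$ onto $\prod_{v\in\mathcal{L}_j}N_1$, the point being that an element of $N_j$ lies in $N_{j+1}$ precisely when each of its $d^j$ sections at level $j$ lies in $N_1$. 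Hence $N_j/N_{j+1}\cong(N/N_1)^{d^j}$, and the index equals $[N:N_1]^{d^j}=\abs{\St_\mathcal{P}(D-1)}^{\,d^{\,m-D+1}}$ by the previous paragraph.

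For $n>D$, writing $\pi_n(G_\mathcal{P})\cong G_\mathcal{P}/\St_{G_\mathcal{P}}(n)$ and decomposing the chain $G_\mathcal{P}\ge\St_{G_\mathcal{P}}(D)\ge\dots\ge\St_{G_\mathcal{P}}(n)$, we obtain
\begin{equation*}
\abs{\pi_n(G_\mathcal{P})}=\abs{\mathcal{P}}\prod_{m=D}^{n-1}\abs{\St_\mathcal{P}(D-1)}^{\,d^{\,m-D+1}}=\abs{\mathcal{P}}\,\abs{\St_\mathcal{P}(D-1)}^{\,d+d^2+\dots+d^{\,n-D}},
\end{equation*}
which is \cref{equation: finitetype pin(G)}. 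For the Hausdorff dimension, plug this and $\abs{\pi_n(\Aut(T))}=\abs{\Aut(T^n)}=(d!)^{1+d+\dots+d^{\,n-1}}$ into the definition of $\mathcal{H}$; taking logarithms and dividing numerator and denominator by $d^{n}$, the numerator converges to $\tfrac{d^{1-D}}{d-1}\log\abs{\St_\mathcal{P}(D-1)}$ and the denominator to $\tfrac{1}{d-1}\log(d!)>0$, so the $\liminf$ is an honest limit equal to $\tfrac{1}{d^{D-1}}\,\tfrac{\log\abs{\St_\mathcal{P}(D-1)}}{\log(d!)}$, which is \cref{equation: finite type Hausorff dimension}.

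The only delicate point is the index identity in the second paragraph: $N$ is infinite, so one must argue carefully that $N_j/N_{j+1}$ is a \emph{finite} direct power of the finite group $\St_\mathcal{P}(D-1)$. This is exactly where the regular branch hypothesis enters, through $N_1\leq_f N$ (which also guarantees $N_{j+1}\leq N_j$ so the quotient is defined); the rest is routine bookkeeping with the wreath recursion.
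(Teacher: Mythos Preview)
Your proof is correct. The paper itself does not prove \cref{equation: finitetype pin(G)} but simply cites \cite[Proposition 1]{Bondarenko2014}; your argument via the geometric product decomposition $\St_{G_\mathcal{P}}(m)=N_{m-D+1}$ and the index identity $[N_j:N_{j+1}]=[N:N_1]^{d^j}$ is exactly the standard one (and is essentially what is in the cited reference). For the Hausdorff dimension, the paper takes a slightly different route: rather than computing $\abs{\pi_n(\Aut(T))}=(d!)^{1+d+\dots+d^{n-1}}$ directly, it observes that $\Aut(T)$ is itself a group of finite type of depth $1$ with pattern group $\Sym(d)$ and $\St_{\Sym(d)}(0)=\Sym(d)$, so that the same counting formula applies to both numerator and denominator, and the ratio falls out immediately. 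Your direct limit computation and the paper's self-referential trick are of course equivalent; the paper's version is a touch slicker, while yours is more self-contained.
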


\begin{proof}
The formula of $\abs{\pi_n(G_\mathcal{P})}$ is deduced in \cite[Proposition 1]{Bondarenko2014}. To calculate the Hausdorff dimension, recall that $\Aut(T)$ is a group of finite type with depth $1$ and $\mathcal{P} = \Sym(d)$. Then $\St_{\Sym(d)}(1-1) = \Sym(d)$ and therefore
\begin{align*}
\mathcal{H}(G_\mathcal{P}) = \liminf_{n \rightarrow \infty} \frac{\log(\abs{\pi_n(G_\mathcal{P})}}{\log(\abs{\pi_n(\Aut(T))})} = \frac{1}{d^{D-1}} \frac{\log(\abs{\St_\mathcal{P}(D-1)})}{\log(d!)}.
\end{align*}
\end{proof}

We obtain two corollaries from \cref{proposition: number elements finite type and Hausdorff dimension}:

\begin{corollary}
Let $T$ be a $d$-regular tree, $D$ a natural number and $\mathcal{P} \leq \Aut(T^D)$ a minimal pattern subgroup. The following statements are equivalent:

\begin{itemize}
\item $G_\mathcal{P}$ is finite,
\item $\St_\mathcal{P}(D-1)$ is trivial,
\item $G_\mathcal{P}$ is isomorphic to $\mathcal{P}$,
\item $\mathcal{H}(G_\mathcal{P}) = 0$.
\end{itemize}
\label{theorem: finite type finiteness condition}
\end{corollary}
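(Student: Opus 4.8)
The plan is to prove the chain of equivalences cyclically, leaning on \cref{equation: finitetype pin(G)} from \cref{proposition: number elements finite type and Hausdorff dimension} as the computational engine. First I would show that $G_\mathcal{P}$ finite implies $\St_\mathcal{P}(D-1)$ trivial: if $\St_\mathcal{P}(D-1)$ were non-trivial, then $\abs{\St_\mathcal{P}(D-1)} \geq 2$ and the exponent $d + \dots + d^{n-D}$ tends to infinity with $n$, so $\abs{\pi_n(G_\mathcal{P})} \to \infty$, forcing $G_\mathcal{P}$ to be infinite since $\pi_n(G_\mathcal{P})$ is a quotient of $G_\mathcal{P}$ of order tending to infinity.

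Next I would show $\St_\mathcal{P}(D-1)$ trivial implies $G_\mathcal{P} \simeq \mathcal{P}$. Plugging $\abs{\St_\mathcal{P}(D-1)} = 1$ into \cref{equation: finitetype pin(G)} gives $\abs{\pi_n(G_\mathcal{P})} = \abs{\mathcal{P}}$ for all $n \geq D$. Since $G_\mathcal{P}$ is closed in $\Aut(T)$ (by \cref{proposition: finite type is closed self-similar and regular branch}), it is the inverse limit of its finite quotients $\pi_n(G_\mathcal{P})$, and the projections $\pi_{n+1}(G_\mathcal{P}) \twoheadrightarrow \pi_n(G_\mathcal{P})$ are surjections between finite groups of equal order, hence isomorphisms; therefore $G_\mathcal{P} \simeq \pi_D(G_\mathcal{P}) = \mathcal{P}$, the last equality because $\mathcal{P}$ is a minimal pattern subgroup (\cref{proposition: existence minimal pattern}). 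The implication $G_\mathcal{P} \simeq \mathcal{P}$ implies $G_\mathcal{P}$ finite is immediate since $\mathcal{P} \leq \Aut(T^D)$ is finite.

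It remains to splice in the Hausdorff dimension condition. The implication $G_\mathcal{P}$ finite implies $\mathcal{H}(G_\mathcal{P}) = 0$ follows because $\abs{\pi_n(G_\mathcal{P})}$ is eventually constant while $\abs{\pi_n(\Aut(T))} \to \infty$, so the $\liminf$ defining $\mathcal{H}(G_\mathcal{P})$ is $0$; alternatively just read off \cref{equation: finite type Hausorff dimension} with $\abs{\St_\mathcal{P}(D-1)} = 1$, giving $\log(1)/\log(d!) = 0$. For the converse, $\mathcal{H}(G_\mathcal{P}) = 0$ forces $\log(\abs{\St_\mathcal{P}(D-1)}) = 0$ by \cref{equation: finite type Hausorff dimension} (since $d^{D-1}$ and $\log(d!)$ are positive constants), hence $\abs{\St_\mathcal{P}(D-1)} = 1$, i.e. $\St_\mathcal{P}(D-1)$ is trivial, closing the cycle.

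I do not anticipate a genuine obstacle here: the proposition is essentially a bookkeeping consequence of the explicit order formula already established, and the only point requiring a little care is the step $\St_\mathcal{P}(D-1)$ trivial $\Rightarrow G_\mathcal{P} \simeq \mathcal{P}$, where one must invoke that $G_\mathcal{P}$ is closed (hence profinite, hence the inverse limit of the $\pi_n(G_\mathcal{P})$) to upgrade "all finite quotients have the same order" to "the group equals that common quotient." Everything else is a direct substitution into \cref{equation: finitetype pin(G)} or \cref{equation: finite type Hausorff dimension}.
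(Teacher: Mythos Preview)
Your proposal is correct and follows essentially the same approach as the paper: both proofs use the order formula \cref{equation: finitetype pin(G)} to show that $\abs{\pi_n(G_\mathcal{P})}$ stabilizes if and only if $\St_\mathcal{P}(D-1)$ is trivial (and then stabilizes at $\abs{\mathcal{P}}$), invoke the profinite/inverse-limit structure of $G_\mathcal{P}$ to pass from stabilization to $G_\mathcal{P}\simeq\mathcal{P}$, and read off the Hausdorff-dimension equivalence directly from \cref{equation: finite type Hausorff dimension}. The only cosmetic difference is that you organize the argument as a cycle of implications whereas the paper phrases it as a chain of ``if and only if'' statements.
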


\begin{proof}
A profinite group as $G_\mathcal{P}$ is finite if and only if there exists $n_0 \in \NN$ such that for all $n \geq n_0$ then $\abs{\pi_{n_0}(G_\mathcal{P})} = \abs{\pi_n(G_\mathcal{P})}$. By \cref{equation: finitetype pin(G)}, the sequence $\set{\abs{\pi_n(G_\mathcal{P})}}_{n \in \NN}$ stabilizes if and only if $\St_\mathcal{P}(D-1)$ is trivial and it does stabilize with $n = D$, so $G_\mathcal{P}$ is finite if and only if it is isomorphic to $\mathcal{P}$. By \cref{equation: finite type Hausorff dimension}, this implies that $\mathcal{H}(G_\mathcal{P})$ is zero. Conversely, if $\mathcal{H}(G_\mathcal{P}) = 0$, then $\St_\mathcal{P}(D-1)$ must be trivial.
\end{proof}

\begin{corollary}
Let $T$ be a $d$-regular tree, $D$ a natural number and $\mathcal{P} \leq \Aut(T^D)$ a minimal pattern subgroup. Then $\mathcal{H}(G_\mathcal{P}) = 1$ if and only if $G_\mathcal{P} = \Aut(T)$.
\label{theorem: finite type hausdorff dimension 1}
\end{corollary}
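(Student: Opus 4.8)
The backward implication is immediate from the definitions: if $G_\mathcal{P} = \Aut(T)$, then $\mathcal{H}(G_\mathcal{P}) = \mathcal{H}_{\Aut(T)}(\Aut(T)) = 1$. So the whole content is the forward implication, and the plan is to prove it by induction on the depth $D$. The first step is to feed $\mathcal{H}(G_\mathcal{P}) = 1$ into \cref{proposition: number elements finite type and Hausdorff dimension}: by \cref{equation: finite type Hausorff dimension} this is equivalent to $\abs{\St_\mathcal{P}(D-1)} = (d!)^{d^{D-1}}$. Now $\St_\mathcal{P}(D-1)$ is contained in the subgroup of $\Aut(T^D)$ fixing every vertex of $\mathcal{L}_{D-1}$, which is isomorphic to $\Sym(d)^{d^{D-1}}$ and so has order exactly $(d!)^{d^{D-1}}$; hence $\St_\mathcal{P}(D-1)$ must be that entire stabilizer, and in particular $\mathcal{P}$ contains every element of $\Aut(T^D)$ fixing $\mathcal{L}_{D-1}$. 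For $D = 1$ this already finishes the proof, since $\St_\mathcal{P}(0) = \mathcal{P}$ and therefore $\mathcal{P} = \Sym(d) = \Aut(T^1)$, i.e. $G_\mathcal{P} = \Aut(T)$.

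For the inductive step ($D \geq 2$) the plan is to pass to $\mathcal{P}' := \pi_{D-1}(\mathcal{P}) = \pi_{D-1}(G_\mathcal{P})$. A short check (using $G_\mathcal{P} \le G_{\mathcal{P}'}$ and $\pi_{D-1}(G_{\mathcal{P}'}) = \mathcal{P}'$, cf. \cref{proposition: existence minimal pattern}) shows that $\mathcal{P}'$ is the minimal pattern subgroup of depth $D-1$ attached to $G_{\mathcal{P}'}$, so the inductive hypothesis will apply to it. By \cref{lemma: equivalence minimal pattern}, $\mathcal{P}$ is closed under patterns of itself, so $p|_x \in \mathcal{P}'$ for every $p \in \mathcal{P}$ and every $x \in \mathcal{L}_1$. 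Apply this to the elements of $\mathcal{P}$ produced above: given any $r \in \Aut(T^{D-1})$ fixing $\mathcal{L}_{D-2}$, the automorphism of $T^D$ acting as $r$ below one fixed first-level vertex and trivially below the others fixes $\mathcal{L}_{D-1}$, hence lies in $\mathcal{P}$, and its section at that vertex is $r$; therefore $r \in \mathcal{P}'$. Thus $\mathcal{P}'$ contains the full stabilizer of $\mathcal{L}_{D-2}$ in $\Aut(T^{D-1})$, so $\abs{\St_{\mathcal{P}'}(D-2)} = (d!)^{d^{D-2}}$ and $\mathcal{H}(G_{\mathcal{P}'}) = 1$ again by \cref{proposition: number elements finite type and Hausdorff dimension}. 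By the inductive hypothesis $G_{\mathcal{P}'} = \Aut(T)$, that is, $\mathcal{P}' = \Aut(T^{D-1})$.

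To conclude, I would lift this back up one level. For an arbitrary $p \in \Aut(T^D)$ we have $\pi_{D-1}(p) \in \Aut(T^{D-1}) = \mathcal{P}'$, so there is $p' \in \mathcal{P}$ with $\pi_{D-1}(p') = \pi_{D-1}(p)$; then $p(p')^{-1}$ fixes $\mathcal{L}_{D-1}$, so $p(p')^{-1} \in \mathcal{P}$ by the first step, whence $p \in \mathcal{P}$. Therefore $\mathcal{P} = \Aut(T^D)$ and $G_\mathcal{P} = \Aut(T)$, completing the induction.

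The step I expect to be the main obstacle is exactly the passage from ``$\St_\mathcal{P}(D-1)$ is the full level-$(D-1)$ stabilizer'' to ``$\mathcal{P}$ is all of $\Aut(T^D)$'': this is not a single-level counting statement, and it is where the minimal-pattern structure (closedness under patterns) has to be used to propagate the fullness down one level at a time, which is precisely what makes the induction run. A minor technical point that must also be handled carefully is verifying that $\pi_{D-1}(\mathcal{P})$ is again a minimal pattern subgroup, so that the inductive hypothesis genuinely applies to it.
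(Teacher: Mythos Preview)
Your proof is correct, but it is organized differently from the paper's. The paper stays at the fixed depth $D$ throughout and runs a \emph{downward} induction on $m$ to show $\pi_D(\St(m)) \leq \mathcal{P}$ for all $m \le D-1$: given an element $a \in \pi_D(\St(m-1))$, it builds the ``diagonal'' element $b \in \pi_D(\St(m))$ with $b|_x = \pi_{D-1}(a)$ for every $x \in \mathcal{L}_1$, extends $b$ to some $g \in G_\mathcal{P}$ (using that $b \in \mathcal{P}$ by the inductive hypothesis), and then corrects the section $g|_x^D$ by an element of the full level-$(D-1)$ stabilizer to recover $a \in \mathcal{P}$.

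You instead induct on the depth $D$ itself, projecting $\mathcal{P}$ to $\mathcal{P}' = \pi_{D-1}(\mathcal{P})$ and checking that $\mathcal{P}'$ is again a minimal pattern subgroup with full last-level stabilizer, so the inductive hypothesis gives $\mathcal{P}' = \Aut(T^{D-1})$, which you then lift. Both arguments exploit exactly the same structural input (closedness of $\mathcal{P}$ under patterns, equivalently self-similarity of $G_\mathcal{P}$) to propagate ``full stabilizer at the last level'' upward; the paper's route avoids the small side verification that $\mathcal{P}'$ is minimal, while yours is perhaps conceptually tidier in that each step is literally ``drop the depth by one''.
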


\begin{proof}
The group $\St_\mathcal{P}(D-1)$ is a subgroup of $\pi_D(\St(D-1))$ and this latter is isomorphic to $\Sym(d)^{\mathcal{L}_{D-1}}$. If $\mathcal{H}(G_\mathcal{P}) = 1$, by \cref{equation: finite type Hausorff dimension}, this means that $\St_\mathcal{P}(D-1) = \pi_D(\St(D-1))$. We claim that $\mathcal{P} = \Aut(T^D)$. Suppose by induction that we have proved that $\pi_D(\St(m)) \leq \mathcal{P}$ for $m < D$ and we want to show that $\pi_D(\St(m-1)) \leq \mathcal{P}$. Let $a \in \pi_D(\St(m-1))$ and consider $b \in \pi_D(\St(m))$ such that $b|_x = \pi_{D-1}(a)$ for all $x \in \mathcal{L}_1$. Since $b$ is in the minimal pattern subgroup, it can be extended to an element $g \in G_\mathcal{P}$ such that $\pi_D(g) = b$. Then, $g|_x^D \in \mathcal{P}$.

Notice that $g|_x^D$ equals $a$ on the first $D-1$ levels but not necessarily until level $D$. But since $\pi_D(\St(D-1)) \leq \mathcal{P}$, this can be corrected multiplying by the necessary element. Then $\pi_D(\St(m-1)) \leq \mathcal{P}$. By the inductive process, we obtain that $\pi_D(\St(0)) = \Aut(T^D) \leq \mathcal{P}$.
\end{proof}

\subsection{Fractality}
\label{section: Fractality}

Let $T$ be a $d$-regular tree. Recall the map $\varphi_v$ defined in \cref{equation: map varphi v}. We say that a group $G \leq \Aut(T)$ is \textit{fractal} if $G$ is self-similar, level-transitive and $\varphi_v(\st_G(v)) = G$ for all $v$ vertex in $T$. We say that $G$ is \textit{strongly fractal} if $G$ is self-similar, level-transitive and $\varphi_v(\St_G(1)) = G$ for all $v \in \mathcal{L}_1$. Finally we say that a group $G \leq \Aut(T)$ is \textit{super strongly fractal} if $G$ is self-similar, level-transitive and $\varphi_v(\St_G(n)) = G$ for all $v \in \mathcal{L}_n$ and $n \ge 1$. 

Fractality responds to the level of self-repetition that the group has. Fractal properties of self-similar groups were recently related to the dynamics and ergodic properties of self-similar profinite groups (see \cite{Jorge2025}), to the action of random subgroups in profinite groups (see \cite{FariñaRadi2025RandomSubgroups}) and to density problems in number theory (see \cite{FariñaRadi2025}).

The three notions of fractality are not equivalent, as it was proved in \cite[Section 3 and Proposition 4.3]{UriaAlbizuri2016}, but the following implications are true:
$$\text{Super strongly fractal} \Rightarrow \text{Strongly fractal} \Rightarrow \text{Fractal}.$$

The following proposition summarizes necessary and sufficient conditions for a group of finite type to be fractal, strongly fractal or super strongly fractal. For convenience, we set the notation $\varphi^{D-1}_v$ to refer to $\pi_{D-1} \circ \varphi_v$. 

\begin{Proposition}
Let $T$ be a $d$-regular tree, $D$ a natural number and $\mathcal{P} \leq \Aut(T^D)$ a minimal pattern subgroup. 

\begin{enumerate}
\item If $D = 1$, then $G_\mathcal{P}$ is super strongly fractal if and only if $\mathcal{P}$ is transitive. 

\item If $D \geq 2$, then $G_\mathcal{P}$ is fractal if and only if $G_\mathcal{P}$ is transitive on the first level and  there exists $v_0 \in \mathcal{L}_1$ such that $\varphi_{v_0}^{D-1}(\st_{G_\mathcal{P}}(v_0)) = \pi_{D-1}(\mathcal{P})$.

\item If $D \geq 2$, then $G_\mathcal{P}$ is strongly fractal if and only if $G_\mathcal{P}$ is transitive on the first level and there exists $v_0 \in \mathcal{L}_1$ such that $\varphi_{v_0}^{D-1}(\St_{G_\mathcal{P}}(1)) = \pi_{D-1}(\mathcal{P})$.

\item If $D \geq 2$, then $G_\mathcal{P}$ is super strongly fractal if and only if $G_\mathcal{P}$ is transitive on the first level and there exists $v_0 \in \mathcal{L}_{D-1}$ such that we have the equality $\varphi_{v_0}^{D-1}(\St_{G_\mathcal{P}}(D-1)) = \pi_{D-1}(\mathcal{P})$.
\end{enumerate}
\label{proposition: fractal sf and ssf finite type}
\end{Proposition}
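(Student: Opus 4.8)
The plan is to use that every group of finite type is already self-similar (\cref{proposition: finite type is closed self-similar and regular branch}), so in each item only level-transitivity and the stated surjectivity of sections remain to be checked. The whole argument rests on one observation. Put $K:=\St_{G_\mathcal{P}}(D-1)=\ker(\pi_{D-1}|_{G_\mathcal{P}})$, so that $G_\mathcal{P}/K\cong\pi_{D-1}(G_\mathcal{P})=\pi_{D-1}(\mathcal{P})$. Since $G_\mathcal{P}$ is regular branch over $K$, for any vertex $v$ and any $k\in K$ the element $\delta_v(k)$ lies in $(K)_{|v|}=\St_{G_\mathcal{P}}(|v|+D-1)\le G_\mathcal{P}$ by \cref{lemma: St(n+1) = prod St(n)}, and it fixes every level $\le|v|+D-1$. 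Hence $\delta_v(K)\le\st_{G_\mathcal{P}}(v)$ for every $v$, while $\delta_v(K)\le\St_{G_\mathcal{P}}(1)$ when $v\in\mathcal{L}_1$ and $\delta_v(K)\le\St_{G_\mathcal{P}}(D-1)$ when $v\in\mathcal{L}_{D-1}$. Applying $\varphi_v$ and using that $\varphi_v$ is a homomorphism on the relevant stabilizer, each of $\varphi_v(\st_{G_\mathcal{P}}(v))$, $\varphi_v(\St_{G_\mathcal{P}}(1))$ ($v\in\mathcal{L}_1$) and $\varphi_v(\St_{G_\mathcal{P}}(D-1))$ ($v\in\mathcal{L}_{D-1}$) is a subgroup of $G_\mathcal{P}$ containing $K$. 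Since a subgroup $A$ with $K\le A\le G_\mathcal{P}$ satisfies $A=G_\mathcal{P}$ if and only if $\pi_{D-1}(A)=\pi_{D-1}(\mathcal{P})$, each condition ``$\varphi_v(\cdot)=G_\mathcal{P}$'' is equivalent to its truncation ``$\varphi_{v}^{D-1}(\cdot)=\pi_{D-1}(\mathcal{P})$''. With this dictionary the forward implications in (2)--(4) are immediate: a fractal, strongly fractal, or super strongly fractal group is level-transitive, hence transitive on $\mathcal{L}_1$, and all the relevant $\varphi_v(\cdot)$ already equal $G_\mathcal{P}$.

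For the converses two steps remain: promoting the one-vertex hypothesis to every vertex of the relevant level, and bootstrapping to deeper vertices. For the promotion, if $G_\mathcal{P}$ is transitive on $\mathcal{L}_k$ and $h\in G_\mathcal{P}$ sends $v$ to $v'$, then $\varphi_{v'}(\st_{G_\mathcal{P}}(v'))=h|_v\,\varphi_v(\st_{G_\mathcal{P}}(v))\,(h|_v)^{-1}$ by \cref{equation: properties sections}, and the same formula holds with the \emph{normal} subgroups $\St_{G_\mathcal{P}}(1)$ or $\St_{G_\mathcal{P}}(D-1)$ in place of $\st_{G_\mathcal{P}}(v)$; as $h|_v^{D-1}\in\pi_{D-1}(\mathcal{P})$ and $\pi_{D-1}(\mathcal{P})$ is a group, the truncated condition is constant along the $G_\mathcal{P}$-orbit of $v$. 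In (2) and (3) the distinguished vertex lies in $\mathcal{L}_1$ and transitivity on $\mathcal{L}_1$ is assumed, so the condition holds at every $v\in\mathcal{L}_1$, giving $\varphi_v(\St_{G_\mathcal{P}}(1))=G_\mathcal{P}$ (in (3)) and $\varphi_v(\st_{G_\mathcal{P}}(v))=G_\mathcal{P}$ for all $v\in\mathcal{L}_1$. An induction on $|v|$ using self-similarity (if $\varphi_w(\st_{G_\mathcal{P}}(w))=G_\mathcal{P}$ for $w\in\mathcal{L}_n$, then $\varphi_w(\st_{G_\mathcal{P}}(wx))=\st_{G_\mathcal{P}}(x)$ for every child $x$, hence $\varphi_{wx}(\st_{G_\mathcal{P}}(wx))=\varphi_x(\st_{G_\mathcal{P}}(x))=G_\mathcal{P}$) extends this to all vertices. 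Level-transitivity then follows from \cref{lemma: level transitivity equivalence}, since $\st_{G_\mathcal{P}}(v)$ acts on the children of $v$ through $\pi_1\circ\varphi_v$ with image $\pi_1(\mathcal{P})$, which is transitive. This settles (2) and (3).

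Item (4) is the delicate one, because the distinguished vertex $v_0$ lies at level $D-1$ while only transitivity on $\mathcal{L}_1$ is granted, so the orbit argument cannot be applied directly. The plan is first to derive the hypothesis of (3). Write $\mathcal{B}:=\mathcal{P}\cap\pi_D(\St(1))$, which under $\pi_D(\St(1))\cong\Aut(T^{D-1})^d$ is a subgroup on which each coordinate projection $\rho_x$ is a homomorphism. Using \cref{equation: properties sections} together with $\mathcal{P}=\pi_D(G_\mathcal{P})$, one checks that $\varphi_{v_1}^{D-1}(\St_{G_\mathcal{P}}(1))=\rho_{v_1}(\mathcal{B})$ for $v_1\in\mathcal{L}_1$, and that $\varphi_{v_0}^{D-1}(\St_{G_\mathcal{P}}(D-1))\subseteq\rho_y(\mathcal{B})\subseteq\pi_{D-1}(\mathcal{P})$, where $y\in\mathcal{L}_1$ is the last letter of $v_0$. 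Conjugating $\mathcal{B}$ by a pattern $p\in\mathcal{P}$ whose label $\pi_1(p)$ carries some $x$ to $y$ (such $p$ exists because $\pi_1(\mathcal{P})=\pi_1(G_\mathcal{P})$ is transitive) shows that all the subgroups $\rho_x(\mathcal{B})$, $x\in\mathcal{L}_1$, are conjugate in $\pi_{D-1}(\mathcal{P})$; hence the hypothesis $\varphi_{v_0}^{D-1}(\St_{G_\mathcal{P}}(D-1))=\pi_{D-1}(\mathcal{P})$ forces $\rho_x(\mathcal{B})=\pi_{D-1}(\mathcal{P})$ for every $x\in\mathcal{L}_1$, i.e. the hypothesis of (3) holds. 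By (3), $G_\mathcal{P}$ is strongly fractal, in particular level-transitive, so the orbit argument now applies at level $D-1$ and yields $\varphi_v(\St_{G_\mathcal{P}}(D-1))=G_\mathcal{P}$ for all $v\in\mathcal{L}_{D-1}$. Using $\St_{G_\mathcal{P}}(n)=(\St_{G_\mathcal{P}}(D-1))_{n-D+1}$ (\cref{lemma: St(n+1) = prod St(n)}) and $\delta_{v'}(K)\le\St_{G_\mathcal{P}}(n)$ for $v'\in\mathcal{L}_{n-D+1}$, one gets $\varphi_v(\St_{G_\mathcal{P}}(n))=\varphi_{v''}(\St_{G_\mathcal{P}}(D-1))=G_\mathcal{P}$ whenever $n\ge D-1$, writing $v=v'v''$ with $v''\in\mathcal{L}_{D-1}$. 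The remaining range $1\le n<D-1$ is handled by the same pattern-chasing, now realizing prescribed $(D-1)$-level sections of $\St_{G_\mathcal{P}}(n)$ at a vertex of $\mathcal{L}_n$ via the identities $\rho_x(\mathcal{B})=\pi_{D-1}(\mathcal{P})$. Finally, (1) is direct: for $D=1$ the isomorphism $\psi_n$ identifies $\St_{G_\mathcal{P}}(n)$ with $W_{\mathcal{P}}^{d^n}$ and $\varphi_v$ with a coordinate projection, so $\varphi_v(\St_{G_\mathcal{P}}(n))=W_{\mathcal{P}}$ automatically, while $W_{\mathcal{P}}$ is level-transitive exactly when $\mathcal{P}=\pi_1(W_{\mathcal{P}})$ is transitive.

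The main obstacle is precisely item (4): the hypothesis is anchored at level $D-1$ but transitivity is only assumed at level $1$, and the levels below $D-1$ are not covered by the geometric-product formula. Both difficulties are absorbed by the coordinate identities $\rho_x(\mathcal{P}\cap\pi_D(\St(1)))=\pi_{D-1}(\mathcal{P})$ for all $x\in\mathcal{L}_1$, and the proof of those identities is the only place that genuinely uses that $\mathcal{P}$ is a group (conjugation by patterns); this is the technical heart of the argument.
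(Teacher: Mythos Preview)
Your argument is correct for items (1)--(3) and is essentially a cleaner repackaging of the paper's proof: your ``dictionary'' (that any subgroup of $G_\mathcal{P}$ containing $K=\St_{G_\mathcal{P}}(D-1)$ equals $G_\mathcal{P}$ iff its $\pi_{D-1}$-image is $\pi_{D-1}(\mathcal{P})$) is exactly what the paper achieves by hand when it corrects $s_1|_{v_0}$ with $\delta_{v_0}(g_1^{-1}g)$. The orbit/conjugation and induction steps match the paper's.

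For item (4) you take a genuinely different route. The paper writes $v_0=w_0u_0$ with $u_0$ the last $n$ letters and observes that if $s\in\St_{G_\mathcal{P}}(D-1)$ realises $s|_{v_0}=g$, then $s|_{w_0}\in\St_{G_\mathcal{P}}(n)$ realises the section $g$ at $u_0$; taking $n=1$ feeds directly into (2), giving level-transitivity, and then one propagates. You instead argue algebraically: from the hypothesis you force $\rho_y(\mathcal{B})=\pi_{D-1}(\mathcal{P})$ for $\mathcal{B}=\mathcal{P}\cap\pi_D(\St(1))$ and $y$ the last letter of $v_0$, then use normality of $\mathcal{B}$ in $\mathcal{P}$ to conjugate this equality to every $x\in\mathcal{L}_1$, which is precisely the hypothesis of (3). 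This is correct and is a nice illustration of how the finite pattern group controls the situation; the paper's prefix-suffix trick is shorter but less explicit about the role of the group structure of $\mathcal{P}$.

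There is, however, one genuine gap in your plan: the range $1<n<D-1$ (non-empty as soon as $D\ge 4$). Your phrase ``the same pattern-chasing via $\rho_x(\mathcal{B})=\pi_{D-1}(\mathcal{P})$'' does not cover it, because those identities only concern sections at level~$1$; they give $\varphi_x(\St_{G_\mathcal{P}}(1))=G_\mathcal{P}$, not $\varphi_v(\St_{G_\mathcal{P}}(n))=G_\mathcal{P}$ for $v\in\mathcal{L}_n$ with $n\ge 2$, and the analogue for $\mathcal{B}_n=\mathcal{P}\cap\pi_D(\St(n))$ is not what you proved. The clean fix uses what you already have after your orbit step at level $D-1$: for $u\in\mathcal{L}_n$ choose any $w'\in\mathcal{L}_{D-1-n}$; given $g\in G_\mathcal{P}$ pick $s\in\St_{G_\mathcal{P}}(D-1)$ with $s|_{w'u}=g$ (possible since $\varphi_{w'u}(\St_{G_\mathcal{P}}(D-1))=G_\mathcal{P}$), and then $s|_{w'}\in\St_{G_\mathcal{P}}(n)$ with $(s|_{w'})|_u=g$. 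This is exactly the paper's device, and it slots into your proof with no further changes.
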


\begin{proof}
For (1), if $D = 1$, the group of finite type is the iterated wreath product $W_\mathcal{P}$. In particular, for any $n \in \NN$ and any vertex $v \in \mathcal{L}_n$, we have $\varphi_v(\St_{W_\mathcal{P}}(n)) = W_\mathcal{P}$. So, the group $W_\mathcal{P}$ is super strongly fractal if and only if it is level-transitive and this is if and only if $\mathcal{P}$ is transitive. 

For (2), the direct, if $G_\mathcal{P}$ is fractal and we take any vertex $v_0 \in \mathcal{L}_1$, then $\varphi_{v_0}(\st_{G_\mathcal{P}}(v_0)) = G_\mathcal{P}$. So, if we restrict to the first $D-1$ levels, we obtain the result. 

For the converse, let $g \in G_\mathcal{P}$ and $p = \pi_{D-1}(g)$. Then, there exists $s_1 \in \st_{G_\mathcal{P}}(v_0)$ such that $(s_1)|_{v_0}^{D-1} = p$. Call $g_1 = (s_1)|_{v_0}$. By self-similarity of $G_\mathcal{P}$ we have that $g_1 \in G_\mathcal{P}$ and therefore $g_1^{-1}g \in \St_{G_\mathcal{P}}(D-1)$. Define $s = \delta_{v_0}(g_1^{-1}g)$ where $\delta_{v_0}$ is the funciton defined in \cref{equation: map deltav}. Then $s \in \St_{G_\mathcal{P}}(D)$, the element $s_1s \in \st_{G_\mathcal{P}}(v_0)$ and $$(s_1s)|_{v_0} = s_1 \mid_{v_0} \cdot  s \mid_{v_0} = g.$$ This proves that $\varphi_{v_0}(\st_{G_\mathcal{P}}(v_0)) = G_\mathcal{P}$. Then, if $v$ is another vertex in $\mathcal{L}_1$, by transitivity on the first level, there exists $h \in G_\mathcal{P}$ such that $h(v_0) = v$. This gives that $$\varphi_v(\st_{G_\mathcal{P}}(v)) = h \mid_{v_0} \varphi_{v_0}(\st_{G_\mathcal{P}}(v_0)) (h \mid_{v_0})^{-1} = G_\mathcal{P}.$$

Finally if $v$ is a vertex on level $n$, we proceed by induction on $n$. The case $n = 1$ is the previous paragraph. Then, let $g \in G_\mathcal{P}$, $v \in \mathcal{L}_n$ and write $v = v_1w_{n-1}$ with $v_1 \in \mathcal{L}_1$. So, there exists $g_{n-1} \in \st_G(w_{n-1})$ such that $(g_{n-1})|_{w_{n-1}} = g$. By the hypothesis, there exists $g_n \in \st_G(v_1)$ such that $(g_n)|_{v_1} = g_{n-1}$ and combining everything, we see that $$(g_n)|_v = ((g_n)|_{v_1})|_{w_{n-1}} = (g_{n-1})|_{w_{n-1}} = g$$ and $g_n(v) = g_n(v_1) (g_n)|_{v_1}(w_{n-1}) = v_1 w_{n-1} = v$, so $g_n \in \st_{G_\mathcal{P}}(v)$ as we wanted. The level-transitivity follows from the fact that $G_\mathcal{P}$ acts transitively on the first level. 

For (3), the proof follows the same ideas as the previous point. The direct follows as in (2). For the converse, repeating the process in (2), we conclude that $\varphi_{v_0}(\St_{G_\mathcal{P}}(1)) = G_\mathcal{P}$ and by transitivity on the first level that $\varphi_v(\St_{G_\mathcal{P}}(1)) = G_\mathcal{P}$ for all $v \in \mathcal{L}_1$. Then, since this condition implies fractal and it is transitive on the first level, it is level-transitive by point (2). 

For (4), the direct follows as in (2). For the converse, let $g \in G_\mathcal{P}$ and let $p = \pi_{D-1}(g)$. Then, there exists $s_1 \in \St_{G_\mathcal{P}}(D-1)$ such that $(s_1)|_{v_0}^{D-1} = p$. Call $g_1 = (s_1)|_{v_0}$. By self-similarity of $G_\mathcal{P}$ we have that $g_1 \in G_\mathcal{P}$ and therefore $g_1^{-1}g \in \St_{G_\mathcal{P}}(D-1)$. Define again $s = \delta_{v_0}(g_1^{-1}g)$. Then $s \in \St_{G_\mathcal{P}}(2D-2)$, the element $s_1s \in \St_{G_\mathcal{P}}(D-1)$ and $$(s_1s)|_{v_0} = s_1 \mid_{v_0} \cdot s \mid_{v_0} = g.$$ This proves that $\varphi_{v_0}(\St_{G_\mathcal{P}}(D-1)) = G_\mathcal{P}$. 

Write $v_0 = w_0 u_0$ with $u_0 \in \mathcal{L}_n$ and $n \leq D-1$. Then, given $g \in G_\mathcal{P}$, by the previous paragraph we know there exists $s \in \St_{G_\mathcal{P}}(D-1)$ such that $$g = s|_{v_0} = s|_{w_0 u_0} =  (s|_{w_0})|_{u_0}. $$

By self-similarity, we know that $s|_{w_0} \in \St_{G_\mathcal{P}}(n)$ and so $G_\mathcal{P} \leq \varphi_{u_0}(\St_{G_\mathcal{P}}(n))$. Again by self-similarity, we know that $G_\mathcal{P} \geq \varphi_{u_0}(\St_{G_\mathcal{P}}(n))$, concluding that for any level $n \leq D-1$, there exists $u_0 \in \mathcal{L}_n$ such that $G_\mathcal{P} = \varphi_{u_0}(\St_{G_\mathcal{P}}(n))$.

In the particular case that $n = 1$, we have that $G_\mathcal{P}$ is fractal by (2), so $G_\mathcal{P}$ is level-transitive. Therefore, given $v \in \mathcal{L}_n$ with $n \leq D-1$, there exists $h \in G_\mathcal{P}$ such that $h(u_0) = v$ with $u_0 \in \mathcal{L}_n$ and $G_\mathcal{P} = \varphi_{u_0}(\St_{G_\mathcal{P}}(n))$. Applying the same trick used in (2), $$\varphi_v(\St_{G_\mathcal{P}}(n)) = h \mid_{u_0} \varphi_{u_0}(\St_{G_\mathcal{P}}(n)) (h \mid_{u_0})^{-1} = G_\mathcal{P}.$$

Finally, if $v \in \mathcal{L}_n$ with $n > D-1$, we can write it as $v = uw$ with $w \in \mathcal{L}_{D-1}$. Then, by \cref{lemma: St(n+1) = prod St(n)}, $$\varphi_v(\St_{G_\mathcal{P}}(n)) = \varphi_w(\St_{G_\mathcal{P}}(D-1)) = G_\mathcal{P}.$$
\end{proof}

\subsection{Torsion}

We end up this section applying \cref{theorem: torsion measure weakly branch} to conclude the following:

\begin{corollary}
Let $T$ be a $d$-regular tree, $D$ a natural number and $\mathcal{P} \leq \Aut(T^D)$ a minimal pattern subgroup. Then $G_\mathcal{P}$ is torsion if and only if $G_\mathcal{P}$ is finite. If it is not finite and $\mu$ is its Haar measure, then $\mu(\set{g \in G_\mathcal{P}: \abs{g} < \infty}) = 0$. 
\end{corollary}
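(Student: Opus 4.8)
The plan is to reduce everything to the two ingredients already available: that an infinite group of finite type is a closed weakly branch group (\cref{proposition: finite type is closed self-similar and regular branch} and \cref{lemma: finite type is weakly regular branch}), and Ab\'ert's measure-theoretic result \cref{theorem: torsion measure weakly branch}. The "if" direction is immediate: a finite group is torsion, since every element has finite order. So the content is the "only if" direction together with the measure statement, and in fact both will follow from a single contrapositive argument.

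First I would observe that if $G_\mathcal{P}$ is not finite, then by \cref{proposition: finite type is closed self-similar and regular branch} it is closed in $\Aut(T)$, and by the last assertion of \cref{lemma: finite type is weakly regular branch} it is weakly branch. Hence \cref{theorem: torsion measure weakly branch} applies to $G_\mathcal{P}$ with its normalized Haar measure $\mu$, giving $\mu(\set{g \in G_\mathcal{P} : \abs{g} < \infty}) = 0$. This already establishes the final sentence of the statement verbatim.

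Next, for the equivalence, I would argue by contradiction for the "only if" direction. Suppose $G_\mathcal{P}$ is torsion but not finite. By the previous paragraph $\mu(\set{g \in G_\mathcal{P} : \abs{g} < \infty}) = 0$. But $G_\mathcal{P}$ torsion means that $\set{g \in G_\mathcal{P} : \abs{g} < \infty} = G_\mathcal{P}$, which has measure $1$ since $\mu$ is a normalized Haar measure on $G_\mathcal{P}$. This is the desired contradiction, so a torsion group of finite type must be finite. Combined with the trivial "if" direction, this proves $G_\mathcal{P}$ is torsion if and only if it is finite.

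I do not anticipate a real obstacle here: the only point requiring any care is making sure the hypotheses of \cref{theorem: torsion measure weakly branch} are genuinely met — i.e. that $G_\mathcal{P}$ is \emph{closed} (so that the Haar measure exists and Ab\'ert's theorem is applicable) and that it is weakly branch — but both are supplied directly by \cref{proposition: finite type is closed self-similar and regular branch} and \cref{lemma: finite type is weakly regular branch}. One could optionally give a more elementary, measure-free proof of the equivalence by noting that an infinite group of finite type contains a copy of $\St_{G_\mathcal{P}}(D-1)$ geometrically embedded at every level (via the maps $\delta_v$), hence contains elements of unbounded order whenever $\St_{G_\mathcal{P}}(D-1) \neq 1$; but the measure argument is shorter and simultaneously yields the quantitative statement, so I would present that one.
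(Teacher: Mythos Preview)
Your proposal is correct and follows exactly the paper's approach: the paper's proof simply notes that finite implies torsion, and that if $G_\mathcal{P}$ is infinite then it is weakly branch by \cref{lemma: finite type is weakly regular branch}, implicitly invoking \cref{theorem: torsion measure weakly branch} for the rest. Your write-up just spells out the contradiction (measure $0$ versus measure $1$) and the closedness hypothesis more explicitly than the paper does.
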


\begin{proof}
If $G_\mathcal{P}$ is finite, it is torsion. If not, it is weakly regular branch by \cref{lemma: finite type is weakly regular branch}.
\end{proof}

Although the measure is zero, the group $G_\mathcal{P}$ may not be torsion-free. For example, in the iterated wreath products $W_\mathcal{P}$, we can embed $\pi_n(W_\mathcal{P}) \hookrightarrow W_\mathcal{P}$ by sending $h$ to $\psi_n^{-1}((1,\dots,1) \, h)$. Which is harder is to prove that we can in fact find groups of finite type that are torsion-free:

\begin{Example}
In the $d$-regular tree $T$ with $D = 2$, consider $\sigma = (1,\dots,d)$ the $d$-cycle  in $\Sym(d)$ and the pattern subgroup $\mathcal{P} = \<\alpha>$, where $\alpha \in \Aut(T^D)$ is the element whose portrait is described in \cref{figure: portrait of torsion-free group}.

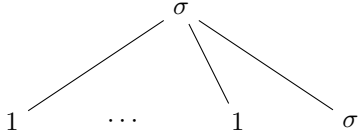
\begin{figure}[h!]
    \centering
    \begin{tikzpicture}
    \node {$\sigma$}
        child {node {$1$}}
        child {node {$\cdots$} edge from parent[draw=none]}
        child {node {$1$}}        
        child {node {$\sigma$}};
    \end{tikzpicture}
    \caption{Portrait of the element $\alpha$.}
    \label{figure: portrait of torsion-free group}
\end{figure}

By \cref{lemma: minimal pattern generating set}, the subgroup $\mathcal{P}$ is a minimal pattern subgroup. The element $\alpha$ has order $d^2$ and its powers can be explicitly described as $$\alpha^{ad+b} = (\sigma^a, \dots, \sigma^a, \underbrace{\sigma^{a+1}, \dots, \sigma^{a+1}}_{\text{last $b$ vertices}}) \sigma^b$$ where $a,b \in \set{0,\dots,d-1}$. 

The claim is that $G_\mathcal{P}$ is torsion-free. Suppose there exists $g \in G_\mathcal{P} \setminus \set{1}$ with finite order. Observe that by \cref{proposition: number elements finite type and Hausdorff dimension}, the primes that divide $\pi_n(G_\mathcal{P})$ for all $n \in \NN$ are the divisors of $d$. Therefore, if $g$ has finite order, taking powers of $g$, we may assume that $\abs{g} = p$ with $p$ a prime divisor of $d$. Since $g \neq 1$, there exists $n \in \NN$ such that $g \in \St_G(n) \setminus \St_G(n+1)$, which in particular implies that there is a vertex $v \in \mathcal{L}_n$, such that $g_1 = g|_v$ acts non-trivially on the first level. On the other hand, $$1 = (g^p)|_v = g_1^p,$$ so $g_1$ also has order $p$, which in particular implies that $\pi_1(g_1)$ and $\pi_2(g_1)$ have order $p$. Since $G_\mathcal{P}$ is of finite type, there must exist $a,b \in \set{0,\dots,d-1}$ with $b \neq 0$ such that $\pi_2(g_1) = \alpha^{ad+b}$ and then $\pi_1(g_1) = \sigma^b$. Therefore, $1 = \pi_1(g_1^p) = \sigma^{pb}$, implying that $pb \equiv 0 \pmod{d}$, or equivalently, there exists $k \in \NN$ such that $pb = kd$. Since $0< b < d$, then $0<k<p$. Similarly, $1 = \pi_2(g_1^p) = \alpha^{apd+pb}$, so $$apd+pb = (ap+k)d \equiv 0 \pmod{d^2}$$ and so $ap+k \equiv 0 \pmod{d}$. Reducing modulo $p$, we obtain $k \equiv 0 \pmod{p}$ which is a contradiction since $0<k<p$. Hence, the group $G_\mathcal{P}$ is torsion-free.
\label{example: torsion free}
\end{Example}

\section{Topological finite generation, just-infiniteness and strong completeness}
\label{section: Topological finite generation just infiniteness and strong completeness}

In this section we will prove \cref{Theorem: finite type equivalences} and its corollaries presented in the Introduction. 

\subsection{Strongly completeness}

Let $T$ be a spherically homogeneous infinite rooted tree and $G$ a subgroup of $\Aut(T)$. Since $\Aut(T)$ is a profinite group with $\set{\St(n): n \in \NN}$ as a base of open neighborhoods of the identity, then the closure of $G$ in $\Aut(T)$ is isomorphic to $$\overline{G} = \varprojlim G/\St_G(n)$$ by \cite[Corollary 1.1.8]{RibesZalesskii2000}. In the case that $\RiSt_G(n) \leq_f G$ for all $n \in \NN$, then $$\widetilde{G} := \varprojlim G/\RiSt_G(n)$$ is also a profinite group and since $\RiSt_G(n) \leq \St_G(n)$ we have a well-defined surjective homomorphism $$\psi_1: \widetilde{G} \rightarrow \overline{G}.$$ The kernel of $\psi_1$ is called the \textit{rigid kernel}.

Another standard construction in group theory is the \textit{profinite completion} of a group, defined as the inverse limit $$\widehat{G} := \varprojlim G/N,$$ where the limit runs over all normal subgroups $N$ of finite index in $G$. In the case that $\RiSt_G(n) \leq_f G$ for every $n \in \NN$, the family of rigid stabilizers is a subset of the family of normal subgroups of finite index of $G$, and therefore we have a well-defined surjective homomorphism $$\psi_2: \widehat{G} \rightarrow \widetilde{G}.$$ The kernel of $\psi_2$ is called the \textit{branch kernel}. Composing the last two maps, we obtain an epimorphism $$\psi_3 = \psi_1 \circ \psi_2: \widehat{G} \rightarrow \overline{G}.$$ The kernel of $\psi_3$ is called the \textit{congruence kernel}. 

An abstract group $G$ is said to have the \textit{congruence subgroup property} if the map $\psi_3$ is an isomorphism. In the case that $G$ is a profinite group, we say that $G$ is \textit{strongly complete}.

The following equivalences are well-known results:

\begin{Lemma}
Let $T$ be a spherically homogeneous infinite rooted tree and a subgroup $G \leq \Aut(T)$. Then
\begin{enumerate}
\item The rigid kernel is trivial if and only if for any $n \in \NN$, there exists $m \in \NN$ such that $\RiSt_G(n) \geq \St_G(m)$.
\item The branch kernel is trivial if and only if for any $N \lhd_f G$, there exists $n \in \NN$ such that $N \geq \RiSt_G(n)$.
\item The congruence kernel is trivial if and only if for any $N \lhd_f G$, there exists $n \in \NN$ such that $N \geq \St_G(n)$.
\end{enumerate}
\label{lemma: equivalences inclusion profinite kernels}
\end{Lemma}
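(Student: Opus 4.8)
The plan is to deduce all three equivalences from one statement about inverse limits of finite groups, thereby avoiding three nearly identical arguments. Say that a pair of families $\mathcal{A},\mathcal{B}$ of finite-index normal subgroups of $G$ is \emph{admissible} if each family is closed under finite intersection and every member of $\mathcal{B}$ contains a member of $\mathcal{A}$; then the quotient maps $G/A\twoheadrightarrow G/B$ for $A\in\mathcal A$, $B\in\mathcal B$, $A\subseteq B$, assemble into a surjection $\Phi_{\mathcal A,\mathcal B}\colon\varprojlim_{A\in\mathcal A}G/A\to\varprojlim_{B\in\mathcal B}G/B$. I would first prove: \emph{$\ker\Phi_{\mathcal A,\mathcal B}=1$ if and only if every member of $\mathcal A$ contains a member of $\mathcal B$}. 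Parts (1), (2), (3) then follow by taking, respectively, $(\mathcal A,\mathcal B)=(\{\RiSt_G(n)\}_n,\{\St_G(n)\}_n)$, $(\{N:N\lhd_f G\},\{\RiSt_G(n)\}_n)$, and $(\{N:N\lhd_f G\},\{\St_G(n)\}_n)$: admissibility holds in each case precisely because the relevant map ($\psi_1$, $\psi_2$, $\psi_3=\psi_1\circ\psi_2$) was already seen to exist — using $\RiSt_G(n)\subseteq\St_G(n)$, resp.\ $\RiSt_G(n)\lhd_f G$ under the standing hypothesis that all rigid stabilizers have finite index, resp.\ $\St_G(n)\lhd_f G$ — and because $\{\St_G(n)\}_n$ and $\{\RiSt_G(n)\}_n$ are descending chains while $\{N:N\lhd_f G\}$ is trivially closed under intersection. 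Reading off the conclusion of the general statement gives exactly the three displayed conditions.

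For the ``if'' direction of the general statement I would argue by coherence. An element of $\ker\Phi_{\mathcal A,\mathcal B}$ is a compatible family $(g_AA)_{A\in\mathcal A}$ with $g_A\in B$ whenever $A\subseteq B\in\mathcal B$. Given $A\in\mathcal A$, pick $B\in\mathcal B$ with $B\subseteq A$ (the hypothesis) and then $A'\in\mathcal A$ with $A'\subseteq B$ (admissibility); compatibility gives $g_{A'}\equiv g_A\pmod A$ and $g_{A'}\in B\subseteq A$, hence $g_A\in A$. Since $A$ was arbitrary, the element is trivial.

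For the ``only if'' direction I would use compactness. Suppose $A_0\in\mathcal A$ contains no member of $\mathcal B$. Put $P:=\varprojlim_{A\in\mathcal A}G/A$, a profinite group with $G$ dense, and for $S\in\mathcal A\cup\mathcal B$ write $\overline S$ for the closure of (the image of) $S$ in $P$. Then each $\overline S$ is an \emph{open} subgroup of $P$ with $\overline S\cap G=S$, because every such $S$ is open of finite index in the topology on $G$ defining $P$ (immediate for $S\in\mathcal A$, and true for $S\in\mathcal B$ since $S$ contains a member of $\mathcal A$). A routine check identifies $\ker\Phi_{\mathcal A,\mathcal B}$ with $\bigcap_{B\in\mathcal B}\overline B$, using $P/\overline B\cong G/B$. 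Now the sets $\overline B\setminus\overline{A_0}$, $B\in\mathcal B$, are closed in the compact space $P$, are nonempty because $B\not\subseteq A_0$ gives $\emptyset\neq B\setminus A_0\subseteq\overline B\setminus\overline{A_0}$, and form a downward-directed family (as $\mathcal B$ is, and $\overline{B_1\cap B_2}\subseteq\overline{B_1}\cap\overline{B_2}$); by the finite-intersection property their intersection is nonempty, so $\bigcap_{B}\overline B\not\subseteq\overline{A_0}$, and any point of $(\bigcap_B\overline B)\setminus\overline{A_0}$ is a nontrivial element of $\ker\Phi_{\mathcal A,\mathcal B}$.

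I expect the main obstacle to be the ``only if'' direction, and specifically the bookkeeping behind $\ker\Phi_{\mathcal A,\mathcal B}=\bigcap_{B\in\mathcal B}\overline B$ together with the facts that $\overline B$ is open and $\overline B\cap G=B$; these all rest on each $B\in\mathcal B$ being open in the $\mathcal A$-topology on $G$, which is exactly the admissibility hypothesis, so I would isolate that observation first. A topology-free alternative would construct the nontrivial kernel element directly as a thread through a compatible system of nonempty finite sets (a König-type argument), but it needs the same cofinality care, so I would keep the compactness formulation for brevity.
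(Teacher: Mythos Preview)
Your proof is correct. The paper itself does not supply a proof of this lemma; it introduces the statement with ``The following equivalences are well-known results:'' and leaves it at that. So there is no paper proof to compare against.

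Your unified formulation is a clean way to handle all three parts at once, and the two directions are argued carefully. A couple of minor remarks: in the ``only if'' direction you announce that each $\overline{B}$ is open, but the compactness argument only needs $\overline{B}$ closed (automatic) and $\overline{A_0}$ open (which it is, being $\ker\pi_{A_0}$ with $G/A_0$ finite); openness of $\overline{B}$ is used only implicitly to get $\overline{B}\cap G=B$. Also, when you invoke that $\{\RiSt_G(n)\}_n$ is a descending chain, it may be worth one line explaining why $\RiSt_G(n+1)\le\RiSt_G(n)$ (group the factors over children of each level-$n$ vertex), since this is not stated in the paper. Neither point is a gap.
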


\subsection{Topological finite generation of pro-nilpotent groups}

Let $G$ be an abstract group. Define $\gamma_1(G) = G$ and $\gamma_r(G) = [\gamma_{r-1}(G),G]$.  We say that $G$ is \textit{nilpotent} if there exists $r \in \NN$ such that $\gamma_r(G) = 1$. We are interested in the following properties of nilpotent groups:

\begin{Lemma}
Let $G$ be an abstract group. 
\begin{enumerate}
\item If $G$ is a finite $p$-group then $G$ is nilpotent.
\item If $G$ is nilpotent and $S$ is a set of $G$ such that the projection of $S$ to $G/G'$ is a generating set, then $S$ generates $G$.
\end{enumerate}
\label{lemma: properties nilpotent groups}
\end{Lemma}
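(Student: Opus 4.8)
The plan is to treat the two statements separately; both are classical and proceed by induction.

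For (1), I would first invoke the \emph{class equation}: partitioning $G$ into conjugacy classes and collecting the singleton classes into $Z(G)$ gives $|G| = |Z(G)| + \sum_i [G : C_G(x_i)]$, the sum running over representatives of the non-central classes. Since $G$ is a $p$-group, each index $[G:C_G(x_i)]$ is a power of $p$ strictly larger than $1$, hence divisible by $p$; as $|G|$ is divisible by $p$ too, so is $|Z(G)|$, and therefore $Z(G) \neq 1$. Then I would induct on $|G|$, the base case $|G|=1$ being trivial. The quotient $G/Z(G)$ is a $p$-group of strictly smaller order, hence nilpotent by the inductive hypothesis, say $\gamma_c(G/Z(G)) = 1$. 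Pulling this back, $\gamma_c(G) \leq Z(G)$, so $\gamma_{c+1}(G) = [\gamma_c(G), G] \leq [Z(G), G] = 1$, which shows $G$ is nilpotent.

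For (2), set $H = \langle S \rangle$ and pick $r$ with $\gamma_{r+1}(G) = 1$. I would prove by induction on $i \geq 2$ that $G = H\,\gamma_i(G)$. The case $i = 2$ is precisely the hypothesis that the image of $S$ generates $G/G' = G/\gamma_2(G)$. For the inductive step, assume $G = H\,\gamma_i(G)$ and pass to $\overline{G} := G/\gamma_{i+1}(G)$, in which $\overline{\gamma_i(G)} = \gamma_i(\overline{G})$ is central. A short commutator computation shows that for a group written as the product of a subgroup with a central subgroup, the terms $\gamma_k$ with $k \geq 2$ depend only on the subgroup factor; applying this and $\gamma_i$ to $\overline{G} = \overline{H}\,\gamma_i(\overline{G})$ gives $\gamma_i(\overline{G}) = \gamma_i(\overline{H}) \leq \overline{H}$, whence $\overline{G} = \overline{H}$, i.e. $G = H\,\gamma_{i+1}(G)$. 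Taking $i = r+1$ yields $G = H$.

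I do not anticipate a serious obstacle. The only place that needs a little care is the inductive step in (2): passing to the quotient by $\gamma_{i+1}(G)$ and observing that absorbing a central direct factor does not alter the higher terms of the lower central series. That observation is exactly what makes the induction close, so I would state and verify it cleanly before invoking it.
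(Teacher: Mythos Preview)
Your proofs of both parts are correct and are essentially the standard textbook arguments. The paper itself does not prove this lemma at all: it simply refers the reader to Hungerford for (1) and to Magnus--Karrass--Solitar for (2). So there is no approach to compare against; you have supplied complete self-contained proofs where the paper gives only citations, and the arguments you give are precisely the classical ones those references contain.
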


\begin{proof}
For (1), see \cite[Theorem 7.2, Chapter 2]{Hungerford1974}. For (2), see \cite[Lemma 5.9, page 350]{MagnusKarrasSolitar1966}.
\end{proof}

Let $I$ be a directed set, $\set{G_i}_{i \in I}$ a family of compatible groups and $G = \varprojlim_I G_i$ its inverse limit with transition maps $\pi_i : G \rightarrow G_i$. We say that $G$ is \textit{pro-nilpotent} if each $G_i$ is nilpotent. We also define the \textit{congruence topology} of $G$ by declaring $\ker(\pi_i)$ a basis of open neighborhoods of the identity for every $i \in I$.

\begin{Definition}
Let $G = \varprojlim_I G_i$ be an inverse limit of groups with surjective transition maps $\pi_i : G \rightarrow G_i$. Let $i \in I$, an element $g \in G$ and $g_i \in G_i$. We say that $g$ \textit{extends} or is an \textit{extension} of $g_i$ if $\pi_i(g) = g_i$. Also, if $S$ is a subset of $G$ and $S_i$ is a subset of $G_i$, we say that $S$ \textit{extends} or is an \textit{extension} of $S_i$ if $\pi_i: S \rightarrow S_i$ is bijective. 
\end{Definition}

\begin{Example}
In the context of automorphism of trees, if $T$ is a spherically homogeneous tree, we have that $\Aut(T) = \varprojlim \Aut(T^n)$, so here if $\alpha \in \Aut(T^n)$, the extensions of $\alpha$ are going to be automorphisms of $\Aut(T)$ such that they equal $\alpha$ when we restrict them to level $n$.
\end{Example}

The following is an easy result from group theory:

\begin{Lemma}
Let $A$ be a group, $\pi: A \rightarrow \pi(A)$ a surjective map, $B \lhd A$ and call $p_1: A \rightarrow A/B$ and $p_2: \pi(A) \rightarrow \pi(A)/\pi(B)$ the natural projections. Then we have a commutative diagram

\begin{equation}
\centering
\begin{tikzcd}
A \arrow{rr}{p_1} \arrow{dd}{\pi} & & A/B \arrow{dd}{\varphi} \\
 & & \\
\pi(A) \arrow{rr}{p_2} & & \pi(A)/\pi(B) \\
\end{tikzcd}
\end{equation}
where $\varphi$ is a well-defined surjective map and $\ker(\varphi) \simeq \ker(\pi)/(\ker(\pi) \cap B)$. Moreover, we have the following equivalences:

\begin{enumerate}
\item $\varphi$ is an isomorphism,
\item $\ker(\pi) \leq B$,
\item $[A:B] = [\pi(A):\pi(B)]$. 
\end{enumerate}
\label{lemma: map defined in abelianizations}
\end{Lemma}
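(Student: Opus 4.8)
The plan is to build the map $\varphi$ by hand, identify its kernel, and then read all three equivalences off that identification; none of the steps should present a genuine obstacle, since $\pi$ is a surjective homomorphism and everything reduces to the isomorphism theorems.

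First I would note that $\pi(B)\lhd\pi(A)$ (because $\pi$ is surjective and $B\lhd A$), so $p_2$ is defined, and then set $\varphi(aB):=\pi(a)\pi(B)$. This is well-defined: if $aB=a'B$ then $a^{-1}a'\in B$, hence $\pi(a)^{-1}\pi(a')=\pi(a^{-1}a')\in\pi(B)$, so $\pi(a)\pi(B)=\pi(a')\pi(B)$. Multiplicativity is inherited from $\pi$ and the two quotient maps, surjectivity is immediate from surjectivity of $\pi$ and $p_2$, and the relation $\varphi\circ p_1=p_2\circ\pi$ holds by construction, which is exactly the commutativity of the square.

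Next I would compute the kernel. One has $aB\in\ker\varphi$ iff $\pi(a)\in\pi(B)$, i.e. iff $\pi(a)=\pi(b)$ for some $b\in B$, i.e. iff $ab^{-1}\in\ker\pi$ for some $b\in B$, i.e. iff $a\in(\ker\pi)B$. Therefore $\ker\varphi=(\ker\pi)B/B$, and the second isomorphism theorem gives $\ker\varphi\simeq\ker\pi/(\ker\pi\cap B)$, which is the asserted description.

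Finally the three equivalences drop out. $\varphi$ is an isomorphism iff $\ker\varphi$ is trivial iff $\ker\pi/(\ker\pi\cap B)$ is trivial iff $\ker\pi\leq B$, giving (1)$\Leftrightarrow$(2). For (1)$\Leftrightarrow$(3), the first isomorphism theorem applied to $\varphi$ yields $(A/B)/\ker\varphi\simeq\pi(A)/\pi(B)$; when $[A:B]$ is finite (which is the case in every application of this lemma) this forces $[A:B]=\abs{\ker\varphi}\cdot[\pi(A):\pi(B)]$, so $[A:B]=[\pi(A):\pi(B)]$ exactly when $\abs{\ker\varphi}=1$, i.e. when $\varphi$ is an isomorphism. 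The only point demanding a little care is this last index computation in the absence of a finiteness hypothesis, which is why I would simply record that $[A:B]$ is finite wherever the lemma is invoked.
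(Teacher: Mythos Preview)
Your proof is correct and follows essentially the same route as the paper: construct $\varphi$ explicitly, identify $\ker\varphi=(\ker\pi)B/B\simeq\ker\pi/(\ker\pi\cap B)$ via the second isomorphism theorem, and read off the equivalences. Your explicit remark that the equivalence (1)$\Leftrightarrow$(3) relies on $[A:B]$ being finite is a point the paper leaves implicit (it simply writes $\varphi$ is an isomorphism iff $|A/B|=|\pi(A)/\pi(B)|$), so if anything you are slightly more careful here.
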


\begin{proof}
First, since $\pi$ is surjective and $B \lhd A$, then $\pi(B) \lhd \pi(A)$, ensuring that $\varphi$ is well-defined. Moreover, since all the maps involved are surjective, $\varphi$ is also surjective.

For the kernel, we have the following equivalences:
\begin{align*}
g B \in \ker(\varphi) \Longleftrightarrow \varphi(gB) = p_2 \, \pi(g) = 1 \Longleftrightarrow  \pi(g) \in \pi(B) \Longleftrightarrow \\ g \in \pi^{-1}(\pi(B)) = \<\ker(\pi),B> = \ker(\pi)B.
\end{align*}
Hence $$\ker(\varphi) = \ker(\pi)B/ B \simeq \ker(\pi)/(\ker(\pi) \cap B)$$ by the second isomorphism theorem.

For the equivalences, the first two follow from the form of the kernel of $\varphi$. Indeed, $$\ker(\varphi) = 1 \Longleftrightarrow \ker(\pi) \cap B = \ker(\pi) \Longleftrightarrow \ker(\pi) \leq B.$$

Finally, since we already know $\varphi$ is a surjective morphism of groups, $$\varphi \text{ is an isomorphism } \Longleftrightarrow \abs{A/B} = \abs{\pi(A)/\pi(B)}.$$
\end{proof}

Given an abstract group $G$, define $g(G) = \min \set{\# S: \<S> = G}$. The number $g(G)$ will be infinite if $G$ is not finitely generated. If $G$ is a topological group, we define $g(G)$ to be the minimal number of generators of a dense subgroup. A topological group $G$ is said \textit{topologically finitely generated} if $g(G) < \infty$.

We are now ready to prove the main result of this subsection:

\begin{theorem}
Let $\set{G_n}_{n \in \NN}$ be a family of finite compatible groups. Let $G = \varprojlim_{\NN} G_n$ be a pro-nilpotent profinite group with surjective transition maps $\pi_n : G \rightarrow G_n$. Denote $S_n = \pi_n(\ker(\pi_{n-1}))$. If there exists $n_0 \in \NN$ such that $S_n \leq G_n'$ for all $n \geq n_0$, then $G$ is topologically finitely generated. Moreover, if $T_{n_0}$ is a generating set of $G_{n_0}$, then any extension $T \subseteq G$ of $T_{n_0}$ is a topological generating set of $G$ in the congruence topology. In particular, $g(G) = g(G_{n_0})$.
\label{theorem: tfg pro-nilpotent}
\end{theorem}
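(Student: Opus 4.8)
The plan is to show that for any extension $T$ of a generating set $T_{n_0}$ of $G_{n_0}$, the abstract subgroup $\langle T\rangle$ is dense in $G$ for the congruence topology. This gives $g(G)\le \#T_{n_0}$, hence $g(G)\le g(G_{n_0})$ on taking $T_{n_0}$ minimal; the reverse inequality is automatic since $G_{n_0}$ is a continuous finite quotient of $G$ via $\pi_{n_0}$, so any (dense) generating set of $G$ projects onto a generating set of $G_{n_0}$. As $\{n:n\ge n_0\}$ is cofinal in $\NN$, we have $G=\varprojlim_{n\ge n_0}G_n$, and a closed subgroup $C\le G$ satisfies $C=\varprojlim_n\pi_n(C)$; thus it suffices to prove $\pi_n(\langle T\rangle)=G_n$ for all $n\ge n_0$ and then apply this to $C=\overline{\langle T\rangle}$, which satisfies $\pi_n(C)\supseteq\pi_n(\langle T\rangle)$.

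First I would record the basic facts about the inverse system. Let $\varphi_{n,n-1}\colon G_n\to G_{n-1}$ be the transition map, characterized by $\pi_{n-1}=\varphi_{n,n-1}\circ\pi_n$; it is surjective because $\pi_{n-1}$ is. I claim $\ker(\varphi_{n,n-1})=S_n$: the inclusion $\supseteq$ is the definition of $S_n=\pi_n(\ker(\pi_{n-1}))$, and for $\subseteq$, if $x\in\ker(\varphi_{n,n-1})$ then, since $\pi_n$ is surjective, choose $g\in G$ with $\pi_n(g)=x$; then $\pi_{n-1}(g)=\varphi_{n,n-1}(x)=1$, so $g\in\ker(\pi_{n-1})$ and $x\in S_n$. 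In particular $G_{n-1}\cong G_n/S_n$.

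Next comes an induction on $n\ge n_0$. The base case $n=n_0$ is $\pi_{n_0}(\langle T\rangle)=\langle\pi_{n_0}(T)\rangle=\langle T_{n_0}\rangle=G_{n_0}$, using that $T$ extends $T_{n_0}$. For the inductive step, assume $\pi_{n-1}(\langle T\rangle)=G_{n-1}$ and set $H:=\pi_n(\langle T\rangle)\le G_n$. Then $\varphi_{n,n-1}(H)=\pi_{n-1}(\langle T\rangle)=G_{n-1}=G_n/S_n$, so $HS_n=G_n$. Since $n\ge n_0$ gives $S_n\le G_n'$, we get $HG_n'=G_n$, i.e. the generating set $\pi_n(T)$ of $H$ projects to a generating set of $G_n/G_n'=G_n^{ab}$. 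Because $G$ is pro-nilpotent, $G_n$ is nilpotent, so by \cref{lemma: properties nilpotent groups}(2) any subset of $G_n$ whose image generates $G_n^{ab}$ already generates $G_n$; hence $H=\pi_n(\langle T\rangle)=G_n$, closing the induction and, via the reduction above, the proof.

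I expect the only delicate part to be the inverse-limit bookkeeping — verifying $S_n=\ker(\varphi_{n,n-1})$ (which crucially uses surjectivity of $\pi_n$) and the reduction ``$\pi_n(C)=G_n$ for all $n$ implies $C=G$'' for closed $C$ — whereas the arithmetic heart, namely $HS_n=G_n$ together with $S_n\le G_n'$ forcing $H=G_n$ through the nilpotency lemma, is short. One should also check that the degenerate cases ($n_0=1$, or $\ker(\pi_{n-1})$ trivial) cause no trouble; they do not, since both the cofinality reduction and the induction are insensitive to them.
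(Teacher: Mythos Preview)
Your proposal is correct and follows essentially the same approach as the paper: both argue by induction on $n\ge n_0$ that $\pi_n(T)$ generates $G_n$, using that $S_n\le G_n'$ forces the images in $G_n^{ab}$ to agree and then invoking \cref{lemma: properties nilpotent groups}(2). The only cosmetic differences are that the paper packages the inductive step via the commutative square of abelianizations and \cref{lemma: map defined in abelianizations} (showing the induced map $G_{n+1}^{ab}\to G_n^{ab}$ is an isomorphism), whereas you argue directly from $HS_n=G_n$ and $S_n\le G_n'$; and the paper concludes density by exhibiting an explicit approximating sequence while you use the equivalent closed-subgroup formulation.
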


\begin{proof}
Take $T \subseteq G$ any extension of $T_{n_0}$ and define $T_n = \pi_n(T)$. Our first step is to prove that $T_n$ is a generating set of $G_n$ for every $n \geq n_0$. We proceed by induction on $n$. The case $n = n_0$ follows by definition of $T_{n_0}$. Then, consider the diagram 
\begin{equation}
\centering
\begin{tikzcd}
G_{n+1} \arrow{rr}{ab_{n+1}} \arrow{dd}{{}_{n+1} \pi_n} & & G_{n+1}^{ab}  \arrow{dd}{{}_{n+1} \varphi_n} \\
 & & \\
G_n \arrow{rr}{ab_n} & & G_n^{ab}  \\
\end{tikzcd}
\label{diag_Gn_abelianizations}
\end{equation}

where ${}_{n+1}\pi_n$ is the transition map. By \cref{lemma: map defined in abelianizations}, ${}_{n+1}\varphi_n$ is well-defined, surjective and its kernel is $$\ker({}_{n+1}\varphi_n) \simeq S_{n+1}/(S_{n+1} \cap G_{n+1}').$$ Since $n \geq n_0$, by hypothesis $S_{n+1} \leq G_{n+1}'$ and therefore ${}_{n+1}\varphi_n$ is an isomorphism. 

On the other hand, $$ab_{n+1}(T_{n+1}) = {}_{n+1} \varphi_n^{-1} \,\, ab_n \,\, {}_{n+1} \pi_n(T_{n+1}) = {}_{n+1}\varphi_n^{-1} \,\, ab_n(T_n)$$ and as ${}_{n+1}\varphi_n^{-1}$ and $ab_n$ are surjective, then $ab_{n+1}(T_{n+1})$ is a generating set of $G_{n+1}^{ab}$. Finally, using that $G$ is pro-nilpotent, the group $G_{n+1}$ is nilpotent and by \cref{lemma: properties nilpotent groups}, the set $T_{n+1}$ is a generating set of $G_{n+1}$ as we wanted.

To prove that $T$ generates $G$ topologically, let $g \in G$ and $n \in \NN$. Then, there exists a word $w_n$ in $T_n$ such that $\pi_n(g) = w_n(T_n)$. Since $\pi_n$ is a homomorphism such that $\pi_n(T) = T_n$, we have $\pi_n(w_n(T)) = w_n(T_n)$ and consequently we obtain $g w_n(T)^{-1} \in \ker(\pi_n)$. Since $n$ is arbitrary and $\set{\ker(\pi_n)}_{n \in \NN}$ is a basis of neighborhoods of the identity, we obtain a sequence with elements in $T$ converging to $g$. Then $G = \overline{\<T>}$.

Finally, we prove that $g(G) = g(G_{n_0})$. Take $T_{n_0}$ to be a minimal generating set of $G_{n_0}$ and $T$ an extension of $T_{n_0}$. Since the map $\pi_{n_0}$ is surjective, we have the inequality $g(G) \geq g(G_{n_0}) = \# T_{n_0}$ by choice of $T_{n_0}$. On the other hand, $T$ is an extension of $T_{n_0}$ and consequently generates $G$ topologically. Therefore, $\# T = \# T_{n_0} = g(G_{n_0}) \geq g(G)$, proving the equality.
\end{proof}

\cref{theorem: tfg pro-nilpotent} applies for example when $G$ is a closed subgroup of $\Aut(T)$ for $T$ an infinite spherically homogeneous rooted tree, where in this case $G_n = \pi_n(G)$ and $S_n = \pi_n(\St_G(n-1))$. If each $\pi_n(G)$ is a $p$-group for a certain prime $p$, by \cref{lemma: properties nilpotent groups}, we have that $G$ is pro-nilpotent and \cref{theorem: tfg pro-nilpotent} can be applied.

In the case of groups of finite type, by \cref{proposition: number elements finite type and Hausdorff dimension}, the group $G_\mathcal{P}$ is a pro-$p$ group if and only if $\mathcal{P}$ is a $p$-group. 

As a corollary of \cref{theorem: tfg pro-nilpotent}, we obtain the following result, well-known for pro-$p$ groups.

\begin{corollary}
If $G = \varprojlim_{\NN} G_n $ is pro-nilpotent and just-infinite, then $G$ is topologically finitely generated.
\end{corollary}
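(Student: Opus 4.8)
The plan is to reduce the statement to \cref{theorem: tfg pro-nilpotent}. First I would normalise the inverse system: replacing each $G_n$ by $\pi_n(G)$ changes neither $G$ nor the property of being pro-nilpotent (a subgroup of a nilpotent group is nilpotent), so I may assume all transition maps $\pi_n\colon G\to G_n$ are surjective. Put $K_n:=\ker(\pi_n)$; then $\{K_n\}_{n}$ is a decreasing basis of open neighbourhoods of the identity, $G_n=G/K_n$, $S_n=\pi_n(K_{n-1})$, and, since $G_n$ is finite, $G_n'=\pi_n(G')=\pi_n(\overline{G'})$.

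The main case is $G$ non-abelian. Then $\overline{G'}$ is a non-trivial closed normal subgroup of $G$, so just-infiniteness gives $[G:\overline{G'}]<\infty$; a closed subgroup of finite index in a profinite group is open, hence there is some $n_0$ with $K_{n_0}\subseteq\overline{G'}$. For every $n\ge n_0+1$ the descending chain of kernels gives $K_{n-1}\subseteq K_{n_0}\subseteq\overline{G'}$, so
\[
S_n=\pi_n(K_{n-1})\subseteq\pi_n\big(\overline{G'}\big)=G_n'.
\]
This is precisely the hypothesis of \cref{theorem: tfg pro-nilpotent} (with $n_0+1$ in the role of its $n_0$), which then gives that $G$ is topologically finitely generated, with $g(G)=g(G_{n_0+1})$.

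The abelian case has to be treated on its own, because there the hypothesis of \cref{theorem: tfg pro-nilpotent} reads $S_n=\{1\}$ for large $n$, which would force $G$ to be finite. Here I would use that a pro-nilpotent profinite group is the direct product of its pro-$p$ Sylow subgroups, and that $G$, being just-infinite, has no non-trivial closed normal subgroup of infinite index; since $\prod_{q\neq p}G_q$ would be such a subgroup whenever $G_p$ is infinite, exactly one Sylow factor survives, so $G$ is an infinite just-infinite abelian pro-$p$ group for some prime $p$. Then $\overline{G^p}$ is a closed normal subgroup that cannot be trivial — otherwise $G$ would be an infinite profinite $\FF_p$-vector space, which contains closed subspaces of infinite codimension — hence $[G:\overline{G^p}]<\infty$, so $G/\overline{G^p}$ is finite and $G$ is topologically finitely generated; in fact $G\cong\ZZ_p$.

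The one genuinely delicate point, I expect, is recognising that \cref{theorem: tfg pro-nilpotent} says nothing when $G$ is abelian, so that case requires the short structural detour above; the non-abelian case is essentially bookkeeping with the kernels $K_n$, once one uses that closed finite-index subgroups of profinite groups are open.
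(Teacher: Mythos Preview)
Your proof is correct and follows the same abelian/non-abelian split as the paper. In the non-abelian case the paper reaches $S_n\le G_n'$ by noting that the maps $G_{n+1}^{ab}\to G_n^{ab}$ eventually stabilise and then invoking \cref{lemma: map defined in abelianizations}, whereas your route via $K_{n_0}\subseteq\overline{G'}$ and $\pi_n(\overline{G'})=G_n'$ is slightly more direct; in the abelian case the paper simply asserts that an abelian just-infinite profinite group must be $\ZZ_p$, so your Sylow-decomposition argument is in fact more detailed than what the paper offers.
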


\begin{proof}
If $G$ is abelian, then $G$ is a product of procyclic groups that can only be just-infinite if $G$ is isomorphic to $\ZZ_p$ for some prime $p$. In particular, $G$ is topologically finitely generated.  

If $G$ is not abelian, then $G' \neq 1$ and so $\overline{G'} \lhd_f G$, namely, there exists $n_0 \in \NN$ such that the map ${}_{n+1} \varphi_n$ in the proof of \cref{theorem: tfg pro-nilpotent} is an isomorphism for all $n \geq n_0$. Applying \cref{lemma: map defined in abelianizations} to the case where $(A,B,\pi) := (G_n, G_n', {}_n \pi_{n-1})$, we obtain $G_n' \geq S_n = \pi_n(\ker(\pi_{n-1}))$ for all $n > n_0$ and therefore $G$ is topologically finitely generated by \cref{theorem: tfg pro-nilpotent}.
\end{proof}

\subsection{Proof of \cref{Theorem: finite type equivalences}}

We start collecting the results already known that will help us to prove \cref{Theorem: finite type equivalences}. 

\begin{Proposition}[{\cite[Proposition 7]{Bondarenko2014}}]
Let $T$ be a $d$-regular tree, $D$ a natural number and a minimal pattern subgroup $\mathcal{P} \leq \Aut(T^D)$. If there exists $n \geq D$ such that $\pi_n(\St_{G_\mathcal{P}}(n-1)) \nleq \pi_n(G_\mathcal{P})'$, then $G_\mathcal{P}$ is not topologically finitely generated.
\label{proposition: finite type not tfg}
\end{Proposition}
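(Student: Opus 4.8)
The plan is to argue by contradiction: suppose $G:=G_\mathcal{P}$ is topologically finitely generated, say by $r$ elements. Then every finite continuous quotient of $G$ is $r$-generated, and applying this to the quotients $\pi_m(G)=G/\St_G(m)$ and to their abelianizations, each finite abelian group $\pi_m(G)^{ab}=\pi_m(G)/\pi_m(G)'$ is $r$-generated; equivalently, writing $V_m^{(p)}:=\pi_m(G)/\pi_m(G)'\pi_m(G)^p\cong \pi_m(G)^{ab}/p\,\pi_m(G)^{ab}$, we would have $\dim_{\FF_p}V_m^{(p)}\le r$ for every prime $p$ and every $m$. So it suffices to exhibit one prime $p$ for which the dimensions $\dim_{\FF_p}V_m^{(p)}$ are unbounded, i.e.\ to show the maximal pro-$p$ quotient of $G$ is not topologically finitely generated. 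Now the hypothesis $\pi_n(\St_G(n-1))\nleq \pi_n(G)'$ says exactly that the image of $\St_G(n-1)$ in $\pi_n(G)^{ab}$ is nontrivial, equivalently (by \cref{lemma: map defined in abelianizations}) that the canonical surjection $\pi_n(G)^{ab}\twoheadrightarrow\pi_{n-1}(G)^{ab}$ is not an isomorphism. Choosing a prime $p$ dividing the order of this nontrivial image, I get a prime $p$ and — after passing a bounded number of levels deeper, using the geometric-product description below — a level $n_0\ge n$ at which $\pi_{n_0}(\St_G(n_0-1))$ has nonzero image in $V_{n_0}^{(p)}$.

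The engine is the regular-branch structure. By \cref{proposition: finite type is closed self-similar and regular branch}, $G$ is self-similar and regular branch over $K:=\St_G(D-1)$, and by \cref{lemma: St(n+1) = prod St(n)} we have $\St_G(m)=(\St_G(D-1))_{m-D+1}$ for $m\ge D-1$; in particular $\St_G(m)=(\St_G(m-1))_1\cong\St_G(m-1)^{\,d}$ for $m\ge D$ via coordinatewise restriction to the $d$ first-level subtrees, and, by \cref{lemma: commutator and geometric product} iterated, this identification carries $(\St_G(m))'$ onto $(\St_G(m-1))'^{\,d}$. Thus each vertex $v\in\mathcal{L}_k$ gives, through the map $\delta_v$ of \cref{equation: map deltav}, an embedded copy $\delta_v(\St_G(m-1))\le \St_G(m+k-1)\le G$ of $\St_G(m-1)$, and these $d^k$ copies commute pairwise modulo $\St_G(m+k-1)$ and together generate $\St_G(m+k-1)$ modulo $\St_G(m+k)$. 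The plan is to use this to \emph{propagate the defect}: show by induction on $m\ge n_0$ that $\pi_m(\St_G(m-1))$ still has nonzero image in $V_m^{(p)}$. Granting that, every transition map $V_m^{(p)}\twoheadrightarrow V_{m-1}^{(p)}$ with $m>n_0$ has nonzero kernel, so over $\FF_p$ the dimensions $\dim V_m^{(p)}$ strictly increase for $m\ge n_0$, hence are unbounded — the desired contradiction.

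The main obstacle is precisely this propagation step, and the delicate point is the interaction with $p$-th powers rather than with commutators alone: knowing $\pi_m(\St_G(m-1))\nleq\pi_m(G)'$ is not by itself enough, because the image in $\pi_m(G)^{ab}$ could consist of $p$-th powers and therefore vanish in the Frattini quotient $V_m^{(p)}$. This is exactly where the $d$-fold splitting $\St_G(m+k-1)\cong\St_G(m-1)^{\,d^k}$ must be exploited: a single ``defective'' section at level $n_0$ is replicated into $d^k$ coordinatewise-independent defective sections further down, and one argues that, for $k$ large relative to the finitely many $p$-adic valuations occurring in the image at level $n_0$, their joint image cannot be absorbed into $\pi_m(G)'\pi_m(G)^p$. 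Making this precise requires comparing the commutator and power subgroups of $\pi_m(G)$ with those of $\pi_m(\St_G(m-1))$: the commutator side is handled by \cref{lemma: commutator and geometric product} together with the fact that $\St_G(D-1)'\lhd G$, while the power side is controlled coordinatewise through the product decomposition, and the compatibility of all these abelian quotients under the transition maps is the content of \cref{lemma: map defined in abelianizations}. When $\mathcal{P}$, and hence $G$, is a $p$-group, the same bookkeeping can be organized around \cref{theorem: tfg pro-nilpotent}, but the $p$-th-power subtlety does not disappear and the geometric-product replication is still needed.
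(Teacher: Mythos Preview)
The paper does not give its own proof of this proposition; it is quoted verbatim from \cite[Proposition 7]{Bondarenko2014} with no argument supplied. So there is no in-paper proof to compare your proposal against.

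As for the proposal itself: your overall strategy—bounding $\dim_{\FF_p}V_m^{(p)}$ under topological finite generation and then showing this dimension is unbounded—is sound, and you have correctly isolated the crux: one must show that the ``defect'' $\pi_{n_0}(\St_G(n_0-1))\nleq \pi_{n_0}(G)'\pi_{n_0}(G)^p$ propagates to all $m\ge n_0$. But your resolution of this step is a sketch, not a proof, and it rests on a misattribution. You write that ``the commutator side is handled by \cref{lemma: commutator and geometric product},'' but that lemma gives $\St_G(m)'=(\St_G(m-1)')_1$, i.e.\ it controls the commutator subgroup \emph{of} $\St_G(m)$. What actually obstructs the propagation is $\St_G(m)\cap G'$ (equivalently, $\pi_{m+1}(\St_G(m))\cap \pi_{m+1}(G)'$), which in general strictly contains $\St_G(m)'$ and is not touched by that lemma. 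Concretely: if $\delta_v(g)\in G'\St_G(m+1)$ for $g\in\St_G(m-1)$, restricting at $v$ gives $g=c|_v\cdot t$ with $t\in\St_G(m)$ and $c\in G'$, but $c|_v$ need not lie in $G'$—sections of commutators are not commutators. So the ``coordinatewise independence'' of the $d^k$ embedded copies $\delta_v(g)$ in $\pi_{n_0+k}(G)^{ab}$ is precisely the thing that needs proving, and neither \cref{lemma: commutator and geometric product} nor \cref{lemma: map defined in abelianizations} supplies it. The phrase ``for $k$ large relative to the finitely many $p$-adic valuations'' does not make this precise.

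One observation you are close to, and which does help: the kernel $J_m=\ker(\pi_m(G)^{ab}\to\pi_{m-1}(G)^{ab})$ is a quotient of $\St_G(m-1)/\St_G(m)\cong\St_\mathcal{P}(D-1)^{d^{m-D}}$, hence has exponent dividing the fixed integer $e:=\exp(\St_\mathcal{P}(D-1))$. This bounded-exponent constraint is the extra structural input (beyond merely $J_m\neq0$) that ultimately forces the rank to grow, but exploiting it still requires an honest argument relating the filtration of $\pi_m(G)^{ab}$ by the images of $\St_G(k)$ to its invariant-factor decomposition. As written, your proposal does not close this gap.
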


\begin{theorem}[{\cite[Theorem 3]{Bondarenko2014}}]
Let $T$ be a $d$-regular tree, $D$ a natural number and a minimal pattern subgroup $\mathcal{P} \leq \Aut(T^D)$. If $G_\mathcal{P}$ acts level-transitively and there exists a natural number $n_0 \geq D$ such that $\pi_{n_0}(\St_{G_\mathcal{P}}(n_0-1)) \leq \pi_{n_0}(\St_{G_\mathcal{P}}(D-1))'$, then $G_\mathcal{P}$ is topologically finitely generated.
\label{theorem: finite type tfg}
\end{theorem}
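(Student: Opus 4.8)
The plan is to exploit the self-similar regular branch structure of $G:=G_{\mathcal P}$ over $K:=\St_G(D-1)$, available by \cref{proposition: finite type is closed self-similar and regular branch}, together with \cref{lemma: St(n+1) = prod St(n)}, which gives $\St_G(n)=(\St_G(n-1))_1$ for all $n\ge D$; I also assume $n_0\ge D$, which is harmless after enlarging $n_0$. \textbf{Step 1: propagate the hypothesis.} I would first show $\pi_n(\St_G(n-1))\le\pi_n(K)'$ for every $n\ge n_0$, by induction starting from the given inclusion at $n=n_0$. For the inductive step take $g\in\St_G(n)=(\St_G(n-1))_1$, write $g=\prod_{i\in\mathcal L_1}\delta_i(g_i)$ with $g_i=g|_i\in\St_G(n-1)$ and $\delta_i$ the map of \cref{equation: map deltav}, use the inductive hypothesis to replace each $g_i$, modulo $\St_G(n)$, by a product of commutators of elements of $K$, and then push through $\delta_i$: since $\delta_i$ is a homomorphism with $\delta_i(K)\subseteq K_1\subseteq K$ and $\delta_i(\St_G(n))\subseteq\St_G(n+1)$, the projection $\pi_{n+1}(\delta_i(g_i))$ lies in $\pi_{n+1}(K)'$, and hence so does $\pi_{n+1}(g)$.

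\textbf{Step 2: stabilisation of the abelianisation tower.} I would then feed Step 1 into \cref{lemma: map defined in abelianizations}, applied to the restriction map $\pi_{n+1}(K)\twoheadrightarrow\pi_n(K)$ and the subgroup $\pi_{n+1}(K)'$: its kernel is $\pi_{n+1}(\St_G(n))$, which sits inside $\pi_{n+1}(K)'$ by Step 1, so the induced map $\pi_{n+1}(K)^{ab}\to\pi_n(K)^{ab}$ is an isomorphism for all $n\ge n_0$. Hence $K^{\overline{ab}}=\varprojlim_n\pi_n(K)^{ab}\cong\pi_{n_0}(K)^{ab}$ is finite; by \cref{corollary: finite type just infinite} this already shows that $G$ is just-infinite. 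Running the same computation with $G$ in place of $K$, using $\pi_n(K)'\le\pi_n(G)'$, shows that $G^{\overline{ab}}$ is finite as well.

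\textbf{Step 3: topological finite generation.} Fix a finite generating set $T_{n_0}$ of the finite group $\pi_{n_0}(G)$ and lift it to a finite $T\subseteq G$; since the $\St_G(n)$ form a basis of neighbourhoods of $1$, it is enough to prove $\pi_n(\langle T\rangle)=\pi_n(G)$ for all $n$, by induction on $n$, the cases $n\le n_0$ being clear. At level $n+1$ one has $\pi_{n+1}(G)=\langle\pi_{n+1}(T)\rangle\cdot\pi_{n+1}(\St_G(n))$ with $\pi_{n+1}(\St_G(n))\le\pi_{n+1}(K)'$ by Step 1. If $\mathcal P$ is a $p$-group the induction closes at once: $G$ is then pro-$p$, hence pro-nilpotent, each $\pi_{n+1}(G)$ is nilpotent, and \cref{lemma: properties nilpotent groups}(2) upgrades ``$\pi_{n+1}(T)$ generates $\pi_{n+1}(G)$ modulo its commutator subgroup'' to ``$\pi_{n+1}(T)$ generates $\pi_{n+1}(G)$'' --- indeed in this case \cref{theorem: tfg pro-nilpotent} applies verbatim once Step 1 is known, and one even gets $g(G)=g(\pi_{n_0}(G))$.

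The hardest part will be the general minimal pattern subgroup, for which $\pi_{n+1}(G)$ need not be nilpotent and the argument just described fails. Here I would strengthen the induction to also carry the statement that $\langle\pi_{n+1}(T)\rangle$ contains $\pi_{n+1}(\RiSt_G(D-1))=\prod_{v\in\mathcal L_{D-1}}\pi_{n+1}(\rist_G(v))$: using level-transitivity to move between the branches at level $D-1$ and self-similarity to write $\rist_G(v)=\delta_v(R_v)$ with $\St_G(D-1)\le R_v\leq_f G$ (the containment coming from $\delta_v(\St_G(D-1))\subseteq\St_G(2D-2)\subseteq\rist_G(v)$, valid since $G$ is regular branch over $\St_G(D-1)$), one tries to transport the generators of the lower-level quotients $\pi_m(G)$, $m\le n$, into the $v$-branch. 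Combined with a fixed finite set of coset representatives for $K/\RiSt_G(D-1)$ --- finite by \cref{lemma: finite type is weakly regular branch}, since $\St_G(2D-2)\le\RiSt_G(D-1)$ --- adjoined to $T$, and with $\pi_{n+1}(\St_G(n))\le\pi_{n+1}(K)'$, this would recover $\pi_{n+1}(\St_G(n))$ inside $\langle\pi_{n+1}(T)\rangle$ and complete the induction. The delicate point, and what I expect to be the real work, is to check that these transported elements genuinely lie in $\langle\pi_{n+1}(T)\rangle$ rather than merely having the prescribed sections; once this is settled, $\overline{\langle T\rangle}=G$ with $T$ finite gives the theorem.
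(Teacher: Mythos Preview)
The paper does not prove this statement: it is quoted as \cite[Theorem~3]{Bondarenko2014} and invoked as a black box in \cref{theorem: finite type all equivalences} (the implication $(5)\Rightarrow(2)$ reads simply ``This is \cref{theorem: finite type tfg}''). There is therefore no in-paper argument to compare your proposal against.

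On its own merits: your Steps~1 and~2 are correct and, using only tools from the present paper, already show that the hypothesis forces $\St_G(D-1)^{\overline{ab}}$ to be finite --- that is, condition~(6) and hence just-infiniteness in \cref{theorem: finite type all equivalences}. In effect you have reproved $(5)\Rightarrow(4)\Rightarrow(6)$ of that theorem. Your Step~3 in the pro-nilpotent case is also correct and is exactly \cref{theorem: tfg pro-nilpotent}.

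The gap you flag for general $\mathcal P$ is genuine, and your outline does not close it. From Step~1 one gets only $\langle\pi_{n+1}(T)\rangle\cdot\pi_{n+1}(K)'=\pi_{n+1}(G)$, and without nilpotency of $\pi_{n+1}(G)$ this does not yield $\langle\pi_{n+1}(T)\rangle=\pi_{n+1}(G)$. Your proposed workaround via $\RiSt_G(D-1)$ is reasonable in spirit, but the ``delicate point'' you isolate --- that the transported elements actually lie in $\langle\pi_{n+1}(T)\rangle$ rather than merely having the correct sections --- is precisely the substance of the proof, and you have not supplied it. Resolving this is what the original argument in \cite{Bondarenko2014} does, exploiting that $\langle T\rangle$ itself acts transitively on $\mathcal L_n$ (inherited from $\pi_n(\langle T\rangle)=\pi_n(G)$) to manufacture the required elements of $\pi_{n+1}(\St_G(n))$ from commutators formed inside $\pi_{n+1}(\langle T\rangle)$. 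As written, your proposal is complete only when $\mathcal P$ is a $p$-group.
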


In \cite{Bondarenko2014}, the converse of \cref{theorem: finite type tfg} is also claimed. However, after revision of the author of this article with Grigorchuk and in a private communication with one of the authors of \cite{Bondarenko2014}, it was concluded that the argument of the converse was wrong.

The following results are related to the strongly completeness:

\begin{theorem}[{\cite[Theorem 1.1]{NikolovSegal2007}}]
Every topologically finitely generated profinite group is strongly complete.
\label{theorem: Segal Nikolov tfg CSP}
\end{theorem}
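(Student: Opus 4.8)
The plan is to follow the strategy of Nikolov and Segal, which reduces this topological statement to a purely algebraic uniformity statement about finite groups and then resolves that statement using the classification of finite simple groups.

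First I would carry out the reduction to verbal subgroups. Strong completeness means that every finite-index subgroup is open; since any finite-index subgroup contains its normal core, which again has finite index, it suffices to show that every finite-index normal subgroup $N \lhd_f G$ is open. The point is that openness of $N$ can be detected verbally. If $G$ is topologically generated by $d$ elements and $[G:N] = n$, then $Q = G/N$ is a finite $d$-generated group of order at most $n$, so it satisfies an identity $w \equiv 1$ for a suitable word $w$ (an iterated commutator-power word encoding the bounded exponent and derived length of groups of order $\le n$). Hence $N$ contains the verbal subgroup $w(G)$, which has finite index. It is therefore enough to prove that $w(G)$ is closed: a closed subgroup of finite index is open, and then $N \supseteq w(G)$ forces $N$ itself to be open.

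Second, I would isolate the mechanism that makes $w(G)$ closed. Writing $G = \varprojlim G/K$ over the open normal subgroups $K$, a compactness argument shows that $w(G)$ is closed precisely when $w$ has \emph{bounded width} in all finite continuous quotients: there is a constant $c$ so that every element of $w(G/K)$ is a product of at most $c$ values of $w^{\pm 1}$, uniformly in $K$. Indeed, the set of such products is a continuous image of a compact set, hence closed, and if it already equals $w(G/K)$ for every $K$ then it equals $w(G)$. Thus the whole theorem rests on a single uniformity statement: for each $d$ there is a function $f$ so that, for the relevant words $w$, the $w$-width of every finite $d$-generated group is at most $f(d)$, independent of the order of the group.

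Third, and here lies the genuine difficulty, I would prove this bounded-width statement. Filtering a $d$-generated finite group through its socle and peeling off abelian, nilpotent, and solvable layers gives width bounds that are comparatively elementary, and the obstruction concentrates on the non-abelian composition factors. The problem thus reduces to products of finite simple (or quasisimple) groups and ultimately to a uniform bound on the length of products of commutators — more precisely of \emph{twisted commutators} of the form $[x,y]^{g}$ gathered over the simple factors — needed to express an arbitrary normally generated element. This is the step that genuinely requires the classification of finite simple groups: one treats the alternating groups, the several families of classical and exceptional groups of Lie type, and the sporadic groups, invoking covering results in the spirit of Ore's conjecture (every element of a finite simple group is a single commutator) together with their quantitative strengthenings on bounded products of conjugacy classes covering the group. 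The main obstacle is precisely assembling these case-by-case simple-group estimates into one function $f(d)$ that is uniform across all finite simple groups simultaneously; once that bound is secured, the layers recombine to give bounded width for $w$ in every finite $d$-generated group, so $w(G)$ is closed, and the reduction of the first step completes the proof that $G$ is strongly complete.
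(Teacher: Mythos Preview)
The paper does not prove this theorem; it is quoted from \cite{NikolovSegal2007} and the paper only remarks afterward that the proof uses the classification of finite simple groups. Your proposal is a sketch of the actual Nikolov--Segal argument, which the present paper treats entirely as a black box, so there is no ``paper's own proof'' to compare against.

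The architecture of your sketch (reduce to openness of a verbal subgroup $w(G)$; translate closedness of $w(G)$ into uniform bounded width of $w$ over all finite quotients; establish bounded width case-by-case via CFSG) is the correct one. However, your first step has a real gap. You choose $w$ to encode ``the bounded exponent and derived length of groups of order $\le n$'', but finite groups of order $\le n$ need not be solvable (take $G/N \cong A_5$), so no such word vanishes on $G/N$ in general. The standard reduction instead uses a word $w$ vanishing on every $d$-generated finite group of order at most $n$ (equivalently, one may simply take $w = x^n$), and then one must invoke Zelmanov's solution of the restricted Burnside problem to conclude that $w$ is $d$-locally finite, i.e.\ that $\overline{w(G)}$ is open. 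Without Zelmanov you cannot justify your assertion that ``$w(G)$ has finite index''. Once that is in place, closedness of $w(G)$ yields $w(G)=\overline{w(G)}$ open, hence $N\supseteq w(G)$ is open, and your second and third steps go through as described.
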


\cref{theorem: Segal Nikolov tfg CSP} is known to use the classification of finite simple groups (see \cite{Gorenstein2023}). In the case of profinite pro-$p$ groups, there is a much simpler argument due to Serre:

\begin{theorem}[{\cite[Chapter 1, 4.2]{Serre1997}}]
Every topologically finitely generated profinite pro-$p$ group is strongly complete.
\label{theorem: Serre tfg CSP}
\end{theorem}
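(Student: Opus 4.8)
The plan is to use the first of the three equivalent conditions for strong completeness recalled in the introduction, namely that $G$ is strongly complete exactly when every finite-index subgroup is open. So I fix a topologically finitely generated pro-$p$ group $G$ and a subgroup $H \leq_f G$, and aim to show $H$ is open. Since any subgroup containing an open subgroup is open, and $H$ has only finitely many conjugates, I may replace $H$ by its core $\bigcap_{g \in G} gHg^{-1}$, which is still of finite index and is normal; thus it suffices to prove that every $N \lhd_f G$ is open. The proof will combine a structural lemma on finitely generated pro-$p$ groups with the fact that $G/N$ is automatically a finite $p$-group.

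The structural lemma I would take as the engine is the classical one: if $G$ is topologically finitely generated and pro-$p$, then the \emph{abstract} subgroup $\Phi(G) = \langle g^p, [g,h] : g,h \in G\rangle = G^p[G,G]$ is open, of index $p^{g(G)}$. Here one half --- that the \emph{closed} subgroup $\overline{G^p[G,G]}$ is open --- is the Burnside basis theorem for pro-$p$ groups, since $G/\overline{G^p[G,G]}$ is a finite $\mathbb{F}_p$-vector space of dimension $g(G)$; the subtler half is that the abstract subgroup $G^p[G,G]$ already equals its closure. This is where I expect the real difficulty to lie: it is the only point at which finite generation is used in an essential way, and showing that the abstract and topological Frattini subgroups coincide genuinely requires work --- it is in fact the heart of Serre's argument in \cite{Serre1997}, and I would import it rather than redo it. Granting the lemma, I would define the lower $p$-series by $P_1(G) = G$ and $P_{i+1}(G) = P_i(G)^p[P_i(G), G]$ (all abstract subgroups). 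Each $P_i(G)$ is a verbal subgroup, hence characteristic in $G$, and each is open, by induction on $i$: if $P_i(G)$ is open then it is itself topologically finitely generated and pro-$p$ (open subgroups of topologically finitely generated profinite groups are topologically finitely generated), so by the lemma $\Phi(P_i(G)) = P_i(G)^p[P_i(G),P_i(G)]$ is open, and since $\Phi(P_i(G)) \subseteq P_{i+1}(G) \leq G$, the subgroup $P_{i+1}(G)$ is open too. Moreover, for any normal subgroup $U$ of $G$ with $G/U$ a finite $p$-group, the lower $p$-series of the finite (hence nilpotent) $p$-group $G/U$ reaches the identity, so $P_m(G)U/U = P_m(G/U) = 1$, i.e.\ $P_m(G) \leq U$, once $m$ is large enough.

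It remains to see that $G/N$ is a finite $p$-group. If some prime $q \neq p$ divided $[G:N]$, then by Cauchy's theorem $G/N$ would contain an element $\bar g$ of order $q$; lifting to $g \in G$ and setting $C = \overline{\langle g \rangle}$, a closed --- hence pro-$p$ --- procyclic subgroup, we have $C \cong \mathbb{Z}_p$ or $C \cong C_{p^k}$. In either case every finite-index subgroup of $C$ is open with index a power of $p$ (for $\mathbb{Z}_p$ this is elementary: a subgroup $L$ of index $n$ contains $n\mathbb{Z}_p$, which is open of $p$-power index), so $CN/N \cong C/(C \cap N)$ is a finite $p$-group, contradicting that it contains $\bar g$ of order $q$. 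Hence $G/N$ is a finite $p$-group; applying the last sentence of the previous paragraph with $U = N$ yields an open subgroup $P_m(G) \leq N$, so $N$ is open. Therefore every finite-index subgroup of $G$ is open, that is, $G$ is strongly complete.
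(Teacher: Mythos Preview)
The paper does not supply a proof of this theorem; it is simply quoted from \cite{Serre1997} and used as a black box alongside the Nikolov--Segal theorem. So there is no ``paper's own proof'' to compare against.

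Your argument is correct and is essentially Serre's original proof. The one point worth flagging is that the lemma you import --- that for a topologically finitely generated pro-$p$ group the \emph{abstract} subgroup $G^p[G,G]$ already coincides with its closure --- is not a minor technicality but the entire content of the theorem: once it is granted, the rest (the inductive openness of the lower $p$-series, the reduction to $G/N$ being a $p$-group via procyclic subgroups, and the termination of the lower $p$-series in a finite $p$-group) is routine, as you correctly carry out. So your write-up is an accurate reconstruction of the standard proof, but be aware that you have packaged the hard step as an import; if you were asked to prove the result from scratch, that step is where all the work lies.

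Two small remarks on details you pass over quickly: the claim that open subgroups of topologically finitely generated profinite groups are again topologically finitely generated does require a short compactness argument (uniform Schreier bound on the finite quotients plus the finite intersection property in $U^M$), and the termination of the lower $p$-series in a finite $p$-group $Q$ is most cleanly seen by observing that if $P_i(Q)=P_{i+1}(Q)=H$ then $H=[H,Q]$, whence $H=1$ by nilpotency. Both are standard, but since you spell out the $\mathbb{Z}_p$ step carefully it would be consistent to spell these out too.
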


The following result is about stabilization of indices of weakly regular branch groups:

\begin{Lemma}[{\cite[Lemma 3.1]{Fariña2025}}]
Let $T$ be a $d$-regular tree and $G,K \leq \Aut(T)$ such that $G$ is self-similar and $K_1 \leq K \lhd G$. If there exists $n_0 \in \NN$ such that $[\pi_{n_0}(G): \pi_{n_0}(K)] = [\pi_{n_0+1}(G): \pi_{n_0+1}(K)]$, then $$[\pi_{n_0}(G): \pi_{n_0}(K)] = [\pi_n(G): \pi_n(K)]$$ for all $n \geq n_0$.
\label{lemma: stabilizabition indices regular branch}
\end{Lemma}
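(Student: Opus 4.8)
The plan is to control the integers $a_n := [\pi_n(G):\pi_n(K)]$ (finite, since $\pi_n(G)\le\Aut(T^n)$ is finite) and show that once the sequence fails to increase at one step, it never increases again. Since $K\lhd G$ we have $\pi_{n+1}(K)\lhd\pi_{n+1}(G)$, so I can apply \cref{lemma: map defined in abelianizations} with $(A,B,\pi)=\big(\pi_{n+1}(G),\pi_{n+1}(K),\,{}_{n+1}\pi_n\big)$, where ${}_{n+1}\pi_n$ is the level-$n$ truncation. The kernel of ${}_{n+1}\pi_n$ restricted to $\pi_{n+1}(G)$ is exactly $\pi_{n+1}(\St_G(n))$ — indeed any preimage in $G$ of such a kernel element already lies in $\St_G(n)$ — so the kernel formula of that lemma gives
\[
a_{n+1}=a_n\cdot\big[\,\pi_{n+1}(\St_G(n)):\pi_{n+1}(\St_G(n))\cap\pi_{n+1}(K)\,\big].
\]
In particular $a_n$ is non-decreasing, and the hypothesis $a_{n_0}=a_{n_0+1}$ is equivalent to $\pi_{n_0+1}(\St_G(n_0))\subseteq\pi_{n_0+1}(K)$. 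So the whole statement reduces to the inductive claim: $\pi_{m+1}(\St_G(m))\subseteq\pi_{m+1}(K)$ for every $m\ge n_0$, since this forces $a_{m+1}=a_m$ for all such $m$.

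For the inductive step I would assume $\pi_{m+1}(\St_G(m))\subseteq\pi_{m+1}(K)$ and take $g\in\St_G(m+1)$. By self-similarity each level-one section $g|_{v_i}$ lies in $G$, and since $g$ is trivial on level $m+1$, in fact $g|_{v_i}\in\St_G(m)$. By the inductive hypothesis there are $k_i\in K$ with $\pi_{m+1}(k_i)=\pi_{m+1}(g|_{v_i})$; and because $g|_{v_i}$ is trivial on level $m$, we get $\pi_m(k_i)=1$, so $k_i\in\St_K(m):=\St(m)\cap K$. Setting $h:=\psi_1^{-1}(k_1,\dots,k_d)$, we have $h\in(\St_K(m))_1\le K_1\le K$, and comparing level-one sections (using that $\psi_1$ is injective on $\St(1)$ and its truncations) gives $\pi_{m+2}(h)=\pi_{m+2}(g)$. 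Hence $\pi_{m+2}(g)\in\pi_{m+2}(K)$, closing the induction; combining with the recursion for $a_n$ yields $[\pi_n(G):\pi_n(K)]=[\pi_{n_0}(G):\pi_{n_0}(K)]$ for all $n\ge n_0$.

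The one delicate point — and the place where the hypothesis $K_1\le K$ is used rather than self-similarity of $K$ (which is not assumed) — is the reassembly step: a tuple of elements of $K$ need not glue to an element of $K$. This is exactly resolved by noticing that an element of $\St_G(m+1)$ has level-one sections lying so deep (in $\St(m)$) that the chosen preimages $k_i$ are forced into $\St_K(m)$; the tuple then glues into the geometric product $(\St_K(m))_1$, which lies in $K_1$ and hence in $K$ by assumption. Everything else — the identification of $\pi_{n+1}(\St_G(n))$ as the relevant kernel, the index recursion, and the section bookkeeping via $\psi_1$ — is routine given \cref{lemma: map defined in abelianizations} and the properties of sections in \cref{equation: properties sections}.
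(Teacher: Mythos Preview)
Your proof is correct. The paper does not supply its own proof of this lemma --- it is quoted verbatim from \cite{Fariña2025} --- so there is no in-paper argument to compare against. Your approach (reducing via \cref{lemma: map defined in abelianizations} to the inclusion $\pi_{m+1}(\St_G(m))\subseteq\pi_{m+1}(K)$ and then propagating this inclusion by reassembling level-one sections through the geometric product) is exactly the natural one, and your identification of where the hypothesis $K_1\le K$ enters --- namely, to guarantee that the glued element $h=(k_1,\dots,k_d)_1$ lands back in $K$ --- is the key structural point. All the bookkeeping (that $g|_{v_i}\in\St_G(m)$ by self-similarity of $G$, that the chosen $k_i$ are forced into $\St_K(m)$, and that agreement of level-one sections to depth $m+1$ forces $\pi_{m+2}(h)=\pi_{m+2}(g)$) is handled correctly.
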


The following result is a result in homological algebra:

\begin{theorem}[{\cite[Proposition 2.2.4]{RibesZalesskii2000}}]
In the category of inverse systems of profinite groups over the same directed set, the functor $\varprojlim$ is exact.
\label{theorem: inverse limit functor is exact}
\end{theorem}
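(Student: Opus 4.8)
The plan is to unwind the definition of a short exact sequence of inverse systems and check exactness of the limit sequence directly, one map at a time, isolating the single point where compactness is genuinely needed. Fix a directed set $I$ and a short exact sequence $1 \to \mathcal{A} \xrightarrow{f} \mathcal{B} \xrightarrow{g} \mathcal{C} \to 1$ of inverse systems of profinite groups over $I$; by definition this means each component sequence $1 \to A_i \xrightarrow{f_i} B_i \xrightarrow{g_i} C_i \to 1$ is exact and the $f_i,g_i$ commute with the transition maps $\alpha_{ij},\beta_{ij},\gamma_{ij}$. Write $A = \varprojlim A_i$, $B = \varprojlim B_i$, $C = \varprojlim C_i$ with induced maps $f,g$. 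I must show $1 \to A \xrightarrow{f} B \xrightarrow{g} C \to 1$ is exact.

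First I would dispose of left exactness, which is purely formal. If $(a_i) \in A$ maps to $1$, then $f_i(a_i) = 1$ for all $i$, hence $a_i = 1$ by injectivity of $f_i$; so $f$ is injective. The inclusion $\operatorname{im} f \subseteq \ker g$ is immediate from $g_i f_i = 1$. For the reverse inclusion, take $(b_i) \in \ker g$; then $b_i \in \ker g_i = f_i(A_i)$ for each $i$, so there is a unique $a_i$ with $f_i(a_i) = b_i$, and uniqueness forces compatibility: from $f_i(\alpha_{ij}(a_j)) = \beta_{ij}(f_j(a_j)) = \beta_{ij}(b_j) = b_i = f_i(a_i)$ we get $\alpha_{ij}(a_j) = a_i$. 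Hence $(a_i) \in A$ and $f((a_i)) = (b_i)$, so $\ker g = \operatorname{im} f$.

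The substantive step --- and the only one using that the groups are profinite --- is surjectivity of $g\colon B \to C$. Given $c = (c_i) \in C$, I would form the fibers $X_i := g_i^{-1}(c_i) \subseteq B_i$. Each $X_i$ is nonempty (as $g_i$ is onto) and is a translate of the closed subgroup $f_i(A_i) \lhd B_i$ (the continuous image of the compact group $A_i$ in the Hausdorff group $B_i$), hence compact. For $j \geq i$ the transition map restricts to $\beta_{ij}\colon X_j \to X_i$, since $g_i(\beta_{ij}(b)) = \gamma_{ij}(g_j(b)) = \gamma_{ij}(c_j) = c_i$ for $b \in X_j$. Thus $(X_i)_{i \in I}$ is an inverse system of nonempty compact Hausdorff spaces over the directed set $I$, and any element of $\varprojlim X_i$ is simultaneously an element $(b_i) \in B$ with $g((b_i)) = c$. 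So everything reduces to the standard fact that $\varprojlim X_i \neq \emptyset$: realize $\varprojlim X_i = \bigcap_{i \leq j} Z_{ij}$ inside the compact space $\prod_i X_i$ (Tychonoff), where $Z_{ij} = \{(x_k) : \beta_{ij}(x_j) = x_i\}$ is closed; the family $\{Z_{ij}\}$ has the finite intersection property because, given finitely many pairs, one can pick an index $m$ above all indices occurring (directedness), choose $x_m \in X_m$, set $x_k := \beta_{km}(x_m)$ for $k \leq m$, and fill in arbitrarily elsewhere, obtaining a tuple in each of those finitely many $Z_{ij}$; compactness then yields a point in the total intersection.

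I expect the surjectivity step to be the main obstacle, both technically and conceptually: over an arbitrary index category, or for sequences of merely discrete groups, $\varprojlim$ is only left exact (the defect measured by $\varprojlim^{1}$), so the argument genuinely depends on compactness of the fibers $X_i$ together with directedness of $I$ --- precisely what rules out a $\varprojlim^{1}$ obstruction. Everything else, namely injectivity and exactness in the middle, is bookkeeping valid in any category with inverse limits.
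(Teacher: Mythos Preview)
The paper does not prove this statement; it merely cites it from \cite[Proposition 2.2.4]{RibesZalesskii2000} and uses it as a black box (for instance, in the proof of \cref{lemma: inverse limit closed abelianization}). Your proof is correct and is essentially the standard argument one finds in Ribes--Zalesskii: left exactness is formal, and right exactness follows by realizing the fibers $g_i^{-1}(c_i)$ as an inverse system of nonempty compact Hausdorff spaces, whose limit is nonempty by Tychonoff and the finite intersection property.
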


From \cref{theorem: inverse limit functor is exact}, we obtain the following corollary that we will use along the article:

\begin{Lemma}
\label{lemma: inverse limit closed abelianization}
Let $\mathcal{P}$ be a minimal pattern subgroup of depth $D$ in a $d$-regular tree $T$ and $G = G_\mathcal{P}$ the associated group of finite type. Then $$\St_G(D-1)^{\overline{ab}} \simeq \varprojlim \pi_n(\St_G(D-1))^{ab}.$$
\end{Lemma}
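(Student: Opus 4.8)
The plan is to exhibit $\St_G(D-1)^{\overline{ab}}$ as an inverse limit of the finite abelianizations $\pi_n(\St_G(D-1))^{ab}$ by applying the exactness of $\varprojlim$ to a naturally occurring inverse system of short exact sequences. First I would record the setup: by \cref{proposition: finite type is closed self-similar and regular branch} the group $G$ is closed in $\Aut(T)$, so $\St_G(D-1) = G \cap \St(D-1)$ is a closed subgroup, hence itself a profinite group. The kernel of $\pi_n$ restricted to $\St_G(D-1)$ is $\St_G(n)$ for $n \ge D-1$, and these subgroups form a basis of open neighbourhoods of the identity in $\St_G(D-1)$; consequently, as in \cite[Corollary 1.1.8]{RibesZalesskii2000},
$$\St_G(D-1) \;=\; \varprojlim_{n \ge D-1} \pi_n(\St_G(D-1)),$$
with all transition maps surjective (they are restrictions of the truncation maps).

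Next, for each $n \ge D-1$ consider the short exact sequence of finite groups
$$1 \longrightarrow \pi_n(\St_G(D-1))' \longrightarrow \pi_n(\St_G(D-1)) \longrightarrow \pi_n(\St_G(D-1))^{ab} \longrightarrow 1,$$
observing that $\pi_n(\St_G(D-1))' = \pi_n\bigl(\St_G(D-1)'\bigr)$ since $\pi_n$ restricts to a surjective homomorphism of $\St_G(D-1)$ onto $\pi_n(\St_G(D-1))$. These sequences assemble into an inverse system over $n \ge D-1$, and by \cref{theorem: inverse limit functor is exact} the inverse limit is exact, yielding
$$1 \longrightarrow \varprojlim \pi_n\bigl(\St_G(D-1)'\bigr) \longrightarrow \St_G(D-1) \longrightarrow \varprojlim \pi_n(\St_G(D-1))^{ab} \longrightarrow 1.$$

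It then remains to identify the leftmost term with $\overline{\St_G(D-1)'}$. The argument is that $\overline{\St_G(D-1)'}$, being a closed subgroup of the profinite group $\St_G(D-1)$, equals $\varprojlim \pi_n\bigl(\overline{\St_G(D-1)'}\bigr)$; and $\pi_n\bigl(\overline{\St_G(D-1)'}\bigr) = \pi_n\bigl(\St_G(D-1)'\bigr)$ because $\pi_n$ is continuous while $\pi_n\bigl(\St_G(D-1)'\bigr)$ is finite, hence closed, so it both contains the image of the closure and is contained in it. Substituting this identification into the exact sequence above gives $\St_G(D-1)/\overline{\St_G(D-1)'} \simeq \varprojlim \pi_n(\St_G(D-1))^{ab}$, which is the assertion. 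The delicate point is precisely this last identification together with checking surjectivity of the transition maps so that \cref{theorem: inverse limit functor is exact} genuinely applies; everything else is bookkeeping about closed subgroups of profinite groups.
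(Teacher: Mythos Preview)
Your proof is correct and follows essentially the same route as the paper's own argument: write down the short exact sequences $1 \to \pi_n(\St_G(D-1)') \to \pi_n(\St_G(D-1)) \to \pi_n(\St_G(D-1))^{ab} \to 1$, apply exactness of $\varprojlim$ (\cref{theorem: inverse limit functor is exact}), and identify the limits on the left and in the middle with $\overline{\St_G(D-1)'}$ and $\St_G(D-1)$ respectively. Your treatment is in fact more careful than the paper's, which simply asserts these identifications without the justification you supply.
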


\begin{proof}
For each $n \in \NN$ we have the short exact sequence $$1 \rightarrow \pi_n(\St_G(D-1)') \rightarrow \pi_n(\St_G(D-1)) \rightarrow \pi_n(\St_G(D-1))^{ab} \rightarrow 1.$$

By \cref{theorem: inverse limit functor is exact}, $$1 \rightarrow \varprojlim \pi_n(\St_G(D-1)') \rightarrow \varprojlim \pi_n(\St_G(D-1)) \rightarrow \varprojlim \pi_n(\St_G(D-1))^{ab} \rightarrow 1,$$ and the first two groups are $\overline{\St_G(D-1)'}$ and $\St_G(D-1)$ respectively, so $$\St_G(D-1)^{\overline{ab}} = \St_G(D-1)/\overline{\St_G(D-1)'} \simeq \varprojlim \pi_n(\St_G(D-1))^{ab}.$$ 
\end{proof}

We are now ready to prove the following theorem, that will have \cref{Theorem: finite type equivalences} as a corollary:

\begin{theorem}
Let $T$ be a $d$-regular tree, $D$ a natural number and $\mathcal{P} \leq \Aut(T^D)$ a minimal pattern subgroup. Suppose that $G = G_\mathcal{P}$ is level-transitive and $\St_G(D-1)^{\overline{ab}}$ is torsion. Then the following statements are equivalent:

\begin{enumerate}
\item $G$ is just-infinite,
\item $G$ is topologically finitely generated,
\item $G$ is strongly complete,
\item There exists $n_0 \geq D$ such that $$[\pi_{n_0 + 1}(\St_G(D-1)): \pi_{n_0 + 1}(\St_G(D-1))'] = [\pi_{n_0}(\St_G(D-1)): \pi_{n_0}(\St_G(D-1))']$$
\item There exists $n_0 \geq D$ such that $\pi_{n_0}(\St_G(n_0-1)) \leq \pi_{n_0}(\St_G(D-1))'$,
\item $\St_G(D-1)^{\overline{ab}}$ is finite.
\end{enumerate}
Moreover, if $G$ is pro-nilpotent, the following item is also equivalent to the previous ones:
\begin{enumerate}
\item[(7)] $\pi_n(\St_G(n_0-1)) \leq \pi_n(G)'$ for all $n \geq D$.
\end{enumerate}
\label{theorem: finite type all equivalences}
\end{theorem}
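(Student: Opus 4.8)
The plan is to run a single cycle of implications through $(1)$–$(6)$ and then attach $(7)$ under the extra pro-nilpotency hypothesis. The equivalence $(1)\Leftrightarrow(6)$ is immediate from \cref{corollary: finite type just infinite} since $G$ is level-transitive, so it is really a restatement.

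For $(4)\Leftrightarrow(5)\Leftrightarrow(6)$ I would pass to the tower of finite groups $\pi_n(\St_G(D-1))^{ab}$. For every $n\geq D$, restriction to level $n-1$ gives a short exact sequence
\[ 1\longrightarrow \pi_n(\St_G(n-1))\longrightarrow \pi_n(\St_G(D-1))\longrightarrow \pi_{n-1}(\St_G(D-1))\longrightarrow 1, \]
the kernel being exactly the patterns in $\pi_n(\St_G(D-1))$ trivial on the first $n-1$ levels, i.e. $\pi_n(\St_G(n-1))$, using $\St_G(n-1)\leq\St_G(D-1)$. Abelianizing, $\pi_n(\St_G(n-1))\leq\pi_n(\St_G(D-1))'$ if and only if $\pi_n(\St_G(D-1))^{ab}\to\pi_{n-1}(\St_G(D-1))^{ab}$ is an isomorphism, equivalently $[\pi_n(\St_G(D-1)):\pi_n(\St_G(D-1))']=[\pi_{n-1}(\St_G(D-1)):\pi_{n-1}(\St_G(D-1))']$; this already yields $(4)\Leftrightarrow(5)$ up to a shift in $n_0$. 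To propagate one such equality to all larger $n$ I would apply \cref{lemma: stabilizabition indices regular branch} with $G_\mathcal{P}$ (self-similar by \cref{proposition: finite type is closed self-similar and regular branch}) and $K=\St_G(D-1)'$: indeed $K\lhd G$ since it is characteristic in the normal subgroup $\St_G(D-1)$, and $K_1=(\St_G(D-1)_1)'=\St_G(D)'\leq\St_G(D-1)'=K$ by \cref{lemma: commutator and geometric product} together with \cref{lemma: St(n+1) = prod St(n)}, while $[\pi_n(G):\pi_n(K)]$ differs from $[\pi_n(\St_G(D-1)):\pi_n(\St_G(D-1))']$ only by the constant factor $[\pi_n(G):\pi_n(\St_G(D-1))]=|\pi_{D-1}(G)|$. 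Thus $(4)$, or $(5)$, forces the orders $|\pi_n(\St_G(D-1))^{ab}|$ to be eventually constant, whence $\St_G(D-1)^{\overline{ab}}=\varprojlim\pi_n(\St_G(D-1))^{ab}$ is finite by \cref{lemma: inverse limit closed abelianization}; conversely, if $(6)$ holds, a finite inverse limit of a surjective system of finite groups forces those orders to stabilize, giving $(4)$ and $(5)$.

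It remains to close the cycle. Granting $(5)$, \cref{theorem: finite type tfg} (using level-transitivity) gives $(2)$, and $(2)\Rightarrow(3)$ is \cref{theorem: Segal Nikolov tfg CSP}. The only place the hypothesis that $\St_G(D-1)^{\overline{ab}}$ is torsion enters is in $(3)\Rightarrow(1)$, which I expect to be the main obstacle, and I would argue it contrapositively. If $G$ is not just-infinite then $A:=\St_G(D-1)^{\overline{ab}}$ is infinite by $(1)\Leftrightarrow(6)$, and being a torsion profinite abelian group it has finite exponent (Baire category: $A=\bigcup_n A[n]$ with $A[n]$ closed, so some $A[n]$ is open of finite index, forcing a bounded exponent). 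Hence some pro-$p$ summand $A_p$ of $A$ is infinite of $p$-power exponent, so $A_p/A_p^p$ is an infinite profinite $\FF_p$-vector space; its abstract $\FF_p$-dual is strictly larger than its continuous dual, so it has a non-closed subgroup of index $p$. Pulling this subgroup back through $\St_G(D-1)\twoheadrightarrow A$ and using $\St_G(D-1)\leq_f G$ produces a finite-index non-open subgroup of $G$, so $G$ is not strongly complete. This is the implication labelled $*$ in \cref{figure: diagram equivalences}, and it completes the equivalence of $(1)$–$(6)$.

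Finally, in the pro-nilpotent case I would prove $(2)\Leftrightarrow(7)$: the contrapositive of \cref{proposition: finite type not tfg} gives $(2)\Rightarrow(7)$ with no extra hypothesis, while $(7)\Rightarrow(2)$ follows from \cref{theorem: tfg pro-nilpotent} applied to $G=\varprojlim\pi_n(G)$ with $S_n=\pi_n(\St_G(n-1))$ and $n_0=D$. The bulk of the write-up is bookkeeping with indices, geometric products and abelianizations of level stabilizers in regular branch groups; the one genuinely new input is the torsion argument in $(3)\Rightarrow(1)$.
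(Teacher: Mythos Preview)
Your proposal is correct and follows essentially the same cycle of implications as the paper, invoking the same auxiliary results (\cref{corollary: finite type just infinite}, \cref{lemma: stabilizabition indices regular branch}, \cref{lemma: commutator and geometric product}, \cref{lemma: St(n+1) = prod St(n)}, \cref{lemma: inverse limit closed abelianization}, \cref{theorem: finite type tfg}, \cref{theorem: Segal Nikolov tfg CSP}, \cref{proposition: finite type not tfg}, \cref{theorem: tfg pro-nilpotent}) at the same places.

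The only genuine difference is in the step $(3)\Rightarrow(6)$. The paper argues by cardinality: an infinite torsion abelian profinite group is an infinite product $\prod_i C_{n_i}$, hence admits uncountably many finite-index subgroups, whereas strong completeness forces every finite-index normal subgroup to contain some $\St_G(n)$, and there are only countably many such overgroups. You instead run a Baire argument to get bounded exponent, pass to an infinite pro-$p$ summand, and explicitly exhibit a non-continuous $\FF_p$-linear functional on $A_p/A_p^p$ whose kernel pulls back to a non-open finite-index subgroup. Both arguments are valid and ultimately rest on the same structural input (torsion profinite abelian $\Rightarrow$ bounded exponent $\Rightarrow$ product of finite cyclics); yours is more constructive, the paper's is shorter. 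Either way this is, as you correctly identify, the only place the torsion hypothesis is used.
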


\begin{proof}
The diagram of implications is shown in \cref{figure: diagram of implications}.

\begin{figure}[h!]
\centering
\begin{tikzpicture}[node distance=2.5cm, auto]
    \node (1) at (2,1) {1};
    \node (4) at (2,0) {4};
    \node (5) at (1,-1) {5};
    \node (2) at (-1,-1) {2};
    \node (3) at (-2,0) {3};
    \node (6) at (-1,1) {6};
    \node (7) at (-3,-1) {7};

    \draw[->, bend left=30, shorten >=2pt, shorten <=2pt] (6) to node[midway, right] {} (1);
    \draw[->, bend left=0, shorten >=5pt, shorten <=2pt] (1) to node[midway, right] {} (6);
    \draw[->, bend right=30, shorten >=2pt, shorten <=2pt] (6) to node[midway, right] {} (4);   
    \draw[->, bend right=0, shorten >=2pt, shorten <=2pt] (4) to node[midway, right] {} (6);   
    \draw[->, bend left=15, shorten >=2pt, shorten <=2pt] (4) to node[midway, right] {} (5);
    \draw[->, bend left=15, shorten >=2pt, shorten <=2pt] (5) to node[midway, right] {} (2);
    \draw[->, bend left=15, shorten >=2pt, shorten <=2pt] (2) to node[midway, right] {} (3);
    \draw[->, bend left=15, shorten >=2pt, shorten <=2pt] (3) to node[midway, left] {*} (6);
    \draw[->, bend left=15, shorten >=2pt, shorten <=2pt] (2) to node[midway, right] {} (7);
    \draw[->, bend left=15, shorten >=5pt, shorten <=3pt] (7) to node[midway, right] {} (2);
\end{tikzpicture}
\caption{Diagram of implications in the proof of \cref{theorem: finite type all equivalences}.}
\label{figure: diagram of implications}
\end{figure}
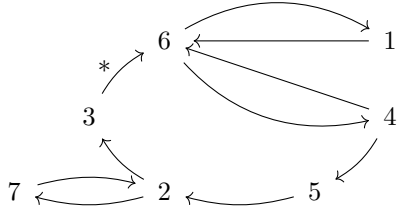

($1 \Leftrightarrow 6$): This is \cref{corollary: finite type just infinite}.

($6 \Rightarrow 4$): If $\St_G(D-1)^{\overline{ab}}$ is finite, by \cref{lemma: inverse limit closed abelianization}, there exists $n_0 \in \NN$ such that for all $n \geq n_0$ then $\pi_n(\St_G(D-1))^{ab} \simeq \pi_{n_0}(\St_G(D-1))^{ab}$. In particular this applies for $n = n_0+1$, obtaining that they have the same order.

($4 \Rightarrow 6$): Since $n_0 \geq D$, we have $$[\pi_{n_0 + 1}(G_\mathcal{P}): \pi_{n_0 + 1}(\St_G(D-1))'] = [\pi_{n_0}(G_\mathcal{P}): \pi_{n_0}(\St_G(D-1))'].$$ By \cref{lemma: commutator and geometric product} and \cref{lemma: St(n+1) = prod St(n)}, $$(\St_G(D-1)')_1 = (\St_G(D-1)_1)' = \St_G(D)' \leq \St_G(D-1)',$$ so applying \cref{lemma: stabilizabition indices regular branch} with $G$ and $\St_G(D-1)'$, we obtain $$[\pi_n(G_\mathcal{P}): \pi_n(\St_G(D-1))'] = [\pi_{n_0}(G_\mathcal{P}): \pi_{n_0}(\St_G(D-1))']$$ for all $n \geq n_0$. Dividing by $$[G_\mathcal{P}: \St_G(D-1)] = [\pi_n(G_\mathcal{P}): \pi_n(\St_G(D-1))] = [\pi_{n_0}(G_\mathcal{P}): \pi_{n_0}(\St_G(D-1))],$$ the result follows.

($4 \Rightarrow 5$): Consider the diagram
\begin{equation*}
\centering
\begin{tikzcd}
\pi_{n_0+1}(\St_G(D-1)) \arrow{rr}{ab_{n_0+1}} \arrow{dd}{\pi} & & \pi_{n_0+1}(\St_G(D-1))^{ab}  \arrow{dd}{\varphi} \\
 & & \\
\pi_{n_0}(\St_G(D-1)) \arrow{rr}{ab_{n_0}} & & \pi_{n_0}(\St_G(D-1))^{ab}
\end{tikzcd}
\end{equation*}

By hypothesis and \cref{lemma: map defined in abelianizations}, we have $$\ker(\pi) = \pi_{n_0+1}(\St_G(n_0)) \leq \pi_{n_0+1}(\St_G(D-1))'.$$

($5 \Rightarrow 2$): This is \cref{theorem: finite type tfg}.

($2 \Rightarrow 3$): This follows from \cref{theorem: Segal Nikolov tfg CSP} (or \cref{theorem: Serre tfg CSP} in the case of pro-$p$ groups). 

($3 \Rightarrow 6$): Suppose that $\St_G(D-1)^{\overline{ab}}$ is not finite. As it is abelian and torsion, then it cannot be topologically finitely generated with respect to the congruence topology. As it is abelian, torsion and profinite, then $$\St_G(D-1)^{\overline{ab}} \simeq \prod_{i = 1}^{\infty} C_{n_i}$$ with $\set{n_i}_{i \in \NN} \subseteq \NN$. Therefore, we can find uncountably many normal subgroups of finite index in $\St_G(D-1)$ containing $\overline{\St_G(D-1)'}$. These subgroups will also have finite index in $G$. 

On the other hand, as $G$ is strongly complete, every normal subgroup of finite index must contain $\St_G(n)$ for some $n \in \NN$ by \cref{lemma: equivalences inclusion profinite kernels}. By the correspondence theorem, we have finitely many normal subgroups containing $\St_G(n)$ since they are in bijection with the normal subgroups of $\pi_n(G)$ that is a finite group. Adding up in all natural numbers $n$, the amount of normal subgroups of finite index in $G$ must be countable. This yields to a contradiction and consequently $\St_G(D-1)^{\overline{ab}}$ must be finite.

($2 \Rightarrow 7$): This is the counter positive of \cref{proposition: finite type not tfg}. 

($7 \Rightarrow 2$): Follows from \cref{theorem: tfg pro-nilpotent} as $G$ is closed by \cref{proposition: finite type is closed self-similar and regular branch} and therefore $G \simeq \varprojlim \pi_n(G)$.
\end{proof}

\begin{Remark}
Notice that the only implication in \cref{theorem: finite type all equivalences} that uses that $\St_G(D-1)^{\overline{ab}}$ is torsion is $(3) \Rightarrow (6)$ as it is indicated in \cref{figure: diagram of implications} with the label *. This in particular shows that if item (4) holds, then the group of finite type is just-infinite, topologically finitely generated and strongly complete.
\label{Remark: item 4 does everything}
\end{Remark}

\subsection{Analysis of when $\St_G(D-1)^{\overline{ab}}$ is torsion}
\label{subsection: Analysis of the property (E)}

Recall that by \cref{conjecture: property (E)}, we expect that $\St_{G_\mathcal{P}}(D-1)^{\overline{ab}}$ is torsion for every group of finite type $G_\mathcal{P}$ of depth $D$. In this subsection, we will analyze equivalences to this conjecture. 

If $G$ is any group, the \textit{exponent} of $G$, denoted by $\exp(G)$ is the smallest natural number $e$ such that $g^e = 1$ for all $g \in G$.

Let $T$ be a $d$-regular tree, $D$ a natural number and $\mathcal{P} \leq \Aut(T^D)$ a minimal pattern subgroup. Recall that $\St_\mathcal{P}(D-1)$ was defined in \cref{section: Hausdorff dimension fractality and torsion} as $\pi_D(\St_{G_\mathcal{P}}(D-1))$.

\begin{Proposition}
\label{proposition: property (E) equivalences}
Let $T$ be a $d$-regular tree, $D$ a natural number and $\mathcal{P} \leq \Aut(T^D)$ a minimal pattern subgroup. Then, the following statements are equivalent:

\begin{enumerate}
\item $\St_{G_\mathcal{P}}(D-1)^{\overline{ab}}$ is torsion,
\item $\St_{G_\mathcal{P}}(D-1)^{\overline{ab}}$ has finite exponent,
\item There exists $S$ a subset of $\St_\mathcal{P}(D-1)$ such that $S$ and their conjugates by elements of $\mathcal{P}$ generate $\St_\mathcal{P}(D-1)$, and for every element $s \in S$ there exists an extension $g_s$ of $s$ to an element in $\St_{G_\mathcal{P}}(D-1)$ such that $g_s$ has finite order in $\St_{G_\mathcal{P}}(D-1)^{\overline{ab}}$.
\end{enumerate}
\end{Proposition}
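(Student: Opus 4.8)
The plan is to establish the cycle $(1)\Rightarrow(3)\Rightarrow(2)\Rightarrow(1)$. Throughout I would write $K:=\St_{G_\mathcal{P}}(D-1)$, $A:=K^{\overline{ab}}=\St_{G_\mathcal{P}}(D-1)^{\overline{ab}}$ and $N:=\St_{\mathcal{P}}(D-1)=\pi_D(K)$, a finite group. The implication $(2)\Rightarrow(1)$ (finite exponent forces torsion) is immediate, and $(1)\Rightarrow(3)$ is almost so: take $S:=N$ itself, choose arbitrary preimages $g_s\in K$ under the surjection $\pi_D\colon K\twoheadrightarrow N$, and note that each $\overline{g_s}\in A$ has finite order because $A$ is torsion by hypothesis. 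So the real content is $(3)\Rightarrow(2)$, and I would prove it by showing that $A$ has finite exponent. The tool for that is \cref{lemma: inverse limit closed abelianization}, which gives $A\simeq\varprojlim_n\pi_n(K)^{ab}$: since each $\pi_n(K)^{ab}$ is a quotient of $A$ and $A$ embeds into their product, $A$ has finite exponent if and only if the numbers $\epsilon_n:=\exp(\pi_n(K)^{ab})$ are bounded, in which case $\exp(A)=\lcm_n\epsilon_n$.

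Assuming (3), I would first put $S$ into a convenient form. Since $N$ is finite, $S$ may be taken finite. Next, because $K\lhd G_\mathcal{P}$, the group $G_\mathcal{P}$ acts on $A$ by conjugation; for $g\in G_\mathcal{P}$ with $\pi_D(g)=p$ and $s\in S$, the element $g\,g_s\,g^{-1}$ again lies in $K$, is an extension of $s^{p}\in N$, and its class in $A$ is the image of $\overline{g_s}$ under an automorphism of $A$, hence still of finite order. Replacing $S$ by the (still finite) set $\{\,s^{p}:s\in S,\ p\in\mathcal{P}\,\}\subseteq N$ together with these conjugated extensions, I may assume $S$ is closed under $\mathcal{P}$-conjugation, so hypothesis (3) now says simply $\langle S\rangle=N$. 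Set $e:=\lcm\{\,\operatorname{ord}_A(\overline{g_s}):s\in S\,\}<\infty$.

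Now the branch structure enters. By \cref{proposition: finite type is closed self-similar and regular branch}, $G_\mathcal{P}$ is regular branch over $K$, so \cref{lemma: St(n+1) = prod St(n)} gives $K_1=\St_{G_\mathcal{P}}(D)=(\St_{G_\mathcal{P}}(D-1))_1$ and $K/K_1\simeq N$; moreover, via $\psi_1$ one has $K_1\simeq K^{d}$, whence $\pi_n(K_1)\simeq\pi_{n-1}(K)^{d}$ for all $n\ge1$. Since $\langle S\rangle=N\simeq K/K_1$, I obtain $K=\langle g_s:s\in S\rangle\cdot K_1$, and applying $\pi_n$ yields $\pi_n(K)=\langle\pi_n(g_s):s\in S\rangle\cdot\pi_n(K_1)$. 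Hence, for $n>D$, the abelian group $\pi_n(K)^{ab}$ is the sum of two subgroups: the image of $\langle\pi_n(g_s)\rangle$, generated by the images of the $\pi_n(g_s)$, which factor through $A$ via the canonical surjection $A\twoheadrightarrow\pi_n(K)^{ab}$ and so have order dividing $e$; and the image of $\pi_n(K_1)$, which is a quotient of $\pi_n(K_1)^{ab}\simeq(\pi_{n-1}(K)^{ab})^{d}$ and so has exponent dividing $\epsilon_{n-1}$. Consequently $\epsilon_n\mid\lcm(e,\epsilon_{n-1})$, and since $\lcm\bigl(e,\lcm(e,m)\bigr)=\lcm(e,m)$, induction from $\epsilon_D=\exp(N^{ab})<\infty$ gives $\epsilon_n\mid\lcm(e,\epsilon_D)$ for all $n\ge D$, so $\exp(A)\mid\lcm\bigl(e,\exp(N^{ab})\bigr)<\infty$, which is (2).

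The step I expect to be the main obstacle is getting this recursion right. A careless bound of $\pi_n(K)^{ab}$ as ``the image of $\pi_n(K_1)$ together with a bounded piece'' only yields $\epsilon_n\mid\epsilon_{n-1}\cdot(\text{bounded})$, which grows without control and would fail to prove anything — indeed, without hypothesis (3) the group $A$ need not be torsion at all. What rescues the argument is that $\pi_n(K)^{ab}$ is the \emph{sum} of the two subgroups and that the bounded part of exponent $e$ reappears inside $\epsilon_{n-1}$, so the honest recursion is with $\lcm$, which is idempotent and therefore stabilises. (For completeness I would also record that $(1)\Leftrightarrow(2)$ holds for purely general reasons: an abelian profinite torsion group has finite exponent, since the closed subgroups $A[m]=\{a\in A:ma=0\}$ cover $A$, so by the Baire category theorem one of them is open, giving an open subgroup of $A$ of finite exponent and finite index.)
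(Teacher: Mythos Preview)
Your proof is correct and follows essentially the same route as the paper. Both argue the cycle $(2)\Rightarrow(1)\Rightarrow(3)\Rightarrow(2)$, with the substantive work in $(3)\Rightarrow(2)$: first close $S$ under $\mathcal{P}$-conjugation (the paper does this by extending to all of $N$), then run an induction on $n$ showing $\exp(\pi_n(K)^{ab})$ divides a fixed bound, using the decomposition $\pi_n(K)=\langle\pi_n(g_s)\rangle\cdot\pi_n(K_1)$ together with $\pi_n(K_1)\simeq\pi_{n-1}(K)^d$. Your phrasing via ``sum of two subgroups in an abelian group, so the exponent divides the $\lcm$'' is a clean repackaging of the paper's element-by-element computation $(gs)^e\equiv g^es^e$, but the underlying mechanism is identical. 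The Baire-category aside giving $(1)\Leftrightarrow(2)$ directly for abelian profinite groups is a nice bonus the paper does not record.
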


\begin{proof}
($2 \Rightarrow 1)$ Trivial. 

($1 \Rightarrow 3)$ Take $S = \St_\mathcal{P}(D-1)$ and in particular, any extension of an element in $\St_\mathcal{P}(D-1)$ will have finite order in $\St_{G_\mathcal{P}}(D-1)^{\overline{ab}}$. 

($3 \Rightarrow 2)$ We start proving that if $\alpha \in \St_\mathcal{P}(D-1)$, then there exists $g_\alpha$ extension of $\alpha$ to an element in $\St_{G_\mathcal{P}}(D-1)$ such that $g_\alpha$ has finite order in $\St_{G_\mathcal{P}}(D-1)^{\overline{ab}}$. Firstly, conjugates of elements in $S$ satisfy it. Indeed, if $\gamma \in \mathcal{P}$, there exists $g \in G_\mathcal{P}$ extending $\gamma$ because $\mathcal{P}$ is a minimal pattern subgroup. Then $g_{\gamma s \gamma^{-1}} := g g_s g^{-1}$ is an extension of $\gamma s \gamma^{-1}$. The extension has finite order in $\St_{G_\mathcal{P}}(D-1)^{\overline{ab}}$ because $\overline{\St_{G_\mathcal{P}}(D-1)'} \lhd G_\mathcal{P}$ and it is in the same conjugacy class as $g_s$. Then, if $\alpha$ is any element of $\St_\mathcal{P}(D-1)$, we can write $\alpha = s_1 \dots s_r$, where each $s_i$ is either an element of $S$ or a conjugate of an element in $S$. Define $g_\alpha = g_{s_1} \dots g_{s_r}$. If $$o := \lcm\set{\abs{g_s \, \overline{\St_{G_\mathcal{P}}(D-1)'}}: s \in S},$$ then $$g_\alpha^o \equiv g_{s_1}^o \dots g_{s_r}^o \equiv 1 \pmod{\overline{\St_{G_\mathcal{P}}(D-1)'}}.$$ This proves the first claim that every element in $\St_\mathcal{P}(D-1)$ has an extension to an element in $\St_{G_\mathcal{P}}(D-1)$ with finite order in $\St_{G_\mathcal{P}}(D-1)^{\overline{ab}}$.

Now, set $e = \lcm\set{\exp(\St_\mathcal{P}(D-1)^{ab}), o}$. By \cref{lemma: inverse limit closed abelianization}, we have $$\St_{G_\mathcal{P}}(D-1)^{\overline{ab}} \simeq \varprojlim \pi_n(\St_{G_\mathcal{P}}(D-1))^{ab}.$$ Therefore, $$\exp(\St_{G_\mathcal{P}}(D-1)^{\overline{ab}}) = \lim_{n \rightarrow +\infty} \exp(\pi_n(\St_{G_\mathcal{P}}(D-1))^{ab}),$$ where we also have $$\exp(\pi_n(\St_{G_\mathcal{P}}(D-1))^{ab}) \,\, | \,\, \exp(\pi_{n+1}(\St_{G_\mathcal{P}}(D-1))^{ab})$$ for all $n \in \NN$.

So, in order to prove the result, we will apply induction on $n$. 

If $n = D$, we have $$\pi_n(\St_{G_\mathcal{P}}(D-1))^{ab} = \St_\mathcal{P}(D-1)^{ab}$$ and clearly $$\exp(\St_\mathcal{P}(D-1)^{ab}) \mid e.$$

Assume the result holds until level $n$ and let $g \in \pi_{n+1}(\St_{G_\mathcal{P}}(D-1))$. Then, the element $g$ is the extension of some $\alpha \in \St_\mathcal{P}(D-1)$. Therefore, there exists $s \in \pi_{n+1}(\St_{G_\mathcal{P}}(D))$ such that $gs = \pi_{n+1}(g_\alpha)$ and
\begin{align}
1 = \pi_{n+1}(g_\alpha^e) = (gs)^e \equiv g^e s^e \pmod{\pi_{n+1}(\St_{G_\mathcal{P}}(D-1)')}.
\label{equation: (gs)^e}
\end{align}

Since $G_\mathcal{P}$ is a group of finite type of depth $D$, by \cref{lemma: St(n+1) = prod St(n)} we have the equality $\St_{G_\mathcal{P}}(D) =  (\St_{G_\mathcal{P}}(D-1))_1$. Then, by \cref{lemma: commutator and geometric product}, $$\St_{G_\mathcal{P}}(D)' = (\St_{G_\mathcal{P}}(D-1)_1)' = (\St_G(D-1)')_1,$$ and taking closures,
\begin{align}
\overline{\St_{G_\mathcal{P}}(D-1)'} \geq \overline{\St_{G_\mathcal{P}}(D)'} = \left(\overline{\St_{G_\mathcal{P}}(D-1)'} \right)_1.
\label{equation: commutator StG(D-1)}
\end{align}

Write $s = (s_1,\dots,s_d)$ with $s_i \in \pi_n(\St_{G_\mathcal{P}}(D-1))$ for all $i = 1,\dots,d$. By inductive hypothesis, there exist $s_1',\dots,s_d' \in \pi_n(\St_{G_\mathcal{P}}(D-1)')$ such that $s_i^e = s_i'$ and then by \cref{equation: commutator StG(D-1)}, we obtain that $s' = (s_1',\dots,s_d') \in \pi_{n+1}(\St_{G_\mathcal{P}}(D-1)')$ and that $$s^e = (s_1^e,\dots,s_d^e) = s'.$$

Replacing in \cref{equation: (gs)^e}, we finally conclude that $$g^e \equiv 1 \pmod{\pi_{n+1}(\St_{G_\mathcal{P}}(D-1)')}.$$ as we wanted.
\end{proof}

\subsection{Corollaries and examples}

In this subsection, we will prove \cref{Corollary: IWP equivalence} and \cref{Corollary: closure Hanoi towers joo}. Recall that $N\wr_A H$ denotes the \textit{wreath product} of the groups $H$ and $N$, indexed by the set $A$ and with respect to an action $H \curvearrowright A$. 

\begin{Lemma}[{\cite[Proposition 4, page 214]{delaHarpe2000}}]
Let $H$ and $N$ be two groups, $A$ a finite set and $\varphi$ a left action of $H$ over $A$, faithful and transitive. Then, we have $(N \wr_A H)^{ab} \simeq N^{ab} \times H^{ab}$.
\label{lemma: abelianiation wreath products}
\end{Lemma}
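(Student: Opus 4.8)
The plan is to prove the statement via the universal property of abelianization, i.e.\ by computing $\Hom(W,C)$ for an arbitrary abelian group $C$, where $W := N \wr_A H$. Since $A$ is finite, $W = N^A \rtimes H$ with $H$ acting on the base group $N^A$ by permuting coordinates through $\varphi$. A homomorphism $W \to C$ is the same datum as a pair of homomorphisms $f \colon N^A \to C$ and $g \colon H \to C$ subject to one compatibility relation: the defining relation $h \cdot x \cdot h^{-1} = {}^h x$ of the semidirect product, together with commutativity of $C$, forces $f({}^h x) = f(x)$ for all $h \in H$ and $x \in N^A$, and conversely any pair $(f,g)$ with $f$ being $H$-invariant assembles into a homomorphism $(x,h) \mapsto f(x)g(h)$. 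So $\Hom(W,C)$ is naturally identified with $\{f \in \Hom(N^A,C) : f \text{ is } H\text{-invariant}\} \times \Hom(H,C)$.

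Next I would identify the set of $H$-invariant homomorphisms $N^A \to C$ with $\Hom(N,C)$. Because the factors of the finite direct product $N^A$ pairwise commute and $C$ is abelian, every homomorphism $N^A \to C$ has the form $(x_a)_{a \in A} \mapsto \prod_{a \in A} \phi_a(x_a)$ for a uniquely determined family $(\phi_a)_{a \in A}$ with $\phi_a \in \Hom(N,C)$. Writing out $H$-invariance coordinatewise shows it is equivalent to $\phi_{h \cdot a} = \phi_a$ for all $h \in H$ and $a \in A$; since the action of $H$ on $A$ is transitive, this forces all the $\phi_a$ to be equal to a single $\phi \in \Hom(N,C)$. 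Hence the $H$-invariant homomorphisms $N^A \to C$ correspond bijectively and naturally to $\Hom(N,C)$. Combining with the previous paragraph and using $\Hom(N,C) = \Hom(N^{ab},C)$ and $\Hom(H,C) = \Hom(H^{ab},C)$, we obtain a natural isomorphism $\Hom(W,C) \cong \Hom(N^{ab} \times H^{ab},C)$ for all abelian $C$, and the Yoneda lemma gives $W^{ab} \cong N^{ab} \times H^{ab}$.

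The computation is entirely formal, so there is no serious obstacle; the one point deserving care is the decomposition of homomorphisms out of $N^A$ and the passage through transitivity, which is exactly where the hypothesis on $\varphi$ enters — and note that only transitivity is used, faithfulness being irrelevant for this statement. An alternative route, which I would mention but not carry out, is to compute $[W,W]$ directly: it is the normal closure of $[N^A,N^A] \cup [H,H] \cup \{x^{-1}\,{}^h x : x \in N^A,\ h \in H\}$, so $W^{ab}$ is the quotient of $(N^{ab})^A \times H^{ab}$ by the coinvariance relations $\bar x = {}^h \bar x$; for a transitive $H$-action the coinvariants $\big((N^{ab})^A\big)_H$ collapse, via the "sum of coordinates" map, to $N^{ab}$, yielding the same conclusion.
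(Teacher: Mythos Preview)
Your proof is correct. The paper does not actually prove this lemma; it simply quotes it from de la Harpe's book \cite{delaHarpe2000} without argument, so there is no ``paper's own proof'' to compare against. Your Yoneda/universal-property computation is clean and complete, and your observation that only transitivity (not faithfulness) is used is accurate. The alternative route you sketch via coinvariants is essentially the direct argument one finds in de la Harpe.
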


We are now ready to prove \cref{Corollary: IWP equivalence}:

\begin{corollary}
Let $d \geq 2$ and $\mathcal{P}$ be a transitive subgroup of $\Sym(d)$. Then, the following statements are equivalent: 

\begin{enumerate}[(i)]
\item $W_\mathcal{P}$ is just-infinite,
\item $W_\mathcal{P}$ is topologically finitely generate,
\item $W_\mathcal{P}$ is strongly complete,
\item $\mathcal{P} = \mathcal{P}'$. 
\end{enumerate}

In particular, the whole group of automorphisms of the tree does not have any of these properties. 
\label{corollary: IWP equivalence}
\end{corollary}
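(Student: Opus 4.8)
The plan is to recognise $W_\mathcal{P}$ as the group of finite type $G_\mathcal{P}$ of depth $D=1$ on the $d$-regular tree $T$ — at depth $1$ any subgroup of $\Sym(d)=\Aut(T^1)$ is automatically a minimal pattern subgroup (the pattern condition involves sections of $D-1=0$ levels and is vacuous), and $W_\mathcal{P}$ is level-transitive precisely because $\mathcal{P}$ is transitive — and then to invoke \cref{theorem: finite type all equivalences}. Since $D=1$ gives $\St_{W_\mathcal{P}}(D-1)=\St_{W_\mathcal{P}}(0)=W_\mathcal{P}$, the only hypothesis of that theorem that needs checking is that $W_\mathcal{P}^{\overline{ab}}$ is torsion; once this is in place, the implications $(1)\Leftrightarrow(2)\Leftrightarrow(3)\Leftrightarrow(6)$ of \cref{theorem: finite type all equivalences} immediately give that (i), (ii), (iii) are equivalent, together with the extra information that they hold if and only if $W_\mathcal{P}^{\overline{ab}}$ is finite.

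First I would compute $W_\mathcal{P}^{\overline{ab}}$ explicitly. Writing $X=\{1,\dots,d\}$ and using the identification $\pi_{n+1}(W_\mathcal{P})\simeq \pi_n(W_\mathcal{P})\wr_X\mathcal{P}$, where $\mathcal{P}$ acts on $X$ transitively (by hypothesis) and faithfully (being a subgroup of $\Sym(d)$), I would apply \cref{lemma: abelianiation wreath products} with $N=\pi_n(W_\mathcal{P})$, $H=\mathcal{P}$, $A=X$, obtaining $\pi_{n+1}(W_\mathcal{P})^{ab}\simeq \pi_n(W_\mathcal{P})^{ab}\times\mathcal{P}^{ab}$. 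Inducting from $\pi_1(W_\mathcal{P})=\mathcal{P}$ then yields $\pi_n(W_\mathcal{P})^{ab}\simeq(\mathcal{P}^{ab})^{n}$ with transition maps the coordinate projections, and \cref{lemma: inverse limit closed abelianization} (applied with $D=1$) gives $W_\mathcal{P}^{\overline{ab}}\simeq\varprojlim_n(\mathcal{P}^{ab})^{n}\simeq\prod_{\NN}\mathcal{P}^{ab}$. Since $\mathcal{P}^{ab}$ is a finite abelian group, this countable product has finite exponent $\exp(\mathcal{P}^{ab})$, hence is torsion — which is the missing hypothesis.

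With that in hand, \cref{theorem: finite type all equivalences} shows that (i), (ii), (iii) are equivalent and equivalent to ``$W_\mathcal{P}^{\overline{ab}}$ is finite''. To finish I would observe that $\prod_{\NN}\mathcal{P}^{ab}$ is finite exactly when $\mathcal{P}^{ab}$ is trivial, i.e.\ exactly when $\mathcal{P}=\mathcal{P}'$; this is (iv), which closes the cycle. For the last sentence, taking $\mathcal{P}=\Sym(d)$ gives $W_\mathcal{P}=\Aut(T)$, and $\mathcal{P}'$ is a proper subgroup of $\mathcal{P}$ (trivial if $d=2$, and $A_d$ if $d\ge 3$), so condition (iv) fails and therefore none of (i)--(iii) holds.

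I do not expect a serious obstacle: the argument is essentially the computation of an abelianization through a tower of wreath products together with a citation. The only point requiring care is that the identifications must be carried out for the \emph{topological} abelianization $W_\mathcal{P}^{\overline{ab}}$ rather than the abstract one — which is precisely the role of \cref{lemma: inverse limit closed abelianization} — and that one must check the transition maps of the resulting inverse system of $(\mathcal{P}^{ab})^{n}$'s really are the coordinate projections, so that the inverse limit is the full countable product and not something smaller.
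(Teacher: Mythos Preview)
Your proposal is correct and follows essentially the same route as the paper: both recognise $W_\mathcal{P}$ as a depth-$1$ group of finite type, compute $\pi_n(W_\mathcal{P})^{ab}\simeq(\mathcal{P}^{ab})^{n}$ via \cref{lemma: abelianiation wreath products}, and then invoke \cref{theorem: finite type all equivalences}. The only cosmetic differences are that the paper establishes torsion by citing \cref{proposition: property (E) equivalences} (each $\sigma\in\mathcal{P}$ extends to the finite-order element $(1,\dots,1)\sigma$) rather than by your direct identification $W_\mathcal{P}^{\overline{ab}}\simeq\prod_{\NN}\mathcal{P}^{ab}$, and the paper checks item~(4) of \cref{theorem: finite type all equivalences} (stabilisation of the index $|\mathcal{P}^{ab}|^{n}$) where you check item~(6) (finiteness of the product).
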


\begin{proof}
Since $W_\mathcal{P}$ is an iterated wreath product, then $D = 1$ and $\St_{\mathcal{P}}(D-1) = \mathcal{P}$. So, given $\sigma \in \mathcal{P}$, the element $g_\sigma = (1,\dots,1)\sigma \in W_\mathcal{P}$, extends $\sigma$ and has finite order. Therefore, by \cref{proposition: property (E) equivalences}, the quotient $\St_{W_\mathcal{P}}(D-1)^{\overline{ab}} = W_\mathcal{P}^{\overline{ab}}$ is torsion.

Now, applying \cref{lemma: abelianiation wreath products}, we have $\pi_n(W_\mathcal{P})^{ab} \simeq (\mathcal{P}^{ab})^n$, so $$[\pi_n(\St_{W_\mathcal{P}}(D-1)): \pi_n(\St_{W_\mathcal{P}}(D-1))'] = \abs{\pi_n(W_\mathcal{P})^{ab}} = \abs{\mathcal{P}^{ab}}^n,$$ that stabilizes if and only if $\mathcal{P}^{ab} = 1$.

Since $\Aut(T) = W_{\Sym(d)}$ and $\Sym(d) \neq \Sym(d)'$, the whole group of automorphisms of the tree is neither topologically finitely generated nor just-infinite nor strongly complete.
\end{proof}

The Hanoi towers group $\mathcal{H}$, firstly defined in \cite{GrigNekraSunic2006}, is a group acting on the ternary tree and generated by the elements $a$, $b$ and $c$ where $a = (a,1,1)(2,3)$, $b = (1,b,1)(1,3)$ and $c = (1,1,c)(1,2)$. The group is regular branch over its commutator subgroup (see \cite[Lemma 2.2]{Skipper2018}), so its closure is a group of finite type by \cref{theorem: characterization finite type groups}. In the next proposition, we calculate the depth:

\begin{Proposition}
The closure of the Hanoi towers group is a group of finite type of depth $D = 2$.
\label{proposition: depth Hanoi towers closure}
\end{Proposition}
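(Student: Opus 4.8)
The plan is to combine Sunić's \cref{theorem: characterization finite type groups} with an analysis of the branching structure of $\mathcal{H}$; writing $D$ for the depth of $\overline{\mathcal{H}}$ (which is already known to be of finite type), the bound $D\ge 2$ will come from the first-level action, and $D\le 2$ from exhibiting a suitable branching subgroup of $\mathcal{H}$ that contains $\St_{\mathcal{H}}(1)$.

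First I would prove $D\ge 2$. Since $\pi_1(a)=(2,3)$, $\pi_1(b)=(1,3)$ and $\pi_1(c)=(1,2)$ generate $\Sym(3)$, we have $\pi_1(\overline{\mathcal{H}})=\Sym(3)$. If $\overline{\mathcal{H}}$ had depth $1$, its minimal pattern subgroup would be $\pi_1(\overline{\mathcal{H}})$, so $\overline{\mathcal{H}}=W_{\Sym(3)}=\Aut(T)$. But $\overline{\mathcal{H}}\subsetneq\Aut(T)$: the topological abelianization $\overline{\mathcal{H}}^{\overline{ab}}$ is topologically generated by the images of the involutions $a,b,c$ and hence is a finite elementary abelian $2$-group, whereas $\pi_n(\Aut(T))^{ab}\cong(\Sym(3)^{ab})^n\cong C_2^n$ by \cref{lemma: abelianiation wreath products}, so $\Aut(T)^{\overline{ab}}$ is infinite. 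Hence $D\ge 2$.

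Next, $D\le 2$. By \cref{theorem: characterization finite type groups} it suffices to find $K\le\mathcal{H}$ with $\mathcal{H}$ regular branch over $K$ and $\St_{\mathcal{H}}(1)\le K$. I would take $K:=\{g\in\mathcal{H}:\pi_1(g)\in A_3\}$, the index-$2$ subgroup of $\mathcal{H}$ acting on the first level by an even permutation; it contains $\St_{\mathcal{H}}(1)$ and, since $\pi_1(\mathcal{H}')=\Sym(3)'=A_3$, it also contains $\mathcal{H}'$. As $\mathcal{H}$ is self-similar, by \cref{lemma: regular branch for self-similar groups} I only need $K_1\le K$; because every element of $K_1$ fixes the first level and $\psi_1(K_1)=K\times K\times K$, this is equivalent to $K\times K\times K\subseteq\psi_1(\St_{\mathcal{H}}(1))$. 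Here I would use the cited fact that $\mathcal{H}$ is already regular branch over $\mathcal{H}'$, which yields $(\mathcal{H}')^3\subseteq\psi_1(\St_{\mathcal{H}}(1))$, together with an explicit generating set of $\St_{\mathcal{H}}(1)$ obtained by Reidemeister--Schreier relative to a transversal of $\mathcal{H}/\St_{\mathcal{H}}(1)\cong\Sym(3)$: computing the sections of these generators one checks that $\psi_1(\St_{\mathcal{H}}(1))$ projects onto each coordinate $\mathcal{H}$ (so $\mathcal{H}$ is strongly fractal) and that its image in $(\mathcal{H}/\mathcal{H}')^3$ already contains $(K/\mathcal{H}')^3$. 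Combined with $(\mathcal{H}')^3\subseteq\psi_1(\St_{\mathcal{H}}(1))$ this gives the required inclusion, so $\mathcal{H}$ is regular branch over $K\supseteq\St_{\mathcal{H}}(1)$, and \cref{theorem: characterization finite type groups} shows $\overline{\mathcal{H}}$ is a group of finite type realizable with depth $2$. Together with $D\ge 2$, the depth is exactly $2$.

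The formal parts — the lower bound and the reduction of the upper bound to an inclusion of subgroups of $\mathcal{H}\times\mathcal{H}\times\mathcal{H}$ — are routine. The main obstacle is the explicit computation with the generators $a,b,c$ establishing $K\times K\times K\subseteq\psi_1(\St_{\mathcal{H}}(1))$, i.e.\ that the maximal branching subgroup of $\mathcal{H}$ (\cref{lemma: unique maximal reg branch subgroup}) has index $2$ in $\mathcal{H}$ rather than $|\mathcal{H}^{ab}|$. An essentially equivalent route avoids choosing $K$: set $\mathcal{P}:=\pi_2(\overline{\mathcal{H}})$, note by \cref{lemma: equivalence minimal pattern} and self-similarity of $\overline{\mathcal{H}}$ that $\mathcal{P}$ is a minimal pattern subgroup (so $G_\mathcal{P}$ has depth at most $2$ and $\overline{\mathcal{H}}\le G_\mathcal{P}$), then show $\St_{G_\mathcal{P}}(1)\le\overline{\mathcal{H}}$, which forces $G_\mathcal{P}=\overline{\mathcal{H}}$ since $\pi_1(G_\mathcal{P})=\pi_1(\overline{\mathcal{H}})$; this reduces to the same description of $\St_{\overline{\mathcal{H}}}(1)$ by its image at level $2$.
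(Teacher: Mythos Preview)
The obstacle you flag at the end is real, and it goes against you: the maximal branching subgroup of $\mathcal{H}$ is exactly $\mathcal{H}'$, of index $|\mathcal{H}^{ab}|=8$, not $2$. Carrying out the Reidemeister--Schreier computation you propose, the nontrivial Schreier generators of $\St_{\mathcal{H}}(1)$ decompose as $acab=(a,cb,a)$, $bcba=(ca,b,b)$, $caba=(a,a,cb)$, $cbab=(b,b,ca)$ (together with their inverses); the image of $\psi_1(\St_{\mathcal{H}}(1))$ in $(\mathcal{H}^{ab})^3\cong C_2^9$ is the $4$-dimensional span of these, and one checks directly that no nonzero vector of the form $(v,0,0)$ lies in it. Hence $(k,1,1)\notin\mathcal{H}$ for every $k\in K\setminus\mathcal{H}'$, so $K_1\not\le\mathcal{H}$. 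Since moreover $\St_{\mathcal{H}}(1)\not\le\mathcal{H}'$ (because $\mathcal{H}/\St_{\mathcal{H}}(1)\cong\Sym(3)$ is nonabelian), $\mathcal{H}$ admits \emph{no} branching subgroup containing $\St_{\mathcal{H}}(1)$, and \cref{theorem: characterization finite type groups} cannot be applied to $G=\mathcal{H}$ with $D=2$. Your lower bound $D\ge 2$ is fine; it is the upper bound that fails as written.

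The paper sidesteps this by working in the closure. The key point is that although $[\mathcal{H}:\mathcal{H}']=8$, one has $[\pi_1(\mathcal{H}):\pi_1(\mathcal{H})']=[\pi_2(\mathcal{H}):\pi_2(\mathcal{H})']=2$ (the second equality checked in GAP); \cref{lemma: stabilizabition indices regular branch} applied to $(\mathcal{H},\mathcal{H}')$ then gives $[\overline{\mathcal{H}}:\overline{\mathcal{H}'}]=2$, and \cref{lemma: map defined in abelianizations} with $(\overline{\mathcal{H}},\overline{\mathcal{H}'},\pi_1)$ yields $\St_{\overline{\mathcal{H}}}(1)\le\overline{\mathcal{H}'}$. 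Now \cref{theorem: characterization finite type groups} applies to $G=\overline{\mathcal{H}}$, regular branch over $\overline{\mathcal{H}'}\supseteq\St_{\overline{\mathcal{H}}}(1)$, giving depth $2$. The passage to the closure is essential, not a convenience: the collapse from index $8$ to index $2$ happens only there, and your alternative route via $\mathcal{P}=\pi_2(\overline{\mathcal{H}})$ ultimately needs the same control of $\St_{\overline{\mathcal{H}}}(1)$ that the paper's index argument provides.
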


\begin{proof}
Using GAP \cite{GAP}, we find that $$[\pi_1(\mathcal{H}):\pi_1(\mathcal{H})'] = [\pi_2(\mathcal{H}):\pi_2(\mathcal{H})'] = 2.$$

Using \cref{lemma: stabilizabition indices regular branch} with $(G,K) := (\mathcal{H}, \mathcal{H}')$, we conclude that $[\overline{\mathcal{H}}: \overline{\mathcal{H}'}] = 2$. Using \cref{lemma: map defined in abelianizations} with $(A,B, \pi) := (\overline{\mathcal{H}}, \overline{\mathcal{H}'}, \pi_1)$, we conclude that $\ker(\pi_1) = \St_{\overline{\mathcal{H}}}(1) \leq \overline{\mathcal{H}'}$ and by \cref{theorem: characterization finite type groups}, then $\overline{\mathcal{H}}$ is a group of finite type of depth $2$.   
\end{proof}

By \cite[Theorem 3.32]{Skipper2018} and \cite{Bartholdi2012}, the group $\mathcal{H}$ is neither just-infinite nor does it have the congruence subgroup property. However:

\begin{corollary}
The group $\overline{\mathcal{H}}$ is just-infinite and strongly complete.
\end{corollary}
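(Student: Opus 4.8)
\emph{Proof proposal.} The plan is to realize $\overline{\mathcal{H}}$ as a level-transitive group of finite type and then feed it into \cref{theorem: finite type all equivalences}. By \cref{proposition: depth Hanoi towers closure} we already know that $\overline{\mathcal{H}}$ is a group of finite type of depth $D=2$, and $\mathcal{H}=\langle a,b,c\rangle$ acts level-transitively on the ternary tree: for a suitable ray $\{v_n\}$ one checks via \cref{lemma: level transitivity equivalence} that $\st_{\mathcal{H}}(v_n)$ acts transitively on the three children of $v_n$ (at the root $\{a,b,c\}$ already maps onto $\Sym(3)$, and below a vertex in, say, the first subtree the section $a|_1=a$ together with its companions do the job). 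Hence $\overline{\mathcal{H}}$ is level-transitive, so it falls within the scope of \cref{Theorem: finite type equivalences}.

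For strong completeness, note first that $\overline{\mathcal{H}}$ is topologically finitely generated, since $\mathcal{H}=\langle a,b,c\rangle$ is a finitely generated dense subgroup. Then the Nikolov--Segal theorem \cref{theorem: Segal Nikolov tfg CSP} applies directly and gives that $\overline{\mathcal{H}}$ is strongly complete. This part is immediate.

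For just-infiniteness it remains to verify the hypothesis of \cref{Theorem: finite type equivalences}, namely that $\St_{\overline{\mathcal{H}}}(1)^{\overline{ab}}$ is torsion; once this is known, just-infiniteness follows from the equivalence $(i)\Leftrightarrow(ii)$ there, since we already have topological finite generation. I would verify torsion through the criterion of \cref{proposition: property (E) equivalences}(3): produce a subset $S\subseteq\St_{\mathcal{P}}(1)=\pi_2(\St_{\overline{\mathcal{H}}}(1))$ whose $\mathcal{P}$-conjugates generate $\St_{\mathcal{P}}(1)$ and such that each $s\in S$ lifts to an element of $\St_{\overline{\mathcal{H}}}(1)$ of finite order in $\St_{\overline{\mathcal{H}}}(1)^{\overline{ab}}$. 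Such an $S$ is obtained from $\{a,b,c\}$ by a Reidemeister--Schreier computation for $\pi_1\colon\mathcal{H}\twoheadrightarrow\Sym(3)$, which expresses $\St_{\mathcal{H}}(1)$ in terms of finitely many words $tgt'^{-1}$ with $t,t'$ in a Schreier transversal and $g\in\{a,b,c\}$; projecting these to level $2$ and keeping one representative per $\mathcal{P}$-conjugacy orbit gives $S$, and for each such generator the order of its image in $\St_{\overline{\mathcal{H}}}(1)^{\overline{ab}}$ is the eventual value of the orders of its images in the finite quotients $\pi_n(\St_{\mathcal{H}}(1))^{ab}$, by \cref{lemma: inverse limit closed abelianization} (a finite \texttt{GAP} computation, in the spirit of the one already used in \cref{proposition: depth Hanoi towers closure}). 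A cleaner alternative that bypasses torsion altogether is to verify item (4) of \cref{theorem: finite type all equivalences} directly: compute $a_n:=[\pi_n(\St_{\mathcal{H}}(1)):\pi_n(\St_{\mathcal{H}}(1))']$ for $n=2,3$ (these equal the corresponding indices for $\overline{\mathcal{H}}$ by density), and if $a_3=a_2$ take $n_0=2$ and invoke \cref{Remark: item 4 does everything} to conclude that $\overline{\mathcal{H}}$ is just-infinite (and again topologically finitely generated and strongly complete), with no torsion hypothesis needed.

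The main obstacle is precisely controlling $\St_{\overline{\mathcal{H}}}(1)^{\overline{ab}}$: a priori this profinite abelianization could be infinite and non-torsion, and the whole content of the argument lies in ruling that out, either via the finite-order-generators argument above or via the direct stabilization computation, after which \cref{lemma: stabilizabition indices regular branch} propagates the stabilization of the indices to all levels. Everything else is either already established in \cref{proposition: depth Hanoi towers closure} or an immediate citation.
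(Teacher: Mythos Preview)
Your proposal is correct, and the paper takes precisely your ``cleaner alternative'': it computes via GAP that $[\pi_2(\St_\mathcal{H}(1)):\pi_2(\St_\mathcal{H}(1))'] = [\pi_3(\St_\mathcal{H}(1)):\pi_3(\St_\mathcal{H}(1))'] = 4$ and then invokes \cref{Remark: item 4 does everything} to conclude both just-infiniteness and strong completeness at once. Your first route through the torsion criterion of \cref{proposition: property (E) equivalences}(3) would also work but is unnecessarily indirect, since the stabilization check in item~(4) of \cref{theorem: finite type all equivalences} requires no torsion hypothesis and is a single finite computation.
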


\begin{proof}
By \cref{proposition: depth Hanoi towers closure}, we need to study $[\pi_n(\St_\mathcal{H}(1)):\pi_n(\St_\mathcal{H}(1))']$ for $n \geq 2$. Using GAP, $$[\pi_2(\St_\mathcal{H}(1)):\pi_2(\St_\mathcal{H}(1))'] = [\pi_3(\St_\mathcal{H}(1)):\pi_3(\St_\mathcal{H}(1))'] = 4,$$ so by \cref{Remark: item 4 does everything}, the group $\overline{\mathcal{H}}$ is just-infinite and strongly complete.
\end{proof}

Consider the group $\IMG(z^2+i)$ acting on a $2$-regular tree $T$ and generated by the elements $a = (1,1)(1,2)$, $b = (a,c)$ and $c = (b,1)$. In \cite{GrigSavchukSunic2007} is proven that $\IMG(z^2+i)$ is regular branch, in \cite{Bondarenko2014} is pointed out that its closure is a group of finite type with depth $D = 4$ and in \cite{Radi2025IMG} is proven that $\IMG(z^2+i)$ is just-infinite and does not have the congruence subgroup property. As the group is just-infinite, then its closure is just-infinite (see \cite[Corollary, page 150]{Grigorchuk2000}) and by \cref{Remark: item 4 does everything}, the closure is strongly complete. Therefore, we conclude:

\begin{corollary}
The group $\IMG(z^2+i)$ does not have the congruence subgroup property but its closure $\overline{\IMG(z^2+i)}$ in $\Aut(T)$ is strongly complete.
\end{corollary}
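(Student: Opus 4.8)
The plan is to assemble the facts recorded in the discussion preceding the statement. Write $G = \IMG(z^2+i)$ and $\overline{G}$ for its closure in $\Aut(T)$, where $T$ is the binary tree. The first assertion — that $G$ fails the congruence subgroup property — is exactly what is proved in \cite{Radi2025IMG}, so the only real content is the strong completeness of $\overline{G}$, which I would deduce from \cref{theorem: finite type all equivalences}.

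First I would check that $\overline{G}$ lies within the scope of \cref{theorem: finite type all equivalences}, i.e. that it is a level-transitive group of finite type. Since $G$ is generated by the finite automaton with states $a,b,c$ it is self-similar, and being regular branch (\cite{GrigSavchukSunic2007}) it is infinite; as a self-similar infinite subgroup of $W_2 = \Aut(T)$, \cref{lemma: self similarity and level-transitivity} shows that $G$ is level-transitive, hence so is $\overline{G}$ (the $\overline{G}$-orbit of a vertex contains the $G$-orbit, which is the whole level). That $\overline{G}$ is a group of finite type of depth $D = 4$ is the observation of \cite{Bondarenko2014}: $G$ is regular branch over a subgroup containing $\St(3)$, so \cref{theorem: characterization finite type groups} applies.

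Next I would feed in just-infiniteness. By \cite{Radi2025IMG} the abstract group $G$ is just-infinite, and by \cite[Corollary, page 150]{Grigorchuk2000} this passes to the closure, so $\overline{G}$ is just-infinite. By \cref{corollary: finite type just infinite} this forces $\St_{\overline{G}}(3)^{\overline{ab}}$ to be finite, in particular torsion, so the standing hypothesis of \cref{theorem: finite type all equivalences} is satisfied. The chain $(1)\Rightarrow(6)\Rightarrow(4)\Rightarrow(5)\Rightarrow(2)\Rightarrow(3)$ in that theorem — which routes through \cref{theorem: finite type tfg} of Bondarenko--Samoilovych and then through \cref{theorem: Segal Nikolov tfg CSP} (or \cref{theorem: Serre tfg CSP}, since $\overline{G}$ is pro-$2$) — then yields that $\overline{G}$ is strongly complete, i.e. item $(3)$. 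Equivalently, once item $(6)$ and hence item $(4)$ are available, \cref{Remark: item 4 does everything} delivers strong completeness directly.

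I do not expect a genuine obstacle here: the statement is bookkeeping on top of results already in hand, with the substantive work living in \cite{Radi2025IMG} (just-infiniteness and the failure of CSP) and \cite{Bondarenko2014} (the depth $D = 4$). The only points deserving a line of justification are the two ``passage to the closure'' steps — level-transitivity and just-infiniteness of $\overline{G}$ — and the remark that, because item $(6)$ is available, one never invokes the torsion-dependent implication $(3)\Rightarrow(6)$, so the conclusion is unconditional.
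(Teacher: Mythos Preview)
Your proposal is correct and follows essentially the same route as the paper: cite \cite{Radi2025IMG} for both the failure of the congruence subgroup property and just-infiniteness of $G$, pass just-infiniteness to $\overline{G}$ via \cite[Corollary, page 150]{Grigorchuk2000}, and then invoke the equivalences of \cref{theorem: finite type all equivalences} (the paper phrases the final step via \cref{Remark: item 4 does everything}, exactly as you note). Your extra lines verifying level-transitivity and the torsion hypothesis are fine but not strictly needed, since the just-infiniteness of $\overline{G}$ already gives item (6), and the chain $(1)\Leftrightarrow(6)\Rightarrow(4)\Rightarrow(3)$ avoids the only torsion-dependent implication.
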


\section{Automata groups and groups of finite type}
\label{section: Automata groups and groups of finite type}

In this section we will prove \cref{Theorem: closure automaton and finite type} and \cref{Corollary: group closure Grig group}.

An \textit{automaton} is a tuple $\mathcal{A} = (Q, X, \tau)$ where $Q$ is a set that represents the states, $X$ is another set that represents the alphabet and $\tau: Q \times X \rightarrow X \times Q$ is the transition function. An automaton is said to be finite if the number of states in $Q$ is finite. If each state in the diagram has $d$ outgoing arrows and when we gather the labels of these $d$ arrows we have a permutation of $\Sym(d)$, then the automata is called \textit{invertible}. If $\mathcal{A}$ is an invertible automaton, given a state $q \in Q$, we can construct an automorphism $g_q \in \Aut(T)$ by identifying $T$ with the free monoid $X^*$ of finite words in $X$ and iterating the transition function with the words in $X^*$ as the inputs. The automata group generated by the automorphisms $\set{g_q: q \in Q}$ will be denoted $G(\mathcal{A})$. Known examples of automata groups are the first Grigorchuk group \cite{Grig1980} and iterated monodromy groups \cite[Chapter 5]{Self_similar_groups}. Note that automata groups are self-similar by construction.

By \cref{theorem: characterization finite type groups}, we know that if an automata group is regular branch, then its closure is a group of finite type. However, the following corollary of \cref{theorem: tfg pro-nilpotent} shows that with a milder condition, the result is still true:

\begin{theorem}
Let $T$ be a $d$-regular tree, $D$ a natural number and $\mathcal{P} \leq \Aut(T^D)$ a minimal pattern $p$-subgroup. Let $\mathcal{A}$ be a finite invertible automaton acting on an alphabet of $d$ letters. If $G_\mathcal{P}$ is topologically finitely generated and $\pi_D(G(\mathcal{A})) = \mathcal{P}$, then $\overline{G(\mathcal{A})} = G_\mathcal{P}$.
\label{theorem: closure automaton and finite type}
\end{theorem}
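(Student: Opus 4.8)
The plan is to prove the inclusions $G(\mathcal{A}) \subseteq G_\mathcal{P}$ and $G_\mathcal{P} \subseteq \overline{G(\mathcal{A})}$ separately; the second is the substantial one and will follow from the extension lemma for pro-nilpotent groups, \cref{theorem: tfg pro-nilpotent}.

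For the first inclusion I would use that automata groups are self-similar: given $g \in G(\mathcal{A})$ and a vertex $v$ of $T$, the section $g|_v$ again lies in $G(\mathcal{A})$, so $g|_v^D = \pi_D(g|_v) \in \pi_D(G(\mathcal{A})) = \mathcal{P}$. By the definition of $G_\mathcal{P}$ this says $g \in G_\mathcal{P}$, hence $G(\mathcal{A}) \subseteq G_\mathcal{P}$; since $G_\mathcal{P}$ is closed in $\Aut(T)$ by \cref{proposition: finite type is closed self-similar and regular branch}, passing to closures gives $\overline{G(\mathcal{A})} \subseteq G_\mathcal{P}$.

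For the reverse inclusion, the key step is to verify that $G_\mathcal{P}$ satisfies the hypotheses of \cref{theorem: tfg pro-nilpotent}. Write $G_n := \pi_n(G_\mathcal{P})$. Since $\mathcal{P}$ is a $p$-group, \cref{proposition: number elements finite type and Hausdorff dimension} shows every $G_n$ is a finite $p$-group, so $G_\mathcal{P} \simeq \varprojlim G_n$ is pro-nilpotent by \cref{lemma: properties nilpotent groups}; moreover $\ker(\pi_{n-1}) = \St_{G_\mathcal{P}}(n-1)$, so in the notation of \cref{theorem: tfg pro-nilpotent} we have $S_n = \pi_n(\St_{G_\mathcal{P}}(n-1))$. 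Because $G_\mathcal{P}$ is topologically finitely generated, the contrapositive of \cref{proposition: finite type not tfg} gives $\pi_n(\St_{G_\mathcal{P}}(n-1)) \leq \pi_n(G_\mathcal{P})' = G_n'$ for \emph{every} $n \geq D$; thus $S_n \leq G_n'$ for all $n \geq n_0$ with $n_0 = D$, and \cref{theorem: tfg pro-nilpotent} applies.

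It remains to transport a generating set through the automaton. Pick a finite generating set $T_D$ of the finite group $G_D = \pi_D(G_\mathcal{P}) = \mathcal{P}$. Since $\pi_D(G(\mathcal{A})) = \mathcal{P}$, for each $t \in T_D$ choose $\tilde t \in G(\mathcal{A})$ with $\pi_D(\tilde t) = t$ and set $T = \{\, \tilde t : t \in T_D \,\} \subseteq G(\mathcal{A}) \subseteq G_\mathcal{P}$; then $\pi_D$ restricts to a bijection $T \to T_D$, so $T$ is an extension of $T_D$ in the sense required by \cref{theorem: tfg pro-nilpotent}. That lemma then says $T$ topologically generates $G_\mathcal{P}$, so, using that $\langle T \rangle \subseteq G(\mathcal{A})$ and that $G_\mathcal{P}$ is closed, $G_\mathcal{P} = \overline{\langle T \rangle} \subseteq \overline{G(\mathcal{A})}$. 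Together with the first inclusion this yields $\overline{G(\mathcal{A})} = G_\mathcal{P}$. I do not expect a genuine obstacle here: the only real content is noticing that topological finite generation of $G_\mathcal{P}$ forces $S_n \leq G_n'$ for \emph{all} $n \geq D$ (so one may take $n_0 = D$), which is exactly what \cref{proposition: finite type not tfg} provides, and the only point requiring care is that $\pi_D(G(\mathcal{A})) = \mathcal{P}$ \emph{exactly}, which is what allows $T$ to be a bijective lift of a generating set of $\pi_D(G_\mathcal{P})$ rather than merely a subset mapping onto part of it.
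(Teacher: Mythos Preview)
Your proof is correct and follows essentially the same approach as the paper: both establish $G(\mathcal{A}) \subseteq G_\mathcal{P}$ via self-similarity and then invoke \cref{theorem: tfg pro-nilpotent} after using the contrapositive of \cref{proposition: finite type not tfg} to get $S_n \leq G_n'$ for all $n \geq D$. The only cosmetic difference is that the paper works directly with the automaton states $a_1,\dots,a_r$ as the lifted generating set, whereas you lift an arbitrary finite generating set of $\mathcal{P}$; both choices work for the same reason.
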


\begin{proof}
Suppose $\mathcal{A}$ has $r$ states and let $a_1,\dots,a_r$ be the elements in $G(\mathcal{A})$ generated by these states. Since $\mathcal{A}$ is an automaton, given $i \in \set{1,\dots,r}$, there exist $i_1,\dots,i_d \in \set{1,\dots,r}$ and $\sigma_i \in \Sym(d)$ such that $a_i = (a_{i_1},\dots,a_{i_{d}}) \sigma_i$. By induction, this implies that given $i \in \set{1,\dots,r}$ and $v$ a vertex in $T$, there exists $j = j(i,v) \in \set{1,\dots,r}$ such that $(a_i)|_v = a_j$. Therefore, $(a_i)|_v^D = \pi_D(a_j) \in \mathcal{P}$ as $\pi_D(G(\mathcal{A})) = \mathcal{P}$. This proves that $G(\mathcal{A}) \subseteq G_\mathcal{P}$.

Since $G_\mathcal{P}$ is topologically finitely generated and $\mathcal{P}$ is a $p$-group, then $G_\mathcal{P}$ is a pro-$p$ group and by \cref{theorem: finite type all equivalences}, we have the inclusion $\pi_n(\St_{G_\mathcal{P}}(n-1)) \leq \pi_n(G_\mathcal{P})'$ for all $n \geq D$. So, by \cref{theorem: tfg pro-nilpotent}, any extension of a generator of $\pi_D(G_\mathcal{P}) = \mathcal{P}$ will generate $G_\mathcal{P}$ topologically. Since $\pi_D(G(\mathcal{A})) = \mathcal{P}$, then $\set{\pi_D(a_1),\dots,\pi_D(a_r)}$ generates $\mathcal{P}$, and since $a_1,\dots,a_r \in G_\mathcal{P}$ and they are naturally extensions of $\pi_D(a_1),\dots,\pi_D(a_r)$, we obtain that $\set{a_1,\dots,a_r}$ generates $G_\mathcal{P}$ topologically. Therefore $\overline{G(\mathcal{A})} = G_\mathcal{P}$. 
\end{proof}

From the side of groups of finite type, \cref{theorem: closure automaton and finite type} is useful to present them as the closure of automata groups (as we will do in \cref{section: List of automata groups whose closure is a group of finite type}). From the side of automata groups, we have the following corollary:

\begin{corollary}
Let $T$ be a $d$-regular tree, $D$ a natural number and $\mathcal{P} \leq \Aut(T^D)$ a minimal pattern subgroup. Let $\mathcal{A}$ be a finite invertible automaton acting on a alphabet of $d$ letters. If $\overline{G(\mathcal{A})} = G_\mathcal{P}$ then 
$$\mathcal{H}(G(\mathcal{A})) = \frac{1}{d^{D-1}} \frac{\log(\abs{\St_\mathcal{P}(D-1)})}{\log(d!)}.$$
\label{corollary: branch and Hausdorff dim of G(A)}
\end{corollary}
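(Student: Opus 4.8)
The plan is to reduce everything to the already-proven formula \cref{equation: finite type Hausorff dimension} for $\mathcal{H}(G_\mathcal{P})$, using the elementary fact that passing to the closure inside $\Aut(T)$ does not change the finite-level images. Concretely, the first step is to show that
$$\pi_n(G(\mathcal{A})) = \pi_n\big(\overline{G(\mathcal{A})}\big) \qquad \text{for all } n \in \NN.$$
The inclusion ``$\subseteq$'' is immediate since $G(\mathcal{A}) \subseteq \overline{G(\mathcal{A})}$. For the reverse inclusion, recall that $\St(n)$ is open in $\Aut(T)$, hence $G(\mathcal{A})\St(n)$ is a union of cosets of an open subgroup and therefore closed; since it contains $G(\mathcal{A})$, it contains $\overline{G(\mathcal{A})}$, and applying $\pi_n$ (whose kernel is exactly $\St(n)$) gives $\pi_n(\overline{G(\mathcal{A})}) \subseteq \pi_n(G(\mathcal{A})\St(n)) = \pi_n(G(\mathcal{A}))$.

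The second step is purely formal: since $\mathcal{H}(H) = \mathcal{H}_{\Aut(T)}(H) = \liminf_{n \to \infty} \frac{\log \abs{\pi_n(H)}}{\log \abs{\pi_n(\Aut(T))}}$ depends on $H$ only through the sequence $\{\abs{\pi_n(H)}\}_{n}$, the equality of finite-level images from the first step yields
$$\mathcal{H}(G(\mathcal{A})) = \mathcal{H}\big(\overline{G(\mathcal{A})}\big) = \mathcal{H}(G_\mathcal{P}),$$
where the last equality is the hypothesis $\overline{G(\mathcal{A})} = G_\mathcal{P}$. The third and final step is to invoke \cref{proposition: number elements finite type and Hausdorff dimension}, specifically \cref{equation: finite type Hausorff dimension}, which gives $\mathcal{H}(G_\mathcal{P}) = \frac{1}{d^{D-1}} \frac{\log(\abs{\St_\mathcal{P}(D-1)})}{\log(d!)}$; combined with the previous displayed equation this is exactly the asserted formula.

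There is essentially no hard step here: the only point requiring a (one-line) argument is the identity $\pi_n(G(\mathcal{A})) = \pi_n(\overline{G(\mathcal{A})})$, and even that is a standard consequence of $\St(n)$ being open. One should perhaps note for cleanliness that $\mathcal{A}$ being a finite invertible automaton guarantees $G(\mathcal{A}) \leq \Aut(T)$ so that the closure is taken in the right ambient group and \cref{proposition: number elements finite type and Hausdorff dimension} applies to $G_\mathcal{P}$; this is already built into the hypothesis $\overline{G(\mathcal{A})} = G_\mathcal{P}$.
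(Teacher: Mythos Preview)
Your proof is correct and follows the same approach as the paper: observe that $\mathcal{H}(G(\mathcal{A})) = \mathcal{H}(\overline{G(\mathcal{A})}) = \mathcal{H}(G_\mathcal{P})$ and then invoke \cref{proposition: number elements finite type and Hausdorff dimension}. The only difference is that the paper asserts $\mathcal{H}(G) = \mathcal{H}(\overline{G})$ in one line, while you spell out the (standard) argument that $\pi_n(G) = \pi_n(\overline{G})$ because $\St(n)$ is open.
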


\begin{proof}
It is enough to observe that $\mathcal{H}(G) = \mathcal{H}(\overline{G})$, and that the Hausdorff dimension of groups of finite type was already computed in \cref{proposition: number elements finite type and Hausdorff dimension}.
\end{proof}

As another consequence of \cref{theorem: closure automaton and finite type}, we will prove \cref{Corollary: group closure Grig group}. Consider the automaton given in the \cref{figure: automaton grigorchuk brother}. The associated automata group is the group $G = \<a,b,c,d>$ generated by $a = (a,c)\sigma$, $b = (d,d)$, $c = (d,b)$ and $d = (a,c)$.

\begin{figure}[h!]
\begin{tikzpicture}[shorten >=1pt,node distance=3cm,on grid,>={Stealth[round]},
    every state/.style={draw=blue!50,very thick,fill=blue!20}, bend angle = 15]

  \node[state] (a)  {$a$};
  \node[state] (b) [right=of a] {$b$};
  \node[state] (c) [below =of a] {$c$};
  \node[state] (d) [below =of b] {$d$};

  \path[->] 
  (a) edge [loop above] node  {1/2} ()       
  (a)  edge node [left] {2/1} (c)
  (b)  edge node [right] {x/x} (d)
  (c)  edge [bend right] node [below] {1/1} (d)
  (c)  edge [bend right] node [left] {2/2} (b)
  (d)  edge [bend right] node [above] {1/1} (a)
  (d)  edge [bend right] node [above] {2/2} (c);
\end{tikzpicture}
\caption{Automaton of the group whose closure is the same as the closure of the first Grigorchuk group. The label $x/x$ means that the input and the output are the same and in both cases we pass to the state $d$.}
\label{figure: automaton grigorchuk brother}
\end{figure}
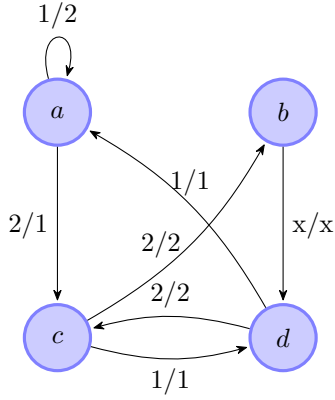

\begin{Proposition}
The group $G$ contains the first Grigorchuk group and it has the same closure. Moreover, the group $G$ is not torsion and the first Grigorchuk group has infinite index in $G$.
\label{Proposition: automaton grigorchuk brother}
\end{Proposition}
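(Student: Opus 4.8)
The plan is to treat the three assertions in turn, with the equality of closures coming from \cref{theorem: closure automaton and finite type}. Let $\Gamma$ denote the first Grigorchuk group, acting on the binary tree $T$. By \cref{theorem: characterization finite type groups}, since $\Gamma$ is regular branch, $\overline{\Gamma}$ is a group of finite type; let $D$ be its depth ($D=4$) and $\mathcal{P}:=\pi_D(\Gamma)=\pi_D(\overline{\Gamma})\leq\Aut(T^D)$ its minimal pattern subgroup, so that $\overline{\Gamma}=G_{\mathcal{P}}$. Since $\Gamma$ is a $2$-group, $\mathcal{P}$ is a $2$-group; and since $\Gamma$ is finitely generated and dense in $G_{\mathcal{P}}$, the group $G_{\mathcal{P}}$ is topologically finitely generated. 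The computational heart of the proof is then to check --- with the algorithm of \cref{section: algorithms to find minimal pattern subgroups}, or directly in GAP --- that $\pi_D(G)=\mathcal{P}$. Note that the four states of the automaton of \cref{figure: automaton grigorchuk brother} have all their sections again among $\{a,b,c,d\}$, so once $\pi_D(G)=\mathcal{P}$ is known, every level-$D$ section of every element of $G$ lies in $\mathcal{P}$ and hence $G\subseteq G_{\mathcal{P}}=\overline{\Gamma}$; applying \cref{theorem: closure automaton and finite type} to $\mathcal{A}$, $\mathcal{P}$ and $D$ then yields $\overline{G}=G_{\mathcal{P}}=\overline{\Gamma}$.

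Next, to see that $G$ contains $\Gamma$, I would realize the standard generators $\tilde a,\tilde b,\tilde c,\tilde d$ of $\Gamma$ (where $\tilde a=\sigma$, $\tilde b=(\tilde a,\tilde c)$, $\tilde c=(\tilde a,\tilde d)$, $\tilde d=(1,\tilde b)$) as elements of $G$. One has $\tilde a=d^{-1}a$, since $d^{-1}a=(a,c)^{-1}(a,c)\sigma=\sigma$; a short computer-assisted search then produces words in $a,b,c,d$ for $\tilde b$ and $\tilde c$, and $\tilde d=\tilde b\tilde c$, so $\Gamma=\<\tilde a,\tilde b,\tilde c>\leq G$. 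Alternatively, since $G\leq G_{\mathcal{P}}$ is of finite type of depth $D$, membership in $G$ is decided at level $D+1$, so it suffices to check $\pi_{D+1}(\tilde a),\dots,\pi_{D+1}(\tilde d)\in\pi_{D+1}(G)$ by a finite computation.

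Finally, I would show that $G$ is not torsion by checking that $a=(a,c)\sigma$ has infinite order: $a$ is the unique active state of $\mathcal{A}$ and $a|_1=a$, and an induction gives that $\pi_n(a)$ has order $2^n$ for every $n$ (already $\pi_2(a)$ is a $4$-cycle on the four vertices of level $2$), so the orders $|\pi_n(a)|$ are unbounded. Given this, $[G:\Gamma]=\infty$ is automatic: $\Gamma$ is an infinite torsion group \cite{Grig1980}, so if $[G:\Gamma]$ were finite then its normal core $N\lhd_f G$ would be a torsion subgroup, whence $g^{[G:N]}\in N$ would have finite order for every $g\in G$ --- forcing $g$, and hence all of $G$, to be torsion, a contradiction.

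The step I expect to be the main obstacle is the first one: identifying $\mathcal{P}$ (equivalently, the depth $D$ together with the finite-type data of $\overline{\Gamma}$) and verifying the finite equality $\pi_D(G)=\mathcal{P}$; this is where the algorithm of \cref{section: algorithms to find minimal pattern subgroups} and an explicit computation do the real work. Once that is in hand, \cref{theorem: closure automaton and finite type} immediately promotes the level-$D$ agreement to an equality of closures, and the remaining points --- the words witnessing $\Gamma\leq G$ and the infinite order of $a$ --- reduce to routine finite checks.
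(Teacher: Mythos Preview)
Your overall structure matches the paper's proof: realize the Grigorchuk generators as words in $a,b,c,d$ (the paper gives them explicitly: $a'=ad^{-1}$, $b'=db^{-1}$, $c'=cd^{-1}$, $d'=bc^{-1}$), check $\pi_D(G)=\mathcal{P}$ in GAP and invoke \cref{theorem: closure automaton and finite type}, exhibit an element of infinite order, and deduce infinite index from the fact that $\Gamma$ is torsion. Two points, however, do not hold as written.

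First, your ``alternative'' for $\Gamma\leq G$ is incorrect: $G$ is a countable automata group, not a group of finite type, so membership in $G$ is \emph{not} decided at any finite level; only membership in $\overline{G}=G_{\mathcal{P}}$ is. Checking $\pi_{D+1}(\tilde b)\in\pi_{D+1}(G)$ would only show $\tilde b\in\overline{G}$, which you already know. You really do need explicit words (and the paper supplies them).

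Second, and more seriously, your non-torsion argument fails. The claimed induction ``$|\pi_n(a)|=2^n$'' breaks at $n=4$. From $a=(a,c)\sigma$ one gets $a^2=(ca,ac)$, $a^4=((ca)^2,(ac)^2)$ with $(ca)^2=(bcda,dabc)$, and $a^8=((ca)^4,(ac)^4)$ with $(ca)^4=((bcda)^2,(dabc)^2)$. A direct computation gives $(bcda)^2=(ddacdbca,\,dbcaddac)$, and each of these words contains exactly two occurrences of the only active generator $a$, so $\pi_1$ of each section is trivial; hence $\pi_2((bcda)^2)=1$, whence $\pi_3((ca)^4)=1$ and $\pi_4(a^8)=1$. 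Thus $|\pi_4(a)|=8$, not $16$. (The orders do eventually grow again, but not by the simple induction you propose.) The paper sidesteps this by producing, via GAP, a specific element $g=bc^{-2}b^{-1}d^2b^{-1}cda^{-1}$ with $g^8\in\St_G(6)$ and $g^8|_{1^6}=g$; this self-similarity relation immediately forces $g$ to have infinite order, and then your (correct) core argument gives $[G:\Gamma]=\infty$.
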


\begin{proof}
Using \cref{equation: properties sections} we have 
\begin{align*}
a d^{-1} = (1,1)\sigma, & \qquad cd^{-1} = ((ad^{-1})^{-1}, bc^{-1}), \\ 
bc^{-1} = (1, db^{-1}), & \qquad db^{-1} = (ad^{-1}, cd^{-1}).
\end{align*}
So, calling $$a' = ad^{-1}, \quad b' = db^{-1}, \quad c' = cd^{-1} \quad \text{and} \quad d' = bc^{-1},$$ we obtain $$a' = (1,1)\sigma, \quad b' = (a',c'), \quad c' = (a',d'), \quad \text{and} \quad d' = (1, b').$$

Therefore, the first Grigorchuk group is contained in $G$.

It is well-known that the first Grigorchuk group is regular branch of depth $4$ (see for example \cite[Proposition 3]{Grigorchuk2000}). Let $G_\mathcal{P}$ be its closure. Using GAP, we find that $\pi_4(G) = \pi_4(G_\mathcal{P})$ and by \cref{theorem: closure automaton and finite type}, they have the same closure.

Using GAP \cite{GAP}, we find that the element $g = bc^{-2}b^{-1}d^2b^{-1}cda^{-1}$ has infinite order, since $g^{8} \in \St_G(6)$ and $$g^8|_{1^6} = g,$$ where $1^6$ represents the leftmost vertex at level $6$ of the tree. This implies that $G$ is not torsion. Finally, suppose that the first Grigrochuk group has finite index in $G$. Then, there exists $n \in \NN$ such that $g^n$ is in the first Grigorchuk group. Since the first Grigorchuk group is torsion, this implies that $g$ is torsion, leading to a contradiction.
\end{proof}

\section{Classification of groups of finite type up to isomorphism} \label{section: classification of group of finite type up to isomorphism}

In this section we will give tools to prove whether two groups of finite type acting on the same tree are isomorphic or not. In particular, we will give the proof of \cref{Theorem: finite type isomorphic conjugated iff gamma cycle}. Notice that since groups of finite type are profinite groups, an isomorphism in this context means that the groups are isomorphic as abstract groups and homeomorphic as topological groups. 

Let $G_1$ and $G_2$ be two groups of finite type acting on the same $d$-regular tree $T$. Then, there exists $\mathcal{P} \leq \Aut(T^D)$ and $\mathcal{Q} \leq \Aut(T^E)$ such that $G_1 \simeq G_\mathcal{P}$ and $G_2 \simeq G_\mathcal{Q}$. Although at the beginning they may have different depth, we may assume that the depth is the same by using the following lemma: 

\begin{Lemma}
Let $T$ be a $d$-regular tree and $G$ a group of finite type group of depth $D$ acting on $T$. Then $G$ is a group of finite type group of depth $E$ for all $E \geq D$.
\label{lemma: finite type different depths}
\end{Lemma}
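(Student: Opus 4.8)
The plan is to show that the defining pattern subgroup of a group of finite type can be lifted to one level deeper without changing the group. Concretely, suppose $G = G_\mathcal{P}$ with $\mathcal{P} \leq \Aut(T^D)$ a minimal pattern subgroup, and set $E = D+1$; iterating handles all $E \geq D$. Define $\mathcal{Q} := \pi_{D+1}(G) \leq \Aut(T^{D+1})$, the image of $G$ under truncation to level $D+1$. I would then prove $G = G_\mathcal{Q}$, which exhibits $G$ as a group of finite type of depth $D+1$ (and in fact $\mathcal{Q}$ is its minimal pattern subgroup of depth $D+1$, by \cref{proposition: existence minimal pattern}).

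The inclusion $G \subseteq G_\mathcal{Q}$ is immediate: if $g \in G$ and $v$ is any vertex of $T$, then by self-similarity (\cref{proposition: finite type is closed self-similar and regular branch}) $g|_v \in G$, hence $g|_v^{D+1} = \pi_{D+1}(g|_v) \in \pi_{D+1}(G) = \mathcal{Q}$; thus $g \in G_\mathcal{Q}$. For the reverse inclusion $G_\mathcal{Q} \subseteq G$, take $g \in G_\mathcal{Q}$, so $g|_v^{D+1} \in \mathcal{Q}$ for every vertex $v$. I want to conclude $g|_v^D \in \mathcal{P}$ for every $v$. Fix $v$; since $\mathcal{Q} = \pi_{D+1}(G)$, there is some $h \in G$ with $h|_\emptyset^{D+1} = g|_v^{D+1}$, and in particular $h|_\emptyset^D = g|_v^D$. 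But $h \in G$ forces $h|_\emptyset^D \in \mathcal{P}$, so $g|_v^D \in \mathcal{P}$. As this holds for all $v$, we get $g \in G_\mathcal{P} = G$.

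I expect the argument to be essentially routine, the only point requiring a little care being that $\mathcal{Q} = \pi_{D+1}(G)$ really does ``remember enough'': the key is that every pattern of depth $D+1$ occurring in $G$ is witnessed at the root by an honest element of $G$, which is just the definition of $\pi_{D+1}$ as an image together with surjectivity onto $\mathcal{Q}$. The potential subtlety — reconciling $\pi_{D+1}$-data with $\pi_D$-data at an arbitrary vertex rather than at the root — is resolved cleanly by self-similarity, exactly as in the proof of \cref{lemma: equivalence minimal pattern}. One could alternatively phrase the whole thing by simply invoking \cref{theorem: characterization finite type groups}: $G$ is (closed and) regular branch over $\St_G(D-1)$ by \cref{proposition: finite type is closed self-similar and regular branch}, and $\St_G(D-1) \supseteq \St_G(D) = \St_G((D+1)-1)$, so $G = \overline{G}$ is a group of finite type of depth $D+1$; but the direct pattern-lifting argument above is more self-contained and makes the identity $G = G_{\pi_{D+1}(G)}$ explicit, which is what is wanted for the comparison of $G_\mathcal{P}$ and $G_\mathcal{Q}$ in \cref{section: classification of group of finite type up to isomorphism}.
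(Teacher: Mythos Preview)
Your proof is correct, but it takes a different route from the paper's. The paper defines $\mathcal{Q} := \pi^{-1}(\mathcal{P})$, the full preimage under the truncation $\pi:\Aut(T^E)\to\Aut(T^D)$, and checks $G_\mathcal{P}=G_\mathcal{Q}$ directly from the definitions: $g|_v^E\in\mathcal{Q}$ iff $g|_v^D=\pi(g|_v^E)\in\mathcal{P}$. This is a one-line argument that needs nothing beyond the definition of $G_\mathcal{P}$. You instead take $\mathcal{Q}:=\pi_{D+1}(G)$, the minimal pattern subgroup at depth $D+1$, and appeal to self-similarity of $G$ for the inclusion $G\subseteq G_\mathcal{Q}$. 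The trade-off: the paper's choice of $\mathcal{Q}$ makes the equality $G_\mathcal{P}=G_\mathcal{Q}$ tautological and avoids invoking \cref{proposition: finite type is closed self-similar and regular branch}, while your choice produces the minimal pattern subgroup explicitly, which is indeed the object one actually compares in \cref{section: classification of group of finite type up to isomorphism}. Your alternative via \cref{theorem: characterization finite type groups} also works and is closest in spirit to how the lemma is used later.
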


\begin{proof}
Consider $\pi: \Aut(T^E) \rightarrow \Aut(T^D)$ the natural projection. If $G$ is a group of finite type of depth $D$, then there exists $\mathcal{P} \leq \Aut(T^D)$ such that $G \simeq G_\mathcal{P}$. Take $\mathcal{Q} = \pi^{-1}(\mathcal{P})$. We want to prove that $G_\mathcal{P} = G_\mathcal{Q}$. 

If $g \in G_\mathcal{Q}$ and $v$ is a vertex in $T$, then $g|_v^E \in \mathcal{Q}$. Therefore, $g|_v^D = \pi(g|_v^E) \in \pi(\mathcal{Q}) = \mathcal{P}$. Thus, $G_\mathcal{Q} \leq G_\mathcal{P}$. If $g \in G_\mathcal{P}$, then for every vertex $v$ in $T$, we have $g|_v^D \in \mathcal{P}$. Therefore $g|_v^E \in \pi^{-1}(\mathcal{P}) = \mathcal{Q}$.
\end{proof}

Our first guess (and wish) is to have the following result:

\bigskip

``Let $T$ be a $d$-regular tree, $D$ a natural number and $\mathcal{P}, \mathcal{Q} \leq \Aut(T^D)$ minimal pattern subgroups. Then, the groups of finite type $G_\mathcal{P}$ and $G_\mathcal{Q}$ are isomorphic if and only if $\mathcal{P}$ is isomorphic to $\mathcal{Q}$." 

\bigskip

However, this will be disproved in \cref{proposition: P iso Q does not imply GP iso GQ}. 

\subsection{Conjugations of groups of finite type}

Let us start with the following result:

\begin{Proposition}
Let $T$ be a $d$-regular tree, $D$ a natural number and $\mathcal{P}, \mathcal{Q} \leq \Aut(T^D)$ minimal pattern subgroups. Suppose there exists $g \in \Aut(T)$ such that 
\begin{enumerate}
\item For all $v$ vertex in $T$, $(g|_v^D) \, \mathcal{P} \, (g|_v^D)^{-1} = \mathcal{Q}$,
\item For all $n \in \NN$ and $v,w \in \mathcal{L}_n$,  $(g|_v^D) \, (g|_w^D)^{-1} \in \mathcal{Q}$.
\end{enumerate}
Then $G_\mathcal{P}$ and $G_\mathcal{Q}$ are isomorphic via conjugation by $g$.
\label{proposition: isomorphism between finite type groups}
\end{Proposition}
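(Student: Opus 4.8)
The plan is to prove the set equality $g G_\mathcal{P} g^{-1} = G_\mathcal{Q}$ directly, since conjugation by a fixed element of $\Aut(T)$ is an inner automorphism of the topological group $\Aut(T)$, hence a homeomorphism of $\Aut(T)$ onto itself; once we know it maps $G_\mathcal{P}$ bijectively onto $G_\mathcal{Q}$, the restriction is automatically an isomorphism of profinite groups (an isomorphism of abstract groups and a homeomorphism), which is exactly what "isomorphic via conjugation by $g$" means. So the only content is the set equality.

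First I would fix $h \in G_\mathcal{P}$ and an arbitrary vertex $v$ of $T$, and compute the section $(ghg^{-1})|_v^D$ by repeatedly applying the cocycle identities of \cref{equation: properties sections}. Setting $w := g^{-1}(v)$, this gives
$$(ghg^{-1})|_v^D = g|_{h(w)}^D \cdot h|_w^D \cdot (g|_w^D)^{-1}.$$
Inserting $g|_w^D\,(g|_w^D)^{-1}$ between the first two factors rewrites the right-hand side as the product $\big(g|_{h(w)}^D\,(g|_w^D)^{-1}\big)\cdot\big(g|_w^D\,h|_w^D\,(g|_w^D)^{-1}\big)$. Since $h\in\Aut(T)$ preserves levels, $w$ and $h(w)$ lie on the same level, so the first factor lies in $\mathcal{Q}$ by hypothesis~(2); since $h\in G_\mathcal{P}$ forces $h|_w^D\in\mathcal{P}$, the second factor lies in $\mathcal{Q}$ by hypothesis~(1). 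As $\mathcal{Q}$ is a subgroup, the product lies in $\mathcal{Q}$, and since $v$ was arbitrary we conclude $ghg^{-1}\in G_\mathcal{Q}$; hence $g G_\mathcal{P} g^{-1}\subseteq G_\mathcal{Q}$.

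For the reverse inclusion I would check that $g^{-1}$ satisfies the two hypotheses with $\mathcal{P}$ and $\mathcal{Q}$ interchanged, and then apply the inclusion just established to $g^{-1}$. Using $(g^{-1})|_v^D=(g|_{g^{-1}(v)}^D)^{-1}$, hypothesis~(1) for $g$ (after conjugating by $g|_{g^{-1}(v)}^D$) gives $(g^{-1}|_v^D)\,\mathcal{Q}\,(g^{-1}|_v^D)^{-1}=\mathcal{P}$; and for hypothesis~(2), writing $u:=g^{-1}(v)$ and $t:=g^{-1}(w)$ (on the same level), one has $(g^{-1}|_v^D)(g^{-1}|_w^D)^{-1}=(g|_u^D)^{-1}g|_t^D$, which lies in $\mathcal{P}$ by combining $g|_u^D(g|_t^D)^{-1}\in\mathcal{Q}$ (hypothesis~(2) for $g$) with the conjugation identity $(g|_u^D)^{-1}\mathcal{Q}\,(g|_u^D)=\mathcal{P}$ already used. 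Hence $g^{-1}G_\mathcal{Q}g\subseteq G_\mathcal{P}$, i.e. $G_\mathcal{Q}\subseteq g G_\mathcal{P} g^{-1}$, which completes the proof. I do not anticipate a real obstacle here: the argument is just disciplined bookkeeping with the section cocycle identities, and the one point demanding care is tracking at which vertex each section is evaluated — in particular, hypothesis~(2) is needed precisely to absorb the "drift" $g|_{h(w)}^D\,(g|_w^D)^{-1}$ produced by $h$ moving $w$ within its level.
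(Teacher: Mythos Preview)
Your proof is correct and follows essentially the same route as the paper: both compute $(ghg^{-1})|_v^D$ via the section cocycle, insert $g|_w^D(g|_w^D)^{-1}$ to split it into a factor handled by hypothesis~(2) and a conjugate handled by hypothesis~(1), and then verify that $g^{-1}$ satisfies the swapped hypotheses to get the reverse inclusion. Your use of the shorthand $w=g^{-1}(v)$ is a mild notational improvement, but the argument is otherwise identical.
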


\begin{proof}
Define $\phi: \Aut(T) \rightarrow \Aut(T)$ as $\phi(h) = ghg^{-1}$. Since $\phi$ is an isomorphism of topological groups, we only need to prove that with the hypothesis given, we have that $\phi(G_\mathcal{P}) \leq G_\mathcal{Q}$ and that $\phi^{-1}(G_\mathcal{Q}) \leq G_\mathcal{P}$. Let $v$ be a vertex in $T$ and $h \in G_\mathcal{P}$, then
\begin{align*}
\phi(h)|_v^D = g|_{hg^{-1}(v)}^D \, h|_{g^{-1}(v)}^D \, (g|_{g^{-1}(v)}^D)^{-1} = \\ \left( g|_{hg^{-1}(v)}^D \, (g|_{g^{-1}(v)}^D)^{-1} \right) \left( g|_{g^{-1}(v)}^D \, h|_{g^{-1}(v)}^D \, (g|_{g^{-1}(v)}^D)^{-1} \right).
\end{align*}

By the second property, the first term in parenthesis is in $\mathcal{Q}$ and by the first property, the second term in parenthesis is in $\mathcal{Q}$. \\

The inverse will be $\phi^{-1}(h) = g^{-1}hg$, that has the form of $\phi$ but changing $g$ by $g^{-1}$. By the previous part of the proof, if we prove that $g^{-1}$ satisfies the conditions
\begin{enumerate}
\item[1'.] For all $v$ vertex in $T$, $(g^{-1}|_{v}^D) \mathcal{Q} (g^{-1}|_{v}^D)^{-1} = \mathcal{P}$,
\item[2'.] for all $n \in \NN$ and $v,w \in \mathcal{L}_n$, $(g^{-1}|_v^D) (g^{-1}|_w^D)^{-1} \in \mathcal{P}$,
\end{enumerate}
then $\phi^{-1}$ maps $G_\mathcal{Q}$ to $G_\mathcal{P}$.

For 1', we apply 1 with $v$ being $g^{-1}(v)$. Then, $$(g|_{g^{-1}(v)}) \mathcal{P} (g|_{g^{-1}(v)}^D)^{-1} = \mathcal{Q}.$$

Solving $\mathcal{P}$, $$\mathcal{P} = (g|_{g^{-1}(v)}^D)^{-1} \, \mathcal{Q} \, (g|_{g^{-1}(v)}^D) = (g^{-1}|_{v}^D) \, \mathcal{Q} \, (g^{-1}|_{v}^D)^{-1}.$$

For 2', 
\begin{align*}
(g^{-1}|_{v}^D) (g^{-1}|_{w}^D)^{-1} = (g^{-1}|_{v}^D) (g|_{g^{-1}(w)}^D) = \\ (g^{-1}|_{v}^D) (g|_{g^{-1}(w)}^D) (g|_{g^{-1}(v)}^D)^{-1} (g^{-1}|_{v}^D)^{-1} 
\end{align*}

Applying 2 with $v$ being $g^{-1}(w)$ and $w$ being $g^{-1}(v)$, we conclude that the central term $(g|_{g^{-1}(w)}^D) (g|_{g^{-1}(v)}^D)^{-1} \in \mathcal{Q}$. Finally applying 1', we get that $$(g^{-1}|_{v}^D) (g^{-1}|_{w}^D)^{-1} \in \mathcal{P}.$$
\end{proof}

\begin{Definition}
Let $T$ be a $d$-regular tree, $D$ a natural number and $\mathcal{P}, \mathcal{Q} \leq \Aut(T^D)$ minimal pattern subgroups. Define $\mathcal{S}_{\mathcal{P}\mathcal{Q}} = \set{r \in \Aut(T^D): r\mathcal{P}r^{-1} = \mathcal{Q}}$. We will call $\mathcal{S}_{\mathcal{P}\mathcal{Q}}$ the \textit{set of transformations} from $\mathcal{P}$ to $\mathcal{Q}$.
\label{definition: SPQ}
\end{Definition}

We equip $\mathcal{S}_{\mathcal{P}\mathcal{Q}}$ with an equivalence relation $\sim$ and with a relation $\rightarrow$. We say that $r \sim r'$ if and only if $r r'^{-1} \in \mathcal{Q}$ and that $r \xrightarrow[]{x} r'$ for $x \in \mathcal{L}_1$ if $r$ can be extended using the pattern $r'$, namely, $r|_x = r'|_\emptyset^{D-1}$.

Each structure given to $\mathcal{S}_{\mathcal{P}\mathcal{Q}}$ represents a different characteristic requested to $g$ in the \cref{proposition: isomorphism between finite type groups}. The definition of $\mathcal{S}_{\mathcal{P}\mathcal{Q}}$ represents the condition 1, the equivalence relation the condition 2 and the latter relation the possibility to extend the patterns until we can define a global element $g$. 

\begin{Proposition}
Let $T$ be a $d$-regular tree, $D$ a natural number, $\mathcal{P}, \mathcal{Q} \leq \Aut(T^D)$ minimal pattern subgroups and suppose there exists $r_0 \in \mathcal{S}_{\mathcal{P}\mathcal{Q}}$. Then, 

\begin{enumerate}
\item $\mathcal{S}_{\mathcal{P}\mathcal{Q}} = N(\mathcal{Q}) \,\, r_0,$ where $N(\mathcal{Q})$ is the normalizer of $\mathcal{Q}$ in $\Aut(T^D)$. 

\item The map $n \mapsto nr_0$ induces the bijection  
\begin{align*}
\mathcal{Q} \backslash N(\mathcal{Q}) \longrightarrow \mathcal{S}_{\mathcal{P}\mathcal{Q}}/\sim
\end{align*}
given by
\begin{align*}
[n] \longrightarrow [nr_0],
\end{align*}
where for the class in the domain we refer to the equivalence class modulo $\mathcal{Q}$ and for the second class to the equivalence relation defined in $\mathcal{S}_{\mathcal{P}\mathcal{Q}}$.
\end{enumerate}
\label{proposition: relation SPQ normalizer}
\end{Proposition}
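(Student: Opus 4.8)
The plan is to prove both parts by straightforward manipulation of the defining conditions, using only that $N(\mathcal{Q})$ is a subgroup of the finite group $\Aut(T^D)$ in which $\mathcal{Q}$ sits as a normal subgroup, together with the standing hypothesis $r_0\mathcal{P}r_0^{-1}=\mathcal{Q}$.

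For part (1), I would first verify the inclusion $N(\mathcal{Q})\,r_0\subseteq\mathcal{S}_{\mathcal{P}\mathcal{Q}}$: for $n\in N(\mathcal{Q})$ one computes $(nr_0)\,\mathcal{P}\,(nr_0)^{-1}=n\,(r_0\mathcal{P}r_0^{-1})\,n^{-1}=n\,\mathcal{Q}\,n^{-1}=\mathcal{Q}$, so $nr_0\in\mathcal{S}_{\mathcal{P}\mathcal{Q}}$. For the reverse inclusion, take $r\in\mathcal{S}_{\mathcal{P}\mathcal{Q}}$; then $r\mathcal{P}r^{-1}=\mathcal{Q}=r_0\mathcal{P}r_0^{-1}$, hence $\mathcal{P}=r_0^{-1}\mathcal{Q}r_0$ and therefore $(rr_0^{-1})\,\mathcal{Q}\,(rr_0^{-1})^{-1}=r\,(r_0^{-1}\mathcal{Q}r_0)\,r^{-1}=r\mathcal{P}r^{-1}=\mathcal{Q}$. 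Thus $rr_0^{-1}\in N(\mathcal{Q})$ and $r=(rr_0^{-1})r_0\in N(\mathcal{Q})r_0$, giving $\mathcal{S}_{\mathcal{P}\mathcal{Q}}=N(\mathcal{Q})r_0$.

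For part (2), I would use the observation that a coset equality $\mathcal{Q}n=\mathcal{Q}n'$ in $\mathcal{Q}\backslash N(\mathcal{Q})$ holds precisely when $n(n')^{-1}\in\mathcal{Q}$, while $nr_0\sim n'r_0$ holds precisely when $(nr_0)(n'r_0)^{-1}\in\mathcal{Q}$. Since $(nr_0)(n'r_0)^{-1}=n r_0 r_0^{-1}(n')^{-1}=n(n')^{-1}$, these two conditions are literally the same. This single identity simultaneously shows that $n\mapsto[nr_0]$ is constant on cosets of $\mathcal{Q}$ (so descends to a well-defined map $\mathcal{Q}\backslash N(\mathcal{Q})\to\mathcal{S}_{\mathcal{P}\mathcal{Q}}/\!\sim$) and that the induced map is injective. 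Surjectivity is immediate from part (1): any element of $\mathcal{S}_{\mathcal{P}\mathcal{Q}}$ is of the form $nr_0$ with $n\in N(\mathcal{Q})$, and its $\sim$-class is the image of $\mathcal{Q}n$.

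I do not expect any real obstacle: the statement is essentially a bookkeeping exercise once part (1) is in hand. The one point that needs a little care is tracking the sides on which $\mathcal{Q}$, $N(\mathcal{Q})$, and $r_0$ act — both the equivalence relation $\sim$ on $\mathcal{S}_{\mathcal{P}\mathcal{Q}}$ and the coset space $\mathcal{Q}\backslash N(\mathcal{Q})$ are ``left-$\mathcal{Q}$'' objects, and it is exactly this compatibility that makes the cancellation $(nr_0)(n'r_0)^{-1}=n(n')^{-1}$ match the two notions of equivalence.
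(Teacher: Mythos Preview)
Your proof is correct and follows essentially the same approach as the paper: both inclusions in part (1) are verified by the same conjugation identities, and the paper dismisses part (2) as ``automatic,'' which is exactly the straightforward coset-compatibility computation you spell out in detail.
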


\begin{proof}
For (1), if $r \in \mathcal{S}_{\mathcal{P}\mathcal{Q}}$, then $r \mathcal{P} r^{-1} = \mathcal{Q}$. As $r_0 \in \mathcal{S}_{\mathcal{P}\mathcal{Q}}$, then we deduce that $(rr_0^{-1}) \mathcal{Q} (r r_0^{-1})^{-1} = \mathcal{Q}$ and consequently $\mathcal{S}_{\mathcal{P}\mathcal{Q}} \, r_0^{-1} \in N(\mathcal{Q})$. Analogously, if $n \in N(\mathcal{Q})$, then $n \mathcal{Q} n^{-1} = (nr_0) \mathcal{P} (nr_0)^{-1} = \mathcal{Q}$, so $nr_0 \in \mathcal{S}_{\mathcal{P}\mathcal{Q}}$. (2) is automatic.
\end{proof}

Define $\Gamma_{\mathcal{P}\mathcal{Q}}$ the directed graph whose vertices are the classes $\mathcal{S}_{\mathcal{P}\mathcal{Q}}/\sim$ and a class $C$ is connected to a class $C'$ and we write $C \rightarrow C'$ if and only if given $r \in C$, there exist $r_1,\dots,r_d \in C'$ such that $r \xrightarrow[]{x} r_x$ for all $x \in \mathcal{L}_1$. 

\begin{Lemma}
Let $T$ be a $d$-regular tree, $D$ a natural number and $\mathcal{P}, \mathcal{Q} \leq \Aut(T^D)$ minimal pattern subgroups such that $\mathcal{S}_{\mathcal{P}\mathcal{Q}} \neq \emptyset$. Let $C$ and $C'$ be two classes in $\mathcal{S}_{\mathcal{P}\mathcal{Q}}/\sim$. Then, $C \rightarrow C'$ if and only if there exists $r \in C$ such that there are $r_1,\dots,r_d \in C'$ with $r \xrightarrow[]{x} r_x$ for all $x \in \mathcal{L}_1$. In other words, we can replace the ``for all elements in the class $C$" by only one.
\label{lemma: equivalence finite type graph isomorphism}
\end{Lemma}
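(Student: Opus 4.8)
The forward implication is immediate from the definition of $C \to C'$: that definition already quantifies over \emph{all} $r \in C$, so it in particular yields one such $r$ (and the classes are nonempty, since $\mathcal{S}_{\mathcal{P}\mathcal{Q}} \neq \emptyset$). The content is the converse, and the plan is to show that if the extension property holds for a single $r \in C$, then it propagates to every $r' \in C$ by adjusting the witnesses $r_x$ by a suitable element of $\mathcal{Q}$.

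Concretely, suppose $r \in C$ and $r_1, \dots, r_d \in C'$ satisfy $r \xrightarrow[]{x} r_x$, i.e. $r|_x = \pi_{D-1}(r_x)$ for every $x \in \mathcal{L}_1$, and fix an arbitrary $r' \in C$. First I would set $q := r' r^{-1}$; since $r \sim r'$ this element lies in $\mathcal{Q}$, and $r' = qr$. Applying the section identity \cref{equation: properties sections} at the level-$1$ vertex $x$ gives $r'|_x = (qr)|_x = q|_{r(x)} \cdot r|_x = q|_{r(x)} \cdot \pi_{D-1}(r_x)$, where $r(x) \in \mathcal{L}_1$ denotes the image of $x$ under $r$. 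Because $\mathcal{Q}$ is a minimal pattern subgroup, by \cref{lemma: equivalence minimal pattern} it is closed under patterns of itself, so $q|_{r(x)} \in \pi_{D-1}(\mathcal{Q})$; I would then pick $\tilde q_x \in \mathcal{Q}$ with $\pi_{D-1}(\tilde q_x) = q|_{r(x)}$ and define $r'_x := \tilde q_x r_x$.

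It remains to verify three short points. First, $r'_x \in \mathcal{S}_{\mathcal{P}\mathcal{Q}}$, since $r_x \in \mathcal{S}_{\mathcal{P}\mathcal{Q}}$ and $\tilde q_x \in \mathcal{Q}$ give $r'_x \mathcal{P} (r'_x)^{-1} = \tilde q_x (r_x \mathcal{P} r_x^{-1}) \tilde q_x^{-1} = \tilde q_x \mathcal{Q} \tilde q_x^{-1} = \mathcal{Q}$. Second, $r'_x \sim r_x$, because $r'_x r_x^{-1} = \tilde q_x \in \mathcal{Q}$; hence $r'_x$ lies in the same class $C'$ as $r_x$. Third, $r' \xrightarrow[]{x} r'_x$: since restriction to the first $D-1$ levels is a homomorphism, $\pi_{D-1}(r'_x) = \pi_{D-1}(\tilde q_x)\,\pi_{D-1}(r_x) = q|_{r(x)} \cdot \pi_{D-1}(r_x) = r'|_x$. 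As this holds for every $x \in \mathcal{L}_1$ and every $r' \in C$, the relation $C \to C'$ follows. I do not expect a genuine obstacle; the only points needing care are the section-composition formula $(qr)|_x = q|_{r(x)} r|_x$ and invoking minimality of $\mathcal{Q}$ in the right place, namely to lift the section $q|_{r(x)}$ to an actual element $\tilde q_x$ of $\mathcal{Q}$ rather than merely to $\pi_{D-1}(\mathcal{Q})$.
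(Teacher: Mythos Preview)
Your proof is correct and follows essentially the same strategy as the paper's: translate from one representative of $C$ to another by multiplying by an element of the pattern subgroup, invoke the minimal-pattern property to extend that element along each $x \in \mathcal{L}_1$, and use the extensions to adjust the witnesses $r_x$ while staying in $C'$. The only cosmetic difference is that you write $r' = qr$ with $q = r'r^{-1} \in \mathcal{Q}$ and left-multiply the witnesses, whereas the paper writes $s = rq$ and right-multiplies (so its $q$ in fact lies in $r^{-1}\mathcal{Q}r = \mathcal{P}$); your version is slightly cleaner in this respect.
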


\begin{proof}
Direct is straightforward. For the converse, let $s \in C$. Then, there exists $q \in \mathcal{Q}$ such that $s = rq$. Since $\mathcal{Q}$ is a minimal pattern subgroup, there exist $q_1,\dots,q_d \in \mathcal{Q}$ such that $q \xrightarrow[]{x} q_x$. Consider $s_x = r_{q(x)}q_x$. Then, $s \xrightarrow[]{x} s_x$. Since $q_x \in \mathcal{Q}$, then $s_x \sim r_{q(x)}$ and consequently $s_x \in C'$ for all $x \in \mathcal{L}_1$.  By definition, $C \rightarrow C'$.
\end{proof}

\begin{theorem}
Let $T$ be a $d$-regular tree, $D$ a natural number, $\mathcal{P}, \mathcal{Q} \leq \Aut(T^D)$ minimal pattern subgroups and  $\Gamma_{\mathcal{P}\mathcal{Q}}$ the associated directed graph. Then, the following statements are equivalent:
\begin{enumerate}
\item There exists $g \in \Aut(T)$ such that  
\begin{enumerate}[(a)]
    \item for all $v$ in $T$, then $g|_v^D \mathcal{P} (g|_v^D)^{-1} = \mathcal{Q}$,
    \item For all $n \in \NN$ and $v,w \in \mathcal{L}_n$, then $g|_v^D (g|_w^D)^{-1} \in \mathcal{Q}$,
\end{enumerate}
\item $\Gamma_{\mathcal{P}\mathcal{Q}}$ has a cycle.
\end{enumerate}
\label{theorem: Gamma cycle equivalence}
\end{theorem}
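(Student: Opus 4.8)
The plan is to prove both implications by tracking, for a candidate element $g\in\Aut(T)$, the family of truncated sections $r_v:=g|_v^D$ over all vertices $v$, and reading conditions (a) and (b) as statements about where the $r_v$ sit inside $\mathcal{S}_{\mathcal{P}\mathcal{Q}}$ and its quotient $\mathcal{S}_{\mathcal{P}\mathcal{Q}}/\!\sim$. The pivotal observation is that the cocycle identities \cref{equation: properties sections} translate the defining structure of $g$ as a tree automorphism into the arrow relation on $\mathcal{S}_{\mathcal{P}\mathcal{Q}}$: for every vertex $v$ and every $x\in\mathcal{L}_1$ one has $(g|_v^D)|_x=g|_{vx}^{D-1}=(g|_{vx}^D)|_\emptyset^{D-1}$, that is, $r_v\xrightarrow{x}r_{vx}$. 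Condition (a) says exactly that every $r_v$ lies in $\mathcal{S}_{\mathcal{P}\mathcal{Q}}$, and condition (b) says that any two $r_v,r_w$ with $v,w$ on a common level differ by an element of $\mathcal{Q}$, i.e. are $\sim$-equivalent. Thus both conditions are purely about the combinatorics captured by $\Gamma_{\mathcal{P}\mathcal{Q}}$.

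For $(1)\Rightarrow(2)$, assume such a $g$ exists. By the discussion above, for each $n\ge 0$ all the sections $\{r_v:v\in\mathcal{L}_n\}$ lie in a single $\sim$-class, which I denote $C_n\in\mathcal{S}_{\mathcal{P}\mathcal{Q}}/\!\sim$. Fixing $v\in\mathcal{L}_n$ and using $r_v\xrightarrow{x}r_{vx}$ for all $x\in\mathcal{L}_1$ together with \cref{lemma: equivalence finite type graph isomorphism}, I get an edge $C_n\to C_{n+1}$ in $\Gamma_{\mathcal{P}\mathcal{Q}}$ for every $n$. Since $\mathcal{S}_{\mathcal{P}\mathcal{Q}}\subseteq\Aut(T^D)$ is finite, $\Gamma_{\mathcal{P}\mathcal{Q}}$ has finitely many vertices, so the infinite walk $C_0\to C_1\to C_2\to\cdots$ must revisit a vertex; a closed subwalk of positive length always contains a directed cycle, so $\Gamma_{\mathcal{P}\mathcal{Q}}$ has a cycle.

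For $(2)\Rightarrow(1)$, fix a cycle $C_0\to C_1\to\cdots\to C_{k-1}\to C_0$ in $\Gamma_{\mathcal{P}\mathcal{Q}}$ (so in particular $\mathcal{S}_{\mathcal{P}\mathcal{Q}}\ne\emptyset$), and build the portrait of $g$ level by level. Choose $r_\emptyset\in C_0$ arbitrarily; inductively, once $r_v\in C_{n\bmod k}$ has been chosen for all $v\in\mathcal{L}_n$, apply \cref{lemma: equivalence finite type graph isomorphism} to the edge $C_{n\bmod k}\to C_{(n+1)\bmod k}$ and to the specific element $r_v$ to pick $r_{vx}\in C_{(n+1)\bmod k}$ with $r_v\xrightarrow{x}r_{vx}$ for each $x\in\mathcal{L}_1$. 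The relations $r_v\xrightarrow{x}r_{vx}$ are precisely the compatibility conditions needed for the standard level-by-level gluing (the same one used in the proof of \cref{lemma: equivalence minimal pattern}) to assemble a well-defined $g\in\Aut(T)$ with $g|_v^D=r_v$ for every $v$. Then (a) holds since each $r_v\in\mathcal{S}_{\mathcal{P}\mathcal{Q}}$ conjugates $\mathcal{P}$ to $\mathcal{Q}$, and (b) holds since $r_v,r_w$ lie in the same $\sim$-class whenever $v,w$ share a level, hence $r_vr_w^{-1}\in\mathcal{Q}$.

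The only steps requiring real care — rather than any deep difficulty — are the bookkeeping in the $(2)\Rightarrow(1)$ gluing (verifying that the arrow-governed choice of sections genuinely patches into one automorphism of $T$, and that the cyclic choice $r_v\in C_{n\bmod k}$ is consistent across a level) and, in $(1)\Rightarrow(2)$, being precise that a closed walk of positive length in a finite digraph contains a directed cycle. Modulo these, the argument is a direct translation between the structure of $\Gamma_{\mathcal{P}\mathcal{Q}}$ and the hypotheses of \cref{proposition: isomorphism between finite type groups}, which is what makes the combination of this theorem with that proposition yield the ``cycle $\Rightarrow$ conjugate'' half of \cref{Theorem: finite type isomorphic conjugated iff gamma cycle}.
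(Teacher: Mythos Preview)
Your proof is correct and follows essentially the same approach as the paper's own proof: in both directions you read conditions (a) and (b) as placing the sections $g|_v^D$ in $\mathcal{S}_{\mathcal{P}\mathcal{Q}}$ and in a single $\sim$-class per level, use the section relation $r_v\xrightarrow{x}r_{vx}$ together with \cref{lemma: equivalence finite type graph isomorphism} to produce the infinite walk $C_0\to C_1\to\cdots$ (and hence a cycle by finiteness), and conversely walk around a cycle to inductively assign patterns $r_v$ that glue to a global $g$. Your write-up is slightly more explicit about the modular indexing and the gluing mechanism, but there is no substantive difference from the paper's argument.
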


\begin{proof}
For (1) $\Rightarrow$ (2), by (a), the pattern $g|_v^D \in \mathcal{S}_{\mathcal{P}\mathcal{Q}}$, and by (b), patterns in the same level are in the same class of $\Gamma_{\mathcal{P}\mathcal{Q}}$, so we can label $C_n$ the class corresponding to $g|_v^D$ for any vertex $v \in \mathcal{L}_n$. By the existence of $g$, clearly $g|_v^D \xrightarrow[]{x} g|_{vx}^D$ for all $v$ vertex in $T$ and $x \in \mathcal{L}_1$, so by \cref{lemma: equivalence finite type graph isomorphism}, we have $C_n \rightarrow C_{n+1}$ for all $n \in \NN$. This creates a path in $\Gamma_{\mathcal{P}\mathcal{Q}}$. Now, the set $\mathcal{S}_{\mathcal{P}\mathcal{Q}}$ is finite, so $\Gamma_{\mathcal{P}\mathcal{Q}}$ has finitely many vertices and consequently the latter path has to self-intersect, giving a cycle. 

For (2) $\Rightarrow$ (1), let $C_0 \rightarrow C_1 \rightarrow \dots \rightarrow C_m \rightarrow C_0$ be a cycle in $\Gamma_{\mathcal{P}\mathcal{Q}}$ and take any $r \in C_0$. By definition of the graph, there exist $r_1,\dots,r_d \in C_1$ such that $r \xrightarrow[]{x} r_x$ for all $x \in \mathcal{L}_1$. Now $C_1$ is connected to $C_2$, so each $r_x$ can be extended via $r_{xy}$ with each $r_{xy} \in C_2$. Notice that every pattern at the same level is in the same class, so that gives condition (b). On the other hand, each pattern is in $\mathcal{S}_{\mathcal{P}\mathcal{Q}}$, so they satisfy condition (a). Gluing these patterns, we generate the element $g$. 
\end{proof}

Combining \cref{proposition: isomorphism between finite type groups} and \cref{theorem: Gamma cycle equivalence}, we conclude the following:

\begin{corollary}
Let $T$ be a $d$-regular tree, $D$ a natural number, $\mathcal{P}, \mathcal{Q} \leq \Aut(T^D)$ minimal pattern subgroups and  $\Gamma_{\mathcal{P}\mathcal{Q}}$ the associated directed graph. If $\Gamma_{\mathcal{P}\mathcal{Q}}$ has a cycle then $G_\mathcal{P}$ and $G_\mathcal{Q}$ are isomorphic via conjugation by an element in $\Aut(T)$.
\label{corollary: Gamma cycle they are conjugated}
\end{corollary}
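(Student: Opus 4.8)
The plan is to combine the two results immediately preceding the statement: \cref{theorem: Gamma cycle equivalence} converts the combinatorial hypothesis (``$\Gamma_{\mathcal{P}\mathcal{Q}}$ has a cycle'') into the existence of a single automorphism $g\in\Aut(T)$ with prescribed local behaviour, and \cref{proposition: isomorphism between finite type groups} turns such a $g$ into an explicit conjugating isomorphism. So the argument is essentially a two-line composition, and the work has already been done upstream.

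Concretely, I would first invoke the implication $(2)\Rightarrow(1)$ of \cref{theorem: Gamma cycle equivalence}: from a cycle in $\Gamma_{\mathcal{P}\mathcal{Q}}$ one produces $g\in\Aut(T)$ such that for every vertex $v$ of $T$ one has $g|_v^D\,\mathcal{P}\,(g|_v^D)^{-1}=\mathcal{Q}$, and for every $n\in\NN$ and every $v,w\in\mathcal{L}_n$ one has $g|_v^D\,(g|_w^D)^{-1}\in\mathcal{Q}$. I would then note that these two statements are, verbatim, hypotheses (1) and (2) of \cref{proposition: isomorphism between finite type groups}; applying that proposition to this $g$ shows that $h\mapsto ghg^{-1}$ is a topological group isomorphism carrying $G_\mathcal{P}$ onto $G_\mathcal{Q}$, which is exactly the claim. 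In particular the conjugating element lives in $\Aut(T)$, as asserted.

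There is no real obstacle here: the only thing to verify is that the conditions matching up is genuine rather than cosmetic, and this is automatic from the way the equivalence relation $\sim$ on $\mathcal{S}_{\mathcal{P}\mathcal{Q}}$ (\cref{definition: SPQ}) and the edge relation of $\Gamma_{\mathcal{P}\mathcal{Q}}$ were set up precisely to encode conditions (2) and the extendability of patterns in \cref{proposition: isomorphism between finite type groups}. If one wanted a self-contained account one would additionally recall, from the proof of \cref{theorem: Gamma cycle equivalence}, that gluing the depth-$D$ patterns along the cycle $C_0\to C_1\to\dots\to C_m\to C_0$ yields a portrait defined at every vertex of $T$ (the edge relation guarantees each pattern extends one level deeper, and periodicity of the cycle guarantees the process never stalls), hence a genuine element of $\Aut(T)$; but since \cref{theorem: Gamma cycle equivalence} is already available, this need not be repeated.
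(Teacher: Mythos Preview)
Your proposal is correct and matches the paper's own proof exactly: the corollary is stated in the paper as the immediate combination of \cref{proposition: isomorphism between finite type groups} and \cref{theorem: Gamma cycle equivalence}, which is precisely what you do.
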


The converse is not far to be true:

\begin{theorem}
Let $T$ be a $d$-regular tree, $D$ a natural number, $\mathcal{P}, \mathcal{Q} \leq \Aut(T^D)$ minimal pattern subgroups and  $\Gamma_{\mathcal{P}\mathcal{Q}}$ the associated directed graph. If $G_\mathcal{P}$ is fractal, then $G_\mathcal{P}$ is isomorphic to $G_\mathcal{Q}$ via conjugation by an element $g \in \Aut(T)$ if and only if $\Gamma_{\mathcal{P} \mathcal{Q}}$ has a cycle.
\label{theorem: finite type conjugated in Aut(T) iff gamma cycle}
\end{theorem}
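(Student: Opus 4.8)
By \cref{corollary: Gamma cycle they are conjugated} the implication ``$\Gamma_{\mathcal{P}\mathcal{Q}}$ has a cycle $\Rightarrow$ $G_\mathcal{P}$ and $G_\mathcal{Q}$ are conjugate in $\Aut(T)$'' is already available, so the content is the converse. Assume $G_\mathcal{P}$ is fractal and that there is $g\in\Aut(T)$ with $gG_\mathcal{P}g^{-1}=G_\mathcal{Q}$. The plan is to show that this very element $g$ satisfies conditions (a) and (b) of \cref{theorem: Gamma cycle equivalence}; the existence of a cycle in $\Gamma_{\mathcal{P}\mathcal{Q}}$ then follows immediately from that theorem. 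A preliminary bookkeeping remark will be needed: since $\St(n)$ is normal in $\Aut(T)$, conjugation by $g$ carries $\St_{G_\mathcal{P}}(n)=G_\mathcal{P}\cap\St(n)$ onto $G_\mathcal{Q}\cap\St(n)=\St_{G_\mathcal{Q}}(n)$, so $\pi_n(G_\mathcal{P})=G_\mathcal{P}/\St_{G_\mathcal{P}}(n)$ and $\pi_n(G_\mathcal{Q})$ have equal order for every $n$; taking $n=D$ gives $\abs{\mathcal{P}}=\abs{\mathcal{Q}}$. This is what will let me upgrade the inclusions obtained below to equalities.

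For condition (a): fix a vertex $v$ of $T$ and put $u=g^{-1}(v)$. For any $h\in\st_{G_\mathcal{P}}(u)$ the element $ghg^{-1}$ lies in $G_\mathcal{Q}$ and fixes $v$, and a routine section computation using \cref{equation: properties sections} gives $(ghg^{-1})|_v=(g|_u)(h|_u)(g|_u)^{-1}$; this lies in $G_\mathcal{Q}$ because $G_\mathcal{Q}$ is self-similar (it is a group of finite type). By fractality of $G_\mathcal{P}$, the section $h|_u$ runs over all of $G_\mathcal{P}$ as $h$ runs over $\st_{G_\mathcal{P}}(u)$, so $(g|_u)G_\mathcal{P}(g|_u)^{-1}\subseteq G_\mathcal{Q}$; applying the homomorphism $\pi_D$ yields $(g|_u^D)\mathcal{P}(g|_u^D)^{-1}\subseteq\mathcal{Q}$. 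As $v$ runs over all vertices of $T$, so does $u=g^{-1}(v)$, and by $\abs{\mathcal{P}}=\abs{\mathcal{Q}}$ each inclusion is an equality; this is exactly condition (a).

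For condition (b): fix $n\in\NN$ and vertices $u,u'\in\mathcal{L}_n$. Using level-transitivity of $G_\mathcal{P}$ choose $h_1\in G_\mathcal{P}$ with $h_1(u)=u'$, and using fractality of $G_\mathcal{P}$ at the vertex $u'$ choose $h_2\in\st_{G_\mathcal{P}}(u')$ with $h_2|_{u'}=(h_1|_u)^{-1}$ (note $(h_1|_u)^{-1}\in G_\mathcal{P}$ by self-similarity). Then $h:=h_2h_1\in G_\mathcal{P}$ satisfies $h(u)=u'$ and $h|_u=h_2|_{u'}\,h_1|_u=1$. Since $ghg^{-1}\in G_\mathcal{Q}$ and $G_\mathcal{Q}$ is self-similar, the section $(ghg^{-1})|_{g(u)}=(g|_{u'})(h|_u)(g|_u)^{-1}=(g|_{u'})(g|_u)^{-1}$ belongs to $G_\mathcal{Q}$, whence $(g|_{u'}^D)(g|_u^D)^{-1}\in\mathcal{Q}$. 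As $u,u'$ were arbitrary vertices on level $n$, this is condition (b). With (a) and (b) verified, \cref{theorem: Gamma cycle equivalence} gives that $\Gamma_{\mathcal{P}\mathcal{Q}}$ has a cycle, completing the proof.

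I expect the delicate point to be the construction of the element $h$ in the verification of (b): it must move $u$ to $u'$ while simultaneously having trivial section at $u$, and this is precisely where both level-transitivity and the full strength of fractality (the equality $\varphi_w(\st_{G_\mathcal{P}}(w))=G_\mathcal{P}$ at \emph{every} vertex $w$, not just at $w=\emptyset$) are used. A secondary subtlety, easy to overlook, is that self-similarity alone only yields inclusions; one must separately exploit that the conjugator lies in $\Aut(T)$ — so it preserves the levels of the tree and forces $\abs{\mathcal{P}}=\abs{\mathcal{Q}}$ — in order to turn those inclusions into the equalities demanded by condition (a).
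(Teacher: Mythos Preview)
Your proof is correct and follows essentially the same approach as the paper: reduce to verifying conditions (a) and (b) of \cref{theorem: Gamma cycle equivalence} for the conjugating element $g$, using fractality to produce elements in $\st_{G_\mathcal{P}}$ with prescribed sections. The only cosmetic differences are that the paper argues directly at the $\pi_D$ level (so it only needs $h|_w^D=1$ rather than your full $h|_u=1$) and obtains $\abs{\mathcal{P}}=\abs{\mathcal{Q}}$ by observing $\pi_D(g)\,\mathcal{P}\,\pi_D(g)^{-1}=\mathcal{Q}$ rather than via normality of $\St(n)$.
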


\begin{proof}
For the direct, by \cref{theorem: Gamma cycle equivalence}, it is equivalent to prove that $g$ satisfies: 
\begin{enumerate}[(a)]
    \item for all $v$ vertex in $T$, $(g|_v^D) \mathcal{P} (g|_v^D)^{-1} = \mathcal{Q}$,
    \item For all $n \in \NN$ and $v,w \in \mathcal{L}_n$, then $(g|_v^D) (g|_w^D)^{-1} \in \mathcal{Q}$.
\end{enumerate}

For the point (a), restricting the isomorphism to the first $D$ levels, we have that $\pi_D(g) \, \mathcal{P} \, \pi_D(g)^{-1} = \mathcal{Q}$, so any conjugate of $\mathcal{P}$ must have the same cardinality as $\mathcal{Q}$. Now, take $v$ a vertex in $T$, $\alpha \in \mathcal{P}$, and choose $h \in \st_{G_\mathcal{P}}(v)$ such that $h|_{v}^D = \alpha$. Such an element $h$ exists because $G_\mathcal{P}$ is fractal. Then, 
\begin{align*}
(ghg^{-1})|_{g(v)}^D = (g|_v^D) (h|_v^D) (g|_v^D)^{-1} = (g|_v^D) \, \alpha \, (g|_v^D)^{-1} \in \mathcal{Q}, 
\end{align*}
as $g$ conjugates $G_\mathcal{P}$ onto $G_\mathcal{Q}$. This implies that $(g|_v^D) \mathcal{P} (g|_v^D)^{-1} \leq \mathcal{Q}$ and since they have the same cardinality, we obtain $(g|_v^D) \mathcal{P} (g|_v^D)^{-1} = \mathcal{Q}$ for any vertex $v$ in $T$. 

For (b), by level-transitivity, there exists $k \in G_\mathcal{P}$ such that $k(w) = v$ and by fractalness, there exists $l \in \st_{G_\mathcal{P}}(w)$ such that $l|_w^D = (k|_w^D)^{-1}$. Take $h = kl$. Then $h(w) = v$ and $$h|_w^D = (k|_w^D)(l|_w^D) = 1.$$ Hence, 
\begin{align*}
(ghg^{-1})|_{g(w)}^D = (g|_{h(w)}^D) (h|_w^D) (g|_w^D)^{-1} = (g|_v^D) (g|_w^D)^{-1} \in \mathcal{Q}. 
\end{align*}

The converse is \cref{corollary: Gamma cycle they are conjugated}. 
\end{proof}

\subsection{Results of rigidity for groups of finite type}
\label{subsection: Results of rigidity for groups of finite type}

We have characterized what a conjugation between two groups of finite type must satisfy. The problem is that we do not know if all the isomorphisms between groups of finite type must be conjugations. In this subsection, we will adapt known results of rigidity existing in the literature to groups of finite type.

Given $T$ a spherically homogeneous tree, we say that a group $G \leq \Aut(T)$ is \textit{$T$-rigid} if the unique kinds of isomorphisms of $G$ in $\Aut(T)$ are conjugations by elements in $\Aut(T)$.

\begin{theorem}[{\cite[Proposition 2.4.49]{Nekrashevysh2022}}]
Let $T$ be a spherically homogeneous tree and $G \leq \Aut(T)$ such that $\rist_G(v)$ acts level-transitively on the vertices below $T_v$ for all $v$ vertex in $T$. Then $G$ is $T$-rigid.
\end{theorem}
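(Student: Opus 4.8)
## Proof Proposal

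The plan is to show that any isomorphism $\phi\colon G \to \phi(G)$ between two copies of $G$ inside $\Aut(T)$ must carry the canonical filtration of $G$ by rigid vertex stabilizers to the corresponding filtration of $\phi(G)$, and that this forces $\phi$ to be realized levelwise by a compatible family of bijections of the alphabets, hence by a single tree automorphism. The hypothesis that $\rist_G(v)$ acts level-transitively below $T_v$ for every $v$ is exactly what makes the subgroups $\rist_G(v)$ recognizable from the abstract group structure: they are the building blocks of a branch structure, and a branch group with this strong transitivity has a particularly rigid subgroup lattice.

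First I would recall that under this hypothesis $G$ is (weakly) branch and that the rigid level stabilizers $\RiSt_G(n) \cong \prod_{v\in\mathcal{L}_n}\rist_G(v)$ are normal in $G$ with the product structure from \cref{definition: branch group}. The key structural step is to characterize, purely group-theoretically, the set $\{\rist_G(v) : v \in \mathcal{L}_n\}$ for each $n$: these are precisely the minimal (in an appropriate sense) direct factors appearing in the decomposition of $\RiSt_G(n)$, and the transitivity of each $\rist_G(v)$ on $T_v$ rules out any coarser or finer decomposition being preserved — a subgroup acting level-transitively on a rooted subtree cannot split further as a direct product respecting the normalizer action of $G$. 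Consequently an isomorphism $\phi$ must send $\RiSt_G(n)$ to $\RiSt_{\phi(G)}(n)$ and permute the factors $\rist_G(v)$, inducing for each $n$ a bijection $\sigma_n\colon \mathcal{L}_n \to \mathcal{L}_n$. The inclusions $\rist_G(vw) \le \rist_G(v)$ translate into containments among the factors, which $\phi$ must respect, so the bijections $\sigma_n$ are compatible with the tree's edge relation: they assemble into a single $g \in \Aut(T)$ with $g(v) = \sigma_{|v|}(v)$.

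Next I would show that $\phi$ and conjugation by $g$ agree. Having matched the vertex stabilizer filtration, one argues that for each $h \in G$ the element $g^{-1}\phi(h)g$ fixes the same rigid vertex stabilizers as $h$ and acts the same way on each level quotient; since $\bigcap_n \St_G(n) = 1$, an automorphism of the tree is determined by its action on all finite levels, and one checks by induction on the level that $g^{-1}\phi(h)g$ and $h$ induce the same permutation of $\mathcal{L}_n$ for all $n$ (the inductive step uses that the action of $h$ on $\mathcal{L}_{n+1}$ is detected by how $h$ conjugates the factors $\rist_G(v)$, $v \in \mathcal{L}_n$, information that $\phi$ preserves). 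Hence $\phi(h) = ghg^{-1}$ for all $h$, which is precisely the statement that $G$ is $T$-rigid.

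The main obstacle is the second paragraph: proving that the direct-product decomposition $\RiSt_G(n) = \prod_{v\in\mathcal{L}_n}\rist_G(v)$ is the \emph{unique} one (up to permuting factors) preserved by an abstract isomorphism that also respects the $G$-action, i.e., that the level-transitivity of each $\rist_G(v)$ genuinely forbids alternative decompositions. This is where the hypothesis does all the work, and one must be careful: a priori $\phi(G)$ need not have an obvious branch structure, so the argument should phrase everything in terms of $G$ alone — the factors $\rist_G(v)$ are the atoms of the canonical decomposition because each is directly indecomposable relative to $G$ (a nontrivial level-transitive action on a rooted tree has trivial center and no nontrivial direct-product decomposition compatible with the ambient group's conjugation action). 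I would expect the cited source to handle exactly this indecomposability claim, and my plan is to invoke it and then carry out the bookkeeping with the bijections $\sigma_n$ and the inductive identification of $\phi$ with conjugation by $g$.
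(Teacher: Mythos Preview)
The paper does not supply its own proof of this theorem; it is quoted verbatim from Nekrashevych's book and used as a black box. So there is no in-paper argument to compare against, and your proposal should be judged on its own.

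Your outline is the standard Lavreniuk--Nekrashevych strategy and is correct in its architecture: show that an abstract isomorphism must carry each $\rist_G(v)$ to some $\rist_{\phi(G)}(w)$, read off level bijections $\sigma_n$ that assemble into $g\in\Aut(T)$, and then verify that $\phi$ coincides with conjugation by $g$. The last step is also essentially right once you note that the permutation $h$ induces on $\mathcal{L}_n$ is exactly the permutation it induces by conjugation on the set $\{\rist_G(v):v\in\mathcal{L}_n\}$, and these subgroups are pairwise distinct because each has support precisely $T_v$ (this uses level-transitivity of $\rist_G(v)$).

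The genuine gap is the step you yourself flag as the ``main obstacle'': you never give an intrinsic characterization of the rigid vertex stabilizers, and your stated plan to ``invoke the cited source'' for this indecomposability claim is circular, since the cited source is exactly the proposition you are proving. Phrases like ``minimal direct factors of $\RiSt_G(n)$'' do not suffice, because a priori $\phi$ need not send $\RiSt_G(n)$ to $\RiSt_{\phi(G)}(n)$; you must first recover the $\rist_G(v)$ from $G$ alone. The way Nekrashevych does this is via centralizers: under the hypothesis that every $\rist_G(v)$ acts level-transitively on $T_v$, one shows that the centralizer in $G$ of $\rist_G(v)$ is exactly the pointwise stabilizer of $T_v$ (equivalently, $\prod_{w\nsucceq v,\ w\npreceq v}\rist_G(w)$ at the appropriate level), and conversely that any subgroup with this centralizer pattern is supported on a single subtree. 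That computation is where level-transitivity is actually spent, and it is what turns your plan into a proof. Until you supply it (or an equivalent support/centralizer argument), the proposal remains a correct roadmap with its central lemma missing.
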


If $G_\mathcal{P}$ is a group of finite type, let us denote $K_\mathcal{P}$ the maximal regular branching subgroup of $G_\mathcal{P}$. The existence of such a subgroup is justified in \cref{lemma: unique maximal reg branch subgroup} and more results about it will be obtained in \cref{section: The maximal branching subgroup}. 

\begin{Proposition}
Let $T$ be a $d$-regular tree, $D$ a natural number, and $\mathcal{P} \leq \Aut(T^D)$ a minimal pattern subgroup. If $K_\mathcal{P}$ acts transitively on the first level of $T$, then $G_\mathcal{P}$ is $T$-rigid. 
\label{theorem: finite type KP and rigidity}
\end{Proposition}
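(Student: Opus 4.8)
The plan is to reduce the statement to the rigidity criterion \cite[Proposition 2.4.49]{Nekrashevysh2022}, i.e.\ to verify that $\rist_{G_\mathcal{P}}(v)$ acts level-transitively on the vertices of $T_v$ for every vertex $v$ of $T$. Throughout I write $K = K_\mathcal{P}$. First I would recall that $G_\mathcal{P}$ is regular branch over $K$: by \cref{proposition: finite type is closed self-similar and regular branch} the group $G_\mathcal{P}$ is regular branch over $\St_{G_\mathcal{P}}(D-1)$, so \cref{lemma: unique maximal reg branch subgroup} guarantees that $K$ exists and satisfies $K_1 \le_f K \le_f G_\mathcal{P}$; in particular $K_1 \le K$. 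A one-line induction on $n$ (if $g \in K_{n+1}$ then $g \in \St(n)$ and, for $v \in \mathcal{L}_n$, $g|_v \in K_1 \le K$, hence $g \in K_n$) shows $K_{n+1} \le K_n$, so $K_n \le K$ for all $n \in \NN$. Consequently, for every vertex $v \in \mathcal{L}_n$ and every $k \in K$ the element $\delta_v(k)$ from \cref{equation: map deltav} lies in $K_n \le K \le G_\mathcal{P}$ and fixes every vertex outside $T_v$, so $\delta_v(k) \in \rist_{G_\mathcal{P}}(v)$ and $\varphi_v(\delta_v(k)) = k$. Under the canonical identification $T_v \cong T$ this says $\varphi_v(\rist_{G_\mathcal{P}}(v)) \supseteq K$.

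With this in hand, the whole argument comes down to the key step: \emph{$K$ acts level-transitively on $T$}. I would prove this by induction on the level, the base case being exactly the hypothesis that $K$ is transitive on $\mathcal{L}_1$. Assuming $K$ is transitive on $\mathcal{L}_n$, take $v_1 = u_1 x_1$ and $v_2 = u_2 x_2$ in $\mathcal{L}_{n+1}$ with $u_i \in \mathcal{L}_n$ and $x_i \in \mathcal{L}_1$. Choose $g \in K$ with $g(u_1) = u_2$; using the section formula of \cref{equation: properties sections}, $g(v_1) = u_2\, y$ where $y := \big(g|_{u_1}^1\big)(x_1) \in \mathcal{L}_1$. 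By transitivity of $K$ on $\mathcal{L}_1$ pick $k \in K$ with $k(y) = x_2$; then $\delta_{u_2}(k) \in K_n \le K$ fixes $u_2$ and sends $u_2\, y$ to $u_2\,k(y) = v_2$. Hence $\delta_{u_2}(k)\,g \in K$ maps $v_1$ to $v_2$, closing the induction. Combining with the previous paragraph, $\varphi_v(\rist_{G_\mathcal{P}}(v)) \supseteq K$ is level-transitive on $T_v$, so $\rist_{G_\mathcal{P}}(v)$ acts level-transitively on $T_v$ for every vertex $v$, and \cite[Proposition 2.4.49]{Nekrashevysh2022} yields that $G_\mathcal{P}$ is $T$-rigid.

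Since the hypothesis forces $K$ (hence $G_\mathcal{P}$) to be infinite and transitive on $\mathcal{L}_1$, no degenerate cases arise, and no properties of $\mathcal{P}$ beyond $G_\mathcal{P}$ being regular branch are actually used. The only place requiring any care is the interplay between the geometric products $K_n$, the maps $\delta_v$, and the branching relation $K_1 \le K$ — that is, the bookkeeping giving $K_n \le K$ and $\delta_v(K) \le K_n$ — but these are routine consequences of the section identities in \cref{equation: properties sections}. The genuine content is the level-transitivity of $K$ established in the second paragraph, so I expect that inductive step to be the crux rather than a real obstacle.
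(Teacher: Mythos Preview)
Your proof is correct and follows essentially the same route as the paper's: both reduce to the rigidity criterion \cite[Proposition 2.4.49]{Nekrashevysh2022} by showing $\varphi_v(\rist_{G_\mathcal{P}}(v)) \supseteq K_\mathcal{P}$ via the inclusion $(K_\mathcal{P})_n \leq G_\mathcal{P}$. The only difference is presentational: where you write out an explicit induction proving that $K_\mathcal{P}$ is level-transitive, the paper simply notes that $\rist_{G_\mathcal{P}}(v)$ is transitive on the first level of $T_v$ for every $v$ and says ``iterating this process'' (implicitly invoking \cref{lemma: level transitivity equivalence}, since $\rist_{G_\mathcal{P}}(w) \leq \rist_{G_\mathcal{P}}(v)$ for $w$ below $v$).
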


\begin{proof}
By definition of the geometric product, $(K_\mathcal{P})_n \leq \RiSt_{G_\mathcal{P}}(n)$ for all $n \in \NN$. Therefore, if $v \in \mathcal{L}_n$, then $\varphi_v(\rist_{G_\mathcal{P}}(v)) \geq K_\mathcal{P}$ and as $K_\mathcal{P}$ acts transitively on the first level of $T$, then $\rist_{G_\mathcal{P}}(v)$ acts transitively on the first level of $T_v$. Iterating this process, we obtain level-transitivity of $\rist_{G_\mathcal{P}}(v)$ below $T_v$.
\end{proof}

In the case that $d = p$ is a prime number and $G_\mathcal{P} \leq W_p$, then the hypothesis of \cref{theorem: finite type KP and rigidity} force $G_\mathcal{P}$ to be $W_p$. Indeed, if $K_\mathcal{P}$, the maximal regular branching subgroup of $G_\mathcal{P}$, is transitive, then it must act like $C_p$ on the first level, but $(K_\mathcal{P})_n \leq K_\mathcal{P}$ for all $n \in \NN$ which implies that $K_\mathcal{P} = W_p$. Therefore, we need other results of rigidity to deal with the case where $G_\mathcal{P} \leq W_p$. 

\begin{theorem}[{\cite[Proposition 5.3]{Fariña2025Boston}}]
Let $T$ be a $d$-regular tree and $G \leq \Aut(T)$ a weakly branch group satisfying the following conditions:

(*) For each vertex $v$ in $T$, the stabilizer $\st_G(v)$ acts as a transitive cyclic
group of prime order on the immediate descendants of $v$.

(N) For every triple of vertices $v_1$, $v_2$, $v_3$ in $T$ such that $v_{i+1}$ is a child of $v_i$ for $1 \leq i \leq 2$ and $[\st_G(v_2) : \bigcap_{v \in V} \st_G(v)] > d$ where $V$ are all the vertices below $v_1$ in the level of $v_3$. 

Then $G$ is $T$-rigid.
\label{theorem: Jorge rigidity}
\end{theorem}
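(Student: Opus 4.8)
The plan is to run the reconstruction argument that is standard for rigidity results about (weakly) branch groups: recover the tree $T$ together with its $d$-regular branching from the abstract group $G$ alone, deduce that an abstract isomorphism must induce an automorphism of $T$, and then upgrade that tree automorphism to the conjugating element of $\Aut(T)$. This is the route taken in \cite{Fariña2025Boston}, and the conditions (*) and (N) are precisely the hypotheses that make the recovery go through; (N) in particular is what excludes iterated wreath products, which are manifestly not $T$-rigid.

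So fix an isomorphism $\phi\colon G\to \overline{G}$ onto a subgroup $\overline G=\phi(G)\le\Aut(T)$, and seek $a\in\Aut(T)$ with $\phi(g)=a\,g\,a^{-1}$ for every $g\in G$. The basic invariant is the family of rigid vertex stabilizers $\mathcal R=\set{\rist_G(v): v \text{ a vertex of } T}$. Two properties of $\mathcal R$ are immediate: if $v$ and $w$ are incomparable (so $T_v$ and $T_w$ are disjoint) then $[\rist_G(v),\rist_G(w)]=1$, and if $w$ lies in $T_v$ then $\rist_G(w)\le\rist_G(v)$. What (*) and (N) supply is \emph{non-degeneracy}: under (*) each $\rist_G(v)$ is nontrivial and acts on the immediate descendants of $v$ as a cyclic group of prime order, so distinct vertices at the same level give incomparable, hence distinct, rigid stabilizers; and (N) forces the passage from $\rist_G(v)$ to the product of the $\rist_G(u)$ over the children $u$ of $v$ to drop the relevant index by more than $d$, ruling out the collapse that happens in iterated wreath products. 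Consequently the partially ordered set $(\mathcal R,\subseteq)$ equipped with the commuting relation is an invariant of the abstract group $G$: its Hasse diagram is $T$ with the edges reversed, and the common number of lower covers recovers $d$. Transporting $\mathcal R$ through $\phi$ — this is the step where one must also check that $\overline G$ is again weakly branch with the transported stabilizers playing the role of rigid stabilizers in $T$ — yields $\phi(\rist_G(v))=\rist_{\overline G}(\alpha(v))$ for a bijection $\alpha$ of the vertex set that respects the descendant order, i.e.\ $\alpha\in\Aut(T)$.

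With $\alpha$ at hand I would set $\psi:=c_\alpha^{-1}\circ\phi$, where $c_\alpha$ denotes conjugation by $\alpha$; then $\psi\colon G\to\psi(G)\le\Aut(T)$ is an isomorphism with $\psi(\rist_G(v))=\rist_{\psi(G)}(v)$ for every $v$. For $g\in G$ and a vertex $v$, applying $\psi$ to the identity $g\,\rist_G(v)\,g^{-1}=\rist_G(g(v))$ gives $\psi(g)\,\rist_{\psi(G)}(v)\,\psi(g)^{-1}=\rist_{\psi(G)}(g(v))$; on the other hand $\psi(g)\in\Aut(T)$ conjugates $\rist_{\psi(G)}(v)$ to $\rist_{\psi(G)}(\psi(g)(v))$, so $\rist_{\psi(G)}(\psi(g)(v))=\rist_{\psi(G)}(g(v))$, and since distinct vertices still yield distinct rigid stabilizers we get $\psi(g)(v)=g(v)$. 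As an automorphism of $T$ is nothing but its action on the vertex set, this forces $\psi(g)=g$ for every $g\in G$, i.e.\ $\psi=\id$ and hence $\phi=c_\alpha$, proving $T$-rigidity.

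The main obstacle is the reconstruction step, and within it the one-sidedness of the hypotheses: we only know that $G$, and not $\phi(G)$, is weakly branch and satisfies (*) and (N), so one must genuinely argue that the subgroups $\phi(\rist_G(v))$, viewed inside $\Aut(T)$, are supported on disjoint subtrees and are the rigid stabilizers of $\phi(G)$ in $T$ — equivalently, that $\phi(G)$ inherits weak branchness together with the non-degeneracy needed above. This is where (*) and (N) are used in an essential way and where one follows the argument of \cite{Fariña2025Boston}. The remaining ingredients — pinning down the rigid stabilizers purely group-theoretically from the poset-with-commutation data, and matching the local actions level by level — are the delicate but ultimately routine parts, carried out with the formalism recalled in \cref{section: preliminaries}.
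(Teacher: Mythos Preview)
The paper does not give a proof of this statement: it is quoted verbatim from \cite[Proposition~5.3]{Fariña2025Boston} and used as a black box. So there is no ``paper's own proof'' to compare against.

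On its own merits, your outline follows the standard reconstruction strategy for rigidity of branch-type actions (as in Lavreniuk--Nekrashevych and Grigorchuk--Wilson): recover the tree from the poset of rigid stabilizers together with the commutation relation, show that any abstract isomorphism must permute rigid stabilizers, read off a tree automorphism $\alpha$, and verify that $c_\alpha^{-1}\circ\phi$ is the identity. That is indeed the shape of the argument in \cite{Fariña2025Boston}.

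However, what you have written is a plan rather than a proof. The decisive step is to give a \emph{purely group-theoretic} characterization of the subgroups $\rist_G(v)$ --- one that an abstract isomorphism is forced to preserve --- and to show that conditions (*) and (N) make this characterization work. You do not carry this out; you say only that ``this is where (*) and (N) are used in an essential way and where one follows the argument of \cite{Fariña2025Boston}''. That is exactly the content of the theorem, so the proposal is circular at its core. Relatedly, there is a small slip: condition (*) concerns the action of $\st_G(v)$, not of $\rist_G(v)$, on the immediate descendants of $v$; the rigid stabilizer need not itself act transitively there, and the passage from (*) to statements about $\rist_G(v)$ is part of what has to be argued. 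Finally, your last paragraph correctly identifies the genuine difficulty --- showing that $\phi(G)$ inherits enough structure for the $\phi(\rist_G(v))$ to be its rigid stabilizers --- but again defers it entirely to the cited reference.
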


The condition (*) was firstly used in \cite{GrigorchukWilson2003} together with another condition to ensure rigidity of groups acting on trees. \cref{theorem: Jorge rigidity} is a recent generalization of the result in \cite{GrigorchukWilson2003} that ensures rigidity for a larger family of groups. Our goal now is to understand the conditions (*) and (N) when we restrict to groups of finite type.

Let us denote $w_n$ the rightmost vertex on level $n$ of a $d$-regular tree $T$. Given $\mathcal{P}$ a minimal pattern subgroup in $\Aut(T^D)$, define $\Gamma_\mathcal{P}$ the directed graph whose vertices are the elements in the set $\st_\mathcal{P}(w_D) := \set{t \in \mathcal{P}: t(w_D) = v}$, and $t_1$ has an edge going to $t_2$ if and only if $t_1$ can be extended using $t_2$ along the vertex $w_1$. 

Clearly, the number of vertices of $\Gamma_\mathcal{P}$ is $\abs{\st_\mathcal{P}(w_D)}$. If $\mathcal{P}$ acts transitively on level $D$, by the orbit-stabilizer theorem, we have $\abs{\st_\mathcal{P}(w_D)} = \abs{\mathcal{P}}/d^D$. We are going to be interested when $G_\mathcal{P}$ is an infinite group included in $W_p$. In that case, we have the following lemma:

\begin{Lemma}
Let $T$ be a $p$-regular tree with $p$ prime, $D$ a natural number, and $\mathcal{P} \leq \Aut(T^D)$ a minimal pattern subgroup such that $G_\mathcal{P} \leq W_p$ and $G_\mathcal{P}$ is infinite. Then every vertex in $\Gamma_\mathcal{P}$ has $\abs{\St_\mathcal{P}(D-1) \cap \St_\mathcal{P}(w_D)}$ outgoing edges and there exists a positive number $m$ such that every vertex has either $m$ ingoing edges or zero. 
\end{Lemma}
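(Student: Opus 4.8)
The plan is to compute, for an arbitrary vertex $t_1\in\st_\mathcal{P}(w_D)$ of $\Gamma_\mathcal{P}$, its out-degree and in-degree separately, converting each count into a coset count inside the finite group $\mathcal{P}$. Throughout I would use: the decomposition $w_D=w_1w_{D-1}$ (concatenation of the rightmost level-$1$ vertex with the rightmost level-$(D-1)$ vertex); the identity $\St_\mathcal{P}(D-1)=\ker(\pi_{D-1}|_\mathcal{P})$ (one inclusion is immediate, and for the other any $t\in\mathcal{P}$ with $\pi_{D-1}(t)=1$ lifts by \cref{proposition: existence minimal pattern} to some $g\in G_\mathcal{P}$, which then lies in $\St_{G_\mathcal{P}}(D-1)$); the fact that $\St_\mathcal{P}(D-1)\neq 1$, since otherwise $G_\mathcal{P}$ would be finite by \cref{theorem: finite type finiteness condition}; and the fact that $G_\mathcal{P}$ is level-transitive, by \cref{lemma: self similarity and level-transitivity} applied to the self-similar (\cref{proposition: finite type is closed self-similar and regular branch}), infinite group $G_\mathcal{P}\leq W_p$.

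\textbf{Out-degree.} By definition of $\Gamma_\mathcal{P}$ the out-neighbours of $t_1$ form the set $B=\{t_2\in\st_\mathcal{P}(w_D):\pi_{D-1}(t_2)=t_1|_{w_1}\}$. Put $q=t_1|_{w_1}$. Since $\mathcal{P}$ is closed under its own patterns (\cref{lemma: equivalence minimal pattern}), the fibre $F=\{t\in\mathcal{P}:\pi_{D-1}(t)=q\}$ is a nonempty coset of $\St_\mathcal{P}(D-1)$, and all its elements fix $w_{D-1}$ because $q$ does. I would first show $B\neq\emptyset$ and then conclude the count: fixing $t_2^{0}\in B$, every element of $F$ is $s\,t_2^{0}$ with $s\in\St_\mathcal{P}(D-1)$, and since $t_2^{0}$ fixes $w_D$ the extra condition $s\,t_2^{0}\in\st_\mathcal{P}(w_D)$ reduces to $s\in\st_\mathcal{P}(w_D)$; hence $B=(\St_\mathcal{P}(D-1)\cap\st_\mathcal{P}(w_D))\,t_2^{0}$, so every out-degree equals $|\St_\mathcal{P}(D-1)\cap\st_\mathcal{P}(w_D)|\geq 1$. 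To see $B\neq\emptyset$, note that the map sending an element of $\Aut(T^D)$ that fixes $w_{D-1}$ to its level-$1$ section at $w_{D-1}$ is a homomorphism into $\langle\sigma\rangle$ (by \cref{equation: properties sections} and $G_\mathcal{P}\leq W_p$); call its restriction to $\St_\mathcal{P}(D-1)$ by $\chi$. This map carries the coset $F$ onto a coset of $\chi(\St_\mathcal{P}(D-1))$ in $\langle\sigma\rangle$, so if $\chi$ is \emph{surjective} then some $t_2\in F$ has trivial level-$1$ section at $w_{D-1}$; since $t_2$ fixes $w_{D-1}$, it then fixes $w_D=w_1w_{D-1}$, i.e. $t_2\in B$. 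As $\langle\sigma\rangle\cong C_p$ has prime order, it is enough to know $\chi$ is nontrivial.

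The nontriviality of $\chi$ is the crux, and the place where primality and $G_\mathcal{P}\leq W_p$ are genuinely used. Since $\St_\mathcal{P}(D-1)\neq 1$, there is $x\in\St_\mathcal{P}(D-1)$ with nontrivial level-$1$ section $\sigma^{k_0}$ at some $v_0\in\mathcal{L}_{D-1}$. Lift $x$ to $g_x\in\St_{G_\mathcal{P}}(D-1)$, and choose $h\in G_\mathcal{P}$ with $h(v_0)=w_{D-1}$ using level-transitivity. Then $hg_xh^{-1}\in\St_{G_\mathcal{P}}(D-1)$ (normality), and by \cref{equation: properties sections}, using that $g_x$ fixes $v_0$, its level-$1$ section at $w_{D-1}$ equals $h|^{1}_{v_0}\,\sigma^{k_0}\,(h|^{1}_{v_0})^{-1}$, which is $\sigma^{k_0}\neq 1$ because $h|^{1}_{v_0}\in\langle\sigma\rangle$ and $\langle\sigma\rangle$ is abelian. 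Hence $\chi$ is onto, finishing the out-degree computation.

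\textbf{In-degree.} The in-neighbours of a vertex $t_2\in\st_\mathcal{P}(w_D)$ are the $t_1\in\st_\mathcal{P}(w_D)$ with $t_1|_{w_1}=\pi_{D-1}(t_2)=:q$, and $q$ fixes $w_{D-1}$. For such a $t_1$ one has $t_1(w_D)=t_1(w_1w_{D-1})=t_1(w_1)\cdot q(w_{D-1})=t_1(w_1)\cdot w_{D-1}$, so the requirement $t_1(w_D)=w_D$ is equivalent to $t_1(w_1)=w_1$; hence the set of in-neighbours of $t_2$ is exactly $\Phi^{-1}(q)$, where $\Phi\colon\st_\mathcal{P}(w_1)\to\Aut(T_{w_1}^{D-1})$, $t\mapsto t|_{w_1}$, is a group homomorphism (a restriction of the section map, which is a homomorphism on a vertex stabilizer by \cref{equation: properties sections}). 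Therefore $\Phi^{-1}(q)$ is either empty or a coset of $\ker\Phi=\{t\in\mathcal{P}:t(w_1)=w_1,\ t|_{w_1}=1\}$, a subgroup that does not depend on $t_2$. Setting $m:=|\ker\Phi|\geq 1$ yields the stated dichotomy: every vertex of $\Gamma_\mathcal{P}$ has either $m$ or $0$ incoming edges.
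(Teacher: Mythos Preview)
Your proof is correct and follows essentially the same approach as the paper: for the out-degree you establish an out-neighbour via conjugating a nontrivial element of $\St_\mathcal{P}(D-1)$ to have a prescribed section at $w_{D-1}$ (your surjectivity of $\chi$ is exactly the paper's construction of $rs^{-1}r^{-1}$), then count via the coset $(\St_\mathcal{P}(D-1)\cap\st_\mathcal{P}(w_D))\,t_2^0$; for the in-degree your fibre argument for $\Phi$ is the paper's bijection $q'\mapsto q'q^{-1}$ with the identity's in-neighbours, rephrased as ``nonempty fibres of a homomorphism are cosets of the kernel.'' The packaging via the homomorphisms $\chi$ and $\Phi$ is a bit cleaner, but the mathematical content is identical.
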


\begin{proof}
The first step is to prove that any pattern $t$ has at least one outgoing edge. Let $t \in \st_\mathcal{P}(w_D)$. As $\mathcal{P}$ is a minimal pattern subgroup, there exists $q \in \mathcal{P}$ such that $q$ extends $t|_{w_1}$. If $q|_{w_{D-1}}^1 = 1$, then $q \in \st_\mathcal{P}(w_D)$ and we are done. If not, then $q|_{w_{D-1}}^1 = \sigma^k$, where $\sigma$ is the permutation $(1, \dots, p)$ and $k \in C_p$ is different to zero. 

As $G_\mathcal{P}$ is infinite, there exists a non-trivial pattern $s$ in $\St_\mathcal{P}(D-1)$ by \cref{theorem: finite type finiteness condition}. Therefore, there exists a vertex $v \in \mathcal{L}_{D-1}$ such that $s|_v^1 = \sigma^\ell$ for some $\ell \in C_p$ different to zero. Taking a power of $s$ if necessary, we may assume that $s|_v^1 = \sigma^k$. 

By \cref{lemma: self similarity and level-transitivity}, we have that $G_\mathcal{P}$ is level-transitive, so there exists $r \in \mathcal{P}$ such that $r(v) = w_{D-1}$ and so $rs^{-1}r^{-1} \in \St_\mathcal{P}(D-1)$ and $(rs^{-1}r^{-1})|_{w_{D-1}}^1 = \sigma^{-k}$. Multiplying this element by $q$, we obtain an extension of $t$ such that $(qrs^{-1}r^{-1})|_{w_{D-1}}^1 = 1$ as we wanted. 

Therefore, any pattern $t$ has an extension $q$ in $\st_\mathcal{P}(w_D)$. As all the extensions of $t$ are of the form $qs$ with $s \in \St_\mathcal{P}(D-1)$, these extensions will be in $\st_\mathcal{P}(w_D)$ if and only if $s \in \st_\mathcal{P}(w_D)$, giving the first part of the lemma.

For the second part, let $m$ be the number of ingoing edges for the identity pattern. We know $m \geq 1$ as the identity pattern goes into itself. Now, if $t$ is a pattern in $\st_\mathcal{P}(w_D)$ that has an ingoing arrow, this means that we have $q \in \st_\mathcal{P}(w_D)$ such that $q|_{w_1} = \pi_{D-1}(t)$. If $q'$ is another pattern such that $q'|_{w_1} = \pi_{D-1}(t)$, as $q$ and $q'$ fix the vertex $w_1$, then $q q'^{-1}$ is a pattern having an edge to the identity pattern. On the other hand, if $r$ is a pattern having an ingoing edge to the identity pattern, then $rq$ satisfies $(rq)|_{w_1} = \pi_{D-1}(t)$, giving a bijection between the ingoing edges of $t$ and the ingoing edges of the identity pattern. 
\end{proof}

\begin{Proposition}
Let $T$ be a $d$-regular tree, $D$ a natural number, and $\mathcal{P} \leq \Aut(T^D)$ a minimal pattern subgroup. Then $G_\mathcal{P}$ satisfies conditions (*) and (N) if and only if there exists $p$ prime such that $G_\mathcal{P} \leq W_p$, $G_\mathcal{P}$ is infinite and $\Gamma_\mathcal{P}$ has a cycle and a path from any vertex of the cycle to a pattern $t$ such that the action of $t$ below $w_{D-2}$ is non-trivial.
\label{Proposition: equivalence (N)(*)}
\end{Proposition}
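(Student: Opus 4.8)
The plan is to treat the two conditions of \cref{theorem: Jorge rigidity} separately. First I would show that condition ($*$) holds if and only if $d=p$ is prime, $G_\mathcal{P}\le W_p$ and $G_\mathcal{P}$ is infinite. If ($*$) holds, applying it at the root gives that $G_\mathcal{P}=\st_{G_\mathcal{P}}(\emptyset)$ acts on $\mathcal{L}_1$ as a transitive cyclic group of prime order, so $d=p$ is prime and $\pi_1(G_\mathcal{P})\cong C_p$; since $G_\mathcal{P}$ is self-similar by \cref{proposition: finite type is closed self-similar and regular branch}, every label $g|_v^1=\pi_1(g|_v)$ of every $g\in G_\mathcal{P}$ lies in $\pi_1(G_\mathcal{P})=C_p$, whence $G_\mathcal{P}\le W_p$, and ($*$) at all vertices makes $\st_{G_\mathcal{P}}(v)$ transitive on the children of each $v$, so $G_\mathcal{P}$ is level-transitive by \cref{lemma: level transitivity equivalence} and in particular infinite. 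Conversely, if $d=p$ is prime, $G_\mathcal{P}\le W_p$ and $G_\mathcal{P}$ is infinite, then $G_\mathcal{P}$ is level-transitive by \cref{lemma: self similarity and level-transitivity}; for a vertex $v$ and two of its children $va,vb$, transitivity on $\mathcal{L}_{|v|+1}$ provides $g\in G_\mathcal{P}$ with $g(va)=vb$, and such a $g$ fixes $v$, so $\st_{G_\mathcal{P}}(v)$ is transitive on the children of $v$, and being a transitive subgroup of $C_p$ this action is cyclic of prime order — that is, ($*$). (When $D=1$ a non-trivial minimal pattern subgroup inside $W_p$ must equal $W_p$, whose $T$-rigidity is already covered by \cref{theorem: finite type KP and rigidity}, so I would treat $D\ge 2$ as the genuine case, matching the appearance of $w_{D-2}$ in the statement.)

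Assume ($*$) from now on, so $d=p$ is prime, $G_\mathcal{P}\le W_p$ is infinite and, by \cref{lemma: finite type is weakly regular branch}, weakly branch. A cycle in $\Gamma_\mathcal{P}$ then exists automatically: by the lemma preceding this proposition every vertex of $\Gamma_\mathcal{P}$ has $|\St_\mathcal{P}(D-1)\cap\st_\mathcal{P}(w_D)|\ge 1$ outgoing edges, a finite directed graph with minimum out-degree at least one contains a directed cycle, and, since all vertices of a cycle reach one another, ``there is a path from some vertex of the cycle to $t$'' and ``there is a path from every vertex of the cycle to $t$'' are interchangeable. So the content is that ($N$) is equivalent to the existence of a pattern $t\in\st_\mathcal{P}(w_D)$, reachable in $\Gamma_\mathcal{P}$ from a cycle, whose section $t|_{w_{D-2}}$ on the two-level subtree below $w_{D-2}$ is non-trivial. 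To unwind ($N$) I would use that a non-identity element of $C_p$ fixes no point: for a vertex $u$ and a child $ux$ one has $g\in\st_{G_\mathcal{P}}(ux)$ iff $g\in\st_{G_\mathcal{P}}(u)$ and $\pi_1(g|_u)=1$, independently of $x$. Writing $H_{v_1}:=\varphi_{v_1}(\st_{G_\mathcal{P}}(v_1))\le G_\mathcal{P}$ for the section group at $v_1$ and pushing the subgroups occurring in ($N$) through the surjection $\varphi_{v_1}\colon \st_{G_\mathcal{P}}(v_1)\to H_{v_1}$, the index in ($N$) becomes $[\St_{H_{v_1}}(1):\St_{H_{v_1}}(2)]=|\pi_2(\St_{H_{v_1}}(1))|$, a power $p^{k(v_1)}$ of $p$ since $\St_{H_{v_1}}(1)$ embeds its action on level $2$ into $\prod_{i\in\mathcal{L}_1}C_p\cong\mathbb{F}_p^{\,p}$ via $h\mapsto(\pi_1(h|_i))_i$. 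Hence ($N$) is equivalent to $k(v_1)\ge 2$ for some vertex $v_1$, i.e.\ the image of $\St_{H_{v_1}}(1)$ in $\mathbb{F}_p^{\,p}$ being at least two-dimensional.

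The final step is the reduction to $\Gamma_\mathcal{P}$. By level-transitivity $k(v_1)$ depends only on the level of $v_1$, and by the periodic description $\St_{G_\mathcal{P}}(n)=(\St_{G_\mathcal{P}}(D-1))_{n-D+1}$ for $n\ge D-1$ (\cref{lemma: St(n+1) = prod St(n)}) it stabilises along the rightmost ray, so one only needs to decide whether $k(w_m)\ge 2$ for $m$ large. Here the dictionary with $\Gamma_\mathcal{P}$ is: an element $g\in G_\mathcal{P}$ fixing a long initial segment $w_0,\dots,w_N$ of the rightmost ray yields, by reading off its depth-$D$ patterns $t_m:=g|_{w_m}^D\in\st_\mathcal{P}(w_D)$, a walk $t_0\to t_1\to\cdots$ in $\Gamma_\mathcal{P}$, and conversely any such walk assembles — filling the remaining branches by the minimal-pattern property as in \cref{lemma: equivalence minimal pattern} — back to such an element. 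Since a pattern in $\st_\mathcal{P}(w_D)$ fixes $w_D$ and hence $w_1$, the ``rightmost-child'' coordinate $\pi_1(t_m|_{w_1})=\pi_1(t_{m+1})$ always vanishes, so the only non-trivial contributions of walks to $\pi_2(\St_{H_{w_m}}(1))$ record action below the $p-1$ non-rightmost children of $w_m$; one then checks that $k(w_m)\ge 2$ is equivalent to being able to reach, from a cycle of $\Gamma_\mathcal{P}$, a pattern $t$ with $t|_{w_{D-2}}\ne 1$ — the cycle being what forces an element to be deep on the ray in the first place, and the path to $t$ supplying the one extra transverse direction in $\mathbb{F}_p^{\,p}$. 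Running this equivalence in both directions, together with the first part, completes the proof.

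I expect this last step to be the main obstacle, for two reasons. One must genuinely verify that the fixator/section data governing $k(v_1)$ stabilises along the ray and is the \emph{maximal} value over all vertices of a given level, so that testing along the rightmost ray is legitimate — this requires careful bookkeeping with the geometric-product structure of the $\St_{G_\mathcal{P}}(n)$ and with \cref{lemma: self similarity and level-transitivity}. And one must pin down that ``$>p$'' in ($N$) corresponds to exactly \emph{one} extra independent branching direction beyond the one a cycle already supplies, so that it is detected by a single pattern non-trivial below $w_{D-2}$ rather than by a more intricate configuration in the graph. Everything else is routine: the index identities are elementary, non-triviality of $\St_\mathcal{P}(D-1)$ for infinite $G_\mathcal{P}$ is \cref{theorem: finite type finiteness condition}, and the edge structure of $\Gamma_\mathcal{P}$ invoked above is precisely the lemma preceding this proposition.
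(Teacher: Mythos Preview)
Your treatment of ($*$) is correct and matches the paper's. For ($N$), your approach diverges and does not quite land.

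First, a quantifier slip: ($N$) is universal over triples, so it is equivalent to $k(v_1)\ge 2$ for \emph{every} $v_1$, not \emph{some}. This matters for the ($\Leftarrow$) direction, where you must establish ($N$) at all levels; your reduction to ``$m$ large'' via an unproved stabilisation leaves the small-$m$ case open. The paper handles small levels by a direct self-similarity argument: for $v_1$ at level $n+1$ with $n+3<D$, take any $g\in G_\mathcal{P}$ extending the target pattern $t$ and section down to $g|_{w_{D-n-3}}$.

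More importantly, the paper's route from ($N$) to the graph is simpler than your dimension framework. Using $[\st_G(v_2):\st_G(v_3)]=p$ under ($*$), the paper rewrites ($N$) as: \emph{for every such triple there exists $g\in\st_G(v_3)$ moving some vertex of $V$}. This ``existence of a single element'' formulation is exactly what the graph encodes. For ($\Rightarrow$): apply ($N$) with $v_i=w_{n+i}$ and $n>|\st_\mathcal{P}(w_D)|$; the patterns $g|_{w_m}^D$ of the resulting element form a long walk in $\Gamma_\mathcal{P}$, which self-intersects by pigeonhole (the cycle) and terminates at a pattern non-trivial below $w_{D-2}$. For ($\Leftarrow$): conjugate to the rightmost ray by level-transitivity, then walk the cycle enough times to reach the required depth and append the path to $t$, filling the remaining branches arbitrarily via the minimal-pattern property. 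Your attempt to read the cycle and the path to $t$ as two ``directions'' in $\mathbb{F}_p^{\,p}$ misidentifies the role of the cycle: it does not contribute a dimension to $\pi_2(\St_{H_{v_1}}(1))$ --- indeed, not every element of $\St_{H_{v_1}}(1)$ arises from a walk in $\Gamma_\mathcal{P}$, so your ``rightmost coordinate vanishes'' claim does not constrain that subspace --- rather, the cycle supplies arbitrary \emph{depth} along the ray. The single pattern $t$ is already the witness to ($N$); the cycle-plus-path is merely the mechanism that places this witness at whatever level is demanded.
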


\begin{proof}
By \cref{lemma: finite type different depths}, we may assume that $D \geq 2$. 

$(\Rightarrow)$ Applying (*) with $v = \emptyset$, we obtain $\pi_1(\st_{G_\mathcal{P}}(\emptyset)) = \pi_1(G_\mathcal{P})$ must act transitively as a cyclic group of prime order, which implies that $d = p$ for some prime $p$. If $v$ is any vertex and $g \in G_\mathcal{P}$, then $g|_v^1 \in \pi_1(G_\mathcal{P})$ by self-similarity and therefore $G_\mathcal{P} \leq W_p$. As $\st_{G_\mathcal{P}}(v)$ acts transitively on the immediate descendants of $v$ for any $v$, then $G_\mathcal{P}$ is level-transitive and consequently is infinite.

We now use condition (N) to deduce the last part. First, observe that we can express condition (N) in a different way. Since $v_3$ is a descendant of $v_2$ and $v_3 \in V$, then $$\left[\st_G(v_2):\bigcap_{v \in V} \st_G(V)\right] = \left[\st_G(v_2):\st_G(v_3)\right] \left[\st_G(v_3):\bigcap_{v \in V} \st_G(V) \right].$$ By (*), $[\st_G(v_2):\st_G(v_3)] = p$ and $d = p$, so $$\left[\st_G(v_3):\bigcap_{v \in V} \st_G(v) \right] = \frac{\left[\st_G(v_2):\bigcap_{v \in V} \st_G(v)\right]}{\left[\st_G(v_2):\st_G(v_3)\right]} > \frac{p}{p} = 1,$$ namely, there exists an element in $\st_G(v_3)$ moving vertices of $V$.

Choose $v_i = w_{n+i}$ for $i = 1,2,3$ with $n > \abs{\st_\mathcal{P}(w_D)}$. Then by condition (N), there exists $g \in G_\mathcal{P}$ such that $g \in \st_{G_\mathcal{P}}(w_{n+3})$ but it acts non-trivially on the set of vertices $V = \mathcal{L}_{n+3} \cap T_{w_{n+1}}$. As $g \in \st_{G_\mathcal{P}}(w_{n+3})$, the patterns  $g|_{w_m}^D \in \st_\mathcal{P}(w_D)$ for $m = 0, \dots, n+3-D$ and they make a path in the graph $\Gamma_{\mathcal{P}}$. Since we chose $n$ greater than the amount of vertices in the graph $\Gamma_{\mathcal{P}}$, then this path self-intersects and ends up in a pattern whose action below $w_{D-2}$ is non-trivial.

$(\Leftarrow)$ By \cref{lemma: self similarity and level-transitivity} we have that $G_\mathcal{P}$ is level-transitive, and since it is in $W_p$, we immediately have (*) by \cref{lemma: level transitivity equivalence}. To prove (N), let $v_1,v_2,v_3$ vertices in the hypothesis of the condition (N) with $v_1 \in \mathcal{L}_{n+1}$. Since $G_\mathcal{P}$ is level-transitive, there exists $g \in G_\mathcal{P}$ such that $g(v_3) = w_{n+3}$ and then by adjacency, $g(v_i) = w_{n+i}$ for $i = 1,2,3$. Since conjugating by $g$ is an inner isomorphism of $G_\mathcal{P}$, we have $$\left[\st_G(v_3):\bigcap_{v \in V} \st_G(v) \right]  > 1 \Longleftrightarrow \left[\st_G(w_{n+3}):\bigcap_{v \in W} \st_G(v) \right] > 1,$$ where $W = g(V)$ are the vertices below $w_{n+1}$ on the level of $w_{n+3}$. 

Therefore it is enough to prove the result for $w_{n+1}, w_{n+2}, w_{n+3}$ and $n \geq -1$. Let us first assume that $n+3 \geq D$, namely, the vertex $w_{n+3}$ is at least at level $D$ or further from the root. Then, we can choose a starting point of the cycle of $\Gamma_\mathcal{P}$ such that the pattern $t$ is at the step $n+3-D$ of the path. Completing the action of the element arbitrarily in the other vertices of $T$, we construct an element $g \in \st_\mathcal{P}(w_{n+3})$ that moves vertices in $W$ as we need. If $n+3 < D$, we take any element  $g \in G_\mathcal{P}$ extending $t$ and by self-similarity, $g|_{w_{D-n-3}} \in G_\mathcal{P}$ and satisfies the sought condition.   
\end{proof}

In the case that the group in \cref{theorem: Jorge rigidity} is fractal and included in $W_p$, the author in \cite{Fariña2025Boston} gives a nicer result:

\begin{theorem}[{\cite[Proposition 5.3]{Fariña2025Boston}}]
Let $T$ be a $p$-regular tree with $p$ prime and $G \leq W_p$ a fractal weakly branch group. Then $G$ is $T$-rigid if and only if $\pi_2(G) \ncong C_p \times C_p$. 
\label{theorem: Jorge rigidity fractal}
\end{theorem}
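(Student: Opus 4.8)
The plan is to reduce the implication ``$\pi_2(G)\ncong C_p\times C_p\Rightarrow G$ is $T$-rigid'' to the general rigidity result \cref{theorem: Jorge rigidity}, and to treat the converse by constructing an explicit isomorphism of $G$ onto a subgroup of $\Aut(T)$ that is not a conjugation.

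For the first implication I would first observe that conditions $(*)$ and $(\mathrm N)$ of \cref{theorem: Jorge rigidity} hold here. Since $G$ is fractal it is self-similar and level-transitive with $\varphi_v(\st_G(v))=G$ for every vertex $v$; as $G\le W_p$ this forces $\pi_1(G)=C_p$, so $\st_G(v)$ acts on the children of $v$ as a transitive cyclic group of order $p$, which is exactly $(*)$. To get $(\mathrm N)$, by level-transitivity it is enough, as in the proof of \cref{Proposition: equivalence (N)(*)}, to check it for the rightmost triples $w_{n+1},w_{n+2},w_{n+3}$. Using $(*)$ one has $[\st_G(w_{n+2}):\st_G(w_{n+3})]=p$, and applying $\varphi_{w_{n+1}}$ (which by fractality maps $\st_G(w_{n+1})$ onto $G$, carries $\st_G(w_{n+3})$ onto $\st_G(\tilde w)$ for a level-$2$ vertex $\tilde w$, and carries $\bigcap_{v\in V}\st_G(v)$ onto $\St_G(2)$) a short computation gives
\[
\Big[\st_G(w_{n+2}):\bigcap_{v\in V}\st_G(v)\Big]=p\cdot[\st_G(\tilde w):\St_G(2)]=p\cdot\frac{|\pi_2(G)|}{p^2}=\frac{|\pi_2(G)|}{p}.
\]
Moreover $|\pi_2(G)|\ge p^2$ always, since $\pi_2(\St_G(1))=1$ would give $\St_G(1)=\St_G(2)$, contradicting that $\varphi_v(\St_G(1))=G$ is level-transitive. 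Hence $(\mathrm N)$ holds precisely when $|\pi_2(G)|>p^2$, i.e. when $\pi_2(G)\ncong C_p\times C_p$, and then \cref{theorem: Jorge rigidity} gives $T$-rigidity.

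For the converse I would assume $\pi_2(G)\cong C_p\times C_p$ and produce a non-conjugation isomorphism. The starting observation is that $\pi_2(G)$, being abelian of type $C_p\times C_p$, has $p+1$ distinct index-$p$ subgroups, only one of which is $\pi_2(\St_G(1))$; pulling back any other one yields a normal subgroup $M\lhd G$ of index $p$ with $M\neq\St_G(1)$, $M\cap\St_G(1)=\St_G(2)$ and $M\St_G(1)=G$. Since any $g\in\Aut(T)$ normalising $G$ fixes $\St_G(1)=G\cap\St(1)$ setwise, it suffices to build a topological automorphism of $G$ (or an isomorphism onto another subgroup of $\Aut(T)$) that does not preserve $\St_G(1)$, the natural candidate being one that interchanges $\St_G(1)$ and $M$. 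One begins with an automorphism of $\pi_2(G)\cong C_p\times C_p$ that is not induced by conjugation inside $\Aut(T^2)=C_p\wr C_p$ (the image of $N_{\Aut(T^2)}(\pi_2(G))$ in $\Aut(\pi_2(G))$ is a proper subgroup), e.g.\ one swapping the two factors, and propagates this symmetry down the tree level by level, using self-similarity and fractality to lift it compatibly from $\pi_n(G)$ to $\pi_{n+1}(G)$, then passes to the inverse limit.

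The main obstacle is this converse: one must show that the level-by-level propagation of the ``outer'' symmetry of $\pi_2(G)$ is unobstructed, that the resulting limit is a well-defined topological isomorphism, and control the possibility that the target subgroup differs from $G$ itself. This is the delicate reconstruction argument of \cite{Fariña2025Boston}, carried out in the spirit of \cite{GrigorchukWilson2003}; the first implication, by contrast, is essentially bookkeeping once $(\mathrm N)$ is rephrased in terms of $\pi_2(G)$.
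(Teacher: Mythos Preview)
The paper does not prove this statement; it is quoted from \cite{Fariña2025Boston} and used as a black box, so there is no in-paper argument to compare your proposal against.

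That said, your forward reduction to \cref{theorem: Jorge rigidity} has a genuine gap. Your computation that $(\mathrm N)$ amounts to $|\pi_2(G)|/p>p$, i.e.\ $|\pi_2(G)|>p^2$, is correct. But you then write ``$(\mathrm N)$ holds precisely when $|\pi_2(G)|>p^2$, i.e.\ when $\pi_2(G)\ncong C_p\times C_p$'', and this identification is false: when $|\pi_2(G)|=p^2$ one may have $\pi_2(G)\cong C_{p^2}$, and this case genuinely occurs. Take $d=p$ prime in \cref{example: torsion free}: the group $G_\mathcal{P}$ there is an infinite group of finite type, hence weakly branch by \cref{lemma: finite type is weakly regular branch}; it lies in $W_p$; it is fractal by \cref{proposition: fractal sf and ssf finite type}(2) (for $v_0=p$ one has $\st_\mathcal{P}(p)=\langle\alpha^p\rangle$ and $\alpha^p=(\sigma,\dots,\sigma)$, so $\varphi_p^1(\st_{G_\mathcal{P}}(p))=\langle\sigma\rangle=\pi_1(\mathcal{P})$); and $\pi_2(G_\mathcal{P})=\mathcal{P}=\langle\alpha\rangle\cong C_{p^2}$. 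For this group $(\mathrm N)$ fails, yet the theorem asserts $T$-rigidity. Since \cref{theorem: Jorge rigidity} only gives a sufficient condition, your argument simply does not cover this case; the proof in \cite{Fariña2025Boston} must use more than the $(\mathrm N)$ mechanism.

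A minor point: your justification that $|\pi_2(G)|\ge p^2$ invokes $\varphi_v(\St_G(1))=G$, which is strong fractality, not fractality. The honest reason is just level-transitivity: the orbit of any level-$2$ vertex has size $p^2$.

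Your converse direction correctly identifies the target---an automorphism of $G$ not preserving $\St_G(1)$, seeded by an outer automorphism of $\pi_2(G)\cong C_p\times C_p$---but the entire content lies in the propagation step, which you defer back to \cite{Fariña2025Boston}. As written this is a pointer rather than an argument.
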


\subsection{Other strategies}

If the results in \cref{subsection: Results of rigidity for groups of finite type} do not apply, the following tool can be useful when the groups of finite type are just-infinite. 

\begin{Definition}
Denote $\mathfrak{G}$ the category of groups. We say that a variant functor $\mathcal{F}: \mathfrak{G} \rightarrow \mathfrak{G}$ is a \textbf{characteristic functor} if it satisfies the following properties: 

\begin{enumerate}
\item $\mathcal{F}(G)$ is a characteristic subgroup of $G$ for all $G \in \mathfrak{G}$,
\item for all $f \in \Hom(G_1,G_2)$, the function $\mathcal{F}(f): \mathcal{F}(G_1) \rightarrow \mathcal{F}(G_2)$ is defined as $\mathcal{F}(f)(x) = f(x)$,
\item if $f$ is surjective, then $\mathcal{F}(f)$ is also surjective. 
\end{enumerate}
\end{Definition}

If in (2) we use the inclusion map, we obtain that if $H \leq G$, then $\mathcal{F}(H) \leq \mathcal{F}(G)$. Also, (2) and (3) imply that $\mathcal{F}(f(G_1)) = f(\mathcal{F}(G_1))$. Examples of these functors are $G \mapsto G^{(k)}$ and $G \mapsto \gamma_k(G)$ for all $k \in \NN$.

Hence, if $G_\mathcal{P}$ and $G_\mathcal{Q}$ were isomorphic as topological groups, for any characteristic functor $\mathcal{F}$, then $G_\mathcal{P}/\overline{\mathcal{F}(G_\mathcal{P})}$ and $G_\mathcal{Q}/\overline{\mathcal{F}(G_\mathcal{Q})}$ must be isomorphic.

Using the same argument as in \cref{lemma: inverse limit closed abelianization}, we obtain that $$G_\mathcal{P}/\overline{\mathcal{F}(G_\mathcal{P})} \simeq \varprojlim \pi_n(G_\mathcal{P})/\mathcal{F}(\pi_n(G_\mathcal{P})).$$

Since we assume $G_\mathcal{P}$ is just-infinite, the previous quotient is finite, so there exists $n_0 \in \NN$ such that $$\varprojlim \pi_n(G_\mathcal{P})/\mathcal{F}(\pi_n(G_\mathcal{P})) \simeq \pi_{n_0}(G_\mathcal{P})/\pi_{n_0}(\mathcal{F}(G_\mathcal{P})) = \pi_{n_0}(G_\mathcal{P})/\mathcal{F}(\pi_{n_0}(G_\mathcal{P})).$$

What we need to find is an explicit way to give $n_0$. The problem that we might have is that such a $n_0$ is too big to be able to prove that for different groups of finite type, the quotients $\pi_{n_0}(G_\mathcal{P})/\mathcal{F}(\pi_{n_0}(G_\mathcal{P}))$ and $\pi_{n_1}(G_\mathcal{Q})/\mathcal{F}(\pi_{n_1}(G_\mathcal{Q}))$ are not isomorphic. To maximize our chances or in other words, to make $n_0$ and $n_1$ as small as possible, we will need to use the maximal branching subgroup of each group of finite type.

\begin{Lemma}
\label{lemma: finite type proj derived series}
Let $T$ be a $d$-regular tree, $D$ a natural number, $\mathcal{F}$ a characteristic functor and $\mathcal{P} \leq \Aut(T^D)$ a minimal pattern subgroup. Denote by $K_\mathcal{P}$ the maximal branching subgroup of $G_\mathcal{P}$. If $K_\mathcal{P}$ is normal and there exists $n_0 \geq D$ such that $$[\pi_{n_0}(K_\mathcal{P}): \mathcal{F}(\pi_{n_0}(K_\mathcal{P}))] = [\pi_{n_0+1}(K_\mathcal{P}): \mathcal{F}(\pi_{n_0+1}(K_\mathcal{P}))],$$ then $$\varprojlim \pi_n(G_\mathcal{P})/\mathcal{F}(\pi_n(G_\mathcal{P})) \simeq \pi_{n_0}(G_\mathcal{P})/\mathcal{F}(\pi_{n_0}(G_\mathcal{P})).$$
\end{Lemma}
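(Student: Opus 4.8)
The plan is to reduce the statement for $G_\mathcal{P}$ to a stabilization statement for the branching subgroup $K_\mathcal{P}$, exploiting the fact that $K_\mathcal{P}$ is normal of finite index in $G_\mathcal{P}$ and that its geometric products sit inside the rigid level stabilizers. First I would record, exactly as in \cref{lemma: inverse limit closed abelianization}, that applying the exact functor $\varprojlim$ (\cref{theorem: inverse limit functor is exact}) to the short exact sequences $1 \to \mathcal{F}(\pi_n(G_\mathcal{P})) \to \pi_n(G_\mathcal{P}) \to \pi_n(G_\mathcal{P})/\mathcal{F}(\pi_n(G_\mathcal{P})) \to 1$ gives $G_\mathcal{P}/\overline{\mathcal{F}(G_\mathcal{P})} \simeq \varprojlim \pi_n(G_\mathcal{P})/\mathcal{F}(\pi_n(G_\mathcal{P}))$; the same argument applied to $K_\mathcal{P}$ (which is closed, since $K_\mathcal{P} \leq_f G_\mathcal{P}$ and $G_\mathcal{P}$ is closed by \cref{proposition: finite type is closed self-similar and regular branch}) gives $K_\mathcal{P}/\overline{\mathcal{F}(K_\mathcal{P})} \simeq \varprojlim \pi_n(K_\mathcal{P})/\mathcal{F}(\pi_n(K_\mathcal{P}))$. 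So it suffices to show the inverse system $\{\pi_n(G_\mathcal{P})/\mathcal{F}(\pi_n(G_\mathcal{P}))\}$ stabilizes at $n_0$, and for this it is enough to show the chain of orders $[\pi_n(G_\mathcal{P}):\mathcal{F}(\pi_n(G_\mathcal{P}))]$ is constant for $n \geq n_0$ (a surjective map of finite groups of equal order is an isomorphism).

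Next I would bring in $K_\mathcal{P}$. The key input is that $\mathcal{F}$ is a characteristic functor, so $\mathcal{F}$ commutes with the surjection $\pi_n$ (property (3)) and is monotone under inclusion, hence $\mathcal{F}(\pi_n(K_\mathcal{P})) \leq \mathcal{F}(\pi_n(G_\mathcal{P}))$ and we get surjections $\pi_n(K_\mathcal{P})/\mathcal{F}(\pi_n(K_\mathcal{P})) \twoheadrightarrow$ the image of $\pi_n(K_\mathcal{P})$ inside $\pi_n(G_\mathcal{P})/\mathcal{F}(\pi_n(G_\mathcal{P}))$. Because $K_\mathcal{P} \lhd G_\mathcal{P}$ with fixed finite quotient $G_\mathcal{P}/K_\mathcal{P}$, one has $[\pi_n(G_\mathcal{P}):\pi_n(K_\mathcal{P})] = [G_\mathcal{P}:K_\mathcal{P}]$ for all $n \geq$ (the level where the quotient stabilizes, which is $\leq n_0$ after enlarging $n_0$ if needed), and an order count relating $[\pi_n(G_\mathcal{P}):\mathcal{F}(\pi_n(G_\mathcal{P}))]$, $[\pi_n(K_\mathcal{P}):\mathcal{F}(\pi_n(K_\mathcal{P}))]$, $[\pi_n(G_\mathcal{P}):\pi_n(K_\mathcal{P})]$ and the comparison term $[\mathcal{F}(\pi_n(G_\mathcal{P})):\mathcal{F}(\pi_n(K_\mathcal{P}))]$ shows that once the $K_\mathcal{P}$-quotient orders are constant, the $G_\mathcal{P}$-quotient orders are too. (The cleanest packaging is via \cref{lemma: map defined in abelianizations}/\cref{lemma: stabilizabition indices regular branch}: think of $\pi_n(\mathcal{F}(K_\mathcal{P}))$ as playing the role of the subgroup whose index we stabilize.)

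The step I expect to be the real obstacle is propagating the hypothesis ``$[\pi_{n_0}(K_\mathcal{P}):\mathcal{F}(\pi_{n_0}(K_\mathcal{P}))] = [\pi_{n_0+1}(K_\mathcal{P}):\mathcal{F}(\pi_{n_0+1}(K_\mathcal{P}))]$" from $n_0+1$ to all larger $n$. The tool is \cref{lemma: stabilizabition indices regular branch} applied with $(G, K) := (G_\mathcal{P}, \overline{\mathcal{F}(K_\mathcal{P})})$ (or its finite-level analogue): this requires checking the geometric-product condition $(\overline{\mathcal{F}(K_\mathcal{P})})_1 \leq \overline{\mathcal{F}(K_\mathcal{P})}$ and $\overline{\mathcal{F}(K_\mathcal{P})} \lhd G_\mathcal{P}$. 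Normality follows from $K_\mathcal{P} \lhd G_\mathcal{P}$ together with characteristicity of $\mathcal{F}(K_\mathcal{P})$ in $K_\mathcal{P}$, and the geometric-product inclusion should follow from $(K_\mathcal{P})_1 \leq K_\mathcal{P}$ (regular branching) combined with the fact, analogous to \cref{lemma: commutator and geometric product}, that $\mathcal{F}$ commutes with the geometric-product operation $(-)_1$ — here one uses property (2) of a characteristic functor applied to the coordinate embeddings $\delta_i$ and the coordinate projections, exactly as in the proof of \cref{lemma: commutator and geometric product} for $\mathcal{F} = {}'$. Once \cref{lemma: stabilizabition indices regular branch} applies, we get $[\pi_n(G_\mathcal{P}):\pi_n(\overline{\mathcal{F}(K_\mathcal{P})})]$ constant for $n \geq n_0$; dividing by the constant $[G_\mathcal{P}:K_\mathcal{P}]$ and by $[\pi_n(K_\mathcal{P}):\pi_n(\mathcal{F}(K_\mathcal{P}))]$-versus-$[\pi_n(G_\mathcal{P}):\mathcal{F}(\pi_n(G_\mathcal{P}))]$ bookkeeping yields constancy of $[\pi_n(G_\mathcal{P}):\mathcal{F}(\pi_n(G_\mathcal{P}))]$ for $n \geq n_0$, which by the first paragraph finishes the proof.
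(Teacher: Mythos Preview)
Your proposal is correct and follows essentially the same route as the paper: both verify $(\mathcal{F}(K_\mathcal{P}))_1\le\mathcal{F}(K_\mathcal{P})\lhd G_\mathcal{P}$, apply \cref{lemma: stabilizabition indices regular branch} to propagate the index equality to all $n\ge n_0$, and finish via \cref{lemma: map defined in abelianizations}. The only place your write-up is looser is the final ``bookkeeping'' paragraph: rather than tracking the four indices you list, the paper observes that once $[\pi_n(K_\mathcal{P}):\pi_n(\mathcal{F}(K_\mathcal{P}))]$ is constant, \cref{lemma: map defined in abelianizations} with $(A,B,\pi)=(K_\mathcal{P},\overline{\mathcal{F}(K_\mathcal{P})},\pi_{n_0})$ gives $\St_{G_\mathcal{P}}(n_0)\le\overline{\mathcal{F}(K_\mathcal{P})}\le\overline{\mathcal{F}(G_\mathcal{P})}$, and a second application with $(A,B,\pi)=(G_\mathcal{P},\overline{\mathcal{F}(G_\mathcal{P})},\pi_{n_0})$ then yields the isomorphism directly---so the comparison term $[\mathcal{F}(\pi_n(G_\mathcal{P})):\mathcal{F}(\pi_n(K_\mathcal{P}))]$ never needs to enter.
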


\begin{proof}
Since $K_\mathcal{P}$ is the maximal branching subgroup of $G$ and $G$ is regular branch over $\St_G(D-1)$, then $\St_G(D-1) \leq K_\mathcal{P}$. On the other hand, since $(K_\mathcal{P})_1 \leq K_\mathcal{P}$, then $\mathcal{F}((K_\mathcal{P})_1) \leq \mathcal{F}(K_\mathcal{P})$ and $\mathcal{F}(K_\mathcal{P}) \lhd G$, as $K_\mathcal{P} \lhd G$ and $\mathcal{F}(K_\mathcal{P})$ is a characteristic subgroup of $K_\mathcal{P}$. Therefore, by hypothesis and \cref{lemma: stabilizabition indices regular branch}, we obtain that $$[\pi_n(K_\mathcal{P}): \mathcal{F}(\pi_n(K_\mathcal{P}))] = [\pi_{n_0}(K_\mathcal{P}): \mathcal{F}(\pi_{n_0}(K_\mathcal{P}))]$$ for all $n \geq n_0$. As $$[K_\mathcal{P}: \overline{\mathcal{F}(K_\mathcal{P})}] = \lim_{n \rightarrow +\infty} [\pi_n(K_\mathcal{P}): \mathcal{F}(\pi_n(K_\mathcal{P}))],$$ by \cref{lemma: map defined in abelianizations} applied to $(A,B,\pi) := (K_\mathcal{P}, \overline{\mathcal{F}(K_\mathcal{P})}, \pi_{n_0})$, we obtain the inclusion $\St_G(n_0) \leq \overline{\mathcal{F}(K_\mathcal{P})}$. Since $\overline{\mathcal{F}(K_\mathcal{P})} \leq \overline{\mathcal{F}(G_\mathcal{P})}$, applying \cref{lemma: map defined in abelianizations} again with $(A,B,\pi) := (G_\mathcal{P}, \overline{\mathcal{F}(G_\mathcal{P})}, \pi_{n_0})$, we conclude that $$\varprojlim \pi_n(G_\mathcal{P})/\mathcal{F}(\pi_n(G_\mathcal{P})) \simeq \pi_{n_0}(G_\mathcal{P})/\mathcal{F}(\pi_{n_0}(G_\mathcal{P})).$$
\end{proof}

\section{Analysis of different cases $(d,D)$}
\label{section: analysis of different cases}

\begin{figure}[h!]
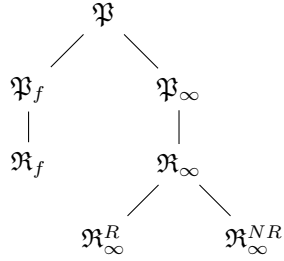

\centering
\tikz {
  \begin{scope}[name prefix = top-]
    \node (P) at (0,0) {$\mathfrak{P}$};
    \node (Pf) at (-1,-1) {$\mathfrak{P}_f$};
    \node (Poo) at (1,-1) {$\mathfrak{P}_\infty$};
    \node (Rf) at (-1,-2) {$\mathfrak{R}_f$};
    \node (Roo) at (1,-2) {$\mathfrak{R}_\infty$};
    \node (RooR) at (0,-3) {$\mathfrak{R}_\infty^R$};
    \node (RooNR) at (2,-3) {$\mathfrak{R}_\infty^{NR}$};
    \draw (P) -- (Pf);
    \draw (P) -- (Poo);
    \draw (Pf) -- (Rf);
    \draw (Poo) -- (Roo);
    \draw (Roo) -- (RooR);
    \draw (Roo) -- (RooNR);
  \end{scope}
}
\caption{Diagram of sets of minimal patterns classifying the associated groups of finite type up to isomorphism.}
\label{figure: isomorphism work diagram}
\end{figure}

In this section we will analyze the cases of binary tree for depths $2$, $3$ and $4$ and ternary for depths $2$ and $3$. We will calculate minimal patterns, study the groups of finite type in terms of fractalness and how many equivalence classes we can say there are based on the theorems proved in \cref{section: classification of group of finite type up to isomorphism}.

To analyze the classes of isomorphisms of a certain set $\mathfrak{P}$ of minimal patterns, we will do it in the following order: 

\begin{enumerate}[(1)]

\item Split $\mathfrak{P}$ in two disjoint subsets: $\mathfrak{P}_f$ the set of minimal pattern subgroups whose associated groups of finite type are finite and $\mathfrak{P}_\infty$ the subset of minimal pattern subgroups whose associated groups of finite type are infinite.

\item If $\mathcal{P}$ is in $\mathfrak{P}_f$, this means that $G_\mathcal{P} \simeq \mathcal{P}$, so we can easily solve how many classes of groups of finite type we have in $\mathfrak{P}_f$. Denote $\mathfrak{R}_f$ for a set of representatives of the classes in $\mathfrak{P}_f$.

\item For the subset $\mathfrak{P}_\infty$, we can use \cref{corollary: Gamma cycle they are conjugated} to split in classes by conjugation. Denote by $\mathfrak{R}_\infty$ a set of representatives of these classes.

\end{enumerate}

By the results we have, we conclude that the maximum number of classes we can have is $\#{\mathfrak{R}_f} + \#{\mathfrak{R}_\infty}$. The next results in \cref{section: classification of group of finite type up to isomorphism} allow us to know if different classes are not equivalent, giving a lower bound for the final number of classes we can have.

\begin{enumerate}[(4)]

\item Using \cref{theorem: finite type KP and rigidity}, \cref{theorem: Jorge rigidity} and \cref{theorem: Jorge rigidity fractal}, we analyze whether the groups are $T$-rigid or not, and this gives a lower bound for the number of isomorphic classes we can ensure. We split $\mathcal{R}_\infty$ in $\mathcal{R}_\infty^R$ and $\mathcal{R}_\infty^{NR}$, where the first is the set of patterns whose associated group of finite type is $T$-rigid and the second set is the complement.
\end{enumerate}

The \cref{figure: isomorphism work diagram} shows how the inclusions of the different subsets defined previously are.

\subsection{The case $(D,d) = (2,2)$} 

To start, we consider the case of groups of finite type in the binary tree with depth $D = 2$. This case was treated in \cite{Bondarenko2014} and \cite{Sunic2010} but they did not calculated the isomorphic classes nor analyzed fractalness. There are six minimal pattern subgroups. Three of them generate a finite group of finite type: the trivial group and two groups of finite type isomorphic to $C_2$. Therefore $\# \mathfrak{R}_f = 2$.

The other three do not generate a topologically finitely generated group of finite type by \cref{proposition: finite type not tfg}. In terms of fractalness, the three groups are super strongly fractal. Since the minimal patterns in $\mathfrak{P}_\infty$ are not pairwise isomorphic, then $\# \mathfrak{R}_\infty = 3$. As they are all fractal by \cref{theorem: Jorge rigidity fractal}, we have 2 infinite groups $T$-rigid and one that is not. Therefore, there are (in total) five isomorphic classes. 

\subsection{The case $(D,d) = (2,3)$} 

Using GAP, we have 60 minimal patterns, where 37 of them generate an infinite group of finite type and the others 23 give finite groups. The 37 infinite groups of finite type are not topologically finitely generated. We have that 27 of them satisfy \cref{proposition: finite type not tfg} with $n = 3$ and 10 of them with $n = 4$. All these results were already obtained in \cite{Bondarenko2014}. 

In terms of fractalness, the 23 finite groups are of course not fractal, so we just focus on the infinite groups of finite type. Among these 37 infinite groups of finite type, 13 of them are super strongly fractal and 12 are strongly fractal but not super strongly fractal. Since we are in the binary tree, fractal and strongly fractal are equivalent, so the remaining 12 are not fractal.

In order to classify them up to isomorphism, the 23 finite groups split in four different classes: $\mathfrak{R}_f = \set{\set{\id}, C_2, C_2 \times C_2,D_4}$. Among the remaining 37 groups in $\mathfrak{P}_\infty$, we have at most 19 classes. 15 of them are fractal. By \cref{theorem: Jorge rigidity fractal}, we find that 11 of them are $T$-rigid and the other 4 are not. For the 4 patterns that are not fractal, none of them satisfies \cref{Proposition: equivalence (N)(*)}, so we cannot ensure if they are $T$-rigid or not. Therefore we conclude that we have at most 23 classes but no less than 16.

\subsection{The case $(d,D) = (2,4)$}
\label{subsection: the case (dD) = (24)}

As it was shown in \cite{Bondarenko2014}, there are 4544 minimal pattern subgroups and only 32 of them give an infinite and topologically finitely generated group of finite type by applying \cref{theorem: finite type tfg} with $n_0 = 6$ (the others are not topologically finitely generated by \cref{proposition: finite type not tfg}). 

The 32 topologically finitely generated groups are super strongly fractal, so we can apply \cref{theorem: finite type conjugated in Aut(T) iff gamma cycle} and \cref{theorem: Jorge rigidity fractal}, obtaining that there are 8 different classes of isomorphisms.

In \cite{Bondarenko2014}, elements $a_i,b_j,c_k \in \Aut(T^4)$ are given such that the 32 minimal patterns can be expressed as $\mathcal{P}_{ijk} = \<a_i,b_j,c_k>$ where $i,k = 1,2,3,4$ and $j = 1,2$. 

Using \cref{theorem: closure automaton and finite type} and comparing some famous groups with the 32 topologically finitely generated groups of finite type we obtain:

\begin{itemize}
\item The closure of the first Grigorchuk group is $G_{\mathcal{P}_{123}}$ (this was already observed in \cite{Bondarenko2014}).
\item The closure of the twin of the first Grigorchuk group is $G_{\mathcal{P}_{123}}$.
\item The closure of $\IMG(z^2+i)$ is $G_{\mathcal{P}_{111}}$ (already observed in \cite{Bondarenko2014}).
\item The closure of the third Grigorchuk group is $G_{\mathcal{P}_{121}}$.
\item The closure of the Grigorchuk Erschler group is $G_{\mathcal{P}_{121}}$.
\end{itemize}

A description of their automata can be found in \cref{figure: automata groups closure depth 4}.

In \cref{section: List of automata groups whose closure is a group of finite type}, we will compute automata groups whose closure is $G_{\mathcal{P}_{ijk}}$ for each  $i,k = 1,2,3,4$ and $j = 1,2$. As a consequence of this list and the classification of the group of finite type up to isomorphism, we deduce the following corollary:

\begin{corollary}
The closure of $\IMG(z^2+i)$ and the closure of the third Grigorchuk group (and the Grigorchuk Erschler group) are isomorphic. 
\label{theorem: closures IMG 3rd Grig isomorphic}
\end{corollary}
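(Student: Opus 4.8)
The plan is to reduce the statement to a finite graph computation via the cycle criterion of \cref{corollary: Gamma cycle they are conjugated}. Concretely, if we can identify the two closures with groups of finite type $G_{\mathcal{P}_{111}}$ and $G_{\mathcal{P}_{121}}$ of depth $4$ in the binary tree and then exhibit a directed cycle in the graph $\Gamma_{\mathcal{P}_{111}\mathcal{P}_{121}}$, then \cref{corollary: Gamma cycle they are conjugated} immediately produces an element $g \in \Aut(T)$ with $g\,G_{\mathcal{P}_{111}}\,g^{-1} = G_{\mathcal{P}_{121}}$, and in particular the two groups are isomorphic as topological groups. Note that for the direction we need — showing the closures \emph{are} isomorphic — fractality is not required; it enters only for the converse.

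For the identification I would start from the explicit generating automata: the third Grigorchuk group $G_3$, the Grigorchuk--Erschler group $G_{\mathrm{GE}}$ and $\IMG(z^2+i)$ are all generated by finite invertible automata over a $2$-letter alphabet (their automata are recorded in \cref{figure: automata groups closure depth 4}). Computing the action of the generators on the first four levels of $T$ — a finite check carried out in GAP — one verifies that $\pi_4(\IMG(z^2+i)) = \mathcal{P}_{111}$, while $\pi_4(G_3) = \pi_4(G_{\mathrm{GE}}) = \mathcal{P}_{121}$, where $\mathcal{P}_{111},\mathcal{P}_{121} \leq \Aut(T^4)$ are two of the $32$ minimal pattern $2$-subgroups whose associated groups of finite type are topologically finitely generated (see \cref{subsection: the case (dD) = (24)}). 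Since $G_{\mathcal{P}_{111}}$ and $G_{\mathcal{P}_{121}}$ are topologically finitely generated, \cref{theorem: closure automaton and finite type} applies and gives $\overline{\IMG(z^2+i)} = G_{\mathcal{P}_{111}}$ and $\overline{G_3} = \overline{G_{\mathrm{GE}}} = G_{\mathcal{P}_{121}}$. Thus it remains to prove $G_{\mathcal{P}_{111}} \simeq G_{\mathcal{P}_{121}}$.

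For the cycle, I would first check with \cref{proposition: relation SPQ normalizer} that $\mathcal{S}_{\mathcal{P}_{111}\mathcal{P}_{121}} \neq \emptyset$, i.e. that $\mathcal{P}_{111}$ and $\mathcal{P}_{121}$ are conjugate inside the finite group $\Aut(T^4)$; a conjugating pattern $r_0$ can be exhibited directly. The classes $\mathcal{S}_{\mathcal{P}_{111}\mathcal{P}_{121}}/\!\sim$ are then in bijection with $\mathcal{P}_{121}\backslash N(\mathcal{P}_{121})\,r_0$ (again \cref{proposition: relation SPQ normalizer}), so the vertex set of $\Gamma_{\mathcal{P}_{111}\mathcal{P}_{121}}$ is finite and explicitly computable. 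Its arrows are determined, by \cref{lemma: equivalence finite type graph isomorphism}, by checking for a single representative of each class whether its two depth-$(D{-}1)$ sections at the level-one vertices both lie in a common class; enumerating these data and searching for a directed cycle is a finite computation, and producing one such cycle finishes the argument. (For the sharper fact that every isomorphism $G_{\mathcal{P}_{111}} \to G_{\mathcal{P}_{121}}$ is a conjugation one would additionally invoke $T$-rigidity via \cref{theorem: Jorge rigidity fractal}, using that all $32$ groups are super strongly fractal and hence fractal by \cref{proposition: fractal sf and ssf finite type}, but this is not needed here.)

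The main obstacle is not theoretical but computational: one must actually run the GAP verification that $\Gamma_{\mathcal{P}_{111}\mathcal{P}_{121}}$ contains a cycle, together with the preliminary identifications $\pi_4(\IMG(z^2+i)) = \mathcal{P}_{111}$ and $\pi_4(G_3) = \pi_4(G_{\mathrm{GE}}) = \mathcal{P}_{121}$. Both are subsumed in the larger classification producing the count of exactly $8$ isomorphism classes among the $32$ topologically finitely generated groups of finite type of depth $4$ in the binary tree (\cref{Corollary: number of isomorphic classes}(iii)), of which the coincidence of the classes of $\mathcal{P}_{111}$ and $\mathcal{P}_{121}$ is precisely one instance.
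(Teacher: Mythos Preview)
Your proposal is correct and follows essentially the same route as the paper: identify the closures with $G_{\mathcal{P}_{111}}$ and $G_{\mathcal{P}_{121}}$ via \cref{theorem: closure automaton and finite type}, then exhibit a cycle in $\Gamma_{\mathcal{P}_{111}\mathcal{P}_{121}}$ and invoke \cref{corollary: Gamma cycle they are conjugated}. The paper is slightly more explicit than your outline: it displays the full graph $\Gamma_{\mathcal{P}_{111}\mathcal{P}_{121}}$ (two vertices, each with a self-loop and arrows in both directions), and gives the concrete conjugating pattern $r_0$ with all labels equal to $\sigma$, so that the self-similar element $g=(g,g)\sigma$ visibly conjugates the two closures; this removes the need to appeal to a GAP computation for the cycle.
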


\begin{proof}
We have $\overline{\IMG(z^2+i)} = G_{\mathcal{P}_{111}}$ and if $G_3$ denotes the third Grigorchuk group, then $\overline{G_3} = G_{\mathcal{P}_{121}}$. The graph $\Gamma_{\mathcal{P}_{111}\mathcal{P}_{121}}$ is shown in the \cref{figure: Gamma IMG 3rd Grig group} and since it has a cycle (in fact it has three cycles!), by \cref{theorem: Gamma cycle equivalence}, the groups of finite type are isomorphic. 

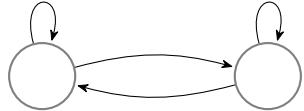
\begin{figure}[h!]
\begin{center}
\begin{tikzpicture}[shorten >=1pt,node distance=3cm,on grid,>={Stealth[round]},
    every state/.style={draw=black!50,thick,fill=white!5}, bend angle = 15]

  \node[state] (1)  {};
  \node[state] (2) [right=of 1] {};
  
  \path[->] 
  (1) edge [loop above] node  {} ()       
  (2) edge [loop above] node  {} ()       
  (1)  edge [bend left] node [above] {} (2)
  (2)  edge [bend left] node [below] {} (1);
\end{tikzpicture}
\end{center}
\caption{Graph $\Gamma_{\mathcal{P}_{111}\mathcal{P}_{121}}$}
\label{figure: Gamma IMG 3rd Grig group}
\end{figure}
\end{proof}

In the case of the groups in \cref{theorem: closures IMG 3rd Grig isomorphic}, the element $r_0$ in \cref{proposition: relation SPQ normalizer} can be taken to have all the labels $\sigma$, being $\sigma$ the non-trivial permutation in $\Sym(2)$. Then, one of the vertices (say the leftmost one) of the graph in \cref{figure: Gamma IMG 3rd Grig group} corresponds to the class of $r_0$ and since $r_0$ can be extended using itself at each level, we have that the element $g = (g,g)\sigma$ conjugates $\overline{\IMG(z^2+i)}$ and $\overline{G_3}$. As the graph indicates, there are other elements $g$ that also conjugate these two groups.

It is important to mention that \cref{Corollary: closure IMG 3rd Grig group} does not prove that the respective profinite completions of $\IMG(z^2+i)$ and $G_3$ are isomorphic. This is because it is known that $G_3$ has the congruence subgroup property, namely, the profinite completion of $G_3$ is isomorphic to $\overline{G_3}$ (the proof follows verbatim \cite[Proposition 10]{Grigorchuk2000}), but $\IMG(z^2+i)$ does not have the congruence subgroup property, as it was recently proved in \cite{Radi2025IMG}.

\begin{figure}
  \centering
\begin{minipage}{0.45\textwidth}
  \centering
\begin{tikzpicture}[shorten >=1pt, node distance=2.5cm, on grid, >={Stealth[round]},
  every state/.style={draw=blue!50, very thick, fill=blue!20}, bend angle=15]

  \node[state] (1) at ( -2,  2) {$1$};
  \node[state] (a) at ( 2, 2) {$a$};
  \node[state] (b) at (2, -2) {$b$};
  \node[state] (c) at (0,0) {$c$};
  \node[state] (d) at (-2,  -2) {$d$};

  \path[->]
    (1) edge [loop above] node {x/x} ()
    (a) edge node [above] {x/x+1} (1)
    (b) edge node [right] {1/1}  (a)
    (b) edge node [above] {2/2} (c)
    (c) edge node [above] {1/1} (a)
    (c) edge node [above] {2/2} (d)
    (d) edge node [left] {1/1} (1)
    (d) edge node [below] {2/2} (b);
\end{tikzpicture}
  \small First Grigorchuk group
\end{minipage}
\hfill
\begin{minipage}{0.45\textwidth}
  \centering
\begin{tikzpicture}[shorten >=1pt, node distance=2.5cm, on grid, >={Stealth[round]},
  every state/.style={draw=blue!50, very thick, fill=blue!20}, bend angle=15]

  \node[state] (1) at ( -2,  2) {$1$};
  \node[state] (a) at ( 2, 2) {$a$};
  \node[state] (b) at (2, -2) {$b$};
  \node[state] (c) at (0,0) {$c$};
  \node[state] (d) at (-2,  -2) {$d$};

  \path[->]
    (1) edge [loop above] node {x/x} ()
    (a) edge node [above] {x/x+1} (1)
    (b) edge node [above] {1/1}  (c)
    (b) edge node [right] {2/2} (a)
    (c) edge node [above] {1/1} (a)
    (c) edge node [above] {2/2} (d)
    (d) edge node [left] {1/1} (1)
    (d) edge node [below] {2/2} (b);
\end{tikzpicture}
  \small Twin of the first Grigorchuk group
\end{minipage}

\vspace{1em} 

\begin{minipage}{0.45\textwidth}
  \centering
\begin{tikzpicture}[shorten >=1pt,node distance=3cm,on grid,>={Stealth[round]},
    every state/.style={draw=blue!50,very thick,fill=blue!20}, bend angle = 15]

  \node[state] (1)  {$1$};
  \node[state] (a) [right=of 1] {$a$};
  \node[state] (b) [below =of a] {$b$};
  \node[state] (c) [below =of 1] {$c$};

  \path[->] 
  (1) edge [loop above] node  {x/x} ()       
  (a)  edge node [above] {x/x+1} (1)
  (b)  edge node [right] {1/1} (a)
  (b)  edge [bend left] node [below] {2/2} (c)
  (c)  edge [bend left] node [above] {1/1} (b)
  (c)  edge node [left] {2/2} (1);
\end{tikzpicture}
  \small $\IMG(z^2+i)$
\end{minipage}
\hfill
\begin{minipage}{0.45\textwidth}
  \centering
\begin{tikzpicture}[shorten >=1pt,node distance=3cm,on grid,>={Stealth[round]},
    every state/.style={draw=blue!50,very thick,fill=blue!20}, bend angle = 15]

  \node[state] (1)  {$1$};
  \node[state] (a) [right=of 1] {$a$};
  \node[state] (b) [below =of a] {$b$};
  \node[state] (c) [below =of 1] {$c$};

  \path[->] 
  (1) edge [loop above] node  {x/x} ()       
  (a)  edge node [above] {x/x+1} (1)
  (b)  edge node [right] {1/1} (a)
  (b)  edge [bend left] node [below] {2/2} (c)
  (c)  edge node [left] {1/1} (1)
  (c)  edge [bend left] node [above] {2/2} (b);
\end{tikzpicture}
  \small \\
  \small Third Grigorchuk group
\end{minipage}

\begin{minipage}{0.45\textwidth}
  \centering
\begin{tikzpicture}[shorten >=1pt, node distance=2.5cm, on grid, >={Stealth[round]},
  every state/.style={draw=blue!50, very thick, fill=blue!20}, bend angle=15]

  \node[state] (1) at ( -2,  2) {$1$};
  \node[state] (a) at ( 2, 2) {$a$};
  \node[state] (b) at (2, -2) {$b$};
  \node[state] (c) at (0,0) {$c$};
  \node[state] (d) at (-2,  -2) {$d$};

  \path[->]
    (1) edge [loop above] node {x/x} ()
    (a) edge node [above] {x/x+1} (1)
    (b) edge node [right] {1/1}  (a)
    (b) edge [loop right] node {2/2} ()
    (c) edge node [above] {1/1} (a)
    (c) edge [bend right] node [above] {2/2} (d)
    (d) edge node [left] {1/1} (1)
    (d) edge [bend right] node [below] {2/2} (c);
\end{tikzpicture}
  \small \\
  \small Grigorchuk Erschler group
\end{minipage}
\caption{Famous automata groups whose closure is a group of finite type of depth $4$. The label $x/x$ means that the output is the same as the input and it passes to the same state. The label $x/x+1$ indicates that the output is one more than the input considering $X = \set{1,2}$ as numbers modulo $2$.}
\label{figure: automata groups closure depth 4}
\end{figure}
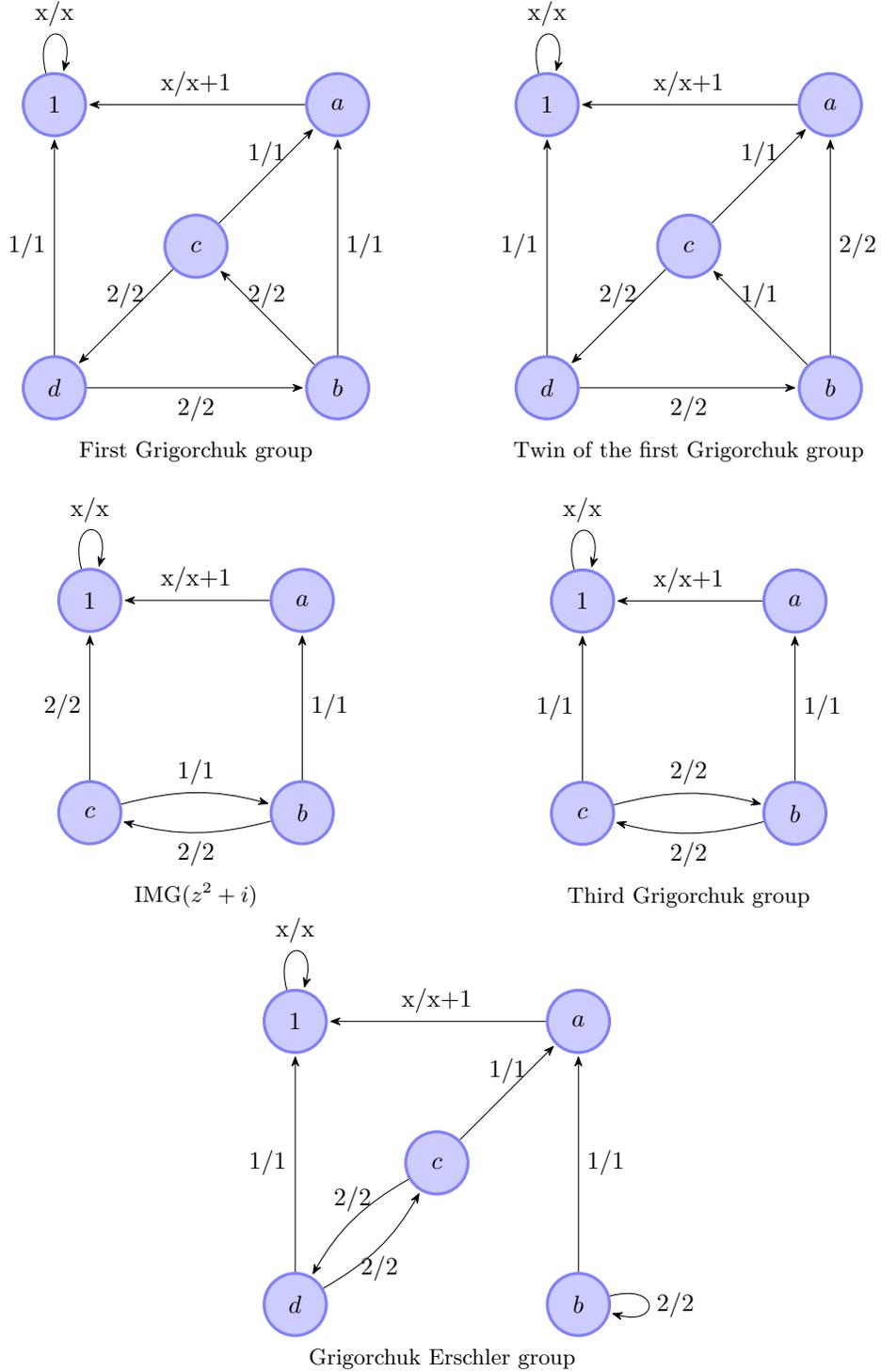

\subsection{The case $(d,D) = (3,2)$}
\label{subsection: the case (dD) = (32)}

Using GAP, we find 588 minimal patterns, where 422 of them give infinite groups of finite type and 166 of them give finite groups of finite type. In the 422 infinite groups of finite type, we have that 380 are not topologically finitely generated by \cref{proposition: finite type not tfg} (322 with $n = 2$ and 58 with $n = 3$) and 2 are topologically finitely generated by \cref{theorem: finite type tfg} with $n = 3$. The remaining 40 groups do not satisfy neither \cref{proposition: finite type not tfg} nor \cref{theorem: finite type tfg} with $n \leq 5$.

In terms of fractalness, the 166 finite groups are of course not fractal (and not level-transitive), so we just focus on the infinite groups of finite type. Among these 422 infinite groups of finite type, 68 of them are super strongly fractal, 76 are fractal but not strongly fractal and 278 are not fractal. 

In order to classify them up to isomorphism, the 166 finite groups of finite type split in four different classes: $\mathcal{R}_f = \set{\set{\id}, C_2, C_3, \Sym(3)}$. Among the remaining 422 groups in $\mathfrak{P}_\infty$, we have at most 36 classes, so the maximum amount of classes is $\# \mathfrak{R}_f + \# \mathfrak{R}_\infty = 40$. Now, 5 of them are in $W_3$ but only 4 are fractal. Among these 4, three of them are $T$-rigid and one is not by \cref{theorem: Jorge rigidity fractal}. The one that is not fractal does not satisfy \cref{theorem: Jorge rigidity}. This allows us to ensure at least 4 non-equivalent classes of infinite groups of finite type that are in $W_3$. If the group is not included in $W_3$, we can use \cref{theorem: finite type KP and rigidity}, obtaining 6 more groups that are $T$-rigid. Therefore, we have at least 15 isomorphic classes.

One of the automata groups whose closure appears in this list is the Hanoi towers group as it was shown in \cref{proposition: depth Hanoi towers closure}. Using \cref{proposition: fractal sf and ssf finite type} we conclude:

\begin{corollary}
The closure of the Hanoi towers group is super strongly fractal.
\end{corollary}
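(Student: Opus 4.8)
The plan is to apply point (4) of \cref{proposition: fractal sf and ssf finite type}. By \cref{proposition: depth Hanoi towers closure} the group $\overline{\mathcal{H}}$ is of finite type of depth $D=2$, and by \cref{proposition: existence minimal pattern} its minimal pattern subgroup is $\mathcal{P} := \pi_2(\overline{\mathcal{H}})$. Since $D-1 = 1$, point (4) of \cref{proposition: fractal sf and ssf finite type} reduces the claim to verifying that (a) $\overline{\mathcal{H}}$ is transitive on the first level, and (b) there is a vertex $v_0 \in \mathcal{L}_1$ with $\pi_1\bigl(\varphi_{v_0}(\St_{\overline{\mathcal{H}}}(1))\bigr) = \pi_1(\mathcal{P})$.

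Condition (a) is immediate: the root labels of the generators $a,b,c$ are the transpositions $(2,3),(1,3),(1,2)$, which generate $\Sym(3)$; hence $\pi_1(\mathcal{H}) = \Sym(3)$ is transitive, so is $\pi_1(\overline{\mathcal{H}}) = \Sym(3)$, and moreover $\pi_1(\mathcal{P}) = \pi_1\bigl(\pi_2(\overline{\mathcal{H}})\bigr) = \Sym(3)$.

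For condition (b) I would take $v_0 = 1$. The inclusion ``$\subseteq$'' holds by self-similarity, since $h|_1 \in \overline{\mathcal{H}}$ and hence $h|_1^2 \in \mathcal{P}$ for every $h \in \overline{\mathcal{H}}$. For the reverse inclusion it suffices, since $\varphi_1$ restricted to $\St_{\overline{\mathcal{H}}}(1) \le \st(1)$ is a group homomorphism, to exhibit two elements of $\St_{\mathcal{H}}(1) \le \St_{\overline{\mathcal{H}}}(1)$ whose sections at the vertex $1$ have root labels generating $\Sym(3)$. Using \cref{equation: properties sections} one computes $ab = (1,b,a)(1,2,3)$, so that $(ab)^3 = (ab,ab,ba)$ lies in $\St_{\mathcal{H}}(1)$ with section $ab$ at the vertex $1$ and root label $\pi_1(ab) = (1,2,3)$; and one checks, for instance by a Reidemeister--Schreier computation for $\mathcal{H}/\St_{\mathcal{H}}(1)$ with transversal $\{1,a,b,c,ab,ac\}$, that the element $g := baca$ lies in $\St_{\mathcal{H}}(1)$ with section $g|_1 = a$ and root label $\pi_1(g|_1) = (2,3)$. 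As $\langle (1,2,3),(2,3)\rangle = \Sym(3)$, condition (b) follows, and point (4) of \cref{proposition: fractal sf and ssf finite type} then yields that $\overline{\mathcal{H}}$ is super strongly fractal.

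The only delicate step is (b), and within it the production of an element of $\St_{\mathcal{H}}(1)$ whose section at a first-level vertex carries an \emph{odd} permutation: the obvious candidates obtained from cubes of a single three-cycle, such as $(ab)^3$ or $(ac)^3$, only produce three-cycles in that section, so one is forced to a coset enumeration to isolate a suitable generator like $baca$. Everything else is routine manipulation of sections via \cref{equation: properties sections}.
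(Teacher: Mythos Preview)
Your proof is correct and follows the same route as the paper: both apply point~(4) of \cref{proposition: fractal sf and ssf finite type} to the depth-$2$ minimal pattern subgroup $\mathcal{P}=\pi_2(\overline{\mathcal{H}})$. The paper simply records the result as a consequence of the GAP computations carried out in \cref{subsection: the case (dD) = (32)} without spelling out the verification, whereas you supply the explicit section computations by hand, exhibiting $(ab)^3$ and $baca$ in $\St_{\mathcal{H}}(1)$ with first sections having root labels $(1,2,3)$ and $(2,3)$.

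One small simplification: no Reidemeister--Schreier or coset enumeration is needed to see that $baca\in\St_{\mathcal{H}}(1)$; it is the one-line permutation check $\pi_1(baca)=(1,3)(2,3)(1,2)(2,3)=\id$, after which the section formula $(gh)|_v=g|_{h(v)}h|_v$ gives $(baca)|_1=a$ directly.
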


\subsection{The case $(d,D) = (3,3)$}
\label{subsection: the case (dD) = (33)}

To solve this case is necessary to analyze subgroups of the group $\Aut(T^3)$, that has order $6^{13}$. However, this is too big for the program GAP, so the focus was put on finding the infinite groups of finite type in $W_3$.

Inside $\pi_3(W_3)$, there are 44671 minimal patterns whose associated groups of finite type are infinite. This program had to be run for a week. Among those, only 216 are topologically finitely generated. The other groups are discarded using \cref{proposition: finite type not tfg}: 30109 with $n = 3$, 11421 with $n = 4$ and 2925 with $n = 5$, and the remaining 216 are proved to be topologically finitely generated by using $n = 5$ in \cref{theorem: finite type tfg}.

In terms of fractalness, the 216 groups of finite type are super strongly fractal. Due to this, we can apply \cref{theorem: finite type conjugated in Aut(T) iff gamma cycle} and \cref{theorem: Jorge rigidity fractal}, obtaining 12 different isomorphic classes. 

In order to give one minimal pattern of each isomorphic class explicitly, recall that the minimal patterns are in $\Aut(T^3) \hookrightarrow \Sym(27)$, so the generators of the minimal pattern subgroups can be given as permutations of the 27 vertices in the third level of the tree, enumerated from left to right. Define $a_3 := \pi_3((1,1,1)\sigma)$, where $\sigma = (1,2,3) \in \Sym(3)$ is the standard 3-cycle. The list is the following:

\begin{flalign*}
& \mathcal{P}_{1,1} = \<(7,9,8)(13,15,14)(16,17,18)(19,27,23,20,25,24,21,26,22), a_3>, \\
& \mathcal{P}_{1,2} = \<(7,8,9)(10,16,15,12,18,14,11,17,13), a_3>, \\
& \mathcal{P}_{2,1} = \<(7,8,9)(10,14,17)(11,15,18)(12,13,16)(25,27,26), a_3>, \\
& \mathcal{P}_{3,1} = \<(1,3,2)(7,9,8)(13,15,14)(16,17,18)(19,27,23)(20,25,24)(21,26,22), a_3 >, \\
& \mathcal{P}_{3,2} = \<(7,9,8)(10,16,14)(11,17,15)(12,18,13), a_3 >,\\
& \mathcal{P}_{4,1} = \<(1,2,3)(7,9,8)(13,14,15)(19,23,27,20,24,25,21,22,26), a_3 >, \\
& \mathcal{P}_{4,2} = \<(13,14,15)(16,17,18)(19,24,25,21,23,27,20,22,26), a_3 >, \\
& \mathcal{P}_{5,1} = \<(1,3,2)(13,15,14)(16,18,17)(19,27,24,20,25,22,21,26,23), a_3 >, \\
& \mathcal{P}_{5,2} = \<(7,8,9)(13,15,14)(19,25,23,21,27,22,20,26,24), a_3>, \\
& \mathcal{P}_{6,1} = \<(10,16,13)(11,17,14)(12,18,15)(19,22,26)(20,23,27)(21,24,25), a_3 >, \\
& \mathcal{P}_{6,2} = \<(10,13,18)(11,14,16)(12,15,17)(19,25,23)(20,26,24)(21,27,22), a_3 >, \\
& \mathcal{P}_{6,3} = \<(10,13,16)(11,14,17)(12,15,18)(19,25,24)(20,26,22)(21,27,23), a_3 >.
\end{flalign*}

Analyzing isomorphic classes of the groups $\mathcal{P}_{i,j}$ we conclude the following result:

\begin{Proposition}
The minimal pattern subgroups $\mathcal{P}_{i,j}$ and $\mathcal{P}_{i',j'}$ are isomorphic if and only if $i = i'$. Moreover, all the isomorphisms are conjugations by an element in $\Aut(T^3)$. As a consequence, if $T$ is a $d$-regular tree, $D$ a natural number and $\mathcal{P}, \mathcal{Q}$ are two minimal patterns isomorphic by a conjugation by an element in $\Aut(T^D)$, it is not necessarily true that their associated groups of finite type $G_\mathcal{P}$ and $G_\mathcal{Q}$ are isomorphic.
\label{proposition: P iso Q does not imply GP iso GQ}
\end{Proposition}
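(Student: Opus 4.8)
The assertion has three ingredients: \textbf{(A)} the abstract-group classification of the twelve finite pattern groups $\mathcal{P}_{i,j}$, namely that $\mathcal{P}_{i,j}\cong\mathcal{P}_{i',j'}$ if and only if $i=i'$; \textbf{(B)} the refinement that every such isomorphism is induced by conjugation inside $\Aut(T^3)$; and \textbf{(C)} the consequence relating conjugacy of minimal patterns to isomorphism of the associated groups of finite type. The plan is to settle (A) and (B) by a direct finite computation on the explicit permutation groups $\mathcal{P}_{i,j}\le\Aut(T^3)\hookrightarrow\Sym(27)$ listed above, using GAP \cite{GAP}, and then to obtain (C) from the classification of the twelve associated groups of finite type carried out in \cref{subsection: the case (dD) = (33)}.

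For (A), each $\mathcal{P}_{i,j}$ is a finite $3$-group, being contained in $\pi_3(W_3)$, a $3$-group of order $3^{13}$. One checks that $\mathcal{P}_{i,1}\cong\mathcal{P}_{i,2}$ for $i\in\{1,3,4,5\}$ and $\mathcal{P}_{6,1}\cong\mathcal{P}_{6,2}\cong\mathcal{P}_{6,3}$, whereas $\mathcal{P}_{i,j}\not\cong\mathcal{P}_{i',j'}$ whenever $i\neq i'$: the positive cases are decided by \texttt{IsomorphismGroups}, while the six classes are separated by inexpensive invariants (the order, the multiset of element orders, and the abelian invariants of the successive quotients of the derived and lower central series), which can also serve as a quick double-check in GAP. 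For (B), for each $i$ one produces an element $r_i\in\Aut(T^3)$ with $r_i\,\mathcal{P}_{i,1}\,r_i^{-1}=\mathcal{P}_{i,2}$ (and, for $i=6$, also an element carrying $\mathcal{P}_{6,1}$ onto $\mathcal{P}_{6,3}$), e.g.\ via \texttt{RepresentativeAction} inside $\Sym(27)$ after verifying that the conjugator lies in $\Aut(T^3)$. Composing an arbitrary abstract isomorphism $\mathcal{P}_{i,1}\to\mathcal{P}_{i,2}$ with conjugation by $r_i^{-1}$ turns it into an automorphism of $\mathcal{P}_{i,1}$, so ``every isomorphism is a conjugation'' is equivalent to surjectivity of the restriction map $N_{\Aut(T^3)}(\mathcal{P}_{i,1})\to\Aut(\mathcal{P}_{i,1})$ --- again a finite verification, which in the notation of \cref{proposition: relation SPQ normalizer} reads $\abs{\mathcal{S}_{\mathcal{P}_{i,1}\mathcal{P}_{i,2}}}/\abs{C_{\Aut(T^3)}(\mathcal{P}_{i,1})}=\abs{\Aut(\mathcal{P}_{i,1})}$.

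For (C), set $d=3$, $D=3$ and take $\mathcal{P}=\mathcal{P}_{1,1}$, $\mathcal{Q}=\mathcal{P}_{1,2}$. By (A) and (B), $\mathcal{P}$ and $\mathcal{Q}$ are isomorphic via conjugation by an element of $\Aut(T^3)$, hence $\mathcal{S}_{\mathcal{P}\mathcal{Q}}\neq\emptyset$. On the other hand, the twelve groups $G_{\mathcal{P}_{i,j}}$ are pairwise non-isomorphic by the classification in \cref{subsection: the case (dD) = (33)}, so $G_{\mathcal{P}_{1,1}}\not\cong G_{\mathcal{P}_{1,2}}$; if $\Gamma_{\mathcal{P}_{1,1}\mathcal{P}_{1,2}}$ contained a cycle, then by \cref{corollary: Gamma cycle they are conjugated} the groups $G_{\mathcal{P}_{1,1}}$ and $G_{\mathcal{P}_{1,2}}$ would be conjugate in $\Aut(T)$, a contradiction. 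Thus $\mathcal{S}_{\mathcal{P}\mathcal{Q}}\neq\emptyset$ --- equivalently, $\mathcal{P}$ and $\mathcal{Q}$ being conjugate in $\Aut(T^D)$ --- does not force a cycle in $\Gamma_{\mathcal{P}\mathcal{Q}}$, and in particular does not force $G_\mathcal{P}\cong G_\mathcal{Q}$, which is precisely the final claim.

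The main obstacle is computational rather than conceptual: the ambient group $\Aut(T^3)\hookrightarrow\Sym(27)$ has order $6^{13}\approx 1.3\times 10^{10}$, so the normalizer, centralizer and ``restriction to $\Aut(\mathcal{P}_{i,1})$'' computations, together with the pairwise separation of the twelve pattern groups, have to be organized with some care (small generating sets, \texttt{RepresentativeAction}, invariant-based separation before calling \texttt{IsomorphismGroups}) to stay within GAP's reach. Once (A) and (B) are in place, part (C) is a one-line deduction from the classification of the groups of finite type already obtained in \cref{subsection: the case (dD) = (33)}.
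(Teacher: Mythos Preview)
Your proposal is correct, but the paper takes a more unified route that is worth noting. For (A) you and the paper agree: it is a GAP check on the finite groups $\mathcal{P}_{i,j}$. The difference lies in how (B) and (C) are handled. The paper computes the directed graphs $\Gamma_{\mathcal{P}_{i,j}\mathcal{P}_{i,j'}}$ explicitly (they have at most twelve vertices and fall into three shapes, displayed as figures). A single such computation then settles both (B) and (C) at once: the non-emptiness of the vertex set is precisely $\mathcal{S}_{\mathcal{P}_{i,j}\mathcal{P}_{i,j'}}\neq\emptyset$, i.e.\ the patterns are conjugate in $\Aut(T^3)$, while the visible absence of any cycle, combined with fractality, \cref{theorem: finite type conjugated in Aut(T) iff gamma cycle} and \cref{theorem: Jorge rigidity fractal}, yields $G_{\mathcal{P}_{i,j}}\not\cong G_{\mathcal{P}_{i,j'}}$ directly. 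Your approach instead separates the two steps (a \texttt{RepresentativeAction} search for (B), a citation of the classification for (C)); this is valid, but the classification in \cref{subsection: the case (dD) = (33)} was itself obtained via the same $\Gamma$-graph machinery, so the paper's route is more self-contained and exhibits the obstruction concretely.

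One further point: you read (B) as the strong statement that \emph{every} abstract isomorphism $\mathcal{P}_{i,j}\to\mathcal{P}_{i,j'}$ is induced by conjugation in $\Aut(T^3)$, and reduce this to surjectivity of $N_{\Aut(T^3)}(\mathcal{P}_{i,1})\to\Aut(\mathcal{P}_{i,1})$. The paper's proof, however, only establishes (and only needs) the weaker claim that \emph{some} isomorphism is a conjugation, i.e.\ that the patterns are conjugate in $\Aut(T^3)$. Your stronger version is not required for the consequence (C), and the paper does not verify it.
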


\begin{proof}
The first part follows by using GAP. In order to show that the minimal patterns are isomorphic by a conjugation in $\pi_3(\Aut(T^3))$, we can compute the directed graphs $\Gamma_{\mathcal{P}_{i,j}, \mathcal{P}_{i,j'}}$. If the graph has at least one vertex, this means that the minimal pattern subgroups are conjugated in $\pi_3(\Aut(T^3))$ (see \cref{definition: SPQ}).

We find three different non-empty directed graphs. The option 1 (\cref{figure: option 1 graph 33}) corresponds to $\Gamma_{\mathcal{P}_{i,1}, \mathcal{P}_{i,2}}$ with $i \in \set{1,3,4,5}$. The option 2 (\cref{figure: option 2 graph 33}) corresponds to $\Gamma_{\mathcal{P}_{6,1}, \mathcal{P}_{6,3}}$ and the option 3 (\cref{figure: option 3 graph 33}) corresponds to $\Gamma_{\mathcal{P}_{6,1}, \mathcal{P}_{6,2}}$ and $\Gamma_{\mathcal{P}_{6,2}, \mathcal{P}_{6,3}}$. As it can be seen, none of the directed graphs have a cycle so the associated groups of finite type are not isomorphic by \cref{theorem: finite type conjugated in Aut(T) iff gamma cycle} and \cref{theorem: Jorge rigidity fractal}. Since the graphs have vertices, the minimal patterns subgroups are conjugated in $\pi_3(\Aut(T^3))$.

\begin{figure}
  \centering
\begin{minipage}{0.55\textwidth}
  \centering
\begin{tikzpicture}[shorten >=1pt,node distance=2cm,on grid,>={Stealth[round]}, every state/.style={draw=black!20,thick,minimum size=15pt,fill=white!5}, bend angle = 15]

  \node[state] (1)  {};
  \node[state] (2) [right=of 1] {};
  \node[state] (3) [right=of 2] {};
  \node[state] (4) [below=of 1] {};
  \node[state] (5) [below=of 2] {};
  \node[state] (6) [below=of 3] {};

  \path[->] 
  (1) edge node [above] {} (4)
  (1) edge node [above] {} (5)
  (1) edge node [above] {} (6)
  (2) edge node [above] {} (4)
  (2) edge node [above] {} (5)
  (2) edge node [above] {} (6)
  (3) edge node [above] {} (4)
  (3) edge node [above] {} (5)
  (3) edge node [below] {} (6);
\end{tikzpicture}
  \caption{Option 1}
  \label{figure: option 1 graph 33}
\end{minipage}

\vspace{2em}

\begin{minipage}{0.48\textwidth}
  \centering
\begin{tikzpicture}[shorten >=1pt,node distance=2cm,on grid,>={Stealth[round]}, every state/.style={draw=black!20,thick,minimum size=10pt,fill=white!5}, bend angle = 15]

  \node[state] (1)  {};
  \node[state] (2) [right=of 1] {};
  \node[state] (3) [right=of 2] {};
  \node[state] (4) [below=of 1] {};
  \node[state] (5) [below=of 2] {};
  \node[state] (6) [below=of 3] {};
  \node[state] (7) [below=of 4] {};
  \node[state] (8) [below=of 5] {};
  \node[state] (9) [below=of 6] {};
  \node[state] (10) [below=of 7] {};
  \node[state] (11) [below=of 8] {};
  \node[state] (12) [below=of 9] {};
  
  \path[->] 
  (1) edge node [above] {} (4)
  (1) edge node [above] {} (5)
  (1) edge node [above] {} (6)
  (2) edge node [above] {} (4)
  (2) edge node [above] {} (5)
  (2) edge node [above] {} (6)
  (3) edge node [above] {} (4)
  (3) edge node [above] {} (5)
  (3) edge node [below] {} (6)
  (4) edge node [above] {} (7)
  (4) edge node [above] {} (8)
  (4) edge node [above] {} (9)
  (5) edge node [above] {} (7)
  (5) edge node [above] {} (8)
  (5) edge node [above] {} (9)
  (6) edge node [above] {} (7)
  (6) edge node [above] {} (8)
  (6) edge node [below] {} (9);
\end{tikzpicture}
  \caption{Option 2}
  \label{figure: option 2 graph 33}  
\end{minipage}
\hfill
\begin{minipage}{0.48\textwidth}
  \centering
\begin{tikzpicture}[shorten >=1pt,node distance=2cm,on grid,>={Stealth[round]}, every state/.style={draw=black!20,thick,minimum size=10pt,fill=white!5}, bend angle = 15]

  \node[state] (1)  {};
  \node[state] (2) [right=of 1] {};
  \node[state] (3) [right=of 2] {};
  \node[state] (4) [below=of 1] {};
  \node[state] (5) [below=of 2] {};
  \node[state] (6) [below=of 3] {};
  \node[state] (7) [below=of 4] {};
  \node[state] (8) [below=of 5] {};
  \node[state] (9) [below=of 6] {};
  \node[state] (10) [below=of 7] {};
  \node[state] (11) [below=of 8] {};
  \node[state] (12) [below=of 9] {};
\end{tikzpicture}
  \caption{Option 3}
  \label{figure: option 3 graph 33}  
\end{minipage}
\end{figure}
\end{proof}

Finally, we are interested in finding known automata groups whose closures correspond to any of the 216 groups of finite type studied in this subsection.

Given a vector $\vec{\alpha} \in C_3^2$, the Grigorchuk-Gupta-Sidki (GGS) group with vector $\vec{\alpha} = (\alpha_1, \alpha_2)$ is an automata group acting on the ternary tree defined as $G_{\vec{\alpha}} := \<a,b>$, where $a = (1,1,1)\sigma$, $\sigma$ is the 3-cycle $(1,2,3) \in \Sym(3)$ and $b = (a^{\alpha_1}, a^{\alpha_2},b)$. GGS groups can be defined for any $d$-regular tree with $d$ a prime power and they have been broadly studied by many authors (see for example \cite{DiDomenico2023, FernandezAlcober2017, GGS1983,  Petschick2019}). 

Notice that $b^{-1} = (a^{-\alpha_1},a^{-\alpha_2},b^{-1})$. Consequently $G_{\vec{\alpha}} = G_{-\vec{\alpha}}$. On the other hand, the group $G_{(0,1)} \simeq G_{(1,0)}$ as it is obtained from the other by interchanging the first two letters in the alphabet of the automaton defining the GGS group. Finally, the group $G_{(0,0)}$ is finite. Hence, this gives us three vectors to analyze: $(1,0), (1,1)$ and $(1,2)$. By \cite[Theorem 3.7]{DiDomenico2023}, the group $G_{(1,1)}$ is not branch.

Using \cref{theorem: closure automaton and finite type}, we obtain that $G_{\mathcal{P}_{3,2}} = \overline{G_{(1,0)}}$ and that $G_{\mathcal{P}_{6,2}} = \overline{G_{(1,2)}}$. In particular, their closures are not isomorphic.

\section{The maximal branching subgroup} \label{section: The maximal branching subgroup}

By \cref{proposition: finite type is closed self-similar and regular branch}, we know that groups of finite type are regular branch and by \cref{lemma: unique maximal reg branch subgroup}, we know there exists a maximal branching subgroup. In this section, we will give an explicit form for the maximal branching subgroup of groups of finite type (\cref{theorem: properties KP}) and we will use this to give a tool that helps to prove if an abstract group in $\Aut(T)$ is regular branch (\cref{theorem: regular branch when closure is finite type}).

\subsection{The maximal branching subgroup of groups of finite type} 

Recall the map $\delta_v$ defined in \cref{equation: map deltav}. Let $T$ be a $d$-regular tree, $D$ a natural number and $\mathcal{P} \leq \Aut(T^D)$ a minimal pattern subgroup. Define $$K_\mathcal{P} = \set{g \in G_\mathcal{P}: \delta_v(g) \in G_\mathcal{P} \text{ for all $v$ vertex in $T$}}$$ and $\widetilde{K_\mathcal{P}} = \pi_D(K_\mathcal{P})$.

\begin{theorem}
Let $T$ be a $d$-regular tree, $D$ a natural number and $\mathcal{P} \leq \Aut(T^D)$ a minimal pattern subgroup. Then

\begin{enumerate}
    \item $K_\mathcal{P}$ is a subgroup of $G_\mathcal{P}$,
    \item $\St_{G_\mathcal{P}}(D-1) \leq K_\mathcal{P}$,
    \item $K_\mathcal{P} = \pi_D^{-1}(\widetilde{K_\mathcal{P}})$,
    \item $[G_\mathcal{P}: K_\mathcal{P}] = [\mathcal{P}: \widetilde{K_\mathcal{P}}]$,
    \item $K_\mathcal{P}$ is the maximal branching subgroup of $G_\mathcal{P}$,
    \item $K_\mathcal{P}$ is clopen in the congruence topology,
    \item $\mathcal{K}_\mathcal{P} \lhd G_\mathcal{P}$ if and only if $\widetilde{K_\mathcal{P}} \lhd \mathcal{P}$.
    \item If $G_\mathcal{P}$ is fractal, then $K_\mathcal{P} \lhd G_\mathcal{P}$.
\end{enumerate}
\label{theorem: properties KP}
\end{theorem}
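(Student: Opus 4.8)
The plan is to establish the eight items essentially in order, using repeatedly the two elementary properties of the maps $\delta_v$ recalled after \cref{equation: map deltav} — each $\delta_v$ is an injective homomorphism and $\delta_v \circ \delta_w = \delta_{vw}$ — together with \cref{proposition: finite type is closed self-similar and regular branch} (so $G_\mathcal{P}$ is self-similar and regular branch over $\St_{G_\mathcal{P}}(D-1)$) and the identity $\pi_D(G_\mathcal{P}) = \mathcal{P}$. Item (1) is immediate: if $g, h \in K_\mathcal{P}$ then $\delta_v(gh^{-1}) = \delta_v(g)\delta_v(h)^{-1} \in G_\mathcal{P}$ for every vertex $v$. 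For item (2), fix $g \in \St_{G_\mathcal{P}}(D-1)$ and a vertex $v$; I would check $\delta_v(g)|_w^D \in \mathcal{P}$ for every $w$ by a three-way case split: if $w$ is incomparable with $v$ the section is the identity; if $w$ is a descendant of $v$ the section equals a section of $g$, hence lies in $\mathcal{P}$; and if $w$ is a proper ancestor, writing $v = wu$ with $u \neq \emptyset$ and $\delta_v(g) = \delta_w(\delta_u(g))$, the element $\delta_u(g)$ is trivial on the first $D$ levels (since $g$ is trivial on the first $D-1$ levels and $|u| \geq 1$), so the truncated section is again trivial. Thus $\delta_v(g) \in G_\mathcal{P}$.

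Items (3) and (4) are then formal. The inclusion $K_\mathcal{P} \subseteq \pi_D^{-1}(\widetilde{K_\mathcal{P}})$ holds by definition; conversely, if $g \in G_\mathcal{P}$ and $\pi_D(g) = \pi_D(k)$ with $k \in K_\mathcal{P}$, then $gk^{-1} \in \St_{G_\mathcal{P}}(D) \leq \St_{G_\mathcal{P}}(D-1) \leq K_\mathcal{P}$ by (2), so $g \in K_\mathcal{P}$ by (1); this gives (3). Since $\pi_D \colon G_\mathcal{P} \to \mathcal{P}$ is a surjective homomorphism with $\ker \pi_D = \St_{G_\mathcal{P}}(D) \leq K_\mathcal{P}$, (3) makes the assignment $gK_\mathcal{P} \mapsto \pi_D(g)\widetilde{K_\mathcal{P}}$ a well-defined bijection of coset spaces, giving (4); in particular $K_\mathcal{P} \leq_f G_\mathcal{P}$ because $\mathcal{P}$ is finite.

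For (5), I would first verify $(K_\mathcal{P})_1 \leq K_\mathcal{P}$: an element of $(K_\mathcal{P})_1$ is $\prod_{i \in \mathcal{L}_1} \delta_i(g_i)$ with $g_i \in K_\mathcal{P}$, and applying $\delta_v$ yields $\prod_i \delta_{vi}(g_i)$, a product of elements of $G_\mathcal{P}$. With $K_\mathcal{P} \leq_f G_\mathcal{P}$ from (4) and self-similarity of $G_\mathcal{P}$, \cref{lemma: regular branch for self-similar groups} shows $G_\mathcal{P}$ is regular branch over $K_\mathcal{P}$. For maximality, let $H$ be any branching subgroup ($H_1 \leq_f H \leq_f G_\mathcal{P}$); iterating $H_1 \leq H$ gives $H_n \leq H$ for all $n$, and for $h \in H$ and $v \in \mathcal{L}_n$ one has $\delta_v(h) \in H_n \leq H \leq G_\mathcal{P}$, so $h \in K_\mathcal{P}$; hence $H \leq K_\mathcal{P}$, and by \cref{lemma: unique maximal reg branch subgroup} $K_\mathcal{P}$ is the maximal branching subgroup. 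Item (6) follows from (3): $K_\mathcal{P} = \pi_D^{-1}(\widetilde{K_\mathcal{P}})$ is the preimage of a (clopen) subset of the finite discrete group $\Aut(T^D)$ under the continuous map $\pi_D$. Item (7) is again formal: $\pi_D$ surjective with $\pi_D(K_\mathcal{P}) = \widetilde{K_\mathcal{P}}$ forces $\widetilde{K_\mathcal{P}} \lhd \mathcal{P}$ when $K_\mathcal{P} \lhd G_\mathcal{P}$, and conversely $K_\mathcal{P} = \pi_D^{-1}(\widetilde{K_\mathcal{P}})$ is normal whenever $\widetilde{K_\mathcal{P}}$ is.

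Item (8) is where the genuine argument lies, and I expect it to be the main obstacle. Assume $G_\mathcal{P}$ is fractal; by (7) it suffices to show $hkh^{-1} \in K_\mathcal{P}$ for all $h \in G_\mathcal{P}$, $k \in K_\mathcal{P}$, i.e. $\delta_v(hkh^{-1}) \in G_\mathcal{P}$ for every vertex $v$. By fractality pick $h' \in \st_{G_\mathcal{P}}(v)$ with $h'|_v = h$. Since $h'$ and $\delta_v(k)$ both fix $v$ and $h'$ maps $T_v$ onto itself, the conjugate $h'\,\delta_v(k)\,(h')^{-1}$ is supported on $T_v$ and has section $hkh^{-1}$ at $v$; hence it equals $\delta_v(hkh^{-1})$. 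As $\delta_v(k), h' \in G_\mathcal{P}$, the left-hand side lies in $G_\mathcal{P}$, so $\delta_v(hkh^{-1}) \in G_\mathcal{P}$ and $K_\mathcal{P} \lhd G_\mathcal{P}$. The delicate points are the level bookkeeping in (2) and the identification $h'\,\delta_v(k)\,(h')^{-1} = \delta_v(hkh^{-1})$ in (8), which hinges on $\delta_v(k)$ fixing the vertex $v$ and on $h'$ preserving $T_v$; everything else reduces to routine manipulation of sections, the $\delta_v$ maps, and indices.
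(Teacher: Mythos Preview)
Your proof is correct and follows essentially the same route as the paper. The only differences are cosmetic: for (2) the paper invokes in one line the fact that $G_\mathcal{P}$ is regular branch over $\St_{G_\mathcal{P}}(D-1)$ (so $\delta_v(\St_{G_\mathcal{P}}(D-1)) \leq G_\mathcal{P}$ by iterating the geometric-product containment), whereas you unpack this into an explicit case split on the position of $w$ relative to $v$; and for (8) the paper simply cites \cite[Lemma 1.3]{Bartholdi2012}, while you supply the argument directly via the identity $h'\,\delta_v(k)\,(h')^{-1} = \delta_v(hkh^{-1})$ --- which is exactly the content of that lemma.
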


\begin{proof}
For (1), for any $v$ vertex in $T$, we have $\delta_v(1) = 1 \in G_\mathcal{P}$ and if $g,h \in K_\mathcal{P}$, then $\delta_v(gh^{-1}) = \delta_v(g) \delta_v(h)^{-1} \in G_\mathcal{P}$ since $\delta_v$ is a morphism of groups and $G_\mathcal{P}$ is a group. 

For (2), since $G_\mathcal{P}$ is regular branch over $\St_{G_\mathcal{P}}(D-1)$, we have that if $s$ is an element in $\St_{G_\mathcal{P}}(D-1)$, then $\delta_v(s) \in G_\mathcal{P}$ for all $v$ vertex in $T$. 

For (3), we have $\pi_D^{-1}(\widetilde{K_\mathcal{P}}) = \<K_\mathcal{P}, \St_{G_\mathcal{P}}(D)> = K_\mathcal{P}$ by point 2. 

For (4), we have $[G_\mathcal{P} : K_\mathcal{P}] = [\pi_D^{-1}(\mathcal{P}): \pi_D^{-1}(\widetilde{K_\mathcal{P}})] = [\mathcal{P}: \widetilde{K_\mathcal{P}}]$ since $\pi_D$ is surjective. 

For (5), by point 4 we have $K_\mathcal{P} \leq_f G_\mathcal{P}$. Let $g = (g_1,\dots,g_d) \in (K_\mathcal{P})_1$, the geometric product of $K_\mathcal{P}$. For any $v$ vertex in $T$, we have $$\delta_v(g) = \prod_{i = 1}^d \delta_{vi}(g_i) \in G_\mathcal{P}$$ since each $g_i \in K_\mathcal{P}$. Hence $(K_\mathcal{P})_1 \leq K_\mathcal{P}$. Then $(K_\mathcal{P})_1 \leq_f K_\mathcal{P}$, by \cref{lemma: regular branch for self-similar groups}, as $G_\mathcal{P}$ is self-similar.

To prove that $K_\mathcal{P}$ is the maximal branching subgroup, let $H$ be a subgroup such that $H_1 \leq_f H \leq_f G_\mathcal{P}$. Let $v \in \mathcal{L}_n$ and $h \in H$. Then, we have $\delta_v(h) \in H_n$ and since $H_1 \leq H$, by induction $H_n \leq H \leq G_\mathcal{P}$, so $h \in K_\mathcal{P}$.

For (6), it follows from the fact that $K_\mathcal{P} = \pi_D^{-1}(\widetilde{K_\mathcal{P}})$, the map $\pi_D$ is continuous and the topology in $\mathcal{P}$ is discrete. 

For (7), we just use that $\pi_D$ is surjective for the direct. 

For (8), it follows from \cite[Lemma 1.3]{Bartholdi2012}.
\end{proof}

\subsection{maximal branching subgroup in groups whose closure is a group of finite type}

Suppose that $G$ is an abstract group whose closure is a group of finite type. Can we use properties of its closure to know whether the group $G$ is regular branch? The answer is yes and we develop some tools in this subsection for this purpose. We start with the following general lemmas of abstract groups and their closures.

\begin{Lemma}
Let $\mathcal{G}$ be a topological group, a subgroup $G \leq \mathcal{G}$, the closure $\overline{G}$ of $G$ in $\mathcal{G}$ and $H$ a closed subgroup with finite index in $\overline{G}$. Then $\overline{G \cap H} = H$. 
\label{lemma: closure(GcapH) = H}
\end{Lemma}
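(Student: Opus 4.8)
The plan is to prove both inclusions. One inclusion is immediate: since $G\cap H\subseteq H$ and $H$ is closed in $\overline{G}$, we get $\overline{G\cap H}\subseteq H$. The real content is the reverse inclusion $H\subseteq\overline{G\cap H}$, i.e.\ that $G\cap H$ is dense in $H$.

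First I would use the fact that $H$ has finite index in $\overline G$. Write $\overline G=\bigsqcup_{i=1}^{m} g_i H$ for coset representatives $g_1,\dots,g_m$, and note we may choose $g_1,\dots,g_m\in G$, because $G$ is dense in $\overline G$ and each coset $g_iH$ is open (a finite-index closed subgroup of a topological group is open, since its complement is a finite union of closed cosets). Thus for each $i$ there is $h_i\in G$ with $g_iH=h_iH$; replacing $g_i$ by $h_i$, assume $g_i\in G$. Now take any $h\in H$ and any open neighborhood $U$ of $h$ in $\overline G$; I want to produce an element of $G\cap H$ inside $U$. Shrinking $U$, we may assume $U\subseteq H$ (since $H$ is open). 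Because $G$ is dense in $\overline G$, there is $x\in G\cap U$; but $U\subseteq H$, so $x\in G\cap H$, and $x\in U$. Hence every neighborhood of every point of $H$ meets $G\cap H$, which is exactly $H\subseteq\overline{G\cap H}$.

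The key step — and the only place where a hypothesis beyond ``$H$ closed'' is needed — is the observation that a closed finite-index subgroup of a topological group is open; this is what lets us shrink neighborhoods into $H$ and thereby transfer density of $G$ in $\overline G$ to density of $G\cap H$ in $H$. I do not expect any serious obstacle here: once openness of $H$ is in hand, the argument is a one-line density chase, and the coset-representative remark about choosing $g_i\in G$ is not even strictly necessary for this streamlined version (it would be needed if one instead argued coset-by-coset). If the paper's convention is that ``closed subgroup of finite index'' in a profinite (or, more generally, in the ambient) group is automatically open, one may simply cite that and proceed directly to the density chase.
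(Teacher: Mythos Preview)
Your proof is correct. The paper takes a somewhat different route: it fixes coset representatives $1=g_1,\dots,g_n$ for $\overline{G}/H$, writes $\overline{G}=\bigsqcup_i g_iH$, intersects with $G$ to get $G=\bigcup_i g_i(G\cap H)$ (implicitly choosing $g_i\in G$), and then takes closures to obtain $\overline{G}=\bigcup_i g_i\,\overline{G\cap H}$. Since each $g_i\,\overline{G\cap H}\subseteq g_iH$ and the $g_iH$ are disjoint and cover $\overline{G}$, equality is forced termwise, in particular $\overline{G\cap H}=H$. Your argument instead isolates the key point---that a closed finite-index subgroup is open---and then transfers density of $G$ in $\overline{G}$ directly to density of $G\cap H$ in $H$ via a one-line neighborhood argument. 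Your version is shorter and makes the role of the finite-index hypothesis transparent; the paper's coset-comparison argument avoids invoking openness explicitly but is essentially equivalent (and, as you noted, your aside about choosing $g_i\in G$ is exactly what the paper needs to make its intersection step literal).
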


\begin{proof}
First of all, $G \cap H \subseteq H$, so taking closure and using that $H$ is closed, we get the inclusion $\overline{G \cap H} \subseteq H$.

Now, take $\set{1 = g_1,\dots,g_n}$ a set of representatives for $\overline{G}/H$ and write $\overline{G}$ as the disjoint union $\overline{G} = \bigcup_{i = 1}^n g_i H$. Intersecting with $G$ and taking the closure we get $$\overline{G} = \overline{\bigcup_{i = 1}^n g_i (G \cap H)} = \bigcup_{i = 1}^n g_i (\overline{G \cap H})$$
since the closure of the union of finitely many sets is the union of the closures.

But now, we have two disjoint descriptions of $\overline{G}$, where $g_i (\overline{G \cap H}) \subseteq g_i H$ for all $i = 1,\dots,n$, so necessarily $g_i (\overline{G \cap H}) = g_i H$. In particular, if we take $i = 1$, then $\overline{G \cap H} = H$. 
\end{proof}

\cref{lemma: closure(GcapH) = H} is not true if $H$ does not have finite index. For an example of uncountable index, consider $\mathcal{G} = \RR$, $G = \QQ$ and $H = \<\pi>$. Then $\overline{G} = \RR$ and  $\overline{G \cap H} = \set{0}$. For a countable case, consider $\mathcal{G} = \QQ$, $G = \<\ZZ,\frac{1}{2^k}: k \geq 1>$ and $H = \<\frac{1}{3}>$. Then $\overline{G} = \QQ$ and since it is countable, any index will be countable. Then $\overline{G \cap H} = \ZZ$.

\begin{Lemma}
Let $\mathcal{G}$ be a topological group and $K,H$ two subgroups of $\mathcal{G}$ such that $K \leq_f H$. Then $[\overline{H}: \overline{K}] \leq [H:K]$.
\label{lemma: index closure subgroups and subgroups}
\end{Lemma}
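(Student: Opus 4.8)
The plan is to pass to quotients by the closure of $K$ and count cosets carefully. First I would write $n := [H:K] < \infty$ and choose a transversal $\{h_1, \dots, h_n\}$ for $H/K$, so that $H = \bigcup_{i=1}^n h_i K$ as a disjoint union. Taking closures in $\mathcal{G}$ and using that the closure of a finite union is the union of the closures, I get $\overline{H} = \bigcup_{i=1}^n \overline{h_i K} = \bigcup_{i=1}^n h_i \overline{K}$, where the last equality uses that left translation by the fixed element $h_i$ is a homeomorphism of $\mathcal{G}$, hence commutes with taking closures. This exhibits $\overline{H}$ as a union of $n$ (left) cosets of $\overline{K}$ inside $\overline{H}$ — note $\overline{K} \leq \overline{H}$ since $K \leq H$ — and therefore $[\overline{H} : \overline{K}] \leq n = [H:K]$.

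The only subtlety is the claim $\overline{h_i K} = h_i \overline{K}$. This is immediate because for any topological group $\mathcal{G}$ and any fixed $g \in \mathcal{G}$, the map $x \mapsto gx$ is a homeomorphism, and homeomorphisms send closures to closures: $\overline{g S} = g\overline{S}$ for every subset $S$. So there is essentially no obstacle here; the statement is a soft topological fact and does not need $K$ to be normal, or closed, or of finite index in $\mathcal{G}$, only $K \leq_f H$.

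One should be slightly careful that the cosets $h_i \overline{K}$ need not be pairwise disjoint — the index $[\overline{H}:\overline{K}]$ can strictly drop, as the examples following \cref{lemma: closure(GcapH) = H} already illustrate in spirit — but this is exactly why the conclusion is an inequality rather than an equality; covering $\overline{H}$ by $n$ cosets of $\overline{K}$ is all that is needed to bound the index by $n$. I expect the whole argument to be three or four lines once the translation-commutes-with-closure observation is stated.

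\begin{proof}
Write $n := [H:K]$ and pick a transversal $\{h_1, \dots, h_n\}$ for the cosets of $K$ in $H$, so $H = \bigcup_{i=1}^n h_i K$. Since left translation by a fixed element of $\mathcal{G}$ is a homeomorphism, it commutes with the closure operator, so $\overline{h_i K} = h_i \overline{K}$ for each $i$. As the closure of a finite union is the union of the closures,
$$\overline{H} = \overline{\bigcup_{i=1}^n h_i K} = \bigcup_{i=1}^n \overline{h_i K} = \bigcup_{i=1}^n h_i \overline{K}.$$
Because $K \leq H$ we have $\overline{K} \leq \overline{H}$, and the displayed equality shows that $\overline{H}$ is covered by the $n$ left cosets $h_1 \overline{K}, \dots, h_n \overline{K}$ of $\overline{K}$ in $\overline{H}$. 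Hence $[\overline{H} : \overline{K}] \leq n = [H:K]$.
\end{proof}
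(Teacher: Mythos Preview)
Your proof is correct and follows essentially the same approach as the paper: pick a transversal for $H/K$, take closures using that finite unions commute with closure and that left translation is a homeomorphism, and conclude the inequality since the resulting cosets $h_i\overline{K}$ need not be pairwise disjoint. The paper's argument is line-for-line the same.
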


\begin{proof}
Let $h_1,\dots,h_n$ be a set of representatives of $H/K$, so $H = \bigcup_{i = 1}^n h_i K$. Taking closure, $$\overline{H} = \overline{ \bigcup_{i = 1}^n h_i K} = \bigcup_{i = 1}^n \overline{h_i K} = \bigcup_{i = 1}^n h_i \overline{K},$$ where the last equality was because the translation is a homeomorphism. The intersection may not be pairwise disjoint, so we obtain the inequality.
\end{proof}

\begin{theorem}[{\cite[Theorem 2]{GarridoSunic2023}}]
Let $G$ be a self-similar, level-transitive and regular branch group with maximal branching subgroup $K$. Then $G$ has trivial rigid kernel if and only if there exists $n \geq 1$ such that $K \geq \St_G(n)$.
\label{theorem: trivial rigid kernel equivalent conditions maximal regular branch}
\end{theorem}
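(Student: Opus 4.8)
The plan is to translate both conditions into statements about level and rigid level stabilizers. By \cref{lemma: equivalences inclusion profinite kernels}, $G$ has trivial rigid kernel precisely when for every $n$ there is $m$ with $\RiSt_G(n)\supseteq\St_G(m)$. On the other side, since $G$ is self-similar and regular branch, its maximal branching subgroup has the explicit form $K=\set{g\in G\st \delta_v(g)\in G\text{ for all }v}$: the right-hand side is a subgroup (cf.\ \cref{theorem: properties KP}(1)), it is closed under the geometric product at level $1$, and it has finite index because it contains $K$, so by \cref{lemma: regular branch for self-similar groups} it is a branching subgroup and hence equals $K$ by maximality, the inclusion $K\subseteq\set{g\in G\st\delta_v(g)\in G\text{ for all }v}$ being clear from $\delta_v(K)\subseteq K_{|v|}\subseteq K$. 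I will use two consequences. First, $K_n\subseteq\RiSt_G(n)$ for all $n$, since for $g\in K_n$ each factor $\delta_v(g|_v)$ with $v\in\mathcal{L}_n$ lies in $\rist_G(v)$. Second, for a level stabilizer one has $\St_G(n)\subseteq K$ if and only if $\delta_i(\St_G(n))\subseteq G$ for every $i\in\mathcal{L}_1$: the forward implication is immediate, and the converse follows by induction on $|v|$, writing $v=iw$ and using that $\delta_w(g)\in\St_G(n+|w|)\subseteq\St_G(n)$ for $g\in\St_G(n)$.

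\smallskip
For the direction ``$K\supseteq\St_G(n_0)$ implies trivial rigid kernel'': since $G$ is self-similar and regular branch over $K$ with $\St_G(n_0)\subseteq K$, \cref{lemma: St(n+1) = prod St(n)} gives $\St_G(n_0+n)=(\St_G(n_0))_{n}$ for all $n\ge 0$; from $\St_G(n_0)\subseteq K$ we get $(\St_G(n_0))_n\subseteq K_n\subseteq\RiSt_G(n)$, so $\RiSt_G(n)\supseteq\St_G(n_0+n)$ for every $n$, and the rigid kernel is trivial by \cref{lemma: equivalences inclusion profinite kernels}. This is the easy direction and is essentially bookkeeping with geometric products.

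\smallskip
For the converse, assume the rigid kernel is trivial. Set $R_v:=\varphi_v(\rist_G(v))=\set{h\in G\st \delta_v(h)\in G}$, a subgroup with $K\subseteq R_v\leq_f G$, so that $\rist_G(v)=\delta_v(R_v)$ and $\RiSt_G(n)=\prod_{v\in\mathcal{L}_n}\delta_v(R_v)$. Choosing $N$ large (to be specified) and $m$ with $\St_G(m)\subseteq\RiSt_G(N)$, every $g\in\St_G(m)$ factors as $g=\prod_{v\in\mathcal{L}_N}\delta_v(g|_v)$ with $g|_v\in R_v$ for all $v\in\mathcal{L}_N$. The plan is to force, by enlarging $N$, that these sections land in $K$: then $g\in\prod_v\delta_v(K)=K_N\subseteq K$, so $\St_G(m)\subseteq K$ and one may take $n_0=m$. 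To achieve this I would exploit that $[G:K]<\infty$ leaves only finitely many subgroups between $K$ and $G$, that level-transitivity makes the $R_v$ with $v$ on a fixed level pairwise $G$-conjugate, and that self-similarity yields inclusions among the $R_v$ along the tree which, combined with $\RiSt_G(n)\supseteq\St_G(m(n))$ for all $n$ and a compactness argument on the boundary of $T$, should isolate a uniform depth $N$ past which the relevant sections are forced into $K$. I expect this last step to be the main obstacle: extracting ``$\St_G(n)\subseteq K$'' rather than merely ``$\varphi_v(\St_G(m))\subseteq R_v$'' from the decomposition is the genuine content of the theorem, and it requires a more delicate analysis of how the geometric-product description of deep level stabilizers interacts with the rigid level stabilizers than is needed for the other direction.
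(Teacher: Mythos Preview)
The paper does not prove this statement; it is quoted verbatim from \cite[Theorem 2]{GarridoSunic2023} and used as a black box in the proof of \cref{theorem: regular branch when closure is finite type}. There is therefore no in-paper argument to compare against.

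On the content of your proposal: the forward direction is correct. The hypothesis $\St_G(n_0)\subseteq K$ lets you invoke \cref{lemma: St(n+1) = prod St(n)} (with $m=n_0$ and branching subgroup $K$), giving $\St_G(n_0+n)=(\St_G(n_0))_n\subseteq K_n\subseteq\RiSt_G(n)$, and \cref{lemma: equivalences inclusion profinite kernels}(1) finishes it. Your preliminary identification of $K$ with $\{g\in G:\delta_v(g)\in G\text{ for all }v\}$ for self-similar regular branch $G$ is also correct and is the natural generalisation of \cref{theorem: properties KP}(5) beyond the finite-type setting; the auxiliary equivalence ``$\St_G(n)\subseteq K\Leftrightarrow\delta_i(\St_G(n))\subseteq G$ for all $i\in\mathcal{L}_1$'' is likewise fine.

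The converse, however, is not proved in your proposal---as you yourself say. The sketch does not close: from $\St_G(m)\subseteq\RiSt_G(N)$ you obtain $g|_v\in R_v$ for each $v\in\mathcal{L}_N$, but $R_v$ may be strictly larger than $K=\bigcap_w R_w$, and neither ``enlarging $N$'', nor conjugacy of the $R_v$ on a fixed level, nor the finiteness of $[G:K]$, nor an unspecified compactness argument on $\partial T$ explains why these sections are eventually forced into $K$. This is precisely the substantive half of the Garrido--\v{S}uni\'c theorem, and your outline has not supplied the mechanism that passes from ``$\RiSt_G(n)$ contains a level stabilizer for every $n$'' to ``some level stabilizer lies in $K$''. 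If you want to complete this direction you will need an additional idea, for which the original source is the right place to look.
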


We are now ready to prove the main theorem of this subsection:

\begin{theorem}
Let $G \leq \Aut(T)$ be a self-similar group such that $\overline{G} = G_\mathcal{P}$ is a group of finite type. Let $K$ be a subgroup of $G$ such that the geometric product $K_1 \leq G$, the subgroup $K \leq K_\mathcal{P}$ and $[G:K] = [G_\mathcal{P}: K_\mathcal{P}]$. Then:
\begin{enumerate}
\item $K = G \cap K_\mathcal{P}$,
\item $G$ is regular branch over $K$,
\item $K$ is the maximal branching subgroup of $G$.
\item If $G$ is level-transitive, then $G$ has trivial rigid kernel.
\end{enumerate}
\label{theorem: regular branch when closure is finite type}
\end{theorem}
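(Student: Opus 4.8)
The plan is to establish the four statements in order, using the hypotheses $K_1 \leq G$, $K \leq K_\mathcal{P}$ and $[G:K] = [G_\mathcal{P}:K_\mathcal{P}]$ together with the structural results about $K_\mathcal{P}$ from \cref{theorem: properties KP}.

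\textbf{Step (1).} First I would show $K = G \cap K_\mathcal{P}$. Since $K \leq K_\mathcal{P}$ and $K \leq G$, the inclusion $K \leq G \cap K_\mathcal{P}$ is immediate, so it suffices to compare indices. On one hand $[G : K] = [G_\mathcal{P}:K_\mathcal{P}]$ by hypothesis. On the other hand, $G \cap K_\mathcal{P} = G \cap \pi_D^{-1}(\widetilde{K_\mathcal{P}})$ by \cref{theorem: properties KP}(3), and since $\overline{G} = G_\mathcal{P}$ we have $\pi_D(G) = \pi_D(G_\mathcal{P}) = \mathcal{P}$, so $[G : G\cap K_\mathcal{P}] = [\pi_D(G) : \pi_D(G \cap K_\mathcal{P})] = [\mathcal{P} : \widetilde{K_\mathcal{P}}] = [G_\mathcal{P}:K_\mathcal{P}]$ by \cref{theorem: properties KP}(4). (One must check $\pi_D(G \cap K_\mathcal{P}) = \widetilde{K_\mathcal{P}}$: the inclusion $\subseteq$ is clear, and for $\supseteq$ use that $\St_{G_\mathcal{P}}(D) \leq K_\mathcal{P}$ and $\pi_D(G) = \mathcal{P} \supseteq \widetilde{K_\mathcal{P}}$, lifting any $\bar k \in \widetilde{K_\mathcal{P}}$ to some $g \in G$ with $\pi_D(g) = \bar k$, which then lies in $G\cap\pi_D^{-1}(\widetilde{K_\mathcal{P}}) = G\cap K_\mathcal{P}$.) Hence $[G:K] = [G : G\cap K_\mathcal{P}]$ with $K \leq G\cap K_\mathcal{P}$, forcing equality.

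\textbf{Steps (2) and (3).} For regular branchness, we already have $K \leq_f G$ from the index hypothesis, and $K_1 \leq G$ by assumption; since $K_1 \leq K_\mathcal{P}$ automatically (indeed $(K_\mathcal{P})_1 \leq K_\mathcal{P}$ by \cref{theorem: properties KP}(5) and $K_1 \leq (K_\mathcal{P})_1$), we get $K_1 \leq G \cap K_\mathcal{P} = K$ by step (1). As $G$ is self-similar, \cref{lemma: regular branch for self-similar groups} gives $K_1 \leq_f K$, so $G$ is regular branch over $K$. For maximality, let $H \leq G$ with $H_1 \leq_f H \leq_f G$; arguing exactly as in the proof of \cref{theorem: properties KP}(5), for $v \in \mathcal{L}_n$ and $h \in H$ we get $\delta_v(h) \in H_n \leq H \leq G \leq G_\mathcal{P}$ by induction on $n$, so $h \in K_\mathcal{P}$, whence $H \leq G \cap K_\mathcal{P} = K$. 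Thus $K$ is the maximal branching subgroup of $G$.

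\textbf{Step (4).} Assuming $G$ level-transitive, I would apply \cref{theorem: trivial rigid kernel equivalent conditions maximal regular branch}: it suffices to find $n$ with $K \geq \St_G(n)$. Since $\St_{G_\mathcal{P}}(D-1) \leq K_\mathcal{P}$ by \cref{theorem: properties KP}(2), we have $\St_G(D-1) = G \cap \St_{G_\mathcal{P}}(D-1) \leq G \cap K_\mathcal{P} = K$, so $n = D-1$ works (noting $G$ is self-similar and regular branch over $K$ with $K \supseteq \St_G(D-1)$, and $G$ level-transitive as required by the hypotheses of that theorem). This completes the proof.

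The main obstacle I anticipate is Step (1), specifically the verification that $\pi_D(G \cap K_\mathcal{P}) = \widetilde{K_\mathcal{P}}$ rather than something smaller; this is where density of $G$ in $G_\mathcal{P}$ (so that $\pi_D(G) = \mathcal{P}$) and the fact that $K_\mathcal{P}$ is a full preimage $\pi_D^{-1}(\widetilde{K_\mathcal{P}})$ together with $\St_{G_\mathcal{P}}(D) \leq K_\mathcal{P}$ must be combined carefully. Everything else is a routine index comparison or a direct reuse of the argument already given for \cref{theorem: properties KP}(5).
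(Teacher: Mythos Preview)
Your proof is correct, but Steps (1) and (3) take a somewhat different route from the paper. For Step~(1), the paper does not compute $[G:G\cap K_\mathcal{P}]$ directly via $\pi_D$; instead it uses the two closure lemmas proved just before the theorem (\cref{lemma: closure(GcapH) = H} and \cref{lemma: index closure subgroups and subgroups}) to write
\[
[G:K] = [G:G\cap K_\mathcal{P}][G\cap K_\mathcal{P}:K] \geq [\overline{G}:\overline{G\cap K_\mathcal{P}}][G\cap K_\mathcal{P}:K] = [G_\mathcal{P}:K_\mathcal{P}][G\cap K_\mathcal{P}:K],
\]
and then cancel. Your approach, exploiting $K_\mathcal{P}=\pi_D^{-1}(\widetilde{K_\mathcal{P}})$ from \cref{theorem: properties KP}(3) together with $\pi_D(G)=\mathcal{P}$, bypasses those topological lemmas entirely and is more elementary. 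For Step~(3), the paper passes to closures, shows $\overline{H}$ is a branching subgroup of $G_\mathcal{P}$, invokes maximality of $K_\mathcal{P}$ in $G_\mathcal{P}$, and then intersects back with $G$; your argument instead checks the defining $\delta_v$-condition for $K_\mathcal{P}$ directly on elements of $H$, which is shorter and avoids the detour through closures. Steps~(2) and~(4) match the paper essentially verbatim (the paper also records the stronger equality $K_1 = G\cap (K_\mathcal{P})_1$ in Step~(2), but this is not needed for the conclusion).
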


\begin{proof}
For (1), by hypothesis, $K \leq K_\mathcal{P}$ and $K \leq G$, so $K \leq G \cap K_\mathcal{P}$. Then, 
\begin{align*}
[G:K] = [G: G \cap K_\mathcal{P}][G \cap K_\mathcal{P}:K] \geq [\overline{G}: \overline{G \cap K_\mathcal{P}}][G \cap K_\mathcal{P}:K] = \\
[\overline{G}: K_\mathcal{P}][G \cap K_\mathcal{P}:K] = [G: K][G \cap K_\mathcal{P}:K]
\end{align*}
where in the first inequality \cref{lemma: index closure subgroups and subgroups} was used, in the next equality we used \cref{lemma: closure(GcapH) = H} and in the last equality the hypothesis over the indices. Cancelling out $[G:K]$ on both sides, we get that $1 \geq [G \cap K_\mathcal{P}:K]$ and consequently $G \cap K_\mathcal{P} = K$.

For (2), we start proving that the geometric product $K_1 = G \cap (K_\mathcal{P})_1$. Indeed, since $K \leq K_\mathcal{P}$ and $K_1 \leq G$ by hypothesis, we have that $K_1 \leq G \cap (K_\mathcal{P})_1$. Take now $g = (k_1,\dots,k_d) \in G$ such that $k_i \in K_\mathcal{P}$ for all $i = 1,\dots,d$. Since $G$ is self-similar and $g \in G$, then the sections $k_i \in G$, so $k_i \in G \cap K_\mathcal{P} = K$ for the point 1. Then $g \in K_1$, concluding that $K_1 = G \cap (K_\mathcal{P})_1$.

Now, by hypothesis $[G:K] = [G_\mathcal{P}: K_\mathcal{P}]$, so $[G:K] < \infty$ by \cref{theorem: properties KP}. Furthermore, 
$(K_\mathcal{P})_1 \leq K_\mathcal{P}$, so $K_1 = G \cap (K_\mathcal{P})_1 \leq G \cap K_\mathcal{P} = K$. Then, $G$ is regular branch over $K$ by \cref{lemma: regular branch for self-similar groups}.

For (3), suppose that $H$ is another subgroup regular branching $G$, namely, we have the inclusions $H_1 \leq_f H \leq_f G$. Taking closure, we keep the inclusions and the finiteness of the indices by \cref{lemma: index closure subgroups and subgroups}, so $\overline{H_1} = (\overline{H})_1 \leq_f \overline{H} \leq_f \overline{G}$. Consequently, $\overline{G}$ is regular branch over $\overline{H}$ and by \cref{theorem: properties KP}, we have $\overline{H} \leq K_\mathcal{P}$. Finally, intersecting with $G$, we get that $G \cap \overline{H} \leq G \cap K_\mathcal{P} = K$. In particular $H \leq G \cap \overline{H} \leq K$, proving the result.

For (4), by \cref{theorem: properties KP} we have $K_\mathcal{P} \geq \St_{G_\mathcal{P}}(D-1)$, so $K = G \cap K_\mathcal{P} \geq \St_G(D-1)$ and by \cref{theorem: trivial rigid kernel equivalent conditions maximal regular branch}, then $G$ has trivial rigid kernel.
\end{proof}

If $G$ is a finitely generated group in the conditions of \cref{theorem: regular branch when closure is finite type}. We can also give a pseudo-algorithm that helps to find a candidate $K$ for $G$. Let $S$ be a finite generating set of $G$ and assume $\widetilde{K_\mathcal{P}} \lhd \mathcal{P}$. Then $\mathcal{P}/\widetilde{K_\mathcal{P}}$ has a presentation with a generating set $\pi_D(S) \widetilde{K_\mathcal{P}}$ and finitely many relations $r_1,\dots,r_m$ since the group is finite. This means that $\widetilde{K_\mathcal{P}} = \<r_1,\dots,r_n>^\mathcal{P}$ where $r_1,\dots,r_n$ are words in terms of the elements in $S$. Our candidate for \cref{theorem: regular branch when closure is finite type} is $$K = \<r_1,\dots,r_m>^G.$$ 

First, of course $K \lhd G$ and since we added all the relations of $\mathcal{P}/\widetilde{K_\mathcal{P}}$, we have the isomorphism of groups $G/K \rightarrow\mathcal{P}/\widetilde{K_\mathcal{P}} \simeq G_\mathcal{P}/K_\mathcal{P}$ and consequently we have $K \leq K_\mathcal{P}$ and the equalities $[G:K] = [\mathcal{P}: \widetilde{K_\mathcal{P}}] = [G_\mathcal{P}: K_\mathcal{P}]$. It remains to check that $K_1 \leq G$. The following lemma can help in this task:

\begin{Lemma}
Let $G$ be a fractal group such that $(x,1,\dots,1) \in G$.

\begin{enumerate}
\item If $g \in G$ then $(gxg^{-1},1,\dots,1) \in G$.
\item If $y \in G$ then $([x,y],1,\dots,1) \in G$.
\item Let $x,y,z \in G$. Then $(yxz,1,\dots,1) \in G$ if and only if $(yz,1,\dots,1) \in G$. 
\end{enumerate}
\label{lemma: tricks for Kx...xK}
\end{Lemma}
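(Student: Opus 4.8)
The plan is to prove each item directly from the hypotheses using the self-similarity (fractal) structure and the formula $(gh)|_v = g|_{h(v)}\,h|_v$. Throughout I write tuples as $(g_1,\dots,g_d)$ meaning $\psi_1$ applied to an element of $\St_G(1)$; the element $(x,1,\dots,1)$ lies in $G$ by hypothesis, and since $G$ is fractal it is in particular self-similar and level-transitive, and $\varphi_1(\st_G(1))=G$ (actually we only need that $(x,1,\dots,1)\in G$ together with self-similarity to manipulate sections).

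For (1): given $g\in G$, fractality of $G$ gives an element $\tilde g\in\st_G(1)$ with $\tilde g|_1 = g$; write $\tilde g = (g, h_2,\dots,h_d)$ for some $h_i\in G$. Then a direct computation with the section formulas shows
\begin{align*}
\tilde g\,(x,1,\dots,1)\,\tilde g^{-1} = (gxg^{-1},1,\dots,1) \in G,
\end{align*}
since $(x,1,\dots,1)$ stabilizes level $1$ and acts only in the first coordinate, so conjugating by an element of $\st_G(1)$ keeps it in $\St_G(1)$ and multiplies the first-coordinate section by $g(\cdot)g^{-1}$ while leaving the other coordinates trivial. Item (2) is then immediate: $[x,y]=xyx^{-1}y^{-1}$, so $([x,y],1,\dots,1) = (x,1,\dots,1)\,(yxy^{-1},1,\dots,1)^{-1}$ wait — more cleanly, $([x,y],1,\dots,1)=(x,1,\dots,1)(y,1,\dots,1)(x^{-1},1,\dots,1)(y^{-1},1,\dots,1)$ is wrong because $(y,1,\dots,1)$ need not be in $G$; instead use (1): $([x,y],1,\dots,1) = (x,1,\dots,1)\cdot\big((yx^{-1}y^{-1}),1,\dots,1\big)$, and the second factor lies in $G$ by part (1) applied to $g=y$ and the element $x^{-1}$ (note $(x^{-1},1,\dots,1)\in G$ since $G$ is a group). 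Multiplying two elements of $G$ supported on the first coordinate and on level $1$ gives the product of first coordinates, i.e. $(x\cdot yx^{-1}y^{-1},1,\dots,1)=([x,y],1,\dots,1)\in G$.

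For (3): the key observation is that $(yxz,1,\dots,1)(yz,1,\dots,1)^{-1}$, or rather the appropriate conjugate/product, reduces to an element of the form $(w,1,\dots,1)$ with $w$ a conjugate of $x$ (or of $x^{-1}$) by an element of $G$, hence lies in $G$ by part (1). Concretely, write $(yxz,1,\dots,1) = (yxy^{-1},1,\dots,1)\cdot(yz,1,\dots,1)$; the first factor is $(yxy^{-1},1,\dots,1)\in G$ by part (1) with $g=y$, so $(yxz,1,\dots,1)\in G$ if and only if $(yz,1,\dots,1)\in G$, which is exactly the claim.

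I do not expect a serious obstacle here; the only subtlety — and the one point to be careful about — is that single-coordinate tuples like $(y,1,\dots,1)$ need not individually lie in $G$, so every manipulation must be arranged so that one only ever multiplies or conjugates the \emph{given} element $(x,1,\dots,1)$ (and its inverse) by honest elements of $G$, never introducing spurious tuples. Part (1) is the workhorse that legitimizes all of this, and it is where fractality (to produce $\tilde g\in\st_G(1)$ with $\tilde g|_1=g$) is genuinely used; parts (2) and (3) are then purely formal consequences via the multiplicativity of $\psi_1$ on $\St_G(1)$.
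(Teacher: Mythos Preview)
Your argument is correct and, for parts (1) and (2), essentially identical to the paper's: use fractality to lift $g$ (resp.\ $y$) to an element of $\st_G(1)$ with prescribed first section, conjugate, and then multiply by $(x,1,\dots,1)$ or its inverse. For part (3) you use a slightly different factorization than the paper: you write $(yxz,1,\dots,1) = (yxy^{-1},1,\dots,1)\cdot(yz,1,\dots,1)$ and invoke only part (1), whereas the paper writes $(yxz,1,\dots,1) = (yz,1,\dots,1)\,(x,1,\dots,1)\,([x^{-1},z^{-1}],1,\dots,1)$ and uses both the hypothesis and part (2). Your route is in fact a touch more economical. The written exposition could be tidied (the mid-proof ``wait'' and abandoned computation in (2) should be removed), but the mathematics is sound and you have correctly identified the one genuine subtlety, namely that $(y,1,\dots,1)$ need not lie in $G$.
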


\begin{proof}
For (1), since $G$ is fractal, there exists an element $h$ fixing the first vertex $1$ such that $h|_1 = g$. Then $$h(x,1,\dots,1)h^{-1} = (gxg^{-1},1\dots,1).$$ 

For (2), we start using the point 1 to justify that $(yx^{-1}y^{-1},1,\dots,1) \in G$. Then multiplying by $(x,1,\dots,1)$, we get that $(xyx^{-1}y^{-1},1,\dots,1) = ([x,y],1,\dots,1) \in G$. 
 
For (3), 
\begin{multline*}
(yxz,1,\dots,1) = (yzx[x^{-1},z^{-1}],1,\dots,1) = \\(yz,1,\dots,1)(x,1,\dots,1)([x^{-1},z^{-1}],1,\dots,1).
\end{multline*} 
Since $(x,1,\dots,1) \in G$, the last two terms are in $G$, giving the result. 
\end{proof}

The most useful property of \cref{lemma: tricks for Kx...xK} will be the last one, as it allows us to annihilate terms that we already know that are in $G$.  

\begin{Example}
\begin{figure}[t]
\begin{tikzpicture}[shorten >=1pt,node distance=3cm,on grid,>={Stealth[round]},
    every state/.style={draw=blue!50,very thick,fill=blue!20}, bend angle = 15]

  \node[state] (1)  {$1$};
  \node[state] (a) [right=of 1] {$a$};
  \node[state] (b) [below=of a] {$b$};
  \node[state] (c) [below=of 1] {$c$};

  \path[->] 
  (1) edge [loop above] node  {x/x} ()       
  (a)  edge node [above] {x/x+1} (1)
  (b)  edge node [right] {1/2} (a)
  (b)  edge [bend right] node [above] {2/1} (c)
  (c)  edge [loop above] node {1/1} ()
  (c)  edge [bend right] node [below] {2/2} (b);
\end{tikzpicture}
\caption{Another automaton group whose closure is the same as the closure of the Grigorchuk group. The label $x/x$ means that the output is the same as the input and it passes to the same state. The label $x/x+1$ indicates that the output is one more than the input considering $X = \set{1,2}$ as numbers modulo $2$.}
\label{figure: automaton max reg brach}
\end{figure}
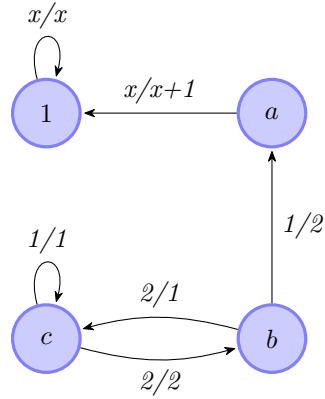

Consider the group $G$ generated by the automaton in the \cref{figure: automaton max reg brach}. Then, $G$ is generated by the automorphisms $a$, $b$ and $c$ where $a = (1,1)\sigma$, $b = (a,c)\sigma$, $c = (c,b)$ and $\sigma$ is the non-trivial permutation of $\Sym(2)$. The group $G$ is given by an automaton, so it is in particular self-similar. The group is also strongly fractal as $c^{-1} b^2 = (a, b^{-1}ac)$, $aca = (b,c)$ and $c = (c,b)$, and conjugating these three elements with $a$, we get $a,b$ and $c$ on the second coordinate. By \cref{theorem: closure automaton and finite type}, we find that $\overline{G}$ is the closure of the first Grigorchuk group, that is a group of finite type of depth $4$ as we already observed in \cref{subsection: the case (dD) = (24)}. Since $G$ is strongly fractal, then $\overline{G} = G_\mathcal{P}$ is fractal and by \cref{theorem: properties KP}, then $\widetilde{K_\mathcal{P}} \lhd \mathcal{P}$. Moreover, the quotient $\mathcal{P}/\widetilde{K_\mathcal{P}}$ has the presentation  $$\mathcal{P}/\widetilde{K_\mathcal{P}} = \<a,b,c \mid a^2, b^2, c^2, (cba)^2, (cacb)^2, (bc)^4>,$$ so we need to prove that the last five relations $r_i$ (because $a^2 = 1$) satisfy that $(r_i,1) \in G$, since by conjugation by $a$, we obtain $(1,r_i) = a(r_i,1)a \in G$.

We have $ba = (c,a)$, so $(ba)^2 = \boxed{(c^2,1) \in G}$. Also $c^2 = (c^2,b^2)$, so we deduce $c^2(ba)^{-2} = (1,b^2)$ and therefore $\boxed{(b^2,1) \in G}$. 

Continuing, we have $acb = (ba,c^2)$ and since $(1,c^2) \in G$, then $\boxed{(ba,1) \in G}$ and also $\boxed{(ab^{-1},1) \in G}$. 

We now use \cref{lemma: tricks for Kx...xK}.(3) to prove the remaining relations. For example 
\begin{align*}
((cba)^2,1) = (c(ba)c(ba),1) \in G \Leftrightarrow (c^2,1) \in G.
\end{align*}

For ``$(cacb)^2$" we will need one relation of $G$. It can be verified that $$bacb^{-2}acbc^{-1}ac^{-1}a = 1,$$ so $cbc^{-1}ac^{-1}a = ab^2c^{-1}ab^{-1}$. Then 
\begin{align*}
(cacbcacb,1) = (cacbc^{-1}(c^2)ac^{-1}(c^2)a(ab),1) \in G \Leftrightarrow \\ (ca(cbc^{-1}ac^{-1}a),1) = (caab^2c^{-1}ab^{-1},1) \in G \Leftrightarrow \\ (ab^{-1},1) \in G.
\end{align*}

Finally, for ``$(cb)^4$", 
\begin{align*}
(c(ab)c(aca))^2 = ((c,b)(a,c)(c,b)(b,c))^2 = ((cacb)^2, (bc)^4).
\end{align*}
Since we already proved that $((cacb)^2,1) \in G$, then $(1,(bc)^4) \in G$ and consequently $((bc)^4,1) \in G$. 

By \cref{theorem: regular branch when closure is finite type}, we conclude that $G$ is regular branch over $$K = \<b^2,c^2,(cba)^2,(cacb)^2,(bc)^4>^G$$ and $K$ is the maximal branching subgroup. As $G$ is level-transitive, by \cref{theorem: regular branch when closure is finite type} we obtain that the rigid kernel is trivial. 
\end{Example}

\section{List of automata groups whose closure is a group of finite type}
\label{section: List of automata groups whose closure is a group of finite type}

To culminate this article, we present a list of automata such that the closure of the automata groups coincide with the infinite topologically finitely generated groups of finite type studied in \cref{section: analysis of different cases}. The search was made in GAP \cite{GAP} by constructing all the automata groups with $r$ states and applying \cref{theorem: closure automaton and finite type}. It is worth mentioning that even though we are just putting one automata per group of finite type, many other automatas whose generated groups have the same closure but are not isomorphic were found.  

In the following figures, when in an automaton appears one arrow labeled as $x/x$, this means that the output is the same as the input and it passes to the same state. The label $x/x+1$ indicates that the output is one more than the input considering $X$ as numbers modulo $d$. Similarly, the label $x/x-1$ indicates that the output is one less than the input considering $X$ as numbers modulo $d$. 

By the analysis done in \cref{section: analysis of different cases}, we know that in the binary tree, there are no infinite topologically finitely generated groups of finite type for depths $2$ and $3$. 

\subsection{List of automata groups for the case $(d,D) = (2,4)$}

In this case, we have 32 infinite topologically finitely generated groups of finite type (see \cref{subsection: the case (dD) = (24)}). The search showed that no $3$-state automata group has as closure these 32 groups. There are 12 of them that coincide with the closure of $4$-state automata groups and 18 of them that coincide with the closure of $5$-state automata groups. There are 2 groups of finite type that do not coincide with the closure of an automata group with at most 5 states.

\begin{minipage}{0.45\textwidth}
    \centering
    \begin{tikzpicture}[shorten >=1pt,node distance=3cm,on grid,>={Stealth[round]},
        every state/.style={draw=blue!50,very thick,fill=blue!20}, bend angle = 15]
    
      \node[state] (1)  {$1$};
      \node[state] (a) [right=of 1] {$a$};
      \node[state] (b) [below =of a] {$b$};
      \node[state] (c) [below =of 1] {$c$};
    
      \path[->] 
      (1) edge [loop above] node  {x/x} ()       
      (a)  edge node [above] {x/x+1} (1)
      (b)  edge node [right] {1/1} (a)
      (b)  edge [bend left] node [below] {2/2} (c)
      (c)  edge [bend left] node [above] {1/1} (b)
      (c)  edge node [left] {2/2} (1);
    \end{tikzpicture}
    
  \small Automaton for $\mathcal{P}_{111}$ $G(\mathcal{A}) = \IMG(z^2+i)$
\end{minipage}
\hfill
\begin{minipage}{0.45\textwidth}
    \centering
    \begin{tikzpicture}[shorten >=1pt,node distance=3cm,on grid,>={Stealth[round]},
        every state/.style={draw=blue!50,very thick,fill=blue!20}, bend angle = 15]
    
      \node[state] (1)  {$1$};
      \node[state] (a) [right=of 1] {$a$};
      \node[state] (b) [below =of 1] {$b$};
      \node[state] (c) [below =of a] {$c$};
    
      \path[->] 
      (1) edge [loop above] node  {x/x} ()       
      (a)  edge [loop above] node  {1/2} ()
      (a)  edge node [right] {2/1} (b)
      (b)  edge [bend right] node [below] {1/1} (c)
      (b)  edge node [above right] {2/2} (1)
      (c)  edge node [right] {1/1} (a)
      (c)  edge [bend right] node [above] {2/2} (b);
    \end{tikzpicture}

  \small Automaton for $\mathcal{P}_{112}$
\end{minipage}

\bigskip

\begin{minipage}{0.45\textwidth}
    \centering
    \begin{tikzpicture}[shorten >=1pt,node distance=3cm,on grid,>={Stealth[round]},
        every state/.style={draw=blue!50,very thick,fill=blue!20}, bend angle = 15]
    
      \node[state] (1)  {$1$};
      \node[state] (a) [right=of 1] {$a$};
      \node[state] (b) [below =of 1] {$b$};
      \node[state] (c) [below =of a] {$c$};
    
      \path[->] 
      (1) edge [loop above] node  {x/x} ()       
      (a)  edge node [above] {x/x+1} (1)
      (b)  edge node [above] {1/2} (a)
      (b)  edge [bend right] node [below] {2/1} (c)
      (c)  edge [loop above] node [above] {2/2} ()
      (c)  edge [bend right] node [above] {1/1} (b);
    \end{tikzpicture}
    
  \small Automaton for $\mathcal{P}_{113}$
\end{minipage}
\hfill
\begin{minipage}{0.45\textwidth}
    \centering
    \begin{tikzpicture}[shorten >=1pt,node distance=3cm,on grid,>={Stealth[round]},
        every state/.style={draw=blue!50,very thick,fill=blue!20}, bend angle = 15]
    
      \node[state] (1)  {$1$};
      \node[state] (a) [right=of 1] {$a$};
      \node[state] (b) [below =of 1] {$b$};
      \node[state] (c) [below =of a] {$c$};
    
      \path[->] 
      (1) edge [loop above] node  {x/x} ()       
      (a)  edge node [above] {x/x+1} (1)
      (b)  edge node [above] {1/1} (a)
      (b)  edge [loop above] node [above] {2/2} ()
      (c)  edge [loop above] node [above] {2/1} ()
      (c)  edge node [above] {1/2} (b);
    \end{tikzpicture}

  \small Automaton for $\mathcal{P}_{114}$
\end{minipage}

\bigskip

\begin{minipage}{0.45\textwidth}
    \centering
    \begin{tikzpicture}[shorten >=1pt,node distance=3cm,on grid,>={Stealth[round]},
        every state/.style={draw=blue!50,very thick,fill=blue!20}, bend angle = 15]
    
      \node[state] (1)  {$1$};
      \node[state] (a) [right=of 1] {$a$};
      \node[state] (b) [below =of a] {$b$};
      \node[state] (c) [below =of 1] {$c$};
    
      \path[->] 
      (1) edge [loop above] node  {x/x} ()       
      (a)  edge node [above] {x/x+1} (1)
      (b)  edge node [right] {1/1} (a)
      (b)  edge [bend left] node [below] {2/2} (c)
      (c)  edge node [left] {1/1} (1)
      (c)  edge [bend left] node [above] {2/2} (b);
    \end{tikzpicture}
    
  \small Automaton for $\mathcal{P}_{121}$ ($G(\mathcal{A})$ is the third Grigorchuk group)
\end{minipage}
\hfill
\begin{minipage}{0.45\textwidth}
    \centering
    \begin{tikzpicture}[shorten >=1pt,node distance=3cm,on grid,>={Stealth[round]},
        every state/.style={draw=blue!50,very thick,fill=blue!20}, bend angle = 15]
    
      \node[state] (1)  {$1$};
      \node[state] (a) [right=of 1] {$a$};
      \node[state] (b) [below =of a] {$b$};
      \node[state] (c) [below =of 1] {$c$};
    
      \path[->] 
      (1) edge [loop above] node  {x/x} ()       
      (a) edge [loop above] node  {2/1} ()       
      (a)  edge node [above] {1/2} (c)
      (b)  edge [bend left] node [below] {1/1} (c)
      (b)  edge node [right] {2/2} (a)
      (c)  edge node [left] {1/1} (1)
      (c)  edge [bend left] node [above] {2/2} (b);
    \end{tikzpicture}

  \small Automaton for $\mathcal{P}_{122}$
\end{minipage}

\bigskip

\begin{minipage}{0.45\textwidth}
    \centering
\begin{tikzpicture}[shorten >=1pt,node distance=3cm,on grid,>={Stealth[round]},
    every state/.style={draw=blue!50,very thick,fill=blue!20}, bend angle = 15]

  \node[state] (a)  {$a$};
  \node[state] (b) [right=of a] {$b$};
  \node[state] (c) [below =of a] {$c$};
  \node[state] (d) [below =of b] {$d$};

  \path[->] 
  (a) edge [loop above] node  {1/2} ()       
  (a)  edge node [left] {2/1} (c)
  (b)  edge node [right] {x/x} (d)
  (c)  edge [bend right] node [below] {1/1} (d)
  (c)  edge [bend right] node [left] {2/2} (b)
  (d)  edge [bend right] node [above] {1/1} (a)
  (d)  edge [bend right] node [above] {2/2} (c);
\end{tikzpicture}
    
  \small Automaton for $\mathcal{P}_{123}$ ($G_{\mathcal{P}_{123}}$ is the closure of the first Grigorchuk group)
\end{minipage}
\hfill
\begin{minipage}{0.45\textwidth}
    \centering
    \begin{tikzpicture}[shorten >=1pt,node distance=3cm,on grid,>={Stealth[round]},
        every state/.style={draw=blue!50,very thick,fill=blue!20}, bend angle = 15]
    
      \node[state] (1)  {$1$};
      \node[state] (a) [right=of 1] {$a$};
      \node[state] (b) [below =of a] {$b$};
      \node[state] (c) [below =of 1] {$c$};
    
      \path[->] 
      (1) edge [loop above] node  {x/x} ()       
      (a) edge node [above]  {x/x+1} (1)
      (b) edge [loop below] node [below] {1/1} ()
      (b) edge node [right] {2/2} (a)
      (c) edge [loop above] node  {2/1} ()
      (c) edge node [above] {1/2} (b);
    \end{tikzpicture}

  \small Automaton for $\mathcal{P}_{124}$
\end{minipage}

\bigskip

\begin{minipage}{0.45\textwidth}
    \centering
    \begin{tikzpicture}[scale=0.8, transform shape, shorten >=1pt,node distance=3cm,on grid,>={Stealth[round]},
        every state/.style={draw=blue!50,very thick,fill=blue!20}, bend angle = 15]
    
      \node[state] (1)  {$1$};
      \node[state] (a) [below right=of 1] {$a$};
      \node[state] (b) [below = of a] {$b$};
      \node[state] (d) [below left =of 1] {$d$};  
      \node[state] (c) [below =of d] {$c$};
      
      \path[->] 
      (1) edge [loop above] node  {x/x} ()       
      (a) edge node [right]  {1/2} (1)
      (a) edge node [right]  {2/2} (b)
      (b) edge  node [above]  {x/x} (c)
      (c) edge  node [above] {1/1} (a)
      (c) edge [bend left] node [left] {2/2} (d)
      (d) edge [bend left] node [right] {1/1} (c)
      (d)  edge node [left] {2/2} (1);
    \end{tikzpicture}
    
  \small Automaton for $\mathcal{P}_{211}$
\end{minipage}
\hfill
\begin{minipage}{0.45\textwidth}
    \centering
    \begin{tikzpicture}[scale=0.8, transform shape, shorten >=1pt,node distance=3cm,on grid,>={Stealth[round]},
    every state/.style={draw=blue!50,very thick,fill=blue!20}, bend angle = 15]

  \node[state] (1)  {$1$};
  \node[state] (c) [below right=of 1] {$c$};
  \node[state] (a) [below = of a] {$a$};
  \node[state] (d) [below left =of 1] {$d$};  
  \node[state] (b) [below =of d] {$b$};
  
  \path[->] 
  (1) edge [loop above] node  {x/x} ()       
  (a) edge node [right]  {1/2} (c)
  (a) edge [loop below] node [right] {2/1} () 
  (b) edge  node [above]  {1/1} (a)
  (b) edge [bend left] node [left] {2/2} (d)
  (c) edge  node  {1/1} (1)
  (c)  edge node [above] {2/2} (b)
  (d) edge [bend left] node [right] {1/1} (b)
  (d)  edge node [above] {2/2} (1);
\end{tikzpicture}

  \small Automaton for $\mathcal{P}_{212}$
\end{minipage}

\bigskip

\begin{minipage}{0.45\textwidth}
    \centering
    \begin{tikzpicture}[scale=0.8, transform shape, shorten >=1pt,node distance=3cm,on grid,>={Stealth[round]},
        every state/.style={draw=blue!50,very thick,fill=blue!20}, bend angle = 15]
    
      \node[state] (c)  {$c$};
      \node[state] (a) [above left=of c] {$a$};
      \node[state] (b) [below left= of c] {$b$};
      \node[state] (d) [above right =of c] {$d$};  
      \node[state] (e) [below right=of c] {$e$};
      
      \path[->] 
      (a) edge [loop above] node  {1/2} ()    
      (a) edge node [above]  {2/1} (c)
      (b) edge node [left] {1/1} (a) 
      (b) edge [bend right] node [below] {2/2} (e)
      (c) edge  node [above]  {1/2} (b)
      (c) edge node [above] {2/1} (d)
      (d) edge node [above] {x/x+1} (a)
      (e) edge [bend right] node [above] {2/1} (b)
      (e) edge node [right] {1/2} (d);
    \end{tikzpicture}
    
  \small Automaton for $\mathcal{P}_{213}$
\end{minipage}
\hfill
\begin{minipage}{0.45\textwidth}
    \centering
    \begin{tikzpicture}[scale=0.8, transform shape, shorten >=1pt,node distance=3cm,on grid,>={Stealth[round]},
        every state/.style={draw=blue!50,very thick,fill=blue!20}, bend angle = 15]
    
      \node[state] (1)  {$1$};
      \node[state] (a) [below right=of 1] {$a$};
      \node[state] (c) [below = of a] {$c$};
      \node[state] (b) [below left =of 1] {$b$};  
      \node[state] (d) [below =of b] {$d$};
      
      \path[->] 
      (1) edge [loop above] node  {x/x} ()       
      (a) edge node [right]  {1/2} (1)
      (a) edge node [above] {2/1} (b) 
      (b) edge node [above]  {x/x} (c)
      (c) edge [right] node  {1/1} (a)
      (c) edge [bend left] node [below] {2/2} (d)
      (d) edge [bend left] node [above] {1/1} (c)
      (d)  edge node [left] {2/2} (b);
    \end{tikzpicture}

  \small Automaton for $\mathcal{P}_{214}$
\end{minipage}

\bigskip

\begin{minipage}{0.45\textwidth}
    \centering
    \begin{tikzpicture}[scale=0.75, transform shape, shorten >=1pt,node distance=3cm,on grid,>={Stealth[round]},
        every state/.style={draw=blue!50,very thick,fill=blue!20}, bend angle = 15]
    
      \node[state] (1)  {$1$};
      \node[state] (a) [below right=of 1] {$a$};
      \node[state] (d) [below = of a] {$d$};
      \node[state] (c) [below left =of 1] {$c$};  
      \node[state] (b) [below =of c] {$b$};
      
      \path[->] 
      (1) edge [loop above] node  {x/x} ()       
      (a) edge node [right]  {1/2} (1)
      (a) edge node [right] {2/1} (d) 
      (b) edge [bend left] node [left]  {1/1} (c)
      (b) edge node [above]  {2/2} (a)
      (c) edge [right] node  {1/1} (1)
      (c) edge [bend left] node [right] {2/2} (b)
      (d) edge node [above] {x/x} (b);
    \end{tikzpicture}

  \small Automaton for $\mathcal{P}_{221}$
\end{minipage}
\hfill
\begin{minipage}{0.45\textwidth}
    \centering
    \begin{tikzpicture}[scale=0.75, transform shape, shorten >=1pt,node distance=3cm,on grid,>={Stealth[round]},
        every state/.style={draw=blue!50,very thick,fill=blue!20}, bend angle = 15]
    
      \node[state] (1)  {$1$};
      \node[state] (c) [below right=of 1] {$c$};
      \node[state] (b) [below = of c] {$b$};
      \node[state] (d) [below left =of 1] {$d$};  
      \node[state] (a) [below =of d] {$a$};
      
      \path[->] 
      (1) edge [loop above] node  {x/x} ()       
      (a) edge [loop below] node [below]  {1/2} ()
      (a) edge node [right] {2/1} (d) 
      (b) edge [bend left] node [left]  {1/1} (c)
      (b) edge node [above]  {2/2} (a)
      (c) edge [right] node  {1/1} (1)
      (c) edge [bend left] node [right] {2/2} (b)
      (d) edge node [above] {1/1} (b)
      (d) edge node [left] {2/2} (1);
    \end{tikzpicture}

  \small Automaton for $\mathcal{P}_{222}$
\end{minipage}

\bigskip

\begin{minipage}{0.45\textwidth}
    \centering
    \begin{tikzpicture}[scale=0.75, transform shape, shorten >=1pt,node distance=3cm,on grid,>={Stealth[round]},
        every state/.style={draw=blue!50,very thick,fill=blue!20}, bend angle = 15]
    
      \node[state] (a)  {$a$};
      \node[state] (b) [below right=of 1] {$b$};
      \node[state] (e) [below = of c] {$e$};
      \node[state] (c) [below left =of 1] {$c$};  
      \node[state] (d) [below =of c] {$d$};
      
      \path[->] 
      (a) edge [loop above] node  {1/2} ()       
      (a) edge node [right]  {2/1} (b)
      (b) edge node [above] {1/2} (c) 
      (b) edge [bend left] node [right]  {2/1} (e)
      (c) edge node [left]  {1/1} (a)
      (c) edge [bend left] node [right] {2/2} (d)
      (d) edge node [above] {1/2} (e)
      (d) edge [bend left] node [left] {2/1} (c)
      (e) edge [bend left] node [left] {x/x+1} (b);
    \end{tikzpicture}

  \small Automaton for $\mathcal{P}_{223}$
\end{minipage}
\hfill
\begin{minipage}{0.45\textwidth}
    \centering
    \begin{tikzpicture}[scale=0.75, transform shape, shorten >=1pt,node distance=3cm,on grid,>={Stealth[round]},
        every state/.style={draw=blue!50,very thick,fill=blue!20}, bend angle = 15]
    
      \node[state] (1)  {$1$};
      \node[state] (a) [below right=of 1] {$a$};
      \node[state] (b) [below left =of 1] {$b$};  
      \node[state] (d) [below = of b] {$d$};
      \node[state] (c) [below =of a] {$c$};
      
      \path[->] 
      (1) edge [loop above] node  {x/x} ()       
      (a) edge node [right]  {1/2} (1)
      (a) edge node [above]  {2/1} (b)
      (b) edge node [above] {x/x} (c) 
      (c) edge node [right]  {1/1} (a)
      (c) edge [bend left] node [below] {2/2} (d)
      (d) edge node [left] {1/1} (b)
      (d) edge [bend left] node [above] {2/2} (c);
    \end{tikzpicture}
    
  \small Automaton for $\mathcal{P}_{224}$
\end{minipage}

\bigskip

\begin{minipage}{0.45\textwidth}
    \centering
    \begin{tikzpicture}[shorten >=1pt,node distance=3cm,on grid,>={Stealth[round]},
        every state/.style={draw=blue!50,very thick,fill=blue!20}, bend angle = 15]
    
      \node[state] (1)  {$1$};
      \node[state] (c) [right=of 1] {$c$};
      \node[state] (a) [below =of c] {$a$};
      \node[state] (b) [below =of 1] {$b$};
    
      \path[->] 
      (1) edge [loop above] node  {x/x} ()       
      (a) edge node [right]  {x/x+1} (c)
      (b) edge [bend left] node [left] {1/1} (c)
      (b) edge node [below] {2/2} (a)
      (c) edge node [above] {1/1} (1)
      (c) edge [bend left] node [right] {2/2} (b);
    \end{tikzpicture}

  \small Automaton for $\mathcal{P}_{311}$
\end{minipage}
\hfill
\begin{minipage}{0.45\textwidth}
    \centering
    \begin{tikzpicture}[scale=0.75, transform shape, shorten >=1pt,node distance=3cm,on grid,>={Stealth[round]},
        every state/.style={draw=blue!50,very thick,fill=blue!20}, bend angle = 15]
    
      \node[state] (1)  {$1$};
      \node[state] (a) [below right=of 1] {$a$};
      \node[state] (c) [below left =of 1] {$c$};  
      \node[state] (d) [below = of c] {$d$};
      \node[state] (b) [below =of a] {$b$};
      
      \path[->] 
      (1) edge [loop above] node  {x/x} ()       
      (a) edge node [right]  {1/2} (1)
      (a) edge node [right]  {2/1} (b)
      (b) edge node [above] {1/2} (d)
      (b) edge [loop below] node [below] {2/1} ()
      (c) edge node [left]  {1/1} (1)
      (c) edge [bend left] node [right] {2/2} (d)
      (d) edge [bend left] node [left] {1/1} (c)
      (d) edge  node [above] {2/2} (a);
    \end{tikzpicture}
  \small Automaton for $\mathcal{P}_{312}$
\end{minipage}

\bigskip

\begin{minipage}{0.45\textwidth}
    \centering
    \begin{tikzpicture}[scale=0.8, transform shape, shorten >=1pt,node distance=3cm,on grid,>={Stealth[round]},
        every state/.style={draw=blue!50,very thick,fill=blue!20}, bend angle = 15]
    
      \node[state] (1)  {$1$};
      \node[state] (b) [below right=of 1] {$b$};
      \node[state] (a) [below left =of 1] {$a$};  
      \node[state] (d) [below = of b] {$d$};
      \node[state] (c) [below =of a] {$c$};
      
      \path[->] 
      (1) edge [loop above] node  {x/x} ()       
      (a) edge node [left]  {1/2} (1)
      (a) edge node [above]  {2/1} (b)
      (b) edge node [right] {1/2} (1)
      (b) edge node [below] {2/1} (c)
      (c) edge node [below]  {1/2} (d)
      (c) edge node [left] {2/1} (a)
      (d) edge [loop above] node [above] {x/x+1} ();
    \end{tikzpicture}

  \small Automaton for $\mathcal{P}_{313}$
\end{minipage}
\hfill
\begin{minipage}{0.45\textwidth}
    \centering
    \begin{tikzpicture}[shorten >=1pt,node distance=3cm,on grid,>={Stealth[round]},
        every state/.style={draw=blue!50,very thick,fill=blue!20}, bend angle = 15]
    
      \node[state] (1)  {$1$};
      \node[state] (a) [right=of 1] {$a$};
      \node[state] (b) [below =of a] {$b$};
      \node[state] (c) [below =of 1] {$c$};
    
      \path[->] 
      (1) edge [loop above] node  {x/x} ()       
      (a) edge node [above]  {1/2} (1)
      (a) edge [bend left] node [right]  {2/1} (b)
      (b) edge [bend left] node [left]  {1/2} (a)
      (b) edge node [below] {2/1} (c)
      (c) edge node [left] {x/x+1} (a);
    \end{tikzpicture}
  \small Automaton for $\mathcal{P}_{314}$
\end{minipage}

\bigskip

\begin{minipage}{0.45\textwidth}
    \centering
    \begin{tikzpicture}[shorten >=1pt,node distance=3cm,on grid,>={Stealth[round]},
        every state/.style={draw=blue!50,very thick,fill=blue!20}, bend angle = 15]
    
      \node[state] (1)  {$1$};
      \node[state] (a) [right=of 1] {$a$};
      \node[state] (b) [below =of 1] {$b$};
      \node[state] (c) [below =of a] {$c$};
    
      \path[->] 
      (1) edge [loop above] node  {x/x} ()       
      (a) edge node [above]  {1/1} (1)
      (a) edge [bend right] node [left] {2/2} (b)
      (b) edge node [below]  {1/1} (c)
      (b) edge [bend right] node [right] {2/2} (a)
      (c) edge  node [right] {x/x+1} (a);
    \end{tikzpicture}

  \small Automaton for $\mathcal{P}_{321}$
\end{minipage}
\hfill
\begin{minipage}{0.45\textwidth}
    \centering
    \begin{tikzpicture}[scale=0.8, transform shape, shorten >=1pt,node distance=3cm,on grid,>={Stealth[round]},
        every state/.style={draw=blue!50,very thick,fill=blue!20}, bend angle = 15]
    
      \node[state] (1)  {$1$};
      \node[state] (b) [below right=of 1] {$b$};
      \node[state] (a) [below left =of 1] {$a$};  
      \node[state] (c) [below = of b] {$c$};
      \node[state] (d) [below =of a] {$d$};
      
      \path[->] 
      (1) edge [loop above] node  {x/x} ()       
      (a) edge node [left]  {1/2} (1)
      (a) edge node [above]  {2/1} (b)
      (b) edge node [right] {1/2} (1)
      (b) edge node [right] {2/1} (c)
      (c) edge node [right] {1/2} (a)
      (c) edge node [below] {2/1} (d)
      (d) edge node [left] {x/x+1} (a);
    \end{tikzpicture}
  \small Automaton for $\mathcal{P}_{322}$
\end{minipage}

\bigskip

\begin{minipage}{0.45\textwidth}
    \centering
    \begin{tikzpicture}[scale=0.8, transform shape, shorten >=1pt,node distance=3cm,on grid,>={Stealth[round]},
        every state/.style={draw=blue!50,very thick,fill=blue!20}, bend angle = 15]
    
      \node[state] (1)  {$1$};
      \node[state] (b) [below right=of 1] {$b$};
      \node[state] (a) [below left =of 1] {$a$};  
      \node[state] (c) [below = of b] {$c$};
      \node[state] (d) [below =of a] {$d$};
      
      \path[->] 
      (1) edge [loop above] node  {x/x} ()       
      (a) edge node [left]  {1/2} (1)
      (a) edge node [above]  {2/1} (b)
      (b) edge node [right] {1/2} (c)
      (b) edge node [right] {2/1} (1)
      (c) edge node [below]  {1/2} (d)
      (c) edge node [below] {2/1} (a)
      (d) edge [loop above] node [left] {x/x+1} ();
    \end{tikzpicture}

  \small Automaton for $\mathcal{P}_{323}$
\end{minipage}
\hfill
\begin{minipage}{0.45\textwidth}
    \centering
    \begin{tikzpicture}[shorten >=1pt,node distance=3cm,on grid,>={Stealth[round]},
        every state/.style={draw=blue!50,very thick,fill=blue!20}, bend angle = 15]
    
      \node[state] (1)  {$1$};
      \node[state] (a) [right=of 1] {$a$};
      \node[state] (b) [below =of a] {$b$};
      \node[state] (c) [below =of 1] {$c$};
    
      \path[->] 
      (1) edge [loop above] node  {x/x} ()       
      (a) edge  node [above] {1/2} (1)
      (a) edge [bend left] node [right] {2/1} (b)
      (b) edge node [below]  {1/2} (c)
      (b) edge [bend left] node [left] {2/1} (a)
      (c) edge node [left] {x/x+1} (a);
    \end{tikzpicture}
  \small Automaton for $\mathcal{P}_{324}$
\end{minipage}

\bigskip

\begin{minipage}{0.45\textwidth}
    \centering
    \begin{tikzpicture}[scale=0.8, transform shape, shorten >=1pt,node distance=3cm,on grid,>={Stealth[round]},
        every state/.style={draw=blue!50,very thick,fill=blue!20}, bend angle = 15]
    
      \node[state] (1)  {$1$};
      \node[state] (d) [below right=of 1] {$d$};
      \node[state] (c) [below left =of 1] {$c$};  
      \node[state] (a) [below = of d] {$a$};
      \node[state] (b) [below =of c] {$b$};
      
      \path[->] 
      (1) edge [loop above] node  {x/x} ()       
      (a) edge [bend right] node [above] {1/2} (c)
      (a) edge node [right]  {2/1} (d)
      (b) edge [bend left] node [left] {1/1} (c)
      (b) edge node [below] {2/2} (a)
      (c) edge node [left]  {1/1} (1)
      (c) edge [bend left] node [right] {2/2} (b)
      (d) edge [bend left] node [below] {1/1} (b)
      (d) edge node [right] {2/2} (1);
    \end{tikzpicture}

  \small Automaton for $\mathcal{P}_{411}$
\end{minipage}
\hfill
\begin{minipage}{0.45\textwidth}
    \centering
    \begin{tikzpicture}[shorten >=1pt,node distance=3cm,on grid,>={Stealth[round]},
        every state/.style={draw=blue!50,very thick,fill=blue!20}, bend angle = 15]
    
      \node[state] (b)  {$b$};
      \node[state] (a) [above left=of b] {$a$};
      \node[state] (c) [below left =of b] {$c$};  
      \node[state] (e) [above right= of b] {$e$};
      \node[state] (d) [below right=of b] {$d$};
      
      \path[->] 
      (a) edge node [left] {1/2} (c)       
      (a) edge node [above] {2/1} (e)
      (b) edge [bend left] node [right] {1/1} (c)
      (b) edge node [right] {2/2} (a)
      (c) edge node [below]  {1/1} (d)
      (c) edge [bend left] node [left] {2/2} (b)
      (d) edge node [right] {x/x} (b)
      (e) edge node [above] {1/1} (b)
      (e) edge node [right] {2/2} (d);
    \end{tikzpicture}
  \small Automaton for $\mathcal{P}_{414}$
\end{minipage}

\bigskip

\begin{minipage}{0.45\textwidth}
    \centering
    \begin{tikzpicture}[scale=0.8, transform shape, shorten >=1pt,node distance=3cm,on grid,>={Stealth[round]},
        every state/.style={draw=blue!50,very thick,fill=blue!20}, bend angle = 15]
    
      \node[state] (1)  {$1$};
      \node[state] (d) [below right=of 1] {$d$};
      \node[state] (c) [below left =of 1] {$c$};  
      \node[state] (a) [below = of d] {$a$};
      \node[state] (b) [below =of c] {$b$};
      
      \path[->] 
      (1) edge [loop above] node  {x/x} ()       
      (a) edge [bend right] node [above] {1/2} (c)
      (a) edge node [right]  {2/1} (d)
      (b) edge node [below] {1/1} (a)
      (b) edge [bend left] node [left] {2/2} (c)
      (c) edge node [left]  {1/1} (1)
      (c) edge [bend left] node [right] {2/2} (b)
      (d) edge [bend left] node [below] {1/1} (b)
      (d) edge node [right] {2/2} (1);
    \end{tikzpicture}

  \small Automaton for $\mathcal{P}_{421}$
\end{minipage}
\hfill
\begin{minipage}{0.45\textwidth}
    \centering
    \begin{tikzpicture}[scale=0.8, transform shape, shorten >=1pt,node distance=3cm,on grid,>={Stealth[round]},
        every state/.style={draw=blue!50,very thick,fill=blue!20}, bend angle = 15]
    
      \node[state] (1)  {$1$};
      \node[state] (a) [below right=of 1] {$a$};
      \node[state] (c) [below left =of 1] {$c$};  
      \node[state] (d) [below = of a] {$d$};
      \node[state] (b) [below =of c] {$b$};
      
      \path[->] 
      (1) edge [loop above] node  {x/x} ()       
      (a) edge [bend right] node [right] {1/2} (1)
      (a) edge node [right]  {2/1} (d)
      (b) edge node [right] {1/1} (a)
      (b) edge [bend left] node [left] {2/2} (c)
      (c) edge node [left]  {1/1} (1)
      (c) edge [bend left] node [right] {2/2} (b)
      (d) edge [bend left] node [below] {1/2} (b)
      (d) edge [loop below] node [below] {2/1} ();
    \end{tikzpicture}
  \small Automaton for $\mathcal{P}_{422}$
\end{minipage}

\bigskip

\begin{minipage}{0.45\textwidth}
    \centering
    \begin{tikzpicture}[scale=0.8, transform shape, shorten >=1pt,node distance=3cm,on grid,>={Stealth[round]},
        every state/.style={draw=blue!50,very thick,fill=blue!20}, bend angle = 15]
    
      \node[state] (a)  {$a$};
      \node[state] (b) [below right=of 1] {$b$};
      \node[state] (c) [below left =of 1] {$c$};  
      \node[state] (e) [below = of b] {$e$};
      \node[state] (d) [below =of c] {$d$};
      
      \path[->] 
      (a) edge [loop above] node  {1/2} ()       
      (a) edge node [right] {2/1} (b)
      (b) edge node [above] {1/1} (c)
      (b) edge [bend left] node [right] {2/2} (e)
      (c) edge node [left]  {1/2} (a)
      (c) edge node [left] {2/1} (d)
      (d) edge node [below] {x/x} (b)
      (e) edge [bend left] node [left] {1/1} (b)
      (e) edge node [below]  {2/2} (d);
    \end{tikzpicture}

  \small Automaton for $\mathcal{P}_{423}$
\end{minipage}
\hfill
\begin{minipage}{0.45\textwidth}
    \centering
\begin{tikzpicture}[shorten >=1pt,node distance=3cm,on grid,>={Stealth[round]},
    every state/.style={draw=blue!50,very thick,fill=blue!20}, bend angle = 15]

  \node[state] (b)  {$b$};
  \node[state] (a) [above left=of b] {$a$};
  \node[state] (c) [below left =of b] {$c$};  
  \node[state] (e) [above right= of b] {$e$};
  \node[state] (d) [below right=of b] {$d$};
  
  \path[->] 
  (a) edge node [left] {1/2} (c)       
  (a) edge node [above] {2/1} (e)
  (b) edge [bend left] node [right] {1/1} (c)
  (b) edge node [right] {2/2} (a)
  (c) edge [bend left] node [above]  {1/1} (b)
  (c) edge node [below] {2/2} (d)
  (d) edge node [right] {x/x} (b)
  (e) edge node [right] {1/1} (d)
  (e) edge node [right]  {2/2} (b);
\end{tikzpicture}
  \small Automaton for $\mathcal{P}_{424}$
\end{minipage}

\subsection{List of automata groups for the case $(d,D) = (3,2)$}

In this case, we have 2 infinite topologically finitely generated groups of finite type (see \cref{subsection: the case (dD) = (32)}) but they are not $p$-groups. One of them was recognized in \cref{proposition: depth Hanoi towers closure} to be the Hanoi towers group, whose automaton can be seen in the \cref{figure: Hanoi towers automaton}. 

\bigskip

\begin{figure}[h!]
\centering
\begin{tikzpicture}[shorten >=1pt,node distance=3cm,on grid,>={Stealth[round]},
    every state/.style={draw=blue!50,very thick,fill=blue!20}, bend angle = 15]

  \node[state] (1)  {$1$};
  \node[state] (a) [above left=of 1] {$a$};
  \node[state] (b) [above right =of 1] {$b$};  
  \node[state] (c) [below left= of 1] {$c$};
  
  \path[->] 
  (1) edge [loop right] node  {x/x} ()
  (a) edge [loop left] node  {1/1} ()
  (b) edge [loop right] node  {2/2} ()
  (c) edge [loop left] node  {3/3} ()  
  (a) edge [bend left] node [above] {2/3} (1)
  (a) edge [bend right] node [below] {3/2} (1)
  (b) edge [bend right] node [above] {1/3} (1)
  (b) edge [bend left] node [below] {3/1} (1)
  (c) edge [bend left] node [above] {1/2} (1)
  (c) edge [bend right] node [below] {2/1} (1);
\end{tikzpicture}
  \caption{Automaton of the Hanoi towers group}
  \label{figure: Hanoi towers automaton}
\end{figure}
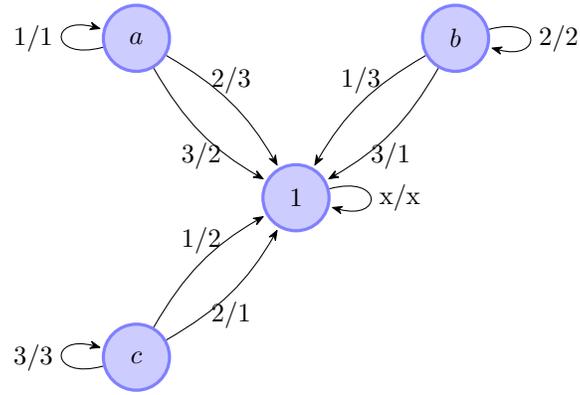

\subsection{List of automata groups for the case $(d,D) = (3,3)$}

In this case, we have 216 infinite topologically finitely generated groups of finite type (see \cref{subsection: the case (dD) = (33)}) but we will only work with the 12 groups of finite type given in this article explicitly. 9 of them are obtained as the closure of a 3-state automaton whereas the remaining 3 are given by a 4-state automaton. 

\bigskip

\begin{minipage}{0.45\textwidth}
    \centering
    \begin{tikzpicture}[scale=0.9, transform shape, shorten >=1pt,node distance=3cm,on grid,>={Stealth[round]},
        every state/.style={draw=blue!50,very thick,fill=blue!20}, bend angle = 15]
    
      \node[state] (1)  {$1$};
      \node[state] (a) [right=of 1] {$a$};
      \node[state] (b) [below =of 1] {$b$};  
      \node[state] (c) [below = of a] {$c$};
      
      \path[->] 
      (1) edge [loop left] node  {x/x} ()       
      (a) edge node [above] {x/x+1} (1)
      (b) edge node [left] {1/1} (1)
      (b) edge [bend right] node [below] {2/2} (c)
      (b) edge [loop left] node  {3/3} ()       
      (c) edge node [right]  {1/2} (a)
      (c) edge [bend right] node [above] {2/3} (b)
      (c) edge [loop right] node  {3/1} ();
    \end{tikzpicture}

  \small Automaton for $\mathcal{P}_{1,1}$
\end{minipage}
\hfill
\begin{minipage}{0.45\textwidth}
    \centering
    \begin{tikzpicture}[scale=0.9, transform shape, shorten >=1pt,node distance=3cm,on grid,>={Stealth[round]},
        every state/.style={draw=blue!50,very thick,fill=blue!20}, bend angle = 15]
    
      \node[state] (1)  {$1$};
      \node[state] (a) [right=of 1] {$a$};
      \node[state] (c) [below =of 1] {$c$};  
      \node[state] (b) [below = of a] {$b$};
      
      \path[->] 
      (1) edge [loop left] node  {x/x} ()       
      (a) edge node [above] {x/x+1} (1)
      (b) edge node [right]  {1/1} (a)
      (b) edge [bend right] node [above] {2/2} (c)
      (b) edge [loop right] node  {3/3} ()
      (c) edge node [left] {1/2} (1)
      (c) edge [bend right] node [below] {2/3} (b)
      (c) edge [loop left] node  {3/1} ();      
    \end{tikzpicture}
  \small Automaton for $\mathcal{P}_{1,2}$
\end{minipage}

\bigskip

\begin{minipage}{0.45\textwidth}
    \centering
    \begin{tikzpicture}[scale=0.9, transform shape, shorten >=1pt,node distance=3cm,on grid,>={Stealth[round]},
        every state/.style={draw=blue!50,very thick,fill=blue!20}, bend angle = 15]
    
      \node[state] (1)  {$1$};
      \node[state] (a) [right=of 1] {$a$};
      \node[state] (b) [below =of 1] {$b$};  
      \node[state] (c) [below = of a] {$c$};
      
      \path[->] 
      (1) edge [loop left] node {x/x} ()       
      (a) edge node [above] {x/x+1} (1)
      (b) edge node [left] {1/3} (1)
      (b) edge [bend right] node [below] {2/1} (c)
      (b) edge [loop left] node  {3/2} ()       
      (c) edge node [right]  {1/3} (a)
      (c) edge [bend right] node [above] {2/1} (b)
      (c) edge [loop right] node  {3/2} ();
    \end{tikzpicture}

  \small Automaton for $\mathcal{P}_{2,1}$
\end{minipage}
\hfill
\begin{minipage}{0.45\textwidth}
    \centering
    \begin{tikzpicture}[scale=0.9, transform shape, shorten >=1pt,node distance=3cm,on grid,>={Stealth[round]},
        every state/.style={draw=blue!50,very thick,fill=blue!20}, bend angle = 15]
    
      \node[state] (c) {$c$};
      \node[state] (a) [above left = of c] {$a$};
      \node[state] (b) [above right =of c] {$b$};  
      
      \path[->] 
      (a) edge [loop left] node {1/1} ()       
      (a) edge node [above] {2/2} (b)
      (a) edge [bend right] node [below] {3/3} (c)
      (b) edge [bend left] node [right] {x/x+1} (c)
      (c) edge [bend right] node [above] {1/2} (a)
      (c) edge [bend left] node [left] {2/3} (b)
      (c) edge [loop below] node  {3/1} ();      
    \end{tikzpicture}
  \small Automaton for $\mathcal{P}_{3,1}$
\end{minipage}

\bigskip

\begin{minipage}{0.45\textwidth}
    \centering
    \begin{tikzpicture}[scale=0.9, transform shape, shorten >=1pt,node distance=3cm,on grid,>={Stealth[round]},
        every state/.style={draw=blue!50,very thick,fill=blue!20}, bend angle = 15]
    
      \node[state] (b) {$b$};
      \node[state] (1) [above left = of b] {$1$};
      \node[state] (a) [above right =of b] {$a$};  
      
      \path[->] 
      (1) edge [loop above] node {x/x} ()       
      (a) edge node [above] {x/x+1} (1)
      (b) edge node [left] {1/1} (a)
      (b) edge node [right] {2/2} (1)
      (b) edge [loop below] node  {3/3} ();      
    \end{tikzpicture}

  \small Automaton for $\mathcal{P}_{3,2}$, $G(\mathcal{A}) = G_{(1,0)}$
\end{minipage}
\hfill
\begin{minipage}{0.45\textwidth}
    \centering
    \begin{tikzpicture}[scale=0.9, transform shape, shorten >=1pt,node distance=3cm,on grid,>={Stealth[round]},
        every state/.style={draw=blue!50,very thick,fill=blue!20}, bend angle = 15]
    
      \node[state] (b) {$b$};
      \node[state] (1) [above left = of b] {$1$};
      \node[state] (a) [above right =of b] {$a$};  
      
      \path[->] 
      (1) edge [loop above] node {x/x} ()     
      (a) edge node [above] {1/2} (1)
      (a) edge [loop above] node {3/1} ()
      (a) edge [bend right] node [left] {2/3} (b)
      (b) edge node [left] {1/1} (1)
      (b) edge [loop below] node  {2/2} ()
      (b) edge [bend right] node [right] {3/3} (a);
    \end{tikzpicture}
  \small Automaton for $\mathcal{P}_{4,1}$
\end{minipage}

\bigskip

\begin{minipage}{0.45\textwidth}
    \centering
    \vspace{3em}
    \begin{tikzpicture}[scale=0.9, transform shape, shorten >=1pt,node distance=3cm,on grid,>={Stealth[round]},
        every state/.style={draw=blue!50,very thick,fill=blue!20}, bend angle = 15]
    
      \node[state] (a) {$a$};
      \node[state] (b) [above left = of a] {$b$};
      \node[state] (c) [above right =of a] {$c$};  
      
      \path[->] 
      (a) edge [loop below] node {1/2} ()
      (a) edge [bend right] node [right] {2/3} (b)
      (a) edge [bend left] node [left] {3/1} (c)
      (b) edge [bend right] node [left] {x/x+1} (a)
      (c) edge [bend left] node [right] {x/x-1} (a);
    \end{tikzpicture}

  \small Automaton for $\mathcal{P}_{4,2}$
\end{minipage}
\hfill
\begin{minipage}{0.45\textwidth}
    \centering
    \begin{tikzpicture}[scale=0.9, transform shape, shorten >=1pt,node distance=3cm,on grid,>={Stealth[round]},
        every state/.style={draw=blue!50,very thick,fill=blue!20}, bend angle = 15]
    
      \node[state] (c) {$c$};
      \node[state] (a) [above left = of c] {$a$};
      \node[state] (b) [above right =of c] {$b$};  
      
      \path[->] 
      (a) edge [loop above] node {1/2} ()       
      (a) edge node [above] {3/1} (b)
      (a) edge [bend right] node [left] {2/3} (c)
      (b) edge [bend left] node [right] {x/x+1} (c)
      (c) edge [bend right] node [right] {1/3} (a)
      (c) edge [bend left] node [left] {3/2} (b)
      (c) edge [loop below] node {2/1} ();
    \end{tikzpicture}
  \small Automaton for $\mathcal{P}_{5,1}$
\end{minipage}

\bigskip

\begin{minipage}{0.45\textwidth}
    \centering
    \begin{tikzpicture}[scale=0.9, transform shape, shorten >=1pt,node distance=3cm,on grid,>={Stealth[round]},
        every state/.style={draw=blue!50,very thick,fill=blue!20}, bend angle = 15]
    
      \node[state] (b) {$b$};
      \node[state] (1) [above left = of b] {$1$};
      \node[state] (a) [above right =of b] {$a$};  
      
      \path[->] 
      (1) edge [loop above] node {x/x} ()       
      (a) edge node [above] {x/x+1} (1)
      (b) edge node [below] {1/2} (1)
      (b) edge node [below] {3/1} (a)
      (b) edge [loop below] node  {2/3} ();      
    \end{tikzpicture}

  \small Automaton for $\mathcal{P}_{5,2}$
\end{minipage}
\hfill
\begin{minipage}{0.45\textwidth}
    \centering
    \begin{tikzpicture}[scale=0.9, transform shape, shorten >=1pt,node distance=3cm,on grid,>={Stealth[round]},
        every state/.style={draw=blue!50,very thick,fill=blue!20}, bend angle = 15]
    
      \node[state] (a) {$a$};
      \node[state] (b) [above left = of a] {$b$};
      \node[state] (c) [above right =of a] {$c$};  
      
      \path[->] 
      (a) edge [loop below] node {1/1} ()       
      (a) edge [bend left] node [left] {2/2} (b)
      (a) edge [bend left] node [left] {3/3} (c)
      (b) edge [loop above] node {2/3} ()       
      (b) edge [bend left] node [right] {1/2} (a)
      (b) edge node [above] {3/1} (c)
      (c) edge [bend left] node [right] {x/x-1} (a);
    \end{tikzpicture}
  \small Automaton for $\mathcal{P}_{6,1}$
\end{minipage}

\bigskip

\begin{minipage}{0.45\textwidth}
    \centering
    \vspace{4em}
    \begin{tikzpicture}[scale=0.9, transform shape, shorten >=1pt,node distance=3cm,on grid,>={Stealth[round]},
        every state/.style={draw=blue!50,very thick,fill=blue!20}, bend angle = 15]
    
      \node[state] (a) {$a$};
      \node[state] (b) [above left = of a] {$b$};
      \node[state] (c) [above right =of a] {$c$};  
      
      \path[->] 
      (a) edge [loop below] node {1/1} ()
      (a) edge [bend right] node [right] {2/2} (b)
      (a) edge [bend left] node [left] {3/3} (c)
      (b) edge [bend right] node [left] {x/x+1} (a)
      (c) edge [bend left] node [right] {x/x-1} (a);
    \end{tikzpicture}

  \small Automaton for $\mathcal{P}_{6,2}$ ($G_{\mathcal{P}_{6,2}}$ is the closure of $G_{(1,2)}$)
\end{minipage}
\hfill
\begin{minipage}{0.45\textwidth}
    \centering
    \begin{tikzpicture}[scale=0.9, transform shape, shorten >=1pt,node distance=3cm,on grid,>={Stealth[round]},
        every state/.style={draw=blue!50,very thick,fill=blue!20}, bend angle = 15]
    
      \node[state] (b) {$b$};
      \node[state] (1) [above left = of b] {$1$};
      \node[state] (a) [above right =of b] {$a$};  
      
      \path[->] 
      (1) edge [loop above] node {x/x} ()       
      (a) edge node [above] {x/x+1} (1)
      (b) edge node [left] {1/3} (1)
      (b) edge node [right] {2/1} (a)
      (b) edge [loop below] node  {3/2} ();      
    \end{tikzpicture}
  \small Automaton for $\mathcal{P}_{6,3}$
\end{minipage}

\bibliographystyle{unsrt}

\end{document}